\newtheorem{thm}{Theorem}[section]
\newtheorem{lem}[thm]{Lemma}
\newtheorem{slem}[thm]{Sublemma}
\newtheorem{rmk}[thm]{Remark}
\newtheorem{cor}[thm]{Corollary}
\newtheorem{defn}[thm]{Definition}
\newtheorem{mainthm}{Theorem}
\newcommand{\N}{\mathbb{N}}
\newcommand{\R}{\mathbb{R}}
\newcommand{\Z}{\mathbb{Z}}
\newcommand{\mcl}{\mathcal L}
\newcommand{\bbp}{\mathbb P}
\newcommand{\al}{\alpha}
\newcommand{\ga}{\gamma}
\newcommand{\del}{\delta}
\newcommand{\ep}{\epsilon}
\newcommand{\sig}{\sigma}
\newcommand{\ka}{\kappa}
\newcommand{\lam}{\lambda}
\newcommand{\Lam}{\Lambda}
\newcommand{\Om}{\Omega}
\newcommand{\om}{\omega}
\newcommand{\tld}[1]{\tilde{#1}}
\newcommand{\mc}[1]{\mathcal{#1}}
\newcommand{\leb}{\text{Leb}}
\newcommand{\essinf}{\mathop{\mathrm{essinf}}}
\newcommand{\var}{\mathop{\mathrm{var}}}
\newcommand{\SOT}{\text{SOT}}
\newcommand{\pvu}[1]{\Pi_{V_+\| U \oplus U_-(#1)}}
\newcommand{\puv}[1]{\Pi_{U\|V_+\oplus U_-(#1)}}
\newcommand{\pvun}[1]{\pvu{\sig^{#1}\om}}
\newcommand{\puvn}[1]{\puv{\sig^{#1}\om}}
\newcommand{\hpt}{{\mc{H}_p^t}}
\newcommand{\lhpt}{{H_p^t}}
\newcommand{\LY}{\text{LY}} 
\newcommand{\PE}{\text{PE}} 
\newcommand{\G}{\mc{G}}
\newcommand{\nint}{b} 
\newcommand{\dist}{\mc{D}} 
\newcommand{\tn}{\ensuremath{| \! | \! |}}
\author{Cecilia Gonz\'alez-Tokman\thanks{Department of Mathematics and Statistics, University of Victoria,
Victoria, B.C., Canada V8W 3R4. Email: ceciliag@uvic.ca.
CGT is supported by a PIMS Postdoctoral Fellowship.}
\and
Anthony Quas \thanks{Department of Mathematics and Statistics, University of Victoria,
Victoria, B.C., Canada V8W 3R4. Email: aquas@uvic.ca. 
AQ is supported by NSERC.}
}
\title{A semi-invertible operator Oseledets theorem}
\begin{document}
\maketitle

\begin{abstract}
  Semi-invertible multiplicative ergodic theorems establish the
  existence of an Oseledets splitting for cocycles of non-invertible
  linear operators (such as transfer operators) over an invertible
  base.  Using a constructive approach, we establish a semi-invertible
  multiplicative ergodic theorem that for the first time can be
  applied to the study of transfer operators associated to the
  composition of piecewise expanding interval maps randomly chosen
  from a set of cardinality of the continuum.  We also give an
  application of the theorem to random compositions of perturbations
  of an expanding map in higher dimensions.
\end{abstract}

\section{Introduction}

\subsection{Motivation and History}
Oseledets' proof, in 1965, of the multiplicative ergodic theorem is a
milestone in the development of modern ergodic theory. It has been
applied to differentiable dynamical systems to establish the existence of
Lyapunov exponents and plays a crucial role in the construction of stable
and unstable manifolds. It also has substantial applications in the
theory of random matrices, Markov chains, etc.

The proof has been generalized in many directions by a number of
authors (including Ruelle \cite{Ruelle}, Ma\~n\'e \cite{Mane},
Ledrappier \cite{KarlssonLedrappier}, Raghunathan \cite{Raghunathan},
Kai\-ma\-no\-vich \cite{Kaimanovich} and many others). In the original
version, one has an ergodic measure-preserving system $\sigma\colon
\Omega\to \Omega$ and for each $\omega\in \Omega$, a corresponding
matrix $A(\omega)\in M_d(\mathbb R)$. Under suitable integrability
conditions on the norms of the matrices it is shown that over almost
every point, $\omega$, of $\Omega$, there is a measurably-varying
collection of subspaces $(V_i(\omega))_{1\le i\le k}$, with a
decreasing sequence of characteristic exponents $\lambda_i$ such that
(i) the subspaces are \textsl{equivariant} - that is,
$A(\omega)(V_i(\omega))\subset V_i(\sigma\omega)$; and (ii) that
vectors in $V_i(\omega)$ (typically) expand at rate $\lambda_i$ under
sequential applications of the matrices $A(\sigma^j\omega)$ along the
orbit. That is,
\[
\lim_{n\to \infty} \frac{1}{n}\log\|A(\sigma^{n-1}\omega)\cdots
A(\omega)v\| =\lambda_i.
\]

More specifically and of crucial significance for this article,
Oseledets' multiplicative ergodic theorem was proved in two versions: an
invertible version and a non-invertible version.

In the invertible version the following is assumed: $\sigma$ is an
invertible transformation of $\Omega$; the matrices $A(\omega)$ are each
invertible and $\int\log^+\|A(\omega)\|\,d\mathbb P(\omega)$ and
$\int\log^+\|A(\omega)^{-1}\|\,d\mathbb P(\omega)$ are both finite. The
conclusion of the theorem is then that there is for almost every $\omega$
a measurable \textsl{splitting} of $\mathbb R^d$:
\begin{equation}\label{eq:splitting}
  \mathbb R^d=Y_1(\omega)\oplus Y_2(\omega)\oplus\ldots\oplus Y_l(\omega)
\end{equation}
such that for all $v\in Y_i(\omega)\setminus\{0\}$
\begin{align}
  \lim_{n\to\infty}
  \frac{\log\|A^{(n)}(\omega)v\|}n&=\lambda_i;\label{rate:fwd}\\
  \lim_{n\to-\infty}
  \frac{\log\|A^{(n)}(\omega)v\|}n&=\lambda_i,\label{rate:backwd}
\end{align}
where $A^{(n)}$ denotes the matrix cocycle $A(\sigma^{n-1}\omega)\cdots
A(\omega)$ for $n>0$ whereas for $n<0$, it is
$A(\sigma^{-n}\omega)^{-1}\cdots A(\sigma^{-1}\omega)^{-1}$.

In the non-invertible version of the theorem, $\sigma$ is no longer
assumed to be invertible and there is no assumption on the invertibility
of the matrices $A(\omega)$. In this case there is a weaker conclusion:
rather than a splitting of $\mathbb R^d$ one obtains a
\textsl{filtration}: A decreasing sequence of subspaces of $\mathbb R^d$
\begin{equation}\label{eq:filtration}
  \mathbb R^d=V_1(\omega)\supset V_2(\omega)\supset\ldots\supset
  V_l(\omega)
\end{equation}
such that for all $v\in V_i(\omega)\setminus V_{i+1}(\omega)$ (defining
$V_{l+1}(\omega)$ to be $\{0\}$), \eqref{rate:fwd} holds.

In \cite{FLQ1}, Froyland, Lloyd and Quas refined the dichotomy between
invertible and non-invertible versions of the theorem, introducing a
third class of versions of the theorem: \textsl{semi-in\-vert\-ible}
multiplicative ergodic theorems. For semi-in\-vert\-ible ergodic theorems the
underlying dynamical system is assumed to be invertible, but no
assumption is made on the invertibility of the matrices. The conclusion
of the theorem in this category is that there is again a splitting of the
vector space (instead of a filtration) and that for all $v\in
Y_i(\omega)\setminus\{0\}$, \eqref{rate:fwd} holds (but not
\eqref{rate:backwd} which does not make sense in this context).

Our motivation for considering semi-in\-vert\-ible multiplicative
ergodic theorems comes from application-oriented studies of rates of
mixing due to Dellnitz, Froyland and collaborators
\cite{DellnitzJunge, DellnitzFroylandSertl, Froyland}.  Given a
measure-preserving dynamical system it is called (strong-) mixing if
$\mu(A\cap T^{-n}B)\to\mu(A)\mu(B)$ for all measurable sets $A$ and
$B$. This is an asymptotic independence property for any measurable
sets under evolution.

An equivalent formulation of mixing is that $\int f\cdot g\circ
T^n\,d\mu$ should converge to $\int f\,d\mu\int g\,d\mu$ for all $L^2$
functions $f$ and $g$. Clearly nothing is lost if one demands that the
functions should have zero integral.

Relaxing the assumption that $\mu$ is an invariant measure, one may take
$\mu$ to be some ambient measure (e.g. Lebesgue measure in the case that
$T$ is a smooth map of a manifold or subset of $\mathbb R^d$). A key tool
in this study is the \textsl{Perron-~Frobenius Operator} or
\textsl{transfer operator}, $\mathcal L$, acting on $L^1(\mu)$. This is
the pre-dual of the operator of composition with $T$ acting on
$L^\infty$, the so-called Koopman operator, so that $\int f\cdot g\circ
T\,d\mu=\int \mathcal Lf\cdot g\,d\mu$ for all $f\in L^1$ and $g\in
L^\infty$). In many cases one can give a straightforward expression for
$\mathcal Lf$. It is not hard to check from the definition that $\mathcal
Lf=f$ if and only if $f$ is the density of an absolutely continuous
invariant measure for $T$.

One might naively ask for the rate of convergence of $\int f\cdot g\circ
T^n\,d\mu$ to 0 if indeed the system is mixing, but simple examples show
that there is no uniform rate of convergence: one can construct in any
non-trivial mixing system, functions $f$ and $g$ such that the rate of
convergence is arbitrarily slow. One does however obtain rates of mixing
if one places suitable restrictions on the class of `observables' $f$ and
$g$ for which one computes $\int f\cdot g\circ T^n\,d\mu$. It turns out
that an important reason that the Perron-Frobenius operator is so useful
is that if one restricts the function $f$ to a suitable smaller Banach
space of observables $B\subset L^1$, then in many cases $\mathcal L$ maps
$B$ to $B$; and better still $\mathcal L$ is a quasi-compact operator on
$B$, so that the spectrum of $\mathcal L$ consists of a discrete set of
values outside the essential spectral radius each corresponding to
eigenvalues of $\mathcal L$ with finite-dimensional eigenspaces. Given
this one can relate the rate of mixing of the dynamical system
(restricted to a suitable class of observables) to the spectral
properties of the operator $\mathcal L$ restricted to the Banach space
$B$. It is a key fact for our purposes that the Perron-Frobenius
operators $\mathcal L$ that one works with are almost invariably
non-invertible.

Ulam's method takes this one step further, replacing the operator
$\mathcal L$ by a finite rank approximation. In works of Froyland
\cite{FroylandUlam1D, FroylandUlamApprox} and Baladi, Isola and
Schmitt \cite{BaladiIsolaSchmitt}, the relationship between the finite
rank approximations of $\mathcal L$ and the original Perron-Frobenius
operators is studied. This turns out to be remarkably effective and
this is a good technique for computing invariant measures numerically
(see for example work of Dellnitz and Junge \cite{DellnitzJunge},
Froyland\cite{FroylandHigherDim}, Keane, Murray and Young
\cite{KeaneMurrayYoung}).  Keller and Liverani \cite{KellerLiverani99}
showed that exceptional eigenvalues of $\mathcal L$ (those outside the
essential spectral radius) persist under approximation of $\mathcal
L$.

In a development of Ulam's method, \cite{DellnitzJunge} and later
\cite{FroylandUlamApprox} related the large sub-unit eigenvalues and
corresponding eigenvectors of the finite rank approximation of
$\mathcal L$ to properties of the underlying system. In particular
they showed that these exceptional eigenvectors give rise to global
features inhibiting mixing of the system (whereas the essential
spectral radius is related to local features inhibiting mixing of the
system). For a cartoon example, one can consider a map of the interval
$[-1,1]$ in which the left sub-interval $[-1,0]$ and right
sub-interval $[0,1]$ are almost invariant (that is only a small amount
of mass leaks from one to the other under application of the map) but
within each subinterval there is rapid mixing- see work of
Gonz\'alez-Tokman, Hunt and Wright \cite{GTHuntWright} and Dellnitz,
Froyland and Sertl \cite{DellnitzFroylandSertl}. In this case one
observes eigenvalues that are close to 1, where the eigenfunction
takes values close to 1 on one sub-interval and close to $-1$ on the
other sub-interval. In applied work Dellnitz, Froyland and
collaborators\cite{FroylandPadbergEnglandTreguier,
  DellnitzFroylandEtal} make use of these exceptional eigenvectors to
analyse the ocean and locate regions with poor mixing, called gyres.

The current work (and its predecessors \cite{FLQ1} and \cite{FLQ2}) is
motivated by extending the program of Dellnitz and Froyland to the
case of forced dynamical systems (or equivalently random dynamical
systems), that is systems of the form
$T(\omega,x)=(\sigma(\omega),T_\omega(x))$.  Again as a cartoon
example, one can consider the effect of the moon on the oceans: the
moon evolves autonomously (and \textsl{invertibly}), whereas the
evolution of the ocean is affected by the position of the
moon. Relating this to the context of the multiplicative ergodic
theorem, think of the dynamical system $\sigma\colon\Omega\to\Omega$
as being the autonomous dynamics of the moon and the
$\omega$-dependent matrix to be a map on a Banach space of densities
in the ocean. The aim is, once again, to identify and study the second
and subsequent exceptional eigenspaces with a view to understanding
obstructions to mixing. The importance of the semi-in\-vert\-ible
multiplicative ergodic theorems here (the underlying base dynamics is
invertible but the Perron-Frobenius operators are non-invertible) are
that the obstructions to mixing, the $V_2(\omega)$, appear here as
finite-dimensional subspaces rather than the finite-codimensional
subspaces that one would obtain from the standard multiplicative
ergodic theorems. This program has been demonstrated to work in
practice for driven cylinder flows in an article of Froyland, Lloyd
and Santitissadeekorn \cite{FroylandLloydSantitissadeekorn}.

In all three works, this paper and its two predecessors, \cite{FLQ1} and
\cite{FLQ2}, the goal is to prove a semi-in\-vert\-ible multiplicative
ergodic theorem and apply it to as general a class of random dynamical
systems as possible. In all three papers, the starting point was a pair
of multiplicative ergodic theorems: an invertible and a non-invertible;
and then to derive, using the pair of ergodic theorems as \textsl{black
boxes}, a semi-in\-vert\-ible ergodic theorem.

\cite{FLQ1} dealt with the original Oseledets context of $d\times d$
real matrices (and used Oseledets' original theorem \cite{Oseledets}
as the basis). \cite{FLQ2} dealt with the case of an operator-valued
multiplicative ergodic theorem where the map $\mathcal L\colon
\omega\mapsto \mathcal L(\omega)$ is (almost)-continuous with respect
to the operator norm (using a theorem of Thieul\-len \cite{Thieullen}
as a basis). The current paper deals with the case of an
operator-valued multiplicative ergodic theorem where the map
$\omega\mapsto\mathcal L(\omega)$ is measurable with respect to a
$\sigma$-algebra related to the strong operator topology (using a
Theorem of Lian and Lu \cite{LianLu} as a basis).

The applications to random dynamical systems have become progressively
more general through the sequence of works: \cite{FLQ1} applied to
finite-dimensional approximations of random dynamical systems (using the
Ulam scheme) as well as dealing exactly with some dynamical systems
satisfying an extremely strong jointly Markov condition. \cite{FLQ2}
applied to one-dimensional expanding maps. However, since the set of
Perron-Frobenius operators of $C^2$ expanding maps acting on the space of
functions of bounded variation is uniformly discrete, the conditions of
the theorem restricted the authors to studying random dynamical systems
with at most countably many maps. In the current paper, Lian and Lu's
result allows us to weaken the continuity assumption to strong
measurability (defined below). Essentially, this amounts to checking
continuity of $\omega\mapsto \mathcal L_{T_\omega}f$ for a \textsl{fixed}
$f$. The cost, however, is that the Banach space on which the transfer
operators act is now required to be separable (which the space of
functions of bounded variation, used in \cite{FLQ2}, is not). In order to
apply the semi-in\-vert\-ible ergodic theorem to random one-dimensional
expanding maps, we make substantial use of recent work of Baladi and
Gou\"ezel \cite{BaladiGouezel} who used a family of local Sobolev norms
to study Perron-Frobenius operators of (higher-dimensional) piecewise
hyperbolic maps; see also Thomine \cite{Thomine} for a specialization in
the context of expanding maps. While Baladi and Gou\"ezel were working
with a single map, we show that the Perron-Frobenius operators on the
Banach spaces that they construct depend in a suitable way for families
of expanding one-dimensional maps allowing us to apply our
semi-in\-vert\-ible multiplicative ergodic theorem (making essential use
also of an idea of Buzzi \cite{Buzzi}). We also point out that, to our
knowledge, it was not even known whether an Oseledets filtration existed
in this setting.

Another feature of the proofs is the way in which the semi-in\-vert\-ible
theorem is proved from the invertible and non-invertible theorems. The
essential issue is that the non-invertible theorem provides equivariant
families of finite co-dimensional subspaces $V_i(\omega)$ (being the set
of vectors that expand at rate $\lambda_i$ or less). One is then
attempting to build an equivariant family of (finite-dimensional) vector
spaces $Y_i(\omega)$ so that $V_{i+1}(\omega)\oplus
Y_i(\omega)=V_i(\omega)$.

In \cite{FLQ1} this was done in a relatively natural way (by pushing
forward the orthogonal complement of $V_i(\sigma^{-n}\omega)\ominus
V_{i+1}(\sigma^{-n}\omega)$ under $A(\sigma^{-1}\omega)\cdots
A(\sigma^{-n}\omega)$ and taking a limit as $n$ tends to infinity).

In \cite{FLQ2}, the proof exploited the structure of the proof given by
Thieullen. Spec\-ifically Thieul\-len first proved the invertible
multiplicative ergodic theorem and then obtained the non-invertible
theorem as a corollary by building an inverse limit Banach space
(reminiscent of the standard inverse limit constructions in ergodic
theory). The finite-co-dimensional family $V_i(\omega)$ was obtained by
projecting the corresponding subspaces from the invertible theorem onto
their zeroth coordinate. In \cite{FLQ2} it was proved that applying the
same projection to the finite-dimensional complementary family yielded
the $Y_i(\omega)$ spaces. This proof, while relatively simple, is
problematic for applications as there appears to be no sensible way to
computationally work with these inverse limit spaces. We see this proof
technique as non-constructive. This non-constructive proof technique
should probably apply with a high degree of generality.

In the current paper we come back much closer to the scheme applied in
\cite{FLQ1}. The same non-constructive techniques that were used by
Thieul\-len to obtain the non-invertible theorem from the invertible
theorem were used by Doan in his thesis \cite{Son} to obtain a
non-invertible version of the result of Lian and Lu \cite{LianLu}.
Starting from the non-constructive existence proof of the finite
co-dimensional subspaces we obtain a constructive proof of the
finite-dimensional $Y_i(\omega)$ spaces. We see this as being likely to
lead to computational methods although we have not implemented these at
the current time.

\subsection{Statement of Results and structure of paper}

The context of Lian and Lu's multiplicative ergodic theorem is that of
strongly measurable families of operators.

If $X$ is a separable Banach space, then $L(X)$ will denote the set of
bounded linear maps from $X$ to $X$. The strong operator topology on
$L(X)$ is the topology generated by the sub-base consisting of sets of
the form $\{T\colon \|T(x)-y\|<\epsilon\}$. The strong $\sigma$-algebra
$\mathcal S$ is defined to be the Borel $\sigma$-algebra on $L(X)$
generated by the strong operator topology. Appendix A develops a number
of basic results about strong-measurability, including the following
useful characterization: A map $\mathcal L\colon \Omega\to L(X)$ is
strongly measurable if for each $x\in X$, the map $\Omega\to X$,
$\omega\mapsto \mathcal L(\omega)(x)$ is measurable with respect to the
$\sigma$-algebra on $\Omega$ and the Borel $\sigma$-algebra on $X$.

Of course the `strong operator topology' is very much coarser than the
norm topology on $L(X)$ - checking continuity in the strong operator
topology can be done one $x$ at a time. This is the essential difference
between the result of Thieul\-len and that of Lian and Lu: for a given
function $f$, $\mathcal L(\omega)f$ and $\mathcal L(\omega')f$ are close
if $T_\omega$ and $T_{\omega'}$ are close enough, but the operators
$\mathcal L_{T_\omega}$ and $\mathcal L_{T_{\omega'}}$ are, in many
interesting cases, uniformly far apart. (An exception to this is the
setting of smooth expanding analytic maps.)

For convenience we state our main results here, even though some of the
terms in the statement have yet to be defined. These correspond to
Theorems \ref {thm:GranderOseledetsSplitting} and
\ref{thm:OseledetsSplitting4LYMaps} in the body of the paper.

Our new semi-in\-vert\-ible multiplicative ergodic theorem is the
following (for simplicity we state the version in which there are
finitely many exceptional exponents; a corresponding version holds if
there are countably many exponents which then necessarily converge to
$\kappa^*$).

\begin{mainthm}\label{mainthm:A}
Let $\sigma$ be an invertible ergodic measure-preserving transformation
of the Lebesgue space $(\Omega,\mathcal F,\mathbb P)$. Let $X$ be a
separable Banach space. Let $\mathcal L\colon
\Omega\to L(X)$ be a strongly measurable family of mappings such that
$\log^+\|\mathcal L(\omega)\|\in L^1(\mathbb P)$ and suppose that the
random linear system $\mathcal R=(\Omega,\mathcal F,\mathbb
P,\sigma,X,\mathcal L)$ is quasi-compact (i.e. the analogue of the
spectral radius, $\lambda^*$, is larger than the analogue of the
essential spectral radius, $\kappa^*$).

Then there exists $1\le l\le \infty$ and a sequence of exceptional
Lyapunov exponents
$\lam^*=\lambda_1>\lambda_2>\ldots>\lambda_l>\kappa^*$ (or in the case
$\lambda=\infty$, $\lam^*=\lambda_1>\lambda_2>\ldots$;
$\lim_{n\to\infty} \lambda_n=\kappa^*$).

For $\mathbb P$-almost every $\omega$ there exists a unique measurable
equivariant splitting of $X$ into closed subspaces
$X=V(\omega)\oplus\bigoplus_{j=1}^l Y_j(\omega)$ where the $Y_j(\omega)$
are finite-dimensional. For each $y\in Y_j(\omega)\setminus\{0\}$,
$\lim_{n\to\infty}\frac 1n\log\|\mathcal L_\omega^{(n)}y\|=\lambda_j$.
For $y\in V(\omega)$, $\lim_{n\to\infty}\frac 1n\log\|\mathcal
L_\omega^{(n)}y\|\le \kappa^*$.
\end{mainthm}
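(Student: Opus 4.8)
The plan is to derive the semi-invertible splitting from the two black-box theorems available to us: the non-invertible multiplicative ergodic theorem of Doan (the non-invertible version of Lian--Lu), which supplies a measurable equivariant filtration $X = V_1(\omega) \supset V_2(\omega) \supset \cdots$ with $V_j(\omega)$ of finite codimension in $X$ (more precisely, $\mathrm{codim}\, V_{j+1}(\omega) - \mathrm{codim}\, V_j(\omega) = \dim Y_j$), together with the invertible Lian--Lu theorem applied, whenever possible, to the restriction of the cocycle to a suitable finite-dimensional equivariant quotient or subbundle. The target is to construct, for each $j$, a measurable equivariant family of finite-dimensional subspaces $Y_j(\omega)$ with $V_{j+1}(\omega) \oplus Y_j(\omega) = V_j(\omega)$, on which the cocycle expands at the sharp rate $\lambda_j$, and then set $V(\omega) = \bigcap_j V_j(\omega)$ (closed, equivariant, with growth rate at most $\kappa^*$) and assemble $X = V(\omega) \oplus \bigoplus_j Y_j(\omega)$.

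**The core construction**, following the constructive scheme of \cite{FLQ1} rather than the inverse-limit approach of \cite{FLQ2}, is a pullback-pushforward limit. Fix $j$ and work inside the equivariant bundle $V_j(\omega)$, which carries a top exponent $\lambda_j$ with multiplicity $d_j := \dim Y_j < \infty$ and all other exponents $\le \lambda_{j+1}$. For each $n$, consider the point $\sigma^{-n}\omega$, choose a $d_j$-dimensional subspace $W(\sigma^{-n}\omega) \subset V_j(\sigma^{-n}\omega)$ complementary to $V_{j+1}(\sigma^{-n}\omega)$ (for instance one given by the graph of a bounded operator obtained from the filtration), push it forward by the cocycle $\mathcal L^{(n)}_{\sigma^{-n}\omega}$, and show that the resulting sequence of $d_j$-dimensional subspaces of $V_j(\omega)$ is Cauchy in the Grassmannian and converges to the desired $Y_j(\omega)$. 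Equivariance is automatic from the construction in the limit; the sharp lower growth bound $\liminf \frac1n \log \|\mathcal L^{(n)}_\omega y\| \ge \lambda_j$ on $Y_j(\omega) \setminus \{0\}$ follows because $Y_j(\omega)$ is complementary to $V_{j+1}(\omega)$ and hence its image cannot be absorbed into the slower-growing part; the matching upper bound $\le \lambda_j$ is immediate since $Y_j(\omega) \subset V_j(\omega)$. Measurability of $\omega \mapsto Y_j(\omega)$ is inherited from measurability of the filtration and of the cocycle via the strong-measurability framework of Appendix A, using that a pointwise limit of measurable maps into a separable metric space (the Grassmannian of $d_j$-planes) is measurable. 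Uniqueness of the splitting is the standard argument: any equivariant complement to $V_{j+1}$ inside $V_j$ with the correct growth rate must coincide with $Y_j$, since a nonzero vector in the symmetric difference would violate one of the two growth estimates.

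**The main obstacle** is establishing the contraction/Cauchy estimate for the pushed-forward subspaces in a genuine infinite-dimensional Banach space where the cocycle consists of non-invertible operators and one has only strong measurability, not norm continuity. In \cite{FLQ1} the analogous step used finite-dimensional matrices and orthogonality in $\mathbb R^d$; here one must instead work with the equivariant complement structure from the filtration and control the "angles'' between the slow subspace $V_{j+1}$ and the candidate fast subspaces using only the quantitative gap $\lambda_j - \lambda_{j+1} > 0$ in the Lyapunov spectrum. The key technical input is a Banach-space substitute for the angle estimate: one needs, for $\mathbb P$-a.e. $\omega$ and every $\epsilon > 0$, a tempered (subexponentially varying) constant $C(\omega)$ such that vectors in $V_j(\sigma^{-n}\omega) \setminus V_{j+1}(\sigma^{-n}\omega)$ stay quantitatively bounded away from $V_{j+1}$ after normalization, which one extracts from the invertible Lian--Lu theorem applied to the finite-dimensional factor bundle $V_j/V_{j+1}$ together with the Lyapunov norm / Pesin-style tempering argument. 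Once this uniform-in-$n$ transversality with controlled temperedness is in hand, the difference of successive pushforwards is bounded by a fixed geometric ratio $e^{-(n)(\lambda_j - \lambda_{j+1}) + o(n)}$, giving summability and hence convergence of the sequence of $d_j$-planes; the rest of the argument is then routine bookkeeping. A secondary (but real) difficulty is handling the case $l = \infty$: one must check that the finite-dimensional spaces $Y_j(\omega)$ can be constructed consistently for all $j$ and that $V(\omega) = \bigcap_j V_j(\omega)$ is the correct "remainder'' with growth rate $\le \kappa^*$, which follows from the definition of $\kappa^*$ as the infimum of the exceptional exponents together with the fact that each truncation $V_{j+1}(\omega)$ already has growth rate $\le \lambda_{j+1} \to \kappa^*$.
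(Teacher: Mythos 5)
Your existence construction follows essentially the same route as the paper: take Doan's filtration $V_1(\omega)\supset V_2(\omega)\supset\cdots$, make a measurable choice of finite-dimensional complements to $V_{j+1}(\omega)$ in $V_j(\omega)$ with controlled projection norms (the paper builds these explicitly with $L^\infty$-bounded projections via a measurable selection of nice bases, rather than extracting tempered control from an invertible MET, but the role is the same), push them forward from $\sigma^{-n}\omega$ by the cocycle, and prove the Cauchy property in the Grassmannian by a block decomposition with respect to $U\oplus V_+$ and the spectral gap $\lambda_j-\lambda_{j+1}>0$, the crucial uniform lower bound on the fast block coming from the classical finite-dimensional Oseledets theorem applied to the induced cocycle on the complement (your ``factor bundle $V_j/V_{j+1}$'') together with a Barreira--Silva-type uniform growth estimate. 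Up to these cosmetic differences, and the minor point that the constant controlling the transversality (the paper's $M(\omega)=\sup_n g_n(\omega)$) is only almost everywhere finite, so the convergence is closed off with an ergodicity/subsequence argument rather than by summability, this part of your plan is sound and is the paper's argument.

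The genuine gap is your uniqueness step. The claim that ``any equivariant complement to $V_{j+1}$ inside $V_j$ with the correct growth rate must coincide with $Y_j$, since a nonzero vector in the symmetric difference would violate one of the two growth estimates'' cannot work as stated: by Doan's theorem \emph{every} nonzero vector of \emph{any} complement $Y'(\omega)$ of $V_{j+1}(\omega)$ in $V_j(\omega)$ lies in $V_j(\omega)\setminus V_{j+1}(\omega)$ and therefore has forward growth rate exactly $\lambda_j$, so no pointwise forward growth estimate is violated, and there is no backward cocycle available to separate the candidates. Uniqueness genuinely requires a dynamical argument exploiting equivariance of $Y'$. In the paper one sets $h(\omega)=\|\Pi_{V_+\|Y\oplus Y_-}(\omega)|_{Y'(\omega)}\|$, shows $h(\sigma^n\omega)\to 0$ because the $V_+$-component of the equivariant family $Y'$ decays like $e^{-n(\lambda_j-\lambda_{j+1}-O(\epsilon))}$ relative to the vectors themselves --- and even this needs a Baire-category argument to replace the vector-dependent constants in the lower growth bound on $Y'$ by a single constant $N$ on a ball in $Y'(\omega)$ --- and then applies Poincar\'e recurrence to the sets $\{h\le 1/i\}$ to force $h=0$ almost everywhere, whence $Y'(\omega)\subset Y(\omega)\oplus Y_-(\omega)$ and $Y'(\omega)=Y(\omega)$ by dimension count. (Alternatively one could push $Y'(\sigma^{-n}\omega)$ forward and rerun your convergence estimate, using equivariance to identify the pushforward with $Y'(\omega)$, but this again requires control of the projection norms attached to $Y'$, which are a priori only measurable, along a positive-measure set of times supplied by ergodicity.) Without an argument of this kind the uniqueness clause of the theorem is not established.
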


The application to random piecewise expanding systems is as follows:

\begin{mainthm}\label{mainthm:B}
  Let $\sigma$ be an invertible ergodic measure-preserving
  transformation of the Lebesgue space $(\Omega,\mathcal F,\mathbb
  P)$. For each $\omega\in\Omega$, let $T_\omega$ be a random
  expanding dynamical system acting on $X_0\subset \R^d$.
  Assume further that
  $\omega\mapsto T_\omega$ is Borel-measurable, the $C^{1+\alpha}$ norm of
  $T_\omega$ is uniformly bounded above, the maps
  $T_\omega$ have a derivative that is uniformly bounded away from 1,
  and that some integrability conditions are satisfied.

  Suppose that either $d=1$ (Lasota-Yorke case);
  or $d>1$ and the maps $T_\omega$ are $C^2$, have a common branch
  partition and belong to a sufficiently small neighbourhood of a
  Cowieson map.

Then there exist a separable, reflexive Banach space $X$ containing the
$C^\infty$ functions supported on $X_0$ for which the map $\om \mapsto \mcl_\om$
given by the transfer operator associated to $T_\om$ is strongly
measurable, a quantity $1\le l\le\infty$, a sequence of exceptional
exponents $0=\lambda_1>\ldots>\lambda_l>\kappa^*$, (or if $l=\infty$,
then $0=\lambda_1>\lambda_2>\ldots>\kappa^*$; $\lim_{n\to\infty}
\lambda_n=\kappa^*$), and a family of finite-dimensional equivariant
subspaces $(Y_i(x))_{1\le i\le l}$ satisfying the conclusions of Theorem
\ref{mainthm:A}.
\end{mainthm}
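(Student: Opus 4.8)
The plan is to deduce Theorem~\ref{mainthm:B} from Theorem~\ref{mainthm:A} by verifying its hypotheses for the transfer-operator cocycle attached to the random system $(T_\omega)$. The first and most substantial task is to produce the separable, reflexive Banach space $X$: in the case $d=1$ one takes the local Sobolev-type space of Baladi and Gou\"ezel \cite{BaladiGouezel} (cf.\ Thomine \cite{Thomine}), and for $d>1$ the analogous space adapted to the common branch partition. One must check that $C^\infty$ functions supported on $X_0$ embed in $X$, that $X$ is separable (this is where the bounded-variation space of \cite{FLQ2} fails, so the argument genuinely needs the Sobolev-type norm), and that $X$ is reflexive. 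The key structural input is a uniform Lasota--Yorke inequality: there are constants $A,B$ and $\rho<1$, uniform over $\omega$ (using the uniform $C^{1+\alpha}$ bound, the derivative bounded away from $1$, and the common branch partition in the higher-dimensional case), such that $\|\mcl_\omega^{(n)}f\|_X \le A\rho^n\|f\|_X + B\|f\|_{\text{weak}}$, together with compactness of the unit ball of $X$ in the weak norm; this yields quasi-compactness of the random linear system with $\kappa^*$ controlled by $\rho$, and the quasi-compactness hypothesis $\lambda^*>\kappa^*$ will hold since the leading exponent is $\lambda_1=0$ (each $\mcl_\omega$ preserves the integral, so the constant function witnesses $\lambda^*\ge 0$, while $\kappa^*<0$ from the Lasota--Yorke bound).

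Second, I would establish strong measurability of $\omega\mapsto\mcl_\omega$. By the characterization recalled in the excerpt (and developed in Appendix~A), it suffices to show that for each fixed $f\in X$ the map $\omega\mapsto\mcl_\omega f$ is measurable into $(X,\text{Borel})$. Since $C^\infty$ functions are dense in $X$ and the operators are uniformly bounded, it is enough to check this for $f$ smooth, where one can appeal to Borel-measurability of $\omega\mapsto T_\omega$ together with an explicit formula for $\mcl_\omega f$ in terms of the inverse branches of $T_\omega$ and their Jacobians. This is precisely the point of passing from the norm-continuity framework of \cite{FLQ2} to strong measurability: $\mcl_{T_\omega}$ and $\mcl_{T_{\omega'}}$ need not be norm-close, but $\mcl_{T_\omega}f$ and $\mcl_{T_{\omega'}}f$ are close for fixed smooth $f$ when $T_\omega,T_{\omega'}$ are close. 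An idea of Buzzi \cite{Buzzi} is invoked here to handle the dependence of the branch structure on $\omega$ in the one-dimensional case; the integrability condition $\log^+\|\mcl_\omega\|\in L^1$ follows from the uniform upper bound on $\|\mcl_\omega\|$ implied by the $C^{1+\alpha}$ control.

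Third, with all hypotheses of Theorem~\ref{mainthm:A} verified, one obtains the exponents $0=\lambda_1>\lambda_2>\ldots>\kappa^*$ and the equivariant finite-dimensional subspaces $Y_i(\omega)$ with the stated growth rates, and one simply transports the conclusion back along $\omega\leftrightarrow x$. I expect the main obstacle to be the second step as it interacts with the branch structure: showing that the Baladi--Gou\"ezel norms, engineered by those authors for a single fixed map, behave well enough under variation of the map that (a) the Lasota--Yorke constants are genuinely uniform and (b) $\omega\mapsto\mcl_\omega f$ lands in $X$ measurably for all $f\in X$, not merely for smooth $f$. Establishing the uniform Lasota--Yorke inequality in the $d>1$ Cowieson-perturbation case, where one must control how the fragmentation of the dynamical partition under iteration depends on $\omega$, will require the delicate complexity-versus-expansion bookkeeping that is the technical heart of the piecewise-expanding theory; the common branch partition hypothesis is what makes this tractable.
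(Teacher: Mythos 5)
Your outline matches the paper's high-level strategy (Baladi--Gou\"ezel fractional Sobolev spaces $\hpt$, strong measurability via continuity of $T\mapsto\mcl_T f$ for fixed $f$, Lasota--Yorke estimates feeding a Hennion-type bound on the index of compactness, then Theorem~\ref{mainthm:A}), but it has a genuine gap at the point where you assert that the leading exponent is $\lambda_1=0$. Integral preservation only gives the lower bound $\lambda^*\ge 0$ (which, with $\kappa^*<0$, already yields quasi-compactness); the equality $\lambda^*=0$ claimed in the statement is not automatic, because the cocycle is not a contraction on $\hpt$: the weak Lasota--Yorke bound only gives a log-integrable per-step growth, and the strong inequality has its auxiliary term in the $L^p$ norm with $p>1$, on which the transfer operator is \emph{not} a contraction. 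In the paper this upper bound $\lambda^*\le 0$ is the delicate half of the quasi-compactness lemmas: one bootstraps, for smooth test functions $g$, through a second pair of norms --- $(\|\cdot\|_{L^1},\|\cdot\|_{\mathrm{BV}})$ via Buzzi's random Lasota--Yorke argument (and, in the Cowieson case, the embedding of BV into $L^{d/(d-1)}$) --- to get subexponential growth of $\|\mcl^{(n)}_\om g\|_p$, then applies the random Hennion lemma again with $(\|\cdot\|_p,\|\cdot\|_\hpt)$, and finally rules out $\lambda^*>0$ by a contradiction argument using density of $C^\infty$ and the boundedness of the projection onto the top Oseledets space furnished by Theorem~\ref{mainthm:A}. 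None of this appears in your proposal, and without it you have only proved the statement with ``$\lambda_1\ge 0$'' in place of ``$\lambda_1=0$''.

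A secondary issue: your ``uniform Lasota--Yorke inequality'' with constants $A,B,\rho<1$ independent of $\omega$ is stronger than what the hypotheses allow. In the one-dimensional case the number of branches is only assumed log-integrable, so the complexity-at-the-end factor in the strong inequality is unbounded in $\omega$; in the higher-dimensional case the complexity at the beginning grows under composition and is only tamed over blocks of fixed length $n_0$ via Cowieson's theorem. The paper accordingly works with measurable, $\omega$-dependent coefficients $A_{\mc R,n}(\om)$, $B_{\mc R,n}(\om)$, controls their exponential rates by Kingman's subadditive theorem, and invokes a random version of Hennion's theorem (with a log-integrable weak bound and $\int\log B<0$) rather than the deterministic one. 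Your measurability step is essentially fine --- checking $\omega\mapsto\mcl_\om f$ on a dense set of smooth $f$ and passing to the limit needs only that each $\mcl_\om$ is bounded, not uniform boundedness --- though note that Buzzi's idea enters the paper in the $\lambda^*\le0$/BV argument, not in the measurability proof.
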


The main motivation behind our search for semi-invertible Oseledets
theorems has been to provide a general framework in which it is
possible to identify low-dimensional spaces that are responsible for
impeding mixing in infinite-dimensional dynamical systems. Following
Dellnitz, Froyland and collaborators we want to extract information
not simply from the exceptional Lyapunov exponents, but rather from
the corresponding Lyapunov subspaces.

It is important to note that exponential decay of correlations is not
assumed. Our work applies, for instance, to an example of Buzzi in
\cite{BuzziEDC} (Example 3). Buzzi's example (which works by essentially
having two copies of the interval and a pair of maps each of which acts
as doubling on each interval and then simply permutes the intervals)
fails to have exponential decay of correlations, but it is still
quasi-compact. In our context this will be reflected in the fact that the
top exceptional Lyapunov subspace has multiplicity 2 rather than 1. In
fact, the structure of this top subspace exactly illustrates the goal of
our work because the Oseledets space will consist of a constant function and a
function which is 1 on one of the intervals and $-1$ on the other,
thereby indicating the source of non-mixing.

In addition, there are examples in the existing literature showing the possibilities of 
having more than one Oseledets space; that is, $l\geq 2$.  In the random setting, there is an example by Froyland, Lloyd and Quas, \cite[Theorem
5.1]{FLQ1}; in the deterministic case, there is one by Keller and Rugh \cite[Theorem 1]{KellerRugh}.
In fact, it is \textsl{a priori} possible to have all sorts of combinations for
number of exceptional Lyapunov exponents ($1\leq l \leq \infty$) and
multiplicities ($1\leq m_1, \dots, m_l <\infty$), in a similar way that
square matrices may have different Jordan normal forms. 

In section \ref{sec:OseledetsSplitting} we give the proof of the
semi-in\-vert\-ible multiplicative theorem. In
section~\ref{sec:RandomLY} we introduce the fractional Sobolev spaces
(as used in Baladi and Gou\"ezel) and study the continuity properties
of the map sending a Lasota-Yorke map to its Perron-Frobenius
operator. We then adapt the proof given by Baladi and Gou\"ezel of
quasi-compactness for a single map to the situation of a random
composition of one-dimensional expanding maps (using results of
Hennion and Buzzi) to show that the theorem of section
\ref{sec:OseledetsSplitting} applies in this context. We also present an application of Theorem~\ref{mainthm:A} to piecewise expanding maps in higher dimensions, building on work of Cowieson \cite{Cowieson}.
Section~\ref{sec:futureWork} summarizes possible directions for future work.

The paper has three appendices: Appendix A contains results about strong
measurability. Appendix B contains results about the Grassmannian of a
separable Banach space. Appendix C collects some results from ergodic
theory: a useful characterization of tempered maps and a Hennion type
theorem for random linear systems.

\subsection{Acknowledgments}
The authors would like to thank Viviane Baladi, Chris Bose, J\'er\^ome
Buzzi, Jim Campbell, and Karl Petersen for useful discussions, as well as
Gary Froyland and an anonymous referee for providing a wide range of
helpful suggestions and bibliographical references.
\section{Oseledets splittings for random linear systems.}\label{sec:OseledetsSplitting}

\subsection{Preliminaries}

We start by introducing some notation about
random dynamical systems.

\begin{defn}\label{defn:RandomDS}
  A \textbf{separable strongly measurable random linear system} is a tuple
  $\mc{R}=(\Om,\mc{F}, \bbp, \sig, X, \mcl)$ such that $(\Omega,\mathcal
  F,\mathbb P)$ is a Lebesgue space, $\sigma$ is a probability preserving
  transformation of $(\Omega,\mathcal F,\mathbb P)$, $X$ is a separable Banach space, and the generator
  $\mcl: \Om \to L(X)$ is a strongly measurable map (see
  Definition~\ref{defn:stronglyMble}).  We use the
  notation $\mcl_\om^{(n)}=\mcl(\sig^{n-1}\om) \circ \dots \circ \mcl(\om)$.
\end{defn}

\begin{defn}\label{defn:indexCompactness}
  The \textbf{index of compactness} (or Kuratowski measure of
  non-com\-pact\-ness) of a bounded linear map $A: X\to X$ is
\[
\|A\|_{ic(X)}=\inf \{r>0 : A(B_X) \text{ can be covered by finitely
  many } \text{ balls of radius } r \},
\]
where $B$ denotes the unit ball in $X$.
\end{defn}

\begin{defn} \label{defn:MLE+IC} Let $\mc{R}=(\Om,\mc{F}, \bbp, \sig, X,
  \mcl)$ be a separable strongly measurable random linear system.  Assume that
  $\int \log^+ \|\mcl_\om\| d\bbp(\om)<\infty$.  For each $\om \in \Om$,
  the \textbf{maximal Lyapunov exponent} for $\om$ is defined as
\[
  \lam(\om):=\lim_{n \to \infty} \frac{1}{n} \log \|\mcl_{\om}^{(n)}\|,
\]
whenever the limit exists.  For each $\om \in \Om$, the \textbf{index
  of compactness} for $\om$ is defined as
\[
  \ka(\om):= \lim_{n\to \infty} \frac{1}{n} \log \|\mcl_\om^{(n)}\|_{ic(X)},
\]
whenever the limit exists. Whenever we want to emphasize the
dependence on $\mc{R}$, we will write $\lam_\mc{R}(\om)$ and
$\ka_\mc{R}(\om)$.
 \end{defn}

\begin{lem}\label{lem:MLexpMble}
  Let $\mathcal R$ be as in Definition \ref{defn:MLE+IC}.
  $\lam(\om)$ is well defined for $\bbp$-almost every $\om$. The
  function $\om \mapsto \lam(\om)$ is measurable and $\sig$-invariant.
\end{lem}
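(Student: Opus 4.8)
The plan is to establish the three assertions --- existence of the limit defining $\lam(\om)$ almost everywhere, measurability of $\om\mapsto\lam(\om)$, and $\sig$-invariance --- essentially as a consequence of the Kingman subadditive ergodic theorem, with a preliminary reduction to handle measurability of the quantities $\|\mcl_\om^{(n)}\|$. First I would check that for each fixed $n$ the map $\om\mapsto\|\mcl_\om^{(n)}\|$ is measurable. Since $X$ is separable, fix a countable dense subset $\{x_k\}$ of the unit ball of $X$; then $\|\mcl_\om^{(n)}\|=\sup_k\|\mcl_\om^{(n)}x_k\|$. By Definition~\ref{defn:RandomDS} and the characterization of strong measurability recalled in the introduction (continuity, hence measurability, of $\om\mapsto\mcl(\om)x$ for each fixed $x$), each $\om\mapsto\mcl(\sig^{j}\om)x$ is measurable, and composing finitely many such maps (using that $\sig$ is measure-preserving, hence measurable, and that applying a measurable family of bounded operators to a measurable $X$-valued function yields a measurable $X$-valued function --- this is where I would cite the relevant lemma from Appendix A) shows $\om\mapsto\mcl_\om^{(n)}x_k$ is measurable for each $k$. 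Taking the countable supremum, $\om\mapsto\log\|\mcl_\om^{(n)}\|$ is measurable.

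Next I would verify the subadditivity structure. Set $f_n(\om)=\log\|\mcl_\om^{(n)}\|$. From $\mcl_\om^{(n+m)}=\mcl_{\sig^{n}\om}^{(m)}\circ\mcl_\om^{(n)}$ and submultiplicativity of the operator norm we get
\[
f_{n+m}(\om)\le f_m(\sig^{n}\om)+f_n(\om),
\]
so $(f_n)$ is a subadditive cocycle over $\sig$. The hypothesis $\int\log^+\|\mcl_\om\|\,d\bbp<\infty$ gives $f_1^+\in L^1(\bbp)$, and subadditivity then gives $f_n^+\in L^1$ for every $n$; in particular $\int f_1\,d\bbp<\infty$ (the integral may be $-\infty$, which is harmless). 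Kingman's subadditive ergodic theorem therefore applies: the limit
\[
\lam(\om)=\lim_{n\to\infty}\tfrac1n f_n(\om)=\inf_{n\ge1}\tfrac1n\!\int f_n\,d\bbp\cdot\mathbf{1}
\]
exists for $\bbp$-almost every $\om$ (the right-hand expression is the constant value, since $\sig$ is ergodic), it is $\sig$-invariant $\bbp$-a.e., and as an a.e. limit of the measurable functions $\tfrac1n f_n$ it is measurable. Ergodicity of $\sig$ is what makes $\lam(\om)$ a.e. constant, but the statement only claims measurability and invariance, both of which follow already from the non-ergodic version of Kingman.

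The only mild obstacle is the measurability step: one must be careful that "strongly measurable" (measurability into $(L(X),\mathcal S)$, equivalently pointwise measurability of $\om\mapsto\mcl(\om)x$) is genuinely enough to conclude measurability of the $X$-valued composition $\om\mapsto\mcl_\om^{(n)}x$ and then of the real-valued norm. The potential pitfall is that evaluation $(T,x)\mapsto Tx$ need not be jointly continuous for the strong operator topology, so one cannot simply invoke continuity of a composition; instead one argues via separability (approximating $x$ and using sequential arguments) exactly as in Appendix~A. Once that is in hand, everything else is a direct invocation of Kingman, and the proof is short.
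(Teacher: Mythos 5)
Your proposal is correct and follows essentially the paper's own route: subadditivity of $f_n(\om)=\log\|\mcl_\om^{(n)}\|$ over $\sig$ plus Kingman's subadditive ergodic theorem (which, as you note, needs no ergodicity for measurability and $\sig$-invariance). The only difference is in the measurability sub-step, and it is cosmetic: the paper deduces measurability of $\om\mapsto\|\mcl_\om^{(n)}\|$ from Lemma~\ref{lem:CompStronglyMeas} (composition of strongly measurable maps) together with the $\mc{S}$-measurability of the balls $L_r(X)$ from Lemma~\ref{lem:Smeas}, whereas you take a countable supremum of $\|\mcl_\om^{(n)}x_k\|$ over a dense subset of the unit ball using the Appendix~A evaluation lemma; both rest on the same appendix material, though your parenthetical ``continuity, hence measurability'' of $\om\mapsto\mcl(\om)x$ should simply read ``measurability'', since strong measurability asserts nothing about continuity.
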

\begin{proof}
  The sequence of functions $\{\log \|\mcl_{\om}^{(n)}\|\}_{n\in \N}$ is
  subadditive. That is,
\[
\log \|\mcl_{\om}^{(m+n)}\| \leq \log \|\mcl_{\sig^n \om}^{(m)}\| +\log
\|\mcl_{\om}^{(n)}\|.
\]
Since the composition of strongly measurable maps is strongly measurable
by Lemma~\ref{lem:CompStronglyMeas}, and the sets $L_r(X)=\{A \in L(X):
\|A\|\leq r \}$ are $\mc{S}$ measurable by
Lemma~\ref{lem:Smeas}\eqref{it:BallIsSmeas}, then the map $\om \mapsto
\|\mcl_\om^{(n)}\|$ is measurable. Measurability of the map $\om \mapsto
\lim_{n \to \infty}\sup \frac{1}{n} \log \|\mcl_{\om}^{(n)}\|$ follows.
By Kingman's subadditive theorem \cite{Kingman}, the limit $\lim_{n \to
\infty} \frac{1}{n} \log \|\mcl_{\om}^{(n)}\|$ exists for $\bbp$-almost
every $\om \in \Om$, and it is $\sig$-invariant.
\end{proof}

\begin{lem}
  Let $\mathcal R$ be as in Definition \ref{defn:MLE+IC}.
  The index of compactness is finite, submultiplicative and
  measurable, when $L(X)$ is equipped with the strong $\sig$-algebra
  $\mc{S}$. Thus, $\ka(\om)$ is well defined for $\bbp$-almost every
  $\om$.  The function $\om \mapsto \ka(\om)$ is measurable and
  $\sig$-invariant.
\end{lem}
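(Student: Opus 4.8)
The plan is to follow the proof of Lemma~\ref{lem:MLexpMble}, with the operator norm replaced throughout by the index of compactness: I would verify, in order, that $\|\cdot\|_{ic(X)}$ is finite and submultiplicative and that $A\mapsto\|A\|_{ic(X)}$ is $\mc{S}$-measurable, and then apply Kingman's subadditive theorem to $g_n(\om):=\log\|\mcl_\om^{(n)}\|_{ic(X)}$.

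\emph{Finiteness and submultiplicativity.} Since $A(B_X)$ is contained in the single ball of radius $\|A\|$ centred at the origin, $\|A\|_{ic(X)}\le\|A\|<\infty$. For submultiplicativity, if $B(B_X)$ is covered by balls $B(y_1,s),\dots,B(y_n,s)$ and $A(B_X)$ by balls $B(z_1,r),\dots,B(z_m,r)$, then
\[
AB(B_X)\subseteq\bigcup_{j\le n}A\bigl(B(y_j,s)\bigr)=\bigcup_{j\le n}\bigl(Ay_j+s\,A(B_X)\bigr)\subseteq\bigcup_{i\le m,\ j\le n}B\bigl(Ay_j+sz_i,\,rs\bigr),
\]
so $\|AB\|_{ic(X)}\le rs$; taking infima yields $\|AB\|_{ic(X)}\le\|A\|_{ic(X)}\|B\|_{ic(X)}$. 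In particular $g_{m+n}(\om)\le g_m(\sig^n\om)+g_n(\om)$, so $\{g_n\}$ is subadditive with values in $[-\infty,\infty)$, and $g_1^+(\om)=\log^+\|\mcl_\om\|_{ic(X)}\le\log^+\|\mcl_\om\|\in L^1(\bbp)$.

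\emph{Measurability.} This is the only part requiring care, and it is where separability of $X$ is used. Fix countable dense sets $\{x_k\}_{k\ge1}\subseteq X$ and $\{d_k\}_{k\ge1}\subseteq B_X$, and fix $c>0$. Approximating arbitrary ball-centres by points of $\{x_k\}$, one sees that $\|A\|_{ic(X)}<c$ if and only if $A(B_X)\subseteq\bigcup_{i\in F}\overline{B}(x_i,\rho)$ for some rational $\rho\in(0,c)$ and some finite $F\subseteq\N$; and, since $A$ is continuous and $\{d_k\}$ is dense in $B_X$ (so $A(B_X)$ is contained in the closure of $\{Ad_k:k\ge1\}$, while the right-hand side is closed), this inclusion holds if and only if for every $k$ there is $i\in F$ with $\|Ad_k-x_i\|\le\rho$. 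Hence
\[
\{A\in L(X):\|A\|_{ic(X)}<c\}=\bigcup_{\rho\in\mathbb{Q}\cap(0,c)}\ \bigcup_{F\subseteq\N\text{ finite}}\ \bigcap_{k\ge1}\ \bigcup_{i\in F}\bigl\{A:\|Ad_k-x_i\|\le\rho\bigr\}.
\]
For each fixed $k$ the evaluation $A\mapsto Ad_k$ is continuous from $(L(X),\SOT)$ to $X$, so $A\mapsto\|Ad_k-x_i\|$ is $\mc{S}$-measurable and each set in braces lies in $\mc{S}$; as all the indicated unions and intersections are countable, the left-hand side lies in $\mc{S}$. Since the sets $\{\|A\|_{ic(X)}<c\}$ with $c>0$ generate the Borel $\sigma$-algebra of $[0,\infty)$, the map $A\mapsto\|A\|_{ic(X)}$ is $\mc{S}$-measurable; and since, just as in the proof of Lemma~\ref{lem:MLexpMble}, $\om\mapsto\mcl_\om^{(n)}$ is strongly measurable, the composition $\om\mapsto g_n(\om)$ is measurable.

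\emph{Conclusion.} The sequence $\{g_n\}$ is measurable, subadditive, and satisfies $g_1^+\in L^1(\bbp)$, so Kingman's subadditive ergodic theorem \cite{Kingman} (in the version allowing the value $-\infty$) shows that $\ka(\om)=\lim_{n\to\infty}\tfrac1n g_n(\om)$ exists in $[-\infty,\infty)$ for $\bbp$-almost every $\om$ and that $\om\mapsto\ka(\om)$ is measurable and $\sig$-invariant. I expect the measurability step to be the main obstacle: the point is to check carefully that coverability of $A(B_X)$ by finitely many balls can be detected using only the countable families $\{x_k\}\subseteq X$ and $\{d_k\}\subseteq B_X$, rather than arbitrary centres and arbitrary elements of $B_X$.
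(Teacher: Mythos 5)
Your proof is correct and follows essentially the same route as the paper: finiteness via $\|A\|_{ic(X)}\le\|A\|$, a direct check of submultiplicativity, a countable characterization of $\{A:\|A\|_{ic(X)}<c\}$ in terms of dense sequences in $X$ and in the unit ball, and then Kingman's subadditive theorem exactly as for the maximal Lyapunov exponent. The only difference is that you prove the key characterization yourself (including the closed-ball/density argument), whereas the paper simply quotes it from Lian and Lu's Lemma 6.5.
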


\begin{proof}
  The index of compactness is bounded by the norm. Submultiplicativity
  is straightforward to check.  To show $\mc{S}$-measurability of the
  index of compactness, we present the argument given in Lian and Lu
  \cite{LianLu}. Let $\{x_i\}_{i\in \N}$ be a dense subset of $X$ and
  $\{y_j\}_{j\in \N}$ be a dense subset of $B(X)$. Let $U$ be the
  (countable) set of finite subsets of $\{x_i\}_{i\in \N}$. Let $U=
  \bigcup_{i \in \N} U_i$. Then, one can check that
\[
\{A \in L(X): \|A\|_{ic(X)}<r\} =\bigcup_{n=2}^\infty
\bigcup_{i=i}^\infty \bigcap_{j=1}^{\infty} \bigcup _{x \in U_i} \{ A:
\| A(y_j) - x \| < (1-1/n) r \},
\]
(see \cite[Lemma 6.5]{LianLu} for a proof.)  Hence, $A \mapsto
\|A\|_{ic(X)}$ is $\mc{S}$-measurable.  Thus, $\bbp$-almost everywhere
existence, measurability and $\sig$-invariance of $\ka$ follow just like
in the proof of Lemma~\ref{lem:MLexpMble}.
\end{proof}

\begin{rmk}\label{rmk:MaxLExpICConstant}
  If $\mc{R}$ has an ergodic base, then $\lam$ and $\ka$ are
  $\bbp$-almost everywhere constant. We call these constants
  $\lam^*(\mc{R})$ and $\ka^*(\mc{R})$, or simply $\lam^*$ and $\ka^*$
  if $\mc{R}$ is clear from the context. It follows from the
  definitions that $\ka^* \leq \lam^*$.  The assumption $\int \log^+
  \|\mcl_\om\| d\bbp(\om)<\infty$ implies that $\lam^*<\infty$.
\end{rmk}

\begin{defn}
  A strongly measurable random linear system with ergodic base is
  called \textbf{quasi-compact} if $\ka^*<\lam^*$.
\end{defn}

\subsection{Construction of Oseledets splitting}
The following theorem was obtained by Doan  \cite{Son} as
a corollary of the two-sided Oseledets theorem proved by Lian and Lu
\cite{LianLu}.

\begin{thm}[Doan \cite{Son}]\label{thm:Son}
Let $\mc{R}=(\Omega,\mathcal F,\sigma,\mathbb P, X, \mcl)$ be a separable strongly
measurable random linear system with ergodic base. Assume that
$\log^+\|\mcl(\omega)\|\in L^1(\Omega,\mathcal F,\mathbb P)$ and that
$\mc{R}$ is quasi-compact.  Then, there exists $1\leq l \leq \infty$,
numbers $\lam^*=\lam_1>\dots > \lam_l> \ka^*$ (or in the case $l=\infty$,
$\lam_1>\lam_2>\ldots>\ka^*$; $\lim_{n\to\infty}\lam_n=\ka^*$), the
exceptional Lyapunov exponents of $\mc{R}$, multiplicities $m_1, \dots,
m_l$, and a filtration $X=V_1(\omega)\supset\ldots\supset
V_l(\omega)\supset V_{l+1}(\omega)$ of finite-codimensional subspaces (in
the case $l=\infty$ we have $V_1(\omega)\supset
V_2(\omega)\supset\ldots$) defined on a full $\bbp$-measure,
$\sig$-invariant subset of $\Om$ satisfying:

\begin{enumerate}
\item For every $1\leq j \leq l$, $\mcl_\om V_j(\om)\subseteq V_{j}(\sig
  \om)$, and the codimension of $V_{j+1}(\om)$ in $V_j(\om)$ is
  $m_j$.
  Furthermore, $\mcl_\om V_{l+1}(\om)\subseteq V_{l+1}(\sig \om)$.
\item
For every $1\leq j \leq l$ and $v\in V_{j}(\om)\setminus V_{j+1}(\om)$,
\[
  \lim_{n \to \infty} \frac{1}{n}\log \|\mcl_{\om}^{(n)}v\|=\lam_j.
\]
For every $v\in V_{l+1}(\om)$,
\[
  \limsup_{n \to \infty} \frac{1}{n}\log \|\mcl_{\om}^{(n)}v\|\leq \ka^*.
\]
\end{enumerate}
\end{thm}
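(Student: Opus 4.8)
The plan is to deduce Theorem~\ref{thm:Son} from the two-sided operator multiplicative ergodic theorem of Lian and Lu \cite{LianLu}, which produces an Oseledets splitting (hence, a fortiori, a filtration) for cocycles of \emph{injective} operators over an \emph{invertible} ergodic base. Since Theorem~\ref{thm:Son} assumes neither, the strategy---going back to Thieullen \cite{Thieullen} and carried out in this setting by Doan \cite{Son}---is to enlarge $\mc{R}$ to a system $\hat{\mc{R}}$ that is injective over an invertible base and has the same exceptional Lyapunov spectrum, apply \cite{LianLu}, and push the resulting filtration back down.

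First I would pass to the natural extension $(\hat\Om,\hat\sig,\hat\bbp)$ of $(\Om,\sig,\bbp)$, whose points are the full $\sig$-orbits $\hat\om=(\om_k)_{k\in\Z}$; this base is invertible and ergodic and carries a factor map $p\colon\hat\om\mapsto\om_0$ onto $\Om$. Over $\hat\Om$ I would form the inverse-limit Banach bundle whose fibre $\hat X_{\hat\om}$ is the set of one-sided backward $\mcl$-orbits $(x_k)_{k\le0}$ with $\mcl(\om_{k})x_{k}=x_{k+1}$ for $k\le-1$, normed by a geometric weight $\|(x_k)\|=\sum_{k\le 0}c^{-k}\|x_k\|$ with $0<c<1$ chosen below; this makes each $\hat X_{\hat\om}$ a separable Banach space depending measurably on $\hat\om$. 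The natural cocycle $\hat\mcl(\hat\om)\colon(x_k)_{k\le0}\mapsto(\mcl(\om_0)x_0,x_0,x_{-1},\dots)$ is \emph{injective} by construction and is intertwined with $\mcl$ by the present-coordinate projection $\pi_{\hat\om}\colon(x_k)\mapsto x_0$. One then checks the routine estimates: $\|\hat\mcl(\hat\om)\|\le\|\mcl(\om_0)\|+c$, so $\log^+\|\hat\mcl\|\in L^1$; and the shift part of $\hat\mcl$ has norm $c$, so choosing $\log c\le\ka^*$ (when $\ka^*>-\infty$; otherwise along a sequence $c_m\to0$) yields $\ka_{\hat{\mc{R}}}^*\le\ka^*$ while $\lam_{\hat{\mc{R}}}^*=\lam^*$, so $\hat{\mc{R}}$ is quasi-compact with the same exceptional data as $\mc{R}$.

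Applying Lian and Lu's theorem to $\hat{\mc{R}}$ then gives, on a full-measure $\hat\sig$-invariant set, equivariant finite-dimensional subspaces $\hat Y_i(\hat\om)$ with $\dim\hat Y_i=m_i$ and a decreasing family of complementary slow subspaces $\hat V_j(\hat\om)=\bigoplus_{i\ge j}\hat Y_i(\hat\om)\oplus\hat V_{l+1}(\hat\om)$, on which $\hat\mcl^{(n)}$ grows at rate exactly $\lam_i$ on $\hat Y_i(\hat\om)$ and at rate $\le\ka^*$ on $\hat V_{l+1}(\hat\om)$. Using the comparison $\|\hat\mcl^{(n)}(x_k)\|\asymp\max_{0\le j\le n}c^{j}\|\mcl_{\om_0}^{(n-j)}x_0\|$ (up to a negligible tail of order $c^n$), the $\hat\mcl$-growth exponent of $(x_k)_{k\le 0}$ is the maximum of the $\mcl_{\om_0}$-growth exponent of $x_0$ and $\log c\le\ka^*$; in particular $\ker\pi_{\hat\om}\subseteq\hat V_{l+1}(\hat\om)$. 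I would then set $V_j(\om):=\overline{\pi_{\hat\om}(\hat V_j(\hat\om))}$ and argue: (i) this is independent of the backward orbit $\hat\om$ over $\om$, since it coincides with the $\om$-intrinsic set $\{x\in X:\limsup_n\tfrac1n\log\|\mcl_\om^{(n)}x\|\le\lam_j\}$, which the right-hand side detects through forward growth alone; (ii) equivariance descends from that of $\hat V_j$; (iii) $\mathrm{codim}_{V_j(\om)}V_{j+1}(\om)=m_j$, because $\pi_{\hat\om}$ is injective on $\hat Y_j(\hat\om)$ (its kernel lies in $\hat V_{l+1}$), so $\pi_{\hat\om}(\hat Y_j(\hat\om))$ is an $m_j$-dimensional complement to $V_{j+1}(\om)$ in $V_j(\om)$; and (iv) the growth dichotomy on $V_j(\om)\setminus V_{j+1}(\om)$ and on $V_{l+1}(\om)$ transfers from the corresponding statements for $\hat\mcl$ via the same comparison.

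The main obstacle is making the inverse-limit reduction genuinely compatible on three fronts at once: separability and measurability of the Banach bundle $\hat X_{\hat\om}$; an \emph{exact} match of the index of compactness (so no spurious exponents in the range $(\log c,\ka^*]$ are introduced, and so that the $l=\infty$ and $\ka^*=-\infty$ cases are covered---this is where the limiting argument over $c_m\to0$ enters); and the descent step, in which one must show that projecting the upstairs filtration to $\Om$ gives an $\hat\om$-independent family of the correct finite codimensions, using crucially that the non-injectivity of $\mcl$ has been absorbed into the slow subspace $\hat V_{l+1}$. A more hands-on but less black-box route would sidestep Lian--Lu's splitting and instead define $V_j(\om)$ directly as the slow-growth set above, proving closedness and finite codimensionality by a Hennion-type argument built on quasi-compactness (as in Appendix~C)---but that essentially re-proves the multiplicative ergodic theorem rather than invoking \cite{LianLu}.
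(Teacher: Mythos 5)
First, a point of comparison: the paper does not actually prove Theorem~\ref{thm:Son}; it imports it from Doan's thesis \cite{Son}, where it is derived from the Lian--Lu theorem \cite{LianLu} by exactly the Thieullen-style reduction you outline (natural extension of the base, an injective lift of the cocycle, then projection of the resulting filtration back to the zeroth coordinate). So your overall strategy is the intended one, and there is no in-paper argument to measure you against. Judged on its own terms, however, your write-up has a genuine gap in the descent step.

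The problem is your choice of fibre: you impose the compatibility condition $\mcl(\om_k)x_k=x_{k+1}$, i.e.\ you take genuine backward orbits. Then the zeroth-coordinate projection $\pi_{\hat\om}$ is in general very far from surjective, because a point of $X$ lies in its image only if it admits a full backward orbit with weighted-summable norms, and for non-surjective generators almost nothing does. Concretely, take $\mcl(\om)\equiv L$ on $X=\ell^1$ with $L(x_1,x_2,\dots)=(x_1,0,0,\dots)$ over any ergodic base: the system satisfies all hypotheses ($\lam^*=0$, $\ka^*=-\infty$, $l=1$, $m_1=1$), but every backward orbit lies in $\mathrm{span}(e_1)$, so $\overline{\pi_{\hat\om}(\hat V_1(\hat\om))}\subseteq\mathrm{span}(e_1)\neq X$. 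Hence your claim (i), that $\overline{\pi_{\hat\om}(\hat V_j(\hat\om))}$ coincides with the intrinsic slow set $\{x:\limsup_n\frac1n\log\|\mcl_\om^{(n)}x\|\le\lam_j\}$, is false as stated, and with your definitions one cannot even recover $V_1(\om)=X$ or the finite codimensions. A second (related) issue is that your fibres $\hat X_{\hat\om}$ vary with $\hat\om$, so \cite{LianLu}, which is stated for a cocycle on a fixed separable Banach space, does not apply off the shelf; you list this as an "obstacle" but it is created by the same design choice.

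Both problems disappear if you drop the compatibility constraint: take the fixed separable space $\hat X=\{(x_k)_{k\le0}: \sum_{k\le0}c^{|k|}\|x_k\|<\infty\}$ and the same shift-and-apply cocycle $\hat\mcl(\hat\om)(x_0,x_{-1},\dots)=(\mcl(\om_0)x_0,x_0,x_{-1},\dots)$. This is still injective and strongly measurable, $\pi$ is now a bounded surjection intertwining the cocycles, and $\ker\pi=\{x_0=0\}$ is equivariant with decay rate $\log c\le\ka^*$, hence contained in the slowest space upstairs. The growth comparison you wrote then shows $\hat V_j(\hat\om)=\pi^{-1}\bigl(\{x:\limsup_n\frac1n\log\|\mcl_{\om_0}^{(n)}x\|\le\lam_j\}\bigr)$, which gives independence of the backward coordinates for free, and since $\hat V_j\supseteq\ker\pi$ and $\pi$ is surjective, $V_j(\om):=\pi(\hat V_j(\hat\om))$ is automatically closed, of codimension $m_1+\dots+m_{j-1}$, equivariant, and has the stated growth dichotomy; measurability is exactly what Lemma~\ref{lem:ImageProj} and Remark~\ref{rmk:FiltrationIsMble} are invoked for. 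With that correction (and your $c_m\to0$ device, or $\log c\le\ka^*$, to avoid spurious exponents), the argument matches the cited derivation.
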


\begin{rmk}\label{rmk:FiltrationIsMble}
  Combining the result of Lian and Lu \cite{LianLu} with
  Lemma~\ref{lem:ImageProj} and the proof of \cite{Son}, we obtain
  that the spaces $V_j(\om)$ forming the filtration given by
  Theorem~\ref{thm:Son} depend measurably on $\om$.
\end{rmk}

The main result of this section is the following.
\begin{thm}[Semi-invertible operator Oseledets
  theorem]\label{thm:GranderOseledetsSplitting}\ \\
  Let $\mc{R}=(\Om,\mc{F}, \bbp, \sig, X, \mcl)$ be a separable strongly measurable
  random linear system with ergodic invertible base. Assume that
  $\log^+\|\mcl(\omega)\|\in L^1(\Omega,\mathcal F,\mathbb P)$ and that
  $\mc{R}$ is quasi-compact.  Let $\lam^*=\lam_1>\dots > \lam_l>
  \ka^*$ be the exceptional Lyapunov exponents of $\mc{R}$, and $m_1,
  \dots, m_l\in \N$ the corresponding multiplicities
  (or in the case $l=\infty$, $\lam_1>\lam_2>\ldots$ with $m_1,m_2,\ldots$ the
  multiplicities).

  Then, up to $\bbp$-null sets, there exists a unique, measurable,
  equivariant splitting of $X$ into closed subspaces, $X=V(\om)\oplus
  \bigoplus_{j=1}^l Y_j(\om)$, where possibly $V(\om)$ is infinite
  dimensional and $\dim Y_j(\om)=m_j$.  Furthermore, for every $y\in
  Y_j(\om)\setminus \{0\}$, $\lim_{n \to \infty} \frac{1}{n}\log
  \|\mcl_{\om}^{(n)}y\|=\lam_j$, for every $v\in V(\om)$, $ \limsup_{n
    \to \infty} \frac{1}{n}\log \|\mcl_{\om}^{(n)}v\|\leq \ka^*$ and the
  norms of the projections associated to the splitting are tempered
  with respect to $\sig$ (where a function $f:\Om \to \R$ is called
    tempered if for $\bbp$-almost every $\om$,
    $\lim_{n \to \pm \infty} \frac{1}{n} \log |f(\sig^n \om)|=0$).
\end{thm}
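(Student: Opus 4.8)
The plan is to build the finite-dimensional spaces $Y_j(\om)$ one ``layer'' at a time, working inside the quotient $V_j(\om)/V_{j+1}(\om)$ where the dynamics acts invertibly with a single exponent $\lam_j$, and then to lift these back to genuine subspaces of $X$ via a limiting construction that exploits the invertibility of the base $\sig$. First I would observe that, by Theorem~\ref{thm:Son}, on the quotient bundle $W_j(\om):=V_j(\om)/V_{j+1}(\om)$ the induced cocycle $\bar{\mcl}_\om$ is a cocycle of maps between $m_j$-dimensional spaces all of whose exponents equal $\lam_j$; since these maps are eventually invertible (their norms and the norms of their inverses grow subexponentially), one can run the classical pushing-forward argument of \cite{FLQ1}: pick a complement to $V_{j+1}(\sig^{-n}\om)$ inside $V_j(\sig^{-n}\om)$, push it forward by $\mcl^{(n)}_{\sig^{-n}\om}$ to get an $m_j$-dimensional subspace $Y_j^{(n)}(\om)\subset V_j(\om)$, and show the sequence $Y_j^{(n)}(\om)$ is Cauchy in the Grassmannian of $X$ (Appendix~B). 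The relevant contraction estimate comes from the gap $\lam_j-\lam_{j+1}>0$ (or $\lam_j-\ka^*>0$ at the last stage): vectors already in $V_{j+1}$ decay, relative to vectors transverse to $V_{j+1}$, at exponential rate, so the effect of the choice of complement at time $-n$ washes out geometrically. Measurability of the limit follows from measurability of each $Y_j^{(n)}$ together with Remark~\ref{rmk:FiltrationIsMble} and the Borel structure on the Grassmannian.

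Next I would verify equivariance and the growth rates. Equivariance $\mcl_\om Y_j(\om)\subseteq Y_j(\sig\om)$ is immediate from the construction: pushing the time-$(n{+}1)$ approximation over $\sig^{-(n+1)}\om$ forward by one more step of the cocycle relates $Y_j^{(n+1)}(\om)$ and $Y_j^{(n)}(\sig\om)$, and this passes to the limit; one must check $\mcl_\om$ is injective on $Y_j(\om)$, which follows because a nonzero vector with $\lim\frac1n\log\|\mcl^{(n)}y\|=\lam_j>\ka^*$ cannot be killed. The exact exponent $\lim\frac1n\log\|\mcl^{(n)}_\om y\|=\lam_j$ for $y\in Y_j(\om)\setminus\{0\}$ comes from the fact that $y$ projects to a nonzero element of $W_j(\om)$ (by transversality to $V_{j+1}$, which is built into the approximations and survives the limit), so $\|\mcl^{(n)}_\om y\|$ is bounded below by the norm of its image in $W_j(\sig^n\om)$, which grows like $e^{n\lam_j}$; the upper bound is automatic since $y\in V_j(\om)$. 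Then $V(\om):=\bigcap_{j}V_j(\om)$ (equivalently $V_{l+1}(\om)$ when $l<\infty$) furnishes the remaining summand, and one checks $X=V(\om)\oplus\bigoplus_j Y_j(\om)$ by a dimension/transversality count layer by layer, using that $Y_j(\om)\oplus V_{j+1}(\om)=V_j(\om)$.

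For uniqueness and temperedness I would argue as follows. If $X=V(\om)\oplus\bigoplus_j \tld Y_j(\om)$ is any other measurable equivariant splitting with the stated growth properties, then $\tld Y_j(\om)\subset V_j(\om)$ (slow-growth vectors lie in $V_j$) and $\tld Y_j(\om)$ is transverse to $V_{j+1}(\om)$ (else some nonzero vector would have exponent $\le\lam_{j+1}$), so $\tld Y_j(\om)$ and $Y_j(\om)$ are two equivariant measurable sections of the same $m_j$-dimensional quotient bundle on which the cocycle is invertible with a single exponent; a standard argument (pull back by $\sig^{-n}$ and use that the two sections must be pushed forward by the same invertible-on-the-quotient cocycle from the same quotient, whose distortion is tempered) forces them to coincide almost everywhere. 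Finally, temperedness of the projection norms follows from the uniqueness-via-limit construction: the norm of the projection onto $Y_j(\om)$ along the complement is controlled by the angles between $V_j(\om)$, $V_{j+1}(\om)$ and $Y_j(\om)$, and one shows these angle-type quantities are tempered by the usual Borel–Cantelli argument — if they failed to be tempered on a positive-measure set, one could manufacture a vector violating the established exponential growth rates. I expect the main obstacle to be the Grassmannian Cauchy estimate in the infinite-dimensional, merely-strongly-measurable setting: one must push forward finite-dimensional subspaces and control their convergence using only the filtration from Theorem~\ref{thm:Son} and the quasi-compactness gap, without any norm continuity of $\om\mapsto\mcl_\om$, so the estimates have to be phrased entirely in terms of the cocycle's action on fixed vectors and the (tempered) geometry of the filtration spaces, which is exactly what Appendix~B is set up to provide.
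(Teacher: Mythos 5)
Your proposal follows essentially the same route as the paper: you push forward a measurably chosen complement of $V_{j+1}(\sig^{-n}\om)$ inside $V_j(\sig^{-n}\om)$ under $\mcl^{(n)}_{\sig^{-n}\om}$, prove Grassmannian convergence from the gap $\lam_j-\lam_{j+1}$ (this is the content of Lemmas~\ref{lem:goodComplement} and~\ref{thm:OseledetsSplitting}, your quotient-bundle picture corresponding to the paper's block decomposition into $\mcl_{00},\mcl_{10},\mcl_{11}$ with $\mcl_{11}$ playing the role of the induced cocycle), and then obtain equivariance, the exact exponent, uniqueness by comparing growth rates, and temperedness by contradiction with the established rates, exactly as the paper does. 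The only step where the paper needs noticeably more than your sketch suggests is uniqueness, where the non-uniformity in $u$ of the growth constants is handled by a Barreira--Silva-type uniform lower bound (Lemma~\ref{lem:uniformGrowth}) together with a Baire category argument and Poincar\'e recurrence, but the underlying strategy is the one you describe.
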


The proof of \ref{thm:GranderOseledetsSplitting} occupies the rest of the
section.  First, we present Lemma \ref{lem:goodComplement}, that allows
us to choose complementary spaces in the filtration of Theorem
\ref{thm:Son}. Then, Lemma \ref{thm:OseledetsSplitting} provides an
inductive step that establishes the proof of
Theorem~\ref{thm:GranderOseledetsSplitting}.

\begin{lem}[Existence of a good complement]\label{lem:goodComplement}
Let the filtration $V_1(\omega)\supset\ldots\supset V_{l+1}(\omega)$ be
as in Theorem~\ref{thm:Son}. Then, for every $1\leq j \leq l$,
there exist $m_j$ dimensional spaces $U_j(\om)$ such that the following
conditions hold.
\begin{enumerate}
\item \label{lem:comp}
For $\bbp$-almost every $\om \in \Om$, $V_{j+1}(\om)\oplus
U_j(\om)=V_{j}(\om)$.

\item \label{lem:meascomp}
The map $\om \mapsto U_j(\om)$ is $(\mc{F}, \mc{B}_{\mc{G}}(X))$ measurable.

For $j=1$, let $U_{<j}(\om)=\{0\}$, and for $1<j\leq l$, let
$U_{<j}(\om)=\bigoplus_{i=1}^{j-1} U_i(\om)$. Then,
\item \label{lem:boundedproj}
$ \| \Pi_{U_j\|V_{j+1}\oplus U_{<j}(\cdot)}\|, \|\Pi_{V_{j+1}\|U_j\oplus
U_{<j}(\cdot)}\| \in L^\infty(\Om, \mc{F}, \bbp)$.
\end{enumerate}
\end{lem}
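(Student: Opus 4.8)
The plan is to construct each $U_j(\om)$ by a pushing-forward-and-limiting argument modeled on the finite-dimensional construction of \cite{FLQ1}, but carried out in the separable Banach space setting using the strong $\sigma$-algebra and the measurable selection tools from Appendix B. Fix $j$. By Theorem~\ref{thm:Son} and Remark~\ref{rmk:FiltrationIsMble}, we have a measurable, equivariant filtration $V_j(\om)\supset V_{j+1}(\om)$ with $\operatorname{codim}_{V_j(\om)}V_{j+1}(\om)=m_j$. First I would fix, measurably in $\om$, some $m_j$-dimensional ``reference complement'' $W_j(\om)$ with $V_{j+1}(\om)\oplus W_j(\om)=V_j(\om)$ — this exists by a measurable selection argument in the Grassmannian of the separable Banach space $X$ (Appendix B), e.g.\ by choosing a measurable basis of $V_j(\om)/V_{j+1}(\om)$ and lifting. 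Then, for each $n\ge 1$, push the space $W_j(\sig^{-n}\om)$ forward under the cocycle:
\[
W_j^{(n)}(\om):=\mcl_{\sig^{-n}\om}^{(n)}\bigl(W_j(\sig^{-n}\om)\bigr).
\]
Because $W_j(\sig^{-n}\om)$ is a complement to $V_{j+1}(\sig^{-n}\om)$ inside $V_j(\sig^{-n}\om)$, and $\mcl$ expands $W_j$-directions at rate $\lam_j$ while contracting $V_{j+1}$-directions at rate $\le \lam_{j+1}<\lam_j$, the subspace $W_j^{(n)}(\om)$ is again an $m_j$-dimensional complement to $V_{j+1}(\om)$ in $V_j(\om)$ (for a.e.\ $\om$ and all large $n$), and the expansion separation should force the sequence $W_j^{(n)}(\om)$ to be Cauchy in the Grassmannian metric on subspaces of $V_j(\om)$, with a geometric rate governed by $\exp(\lam_{j+1}-\lam_j)$. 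Define $U_j(\om)$ to be the limit.

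The properties are then checked as follows. Equivariance $\mcl_\om U_j(\om)\subseteq U_j(\sig\om)$ (which, together with the dimension count, gives \eqref{lem:comp} once one knows $U_j(\om)\cap V_{j+1}(\om)=\{0\}$) is immediate from the construction: $\mcl_\om W_j^{(n)}(\om)=W_j^{(n+1)}(\sig\om)$, so the limits match up. That $U_j(\om)$ is a genuine complement — i.e.\ $U_j(\om)\cap V_{j+1}(\om)=\{0\}$ and hence \eqref{lem:comp} — follows because the distance from $W_j^{(n)}(\om)$ to $V_{j+1}(\om)$ stays bounded below uniformly: a unit vector in $W_j^{(n)}(\om)$ is $\mcl_{\sig^{-n}\om}^{(n)}w$ normalized, with $w$ having a definite $W_j$-component, so its $V_{j+1}$-component is exponentially small relative to its $W_j$-component. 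Measurability \eqref{lem:meascomp} comes from the fact that each $W_j^{(n)}(\cdot)$ is measurable (composition of strongly measurable maps, Lemma~\ref{lem:CompStronglyMeas}, applied to a measurable frame for $W_j$, plus the measurability of taking spans in the Grassmannian, Appendix B) and a pointwise limit of $\mc{B}_{\mc{G}}(X)$-valued measurable maps is measurable. For \eqref{lem:boundedproj}, one shows the norm of the projection $\Pi_{U_j\|V_{j+1}\oplus U_{<j}(\om)}$ is a measurable, a.e.-finite, $\sig$-invariant-in-distribution quantity; the key point is to rule out that it is infinite on a positive-measure set. This is where I expect the main obstacle: the projection norms could in principle blow up (angles between $U_j(\om)$ and $V_{j+1}(\om)\oplus U_{<j}(\om)$ degenerating), and one needs a uniform (in $n$) lower bound on these angles that survives the limit. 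I would obtain this either by a Borel--Cantelli / ergodic-theoretic argument showing the bad set has measure zero at each finite level and using the Cauchy estimate to pass to the limit, or — cleaner — by invoking that the limiting splitting is into Oseledets-type subspaces with distinct exponents, on which a Grassmannian angle estimate (the analogue of the ``$\LY$''/Lian--Lu angle bounds) forces the complementary directions to stay transverse; the $L^\infty$ (rather than merely tempered) conclusion should then follow from the strict exponential gap $\lam_j>\lam_{j+1}$ together with the fact that the $V_i$ are honestly equivariant, not just asymptotically so. Once $U_1,\dots,U_l$ are built, $U_{<j}(\om)=\bigoplus_{i<j}U_i(\om)$ sits inside $V_1(\om)$ transverse to $V_j(\om)$ by the same angle estimates, so the direct sum in \eqref{lem:boundedproj} makes sense and the displayed projections are well defined and bounded.
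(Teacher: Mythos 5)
There is a genuine gap here, and it comes from conflating this lemma with the later, dynamical construction. What you build — the limit of the pushed-forward spaces $\mcl_{\sig^{-n}\om}^{(n)}\bigl(W_j(\sig^{-n}\om)\bigr)$ — is (when the limit exists) the \emph{equivariant} Oseledets complement, i.e.\ exactly the space $Y_j(\om)$ constructed in Lemma~\ref{thm:OseledetsSplitting}, which in the paper is proved \emph{after} and \emph{using} Lemma~\ref{lem:goodComplement}. For that equivariant space the norms of the associated projections are in general only tempered (this is precisely part \eqref{lem:temp} of Lemma~\ref{thm:OseledetsSplitting}), not essentially bounded: the angle between $Y_j(\om)$ and $V_{j+1}(\om)\oplus U_{<j}(\om)$ can degenerate subexponentially along the orbit. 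So your claim at the end that the strict gap $\lam_j>\lam_{j+1}$ upgrades temperedness to an $L^\infty$ bound is exactly the step that fails; the exponential gap controls growth rates, not uniform transversality, and condition \eqref{lem:boundedproj} is the whole point of the lemma. There is also a circularity: to prove your Cauchy/contraction estimates you need uniform control over unit vectors of the reference complements $W_j(\sig^{-n}\om)$ and of the ``off-diagonal'' components, i.e.\ bounds on the projections associated with $W_j$ — but an arbitrary measurable selection of a complement comes with no such control (its projection norms need not even be tempered), and supplying that control is precisely what Lemma~\ref{lem:goodComplement} exists to do before the dynamics enters (Sublemma~\ref{slem:growthrates} is derived from it).

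The paper's proof is, by contrast, entirely static and elementary: working inductively in $j$ and within $V_j(\om)$, it uses the countable dense sequence $(x_i)$ in the unit sphere to select, measurably in $\om$, unit vectors $u_1(\om),\ldots,u_{m_j}(\om)\in V_j(\om)$ with $d\bigl(u_l(\om),\,U_{<j}(\om)\oplus V_{j+1}(\om)\oplus\mathrm{span}(u_1(\om),\ldots,u_{l-1}(\om))\bigr)>1-\ep$, the selection being a pointwise limit of measurable choices from $(x_i)$ so that the limit lands in the closed space $V_j(\om)$. Setting $U_j(\om)=\mathrm{span}(u_1(\om),\ldots,u_{m_j}(\om))$, each one-dimensional projection $\Pi_{\mathrm{span}(u_i(\om))\|W_i(\om)}$ has norm at most $1/(1-\ep)$ uniformly in $\om$, and the projections in \eqref{lem:boundedproj} are finite sums of compositions of these, giving the $L^\infty$ bound directly — no equivariance is claimed or needed for $U_j$. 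If you want to salvage your approach, you should drop the pushing-forward step entirely for this lemma and instead make your ``reference complement'' itself almost orthogonal to $V_{j+1}(\om)\oplus U_{<j}(\om)$ with a uniform constant, which is what the paper's selection accomplishes.
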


\begin{proof}[Proof of Lemma~\ref{lem:goodComplement}]
We proceed by induction on $j$. Fix some $1\le j\le l$. If $j>1$, assume the statement has been obtained for 
all $1\le j'<j$.  Let $V(\omega)=V_j(\omega)$, $V_+(\omega)=V_{j+1}(\omega)$ and $k=m_j$.
Also let $U_-(\omega)=\{0\}$ if $j=1$ and $U_-(\omega)=\bigoplus_{j'<j}U_{j'}(\omega)$ if $j>1$.
Let $(x_i)_{i\in \mathbb N}$ be a countable dense subset of the unit sphere in $X$.

Let $\epsilon>0$ be a constant to be fixed later in the proof.
For $1\le l\le k$, we claim that we can
 inductively pick measurable families of vectors $u_l(\omega)\in V_j(\omega)$ satisfying
(a) $\|u_l(\omega)\|=1$; (b) $d(u_l(\omega),W_l(\om))>1-\epsilon$, where $W_l(\omega)=U_{<j}(\om)\oplus V_+(\omega)\oplus\text{span}(u_1(\omega),\ldots,u_{l-1}(\omega))$.

Assume $1\le l\le k$ and that $u_1(\omega),\ldots,u_{l-1}(\omega)$ have already been constructed.
Let $r_1(\omega)=\min\{i\in\mathbb N\colon d(x_i,W_l(\omega))>1-\epsilon/2\text{ and }d(x_i,V(\omega))<\epsilon/2\}$.
Then define subsequent terms of a sequence $(r_s(\omega))_{s\ge 1}$ 
by $r_{s}(\omega)=\min\{i\in\mathbb N\colon d(x_i,x_{r_{s-1}(\omega)})<\epsilon/2^s; 
d(x_i,V(\omega))<\epsilon/2^s\}$. Then $(x_{r_s(\omega)})_{s\ge 1}$ is a sequence of measurable functions
pointwise convergent to a measurable function $u_l(s)$ satisfying the required properties.

To check the last condition, we let 
$\Pi_i=\Pi_{\text{span}(u_i(\om))\| W_i(\omega)}$. It follows from the above that $\|\Pi_i\|\leq 1/(1-\ep)$.
Also, $\Pi_{U_j\|V_{j+1}\oplus U_{<j}(\cdot)}$ and $\Pi_{V_{j+1}\|U_j\oplus
U_{<j}(\cdot)}$ can be expressed as a finite sum of compositions of $\Pi_i$ and $I$.
The result follows.

\end{proof}

As before, fix some $1\leq j \leq l$ and let $V(\om)=V_j(\om),
V_+(\om)=V_{j+1}(\om)$, $k=m_j=\text{codim}(V_+, V)$, $\lam=\lam_j$
and $\mu=\lam_{j+1}$.

\begin{lem}\label{thm:OseledetsSplitting}
  Let $\mc{R}=(\Om,\mc{F}, \bbp, \sig, X, \mcl)$ be a separable strongly measurable
  random linear system with ergodic invertible base. Let
  $\log^+\|\mcl(\omega)\|\in L^1(\Omega,\mathcal F,\mathbb P)$ and
  $\mc{R}$ be quasi-compact.  Let $U(\om)$ be a good complement of
  $V_+(\om)$ in $V(\om)$, as provided by
  Lemma~\ref{lem:goodComplement}.  For $n\geq 0$, let $Y^{(n)}(\om)=
  \mcl_{\sig^{-n}\om}^{(n)} U(\sig^{-n}\om)$. Then, for $\bbp$-almost
  every $\om$ the following holds.
\begin{enumerate}
  \item \label{lem:conv} (Convergence)
  As $n\to \infty$, $Y^{(n)}(\om)$ converges to a $k$-dimensional
  space $Y(\om)$, which depends measurably on
  $\om$.
  \item \label{lem:invcomp} (Equivariant complement)
   $V_+(\om)\oplus Y(\om)=V(\om)$.
  Hence, for all $y\in Y(\om)\setminus \{0\}$, $\lim_{n \to \infty}
  \frac{1}{n}\log \|\mcl_{\om}^{(n)}y\|=\lam$.
  Furthermore, $\mcl_\om Y(\om)= Y(\sig \om)$.
  \item \label{lem:uniq} (Uniqueness)
  $Y(\om)$ is independent of the choice of $U(\om)$.

  Let $\tld{Y}(\om)= \bigoplus_{i \leq j}Y_i(\om)$. Then,
\item \label{lem:temp} (Temperedness)
  The norms of projections $\Pi_1(\om):=\Pi_{V_{+}\|\tld{Y}(\om)}$ and
  $\Pi_2(\om):=\Pi_{\tld{Y}\|V_+(\om)}$ are tempered with respect to
  $\sig$.
\end{enumerate}
\end{lem}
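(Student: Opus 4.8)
The plan is to build the equivariant complement $Y(\om)$ as a limit of the pushed-forward spaces $Y^{(n)}(\om)=\mcl_{\sig^{-n}\om}^{(n)}U(\sig^{-n}\om)$ and to control the limit via the block structure coming from the filtration of Theorem~\ref{thm:Son}. First I would fix good complements $U(\om)$ as in Lemma~\ref{lem:goodComplement}, so that the projections $\puv{\om}$ and $\pvu{\om}$ along the splitting $V(\om)=V_+(\om)\oplus U(\om)$ have essentially bounded norm. The key estimate is that $\mcl_\om$, restricted to $V(\om)$ and written in block form with respect to $V_+$ and $U$, is ``nearly triangular'': the component mapping $U(\om)$ into $V_+(\sig\om)$ is small relative to the on-diagonal growth, because vectors in $U$ grow at rate $\lam=\lam_j$ while vectors landing in $V_+$ grow at rate at most $\mu=\lam_{j+1}<\lam$. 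Concretely, using subadditivity and the Oseledets estimates on the filtration together with a Borel--Cantelli / tempered-function argument, one gets that for a.e.\ $\om$ and every $\del>0$ there is $C(\om)$ with $\|\mcl_{\sig^{-n}\om}^{(n)}|_{U}\|\le C(\sig^{-n}\om)e^{(\lam+\del)n}$ and $\|\pvu{\sig\,\cdot}\mcl\,\puv{\cdot}\|$-type cross terms decaying like $e^{(\mu+\del)n}$ relative to $e^{(\lam-\del)n}$. Temperedness of the $U_j$-projections (Lemma~\ref{lem:goodComplement}\eqref{lem:boundedproj}, which gives $L^\infty$ bounds, hence a fortiori tempered) and of the filtration projections (Remark~\ref{rmk:FiltrationIsMble} plus Appendix~C) is what lets the constants $C(\sig^{-n}\om)$ be absorbed into the exponential.

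For part~\eqref{lem:conv}, I would show $(Y^{(n)}(\om))_n$ is Cauchy in the Grassmannian metric on $k$-dimensional subspaces of $V(\om)$. The natural way is to compare $Y^{(n+1)}(\om)$ with $Y^{(n)}(\om)$: both are the image under $\mcl_{\sig^{-n}\om}^{(n)}$ of a $k$-dimensional subspace of $V(\sig^{-n}\om)$, namely $U(\sig^{-n}\om)$ versus $\mcl_{\sig^{-n-1}\om}U(\sig^{-n-1}\om)$, and these two $k$-planes differ by a quantity of order $e^{(\mu-\lam+2\del)n}$ because $\mcl_{\sig^{-n-1}\om}U(\sig^{-n-1}\om)$ projects (along $U$) mostly back onto $U(\sig^{-n}\om)$ with a $V_+$-error that is exponentially small at the rate $\mu-\lam$. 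Pushing this forward by $\mcl_{\sig^{-n}\om}^{(n)}$ multiplies errors by at most $e^{(\lam+\del)n}$, so the net discrepancy between consecutive terms is summable, giving convergence to a measurable $k$-dimensional limit $Y(\om)$ (measurability because each $Y^{(n)}$ is measurable by Appendix~A/B and the Grassmannian is a complete metric space). That $\mcl_\om Y(\om)=Y(\sig\om)$ is then immediate from the definition, $\mcl_\om Y^{(n)}(\om)=Y^{(n+1)}(\sig\om)$, together with continuity of $\mcl_\om$ and injectivity of $\mcl_\om$ on $U$-type directions (the latter because the growth rate there is finite and positive-rate vectors cannot be killed).

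For part~\eqref{lem:invcomp}, since $Y(\om)$ is a limit of $k$-planes each complementary to $V_+(\om)$ in $V(\om)$ and the complementarity is ``quantitatively uniform'' (the angle/projection norm stays bounded along the sequence, again by the tempered bounds), the limit is still complementary: $V_+(\om)\oplus Y(\om)=V(\om)$. The growth rate $\lim \frac1n\log\|\mcl_\om^{(n)}y\|=\lam$ for $y\in Y(\om)\setminus\{0\}$ follows because $y$ has a nonzero component in $V(\om)\setminus V_+(\om)$ (it is complementary to $V_+$), so the lower bound $\ge\lam$ comes from Theorem~\ref{thm:Son}\eqref{lem:invcomp}-type statements and the upper bound $\le\lam$ from $y\in V(\om)=V_j(\om)$. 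For part~\eqref{lem:uniq}, if $U'(\om)$ is another good complement, the corresponding limit $Y'(\om)$ also satisfies $V_+(\om)\oplus Y'(\om)=V(\om)$ and $\mcl_\om Y'(\om)=Y'(\sig\om)$; the difference of a vector in $Y(\om)$ and its ``$Y'$-partner'' (same $U$-component modulo $V_+$) lies in $V_+(\om)$ and is equivariant, hence grows at rate $\le\mu<\lam$, yet on the other hand pulling back shows it must grow at rate $\ge\lam$ unless it is zero — so $Y(\om)=Y'(\om)$. Finally, part~\eqref{lem:temp}: writing $\tld Y(\om)=\bigoplus_{i\le j}Y_i(\om)$ and noting $V_+(\om)\oplus\tld Y(\om)=X$ only in the appropriate quotient sense, the projections $\Pi_{V_+\|\tld Y}$ and $\Pi_{\tld Y\|V_+}$ are built from the bounded good-complement projections and the limiting construction; their norms are controlled by the accumulated tempered constants, which are tempered by the Appendix~C characterization (a pointwise-finite $\sig$-subinvariant-up-to-integrable-error bound is tempered). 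I expect the main obstacle to be making the ``nearly triangular'' block estimate and the consecutive-difference bound genuinely rigorous in the Banach-space Grassmannian — in particular, tracking how the $U$-direction errors of size $e^{(\mu-\lam)n}$ survive the amplification by $e^{\lam n}$ only because one has the extra $-\del$ room and summability, and ensuring all the exceptional null sets (from Kingman, from temperedness, from Borel--Cantelli) are assembled into a single full-measure $\sig$-invariant set.
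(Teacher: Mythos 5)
Your overall strategy is the same as the paper's (push forward the good complements, use the block decomposition $\mcl_{00},\mcl_{10},\mcl_{11}$ and the gap $\lam-\mu$, prove Cauchyness in the Grassmannian, then complementarity, uniqueness, temperedness), but the convergence step has a genuine gap. First, the bookkeeping is off: the plane $\mcl_{\sig^{-n-1}\om}U(\sig^{-n-1}\om)$ differs from $U(\sig^{-n}\om)$ by a $V_+$-error controlled only by one application of the cocycle, i.e.\ a subexponential quantity, not one of size $e^{(\mu-\lam)n}$; the exponential gain appears only after pushing forward, because the $V_+$-error is amplified at rate at most $\mu+\del$ while the surviving $U$-directions grow at rate at least $\lam-\del$ (Sublemma~\ref{slem:growthrates}). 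With your accounting (source error $e^{(\mu-\lam+2\del)n}$ then amplification by $e^{(\lam+\del)n}$) the consecutive discrepancy is $e^{(\mu+3\del)n}$, which is summable only if $\mu<0$. More seriously, the constants entering these estimates (the analogue of the paper's $M(\sig^{-n}\om)=\sup_n g_n(\sig^{-n}\om)$ and the one-step lower bounds for $\mcl_{11}$) are evaluated along the backward orbit and are \emph{not} known to be tempered --- neither Lemma~\ref{lem:goodComplement} nor Sublemma~\ref{slem:growthrates} gives that --- so your ``summability of consecutive differences via tempered constants'' does not close. The paper avoids this by comparing $Y^{(n)}(\om)$ with \emph{all} $Y^{(m)}(\om)$, $m>n$, at once, getting a bound of the form $2K(\om)M(\sig^{-n}\om)e^{-\al n}/(1-K(\om)M(\sig^{-n}\om)e^{-\al n})$, and then using ergodicity to select infinitely many $n$ with $M(\sig^{-n}\om)<A$; this recurrence device is what replaces the temperedness you assume.

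Two further steps are not established. For uniqueness, the statement ``the difference $y-y'$ lies in $V_+(\om)$, is equivariant, hence grows at rate $\le\mu$, yet pulling back shows it must grow at rate $\ge\lam$ unless it is zero'' does not work: nonzero vectors of $V_+(\om)$ do grow at rate $\le\mu$, so there is no contradiction, and ``pulling back'' is unavailable since the cocycle is not invertible. The slow growth of $y-y'$ only shows that the angle between $Y'(\sig^n\om)$ and $Y(\sig^n\om)\oplus Y_-(\sig^n\om)$ tends to $0$ along the forward orbit; to conclude equality at $\om$ itself the paper makes this decay uniform on the unit sphere of $Y'(\om)$ (Lemma~\ref{lem:uniformGrowth} together with a Baire category argument producing the sets $D_N$) and then applies Poincar\'e recurrence to $\{h\le 1/i\}$ --- that recurrence step is the missing idea, and it also yields the stronger statement for an arbitrary measurable equivariant complement, not just one arising from another good complement. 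For part~\eqref{lem:temp}, saying the projection norms are ``controlled by the accumulated tempered constants'' begs the question: none of the constants produced by the construction ($M$, $K$, $C$, $D_1$, \dots) are shown to be tempered, and the paper never claims they are. Instead it proves temperedness directly: by Lemma~\ref{lem:TempFwd=TempBwd} it suffices to handle the backward orbit, and if $\|\Pi_{Y_j\|V_{j+1}(\sig^{-n_i}\om)}\|>e^{\ep n_i}$ one picks almost-cancelling $y_{n_i}+v_{n_i}$ and compares the growth rates $e^{(\lam_j-\del)n_i}$, $e^{(\lam_{j+1}+\del)n_i}$ and $e^{(\lam_j+\del-\ep)n_i}$ to reach a contradiction. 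Without these three repairs (all-times comparison plus recurrence for convergence, angle-decay plus Poincar\'e recurrence for uniqueness, and the backward-orbit contradiction for temperedness) the proposal does not constitute a proof.
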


Before proceeding to the proof of Lemma~\ref{thm:OseledetsSplitting}, let
us collect some facts that will be used in it. For $j=1$, let
$U_-(\om)=\{0\}$, and for $1<j\leq l$, let
$U_-(\om)=\bigoplus_{i=1}^{j-1} U_i(\om)$. Then, we have that
$\Pi_{V\|U_-(\om)}=\pvu{\om} + \puv{\om}$. Also, by invariance of
$V(\om)$ under $\mcl_\om$, we have that
\[
\mcl_\om \circ \Pi_{V\|U_-(\om)}=(\pvu{\sig\om} + \puv{\sig\om}) \mcl_\om
(\pvu{\om} + \puv{\om}).
\]
Let
\begin{align*}
  \mcl_{00}(\om)&= \pvu{\sig\om} \mcl_\om \pvu{\om},\\
\mcl_{01}(\om)&= \puv{\sig\om} \mcl_\om \pvu{\om}, \\
 \mcl_{10}(\om)&= \pvu{\sig\om} \mcl_\om \puv{\om},\\
\mcl_{11}(\om)&= \puv{\sig\om} \mcl_\om \puv{\om}.
\end{align*}
Note that by invariance of $V_+$, $\mcl_{01}(\om)=0$, $\bbp$-almost
surely. Therefore,  $\mcl_\om \circ \Pi_{V\|U_-(\om)}= \mcl_{00}(\om)+
\mcl_{10}(\om)+ \mcl_{11}(\om)$.

Let $\mcl_{00}^{(n)}(\om)= \pvun{n} \mcl_\om^{(n)} \pvu{\om}$, and define
operators $\mcl_{01}^{(n)}(\om)$ and $\mcl_{11}^{(n)}(\om)$ analogously.
It is straightforward to verify the following identities.
\begin{align}
\mcl_{00}^{(n)}(\om)&= \mcl_{00}(\sig^{n-1}\om)\dots \mcl_{00}(\om),
\label{eq:L00^n} \tag{$L_{00}$}\\
\mcl_{11}^{(n)}(\om)&= \mcl_{11}(\sig^{n-1}\om)\dots \mcl_{11}(\om).
\label{eq:L11^n} \tag{$L_{11}$}
\end{align}
By induction, we also have that
\begin{equation}
\mcl_{10}^{(n)}(\om)=\sum_{i=0}^{n-1} \mcl_{00}^{(i)}(\sig^{n-i}\om)
\mcl_{10}(\sig^{n-i-1}\om) \mcl_{11}^{(n-i-1)}(\om). \label{eq:L10^n}
\tag{$L_{10}$}
\end{equation}

\begin{slem}\ \label{slem:growthrates} Under the assumptions of
  Lemma~\ref{thm:OseledetsSplitting}, the following statements hold.
\begin{enumerate}
\item \label{slem:L00}
For $\bbp$-almost every $\om \in \Om$,
\[
\lim_{n \to \infty} \frac{1}{n}\log \|\mcl_{00}^{(n)}(\om)\|\leq \mu.
\]
Furthermore, for every $\ep>0$ and $\bbp$-almost every $\om \in \Om$,
there exists $D_1(\om)<\infty$ such that for every $i\geq 0$,
\[
\|\mcl_{00}^{(i)}(\sig^{-i}\om)\| \leq D_1(\om) e^{i(\mu+\ep)}.
\]
Applying this with $\omega$ replaced by $\sigma^n\omega$ we obtain
\[
\|\mcl_{00}^{(i)}(\sig^{n-i}\om)\| \leq D_1(\sig^n\om) e^{i(\mu+\ep)}.
\]
\item \label{slem:L10}
For every $\ep>0$ and $\bbp$-almost every $\om \in \Om$, there exists
$D_2(\om)<\infty$ such that for every $n\in \Z$,
\[
\|\mcl_{10}(\sig^{n}\om)\| \leq D_2(\om) e^{|n|\ep}.
\]
\item \label{slem:L11}
 For $\bbp$-almost every $\om \in \Om$, and every $u\in U(\om) \setminus \{0\}$,
\[
\lim_{n \to \infty} \frac{1}{n}\log \|\mcl_{11}^{(n)}(\om)u\|=\lam.
\]
In particular, $\mcl_{11}(\om)|_{U(\om)}: U(\om) \to U(\sig \om)$ is
invertible for $\bbp$-almost every $\om$.  Furthermore, for every $\ep>0$
and $\bbp$-almost every $\om \in \Om$, there exists $C(\om)<\infty$ such
that for every $n\geq 0$, and every $u\in U(\sigma^{-n}\omega)$
satisfying $\|u\|=1$
\[
\|\mcl_{11}^{(n)}(\sig^{-n}\om)u\| \geq C(\om) e^{n(\lam-\ep)}.
\]
\end{enumerate}
\end{slem}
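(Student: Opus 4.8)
The plan is to prove the three parts of Sublemma~\ref{slem:growthrates} by extracting them from Theorem~\ref{thm:Son} together with the temperedness characterization in Appendix~C, using the block-triangular decomposition $\mcl_\om\circ\Pi_{V\|U_-(\om)}=\mcl_{00}(\om)+\mcl_{10}(\om)+\mcl_{11}(\om)$ and the cocycle identities \eqref{eq:L00^n}, \eqref{eq:L11^n}, \eqref{eq:L10^n} already recorded. The guiding principle is that $\mcl_{00}$ is the part of the cocycle that ``lives on $V_+$'', so it should grow no faster than $e^{n\lam_{j+1}}$, whereas $\mcl_{11}$ is (via the good complement $U$) a conjugate of the induced map on $V/V_+$, so it should grow exactly like $e^{n\lam_j}$; the cross term $\mcl_{10}$ is handled purely by temperedness.

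For part~\eqref{slem:L00}: since $\pvu{\om}$ has image inside $V_+(\om)=V_{j+1}(\om)$ and $V_+$ is equivariant, $\mcl_{00}^{(n)}(\om)=\pvun{n}\mcl_\om^{(n)}\pvu{\om}$ maps $X$ into $V_{j+1}(\sig^n\om)$, so for any unit $x$ the vector $\mcl_\om^{(n)}\pvu{\om}x$ lies in $V_{j+1}(\sig^n\om)$; applying Theorem~\ref{thm:Son}(2) (the $\limsup\le\ka^*\le\mu$ bound on $V_{l+1}$, or more precisely the rate $\le\lam_{j+1}=\mu$ valid throughout $V_{j+1}$, which follows by applying part~(2) at all levels $j'\ge j+1$) gives $\limsup_n\frac1n\log\|\mcl_{00}^{(n)}(\om)\|\le\mu$ once one controls the operator norm uniformly in $x$; the uniform control comes from the fact that $\|\pvu{\om}\|$ is bounded (Lemma~\ref{lem:goodComplement}\eqref{lem:boundedproj}), that $\|\pvun{n}\|$ is tempered (this is exactly conclusion~\eqref{lem:temp} of Lemma~\ref{thm:OseledetsSplitting} applied at all lower levels --- so this part of the sublemma must be proved inductively, in tandem, or the boundedness of $\pvu{}$ from Lemma~\ref{lem:goodComplement} suffices if $U_-$ is already built), and a standard argument promoting a pointwise rate on a finite-codimensional equivariant subspace to a rate on the operator norm of the block. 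Then the sub-exponential error: given $\ep>0$, set $f(\om)=\sup_{i\ge0}\|\mcl_{00}^{(i)}(\sig^{-i}\om)\|e^{-i(\mu+\ep)}$; the rate bound just obtained shows $f(\om)<\infty$ a.e., and $D_1(\om):=f(\om)$ does the job, with the displayed variant following by substituting $\sig^n\om$.

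For part~\eqref{slem:L11}: the key observation is that $\mcl_{11}(\om)|_{U(\om)}$ is conjugate, via the isomorphisms $V(\om)/V_+(\om)\cong U(\om)$, to the induced cocycle on the quotient bundle $V_j/V_{j+1}$, whose Lyapunov exponent is exactly $\lam_j=\lam$ by Theorem~\ref{thm:Son}(2): indeed $\Pi_{U\|V_+\oplus U_-(\sig\om)}\circ\mcl_\om\circ(\text{inclusion of }U(\om))=\mcl_{11}(\om)|_{U(\om)}$ up to the harmless $U_-$ corrections, and a vector $u\in U(\om)\setminus\{0\}$ viewed in $V_j(\om)\setminus V_{j+1}(\om)$ has $\frac1n\log\|\mcl_\om^{(n)}u\|\to\lam$; combining with the tempered bound on the complementary projection shows $\frac1n\log\|\mcl_{11}^{(n)}(\om)u\|\to\lam$ as well. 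Invertibility of $\mcl_{11}(\om)|_{U(\om)}$ is then immediate (a non-injective map would kill a vector, contradicting the exact rate $\lam>-\infty$; surjectivity onto the $m_j$-dimensional $U(\sig\om)$ follows by dimension count since $\mcl_{11}(\om)$ maps $U(\om)$ into $U(\sig\om)$). The uniform lower bound: because the rates are uniform (the exponent is a.e.\ constant and $U$ is finite-dimensional $m_j$), a compactness/uniform-integrability argument --- or directly, defining $C(\om)=\inf_{n\ge0}\inf_{\|u\|=1,\,u\in U(\sig^{-n}\om)}\|\mcl_{11}^{(n)}(\sig^{-n}\om)u\|e^{-n(\lam-\ep)}$ and checking it is positive a.e.\ from the pointwise limits plus Egorov --- yields $C(\om)>0$. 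Part~\eqref{slem:L10} is the easiest: $\mcl_{10}(\om)=\pvu{\sig\om}\mcl_\om\puv{\om}$, so $\|\mcl_{10}(\om)\|\le\|\pvu{\sig\om}\|\,\|\mcl_\om\|\,\|\puv{\om}\|$; the outer projection norms are bounded (Lemma~\ref{lem:goodComplement}\eqref{lem:boundedproj}), and $\log^+\|\mcl_\om\|\in L^1$ together with the Borel--Cantelli/ergodic-theorem argument in Appendix~C (the ``useful characterization of tempered maps'') shows $\om\mapsto\|\mcl_\om\|$, hence $\om\mapsto\|\mcl_{10}(\om)\|$, is tempered, which is exactly the asserted bound $\|\mcl_{10}(\sig^n\om)\|\le D_2(\om)e^{|n|\ep}$.

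The main obstacle I expect is the circularity/bookkeeping between part~\eqref{slem:L00} of this sublemma and the temperedness conclusion~\eqref{lem:temp} of Lemma~\ref{thm:OseledetsSplitting}: to bound $\|\mcl_{00}^{(n)}\|$ uniformly over the unit ball of $X$ (not just on $V_+$) one wants the projection $\pvu{}=\Pi_{V_+\|U\oplus U_-}$ to be tempered, but that projection involves the $Y_i$ for $i\le j$ which are only being constructed in this very lemma. The clean way around it is to run the whole argument by induction on $j$: at stage $j$ the spaces $U_{<j}$ (equivalently $\tld Y$ at level $j-1$) are already built and their projections already known to be tempered, so $\pvu{}$ is tempered at stage $j$, the sublemma goes through, and then Lemma~\ref{thm:OseledetsSplitting} upgrades temperedness to level $j$; alternatively one observes that $\pvu{\om}$ alone is \emph{bounded} (not merely tempered) by Lemma~\ref{lem:goodComplement}\eqref{lem:boundedproj}, which already suffices to promote the pointwise rate on $V_+$ to the operator-norm rate of $\mcl_{00}^{(n)}$. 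A secondary technical point is making the ``a.e.\ pointwise rate on a finite-codimensional equivariant family $\Rightarrow$ operator-norm rate of the restricted cocycle'' step rigorous: this is standard (it appears in every proof of the non-invertible MET) and follows from sub-additivity of $n\mapsto\log\|\mcl_{00}^{(n)}(\om)\|$ (which is \eqref{eq:L00^n}) plus Kingman, combined with the observation that the Kingman limit of this sub-additive sequence cannot exceed the supremum of the pointwise rates on $V_+$, which is $\mu$.
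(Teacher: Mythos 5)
Your treatment of parts \eqref{slem:L00} and \eqref{slem:L10} is essentially the paper's: bound $\mcl_{00}^{(n)}(\om)$ and $\mcl_{10}(\sig^n\om)$ by the projection norms times the rates coming from Theorem~\ref{thm:Son} and Birkhoff. Note that the circularity you worry about does not arise: $\pvu{\om}$ and $\puv{\om}$ are built from the spaces $U_i$ of Lemma~\ref{lem:goodComplement}, whose projection norms are in $L^\infty$, not from the $Y_i$ being constructed, and this is exactly what the paper uses. One smaller point: the uniform constant $D_1(\om)$ does \emph{not} follow from ``the rate bound just obtained,'' because in $\sup_{i\ge 0}\|\mcl_{00}^{(i)}(\sig^{-i}\om)\|e^{-i(\mu+\ep)}$ both the base point $\sig^{-i}\om$ and the number of iterates vary together; one needs subadditivity of $i\mapsto\log\|\mcl_{00}^{(i)}(\sig^{-i}\om)\|$ with respect to $\sig^{-1}$ and Kingman (this is the ``Claim C'' argument of \cite{FLQ1} that the paper invokes), which your closing paragraph only gestures at for the forward direction.

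The genuine gap is in part \eqref{slem:L11}. Identifying $\mcl_{11}(\om)|_{U(\om)}$ with the quotient cocycle on $V_j/V_{j+1}$ is fine, but temperedness (or boundedness) of the projections only yields the \emph{upper} bound: from $\mcl_{11}^{(n)}(\om)u=\puvn{n}\,\mcl_\om^{(n)}u$ one gets $\|\mcl_{11}^{(n)}(\om)u\|\le\|\puvn{n}\|\,\|\mcl_\om^{(n)}u\|$, i.e.\ $\Lam\le\lam$. The matching lower bound --- that the component of $\mcl_\om^{(n)}u$ transverse to $V_+\oplus U_-$ does not decay exponentially relative to $\|\mcl_\om^{(n)}u\|$ --- is the heart of the sublemma and cannot be read off from Theorem~\ref{thm:Son} plus temperedness: Theorem~\ref{thm:Son} controls $\|\mcl_\om^{(n)}u\|$ but says nothing about the angle between $\mcl_\om^{(n)}u$ and $V_{j+1}(\sig^n\om)$, and ruling out collapse of that angle is essentially what the semi-invertible theorem is being proved to show, so your step begs the question. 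The paper closes this by real work: the telescoping identity \eqref{eq:L10^n} expresses $\mcl_{10}^{(n)}(\om)u$ through $\mcl_{00}^{(i)}$, $\mcl_{10}$ and $\mcl_{11}^{(n-i-1)}u$; the bounds of parts \eqref{slem:L00} and \eqref{slem:L10}, together with a recurrence argument along a subsequence where $D_1(\sig^n\om)$ stays bounded, then show that if $\Lam<\lam$ the whole vector $\mcl_\om^{(n)}u$ would grow at rate at most $\max(\Lam,\mu)<\lam$, contradicting Theorem~\ref{thm:Son}; this is what forces $\Lam=\lam$ and hence invertibility. Likewise the final uniform estimate $\|\mcl_{11}^{(n)}(\sig^{-n}\om)u\|\ge C(\om)e^{n(\lam-\ep)}$ does not follow from ``pointwise limits plus Egorov'': again the base point and the iterate count vary simultaneously, and both your argument and the paper need the finite-dimensional uniformity result of \cite[Lemma 8.3]{FLQ1} (in the spirit of Barreira--Silva, cf.\ Lemma~\ref{lem:uniformGrowth}) rather than a soft measure-theoretic step.
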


\begin{proof}

  \ \\ \textbf{Proof of \eqref{slem:L00}.}\\
  From the definition, $\| \mcl_{00}^{(n)}(\om) \| \leq \| \pvun{n}\| \|
  \mcl_\om^{(n)}|_{V_+(\om)}\| \|\pvu{\om}\|$. By Theorem \ref{thm:Son}, we have
  that $\lim_{n \to \infty} \frac{1}{n}\log \|
  \mcl_\om^{(n)}|_{V_+(\om)}\|=\mu$. Using that $\pvu{\om}$ is tempered
  with respect to $\sig$, which follows from
  Lemma~\ref{lem:goodComplement}\eqref{lem:boundedproj}, we get that
  for $\bbp$-almost every $\om$,
\[
\lim_{n \to \infty} \frac{1}{n}\log \| \mcl_{00}^{(n)}(\om) \|\leq \mu.
\]
The second claim follows exactly like Claim C in the predecessor
paper, \cite{FLQ1}.\\

\ \\ \textbf{Proof of \eqref{slem:L10}.}\\
$\|\mcl_{10}(\sig^{n}\om)\|\leq \|\pvun{n+1}\| \|\mcl_{\sig^{n}\om}\|
\|\puvn{n}\|$. Since $\log^+ \|\mcl_{\om}\|$ is integrable with respect
to $\bbp$, using the Birkhoff ergodic theorem, one sees that $\lim_{n \to
\pm
  \infty} \frac{1}{|n|}\log \|\mcl_{\sig^{n}\om}\|=0$ for $\bbp$-almost
every $\om$.  Using again that $\puv{\om}$ and $\pvu{\om}$ are tempered
with respect to $\sig$, we get that for $\bbp$-almost every $\om$,
\[
\lim_{n \to \pm \infty} \frac{1}{|n|}\log \|\mcl_{10}(\sig^{n}\om)\|=0.
\]
Thus, the claim follows.\\

\ \\ \textbf{Proof of \eqref{slem:L11}.}\\

We use the bases $\{u_1(\om), \dots, u_k(\om)\}$ and $\{u_1(\sig \om),
\dots, u_k(\sig \om)\}$ constructed in the proof of
Lemma~\ref{lem:goodComplement} to express $\mcl_{11}|_{U(\om)}: U(\om)
\to U(\sig \om)$ in matrix form.  Recall that the norms of $\{u_1(\om),
\dots, u_k(\om)\}$ are bounded functions of $\om$. Also from the proof of
Lemma~\ref{lem:goodComplement}\eqref{lem:comp}, we have that
\[
\Big\|\sum_{i=1}^k a_i u_i(\om)\Big\| \geq \frac{3^{-k}(1-5\ep)}{2}
\max_{1\leq i \leq k}|a_i|.
\]
Condition \eqref{lem:boundedproj} of Lemma~\ref{lem:goodComplement}
implies that the multiplicative ergodic theorem of Oseledets
\cite{Oseledets} applies to $\mcl_{11}$. Hence, convergence of
$\frac{1}{n}\log \|\mcl_{11}^{(n)}(\om)u\|$ follows from
Equation~\eqref{eq:L11^n}. Call this limit $\Lam$. Thus, for every
$\ep>0$ there exists a constant $D(\om, u)<\infty$ such that
$\|\mcl_{11}^{(n)}(\om)u\| \leq D(\om, u) e^{n(\Lam+\ep)}$.

On the one hand, by definition and invariance of $V(\om)$, we have that
\begin{align*}
\mcl_{11}^{(n)}(\om)&= \puvn{n} \mcl_\om^{(n)} \puv{\om}\\
&=\puvn{n} \mcl_\om^{(n)}|_{V(\om)} \puv{\om}.
\end{align*}
Therefore,
\begin{align*}
\Lam&=\lim_{n \to \infty} \frac{1}{n}\log^+ \|\mcl_{11}^{(n)}(\om)u\| \\
&\leq \lim_{n \to \infty} \frac{1}{n}\big( \log^+ \| \puvn{n} \| + \log^+
\|\mcl_{\om}^{(n)}|_{V(\om)}\| + \log^+ \| \puv{\om} \| \big)=\lam.
\end{align*}
The last equality follows from Theorem~\ref{thm:Son} and temperedness of
$\puv{\om}$ with respect to $\sig$.

On the other hand, for any $u\in U(\om) \setminus \{0\}$,
\begin{align*}
\|\mcl_{\om}^{(n)}u\| &=\|\mcl_{10}^{(n)}(\om)u + \mcl_{11}^{(n)}(\om)u\|
\leq
\|\mcl_{10}^{(n)}(\om)u\|+\|\mcl_{11}^{(n)}(\om)u\|\\
&= \Big\|\sum_{i=0}^{n-1} \mcl_{00}^{(i)}(\sig^{n-i}\om)
\mcl_{10}(\sig^{n-i-1}\om) \mcl_{11}^{(n-i-1)}(\om)u
\Big\|+\|\mcl_{11}^{(n)}(\om)u\| \\ &\leq \sum_{i=0}^{n-1}
\big\|\mcl_{00}^{(i)}(\sig^{n-i}\om) \big\|
\big\|\mcl_{10}(\sig^{n-i-1}\om)\big\| \big\|\mcl_{11}^{(n-i-1)}(\om)u
\big\|+\|\mcl_{11}^{(n)}(\om)u\| .
\end{align*}
Let us estimate the first sum. In view of parts \eqref{slem:L00} and
\eqref{slem:L10}, we have that
\begin{align*}
\sum_{i=0}^{n-1} & \big\|\mcl_{00}^{(i)}(\sig^{n-i}\om) \big\|
\big\|\mcl_{10}(\sig^{n-i-1}\om)\big\| \big\|\mcl_{11}^{(n-i-1)}(\om)
u\big\|
\\ &\leq D_1(\sig^n \om) D_2(\om) D(\om, u) \sum_{i=0}^{n-1} e^{i(\mu+\ep)}
e^{(n-i-1)\ep} e^{(\Lam+\ep)(n-i-1)} \\ &\leq D_1(\sig^n \om) D_2(\om) D(\om, u)
\sum_{i=0}^{n-1} e^{(n-1)(\max(\Lam, \mu) +2\ep)}.
\end{align*}
Let $M>0$ be such that $\bbp(D_1(\om)<M)>0$. Then, by ergodicity of
$\sig$, for $\bbp$-almost every $\om$, there are infinitely  many $n$
such that $D_1(\sig^n \om)<M$. For every such $n$ we have that
\[
\|\mcl_{10}^{(n)}(\om)u\| \leq M D_2(\om) D(\om, u) n e^{(n-1)(\max(\Lam,
\mu) +2\ep)}.
\]
Hence,
\[
\liminf_{n\to \infty}\frac{1}{n} \log \|\mcl_{10}^{(n)}(\om)u\| \leq
\max(\Lam, \mu) .
\]
By definition of $\Lam$, we also know that
\[
\lim_{n\to \infty}\frac{1}{n} \log \|\mcl_{11}^{(n)}(\om)u\|=\Lam.
\]
Therefore,
\[
\lam=\lim_{n\to \infty}\frac{1}{n} \log \|\mcl_{\om}^{(n)}u\|\leq
\max(\Lam, \mu).
\]
Recalling that $\mu<\lam$, we get that $\lam \leq \Lam$.
Combining with the argument above, we conclude that $\lam=\Lam$ as claimed.

The almost-everywhere invertibility of $\mathcal
L_{11}(\omega)|_{U(\om)}$ follows immediately.

The last statement follows as in the case of matrices in the
predecessor paper \cite[Lemma 8.3]{FLQ1}.
\end{proof}

The following lemma will be useful in the proof of Lemma
\ref{thm:OseledetsSplitting}\eqref{lem:uniq}.
\begin{lem}\label{lem:uniformGrowth}
Assume $Y'(\om)$ is a measurable equivariant complement of $V_+(\om)$ in
$V(\om)$. From Theorem~\ref{thm:Son}, for every $y'\in Y'(\om)\setminus
\{0\}$,  $\lim_{n \to \infty} \frac{1}{n}\log
\|\mcl_{\om}^{(n)}y'\|=\lam$. Furthermore, for every $\ep>0$ and
$\bbp$-almost every $\om \in \Om$, there exists $C'(\om)>0$ such that for
every $n\geq 0$, and every $y'\in Y'(\omega)$ satisfying $\|y'\|=1$,
\[
\|\mcl_\om^{(n)}y'\| \geq C'(\om) e^{n(\lam-\ep)}.
\]
\end{lem}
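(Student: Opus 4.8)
The plan is to reduce the claimed uniform lower bound to a statement about the cocycle restricted to the finite-dimensional quotient $V(\om)/V_+(\om)$, which carries a well-behaved $m_j$-dimensional matrix cocycle. First I would observe that the existence of the pointwise limit $\lim_n \frac1n\log\|\mcl_\om^{(n)}y'\|=\lam$ for every nonzero $y'\in Y'(\om)$ is immediate from Theorem~\ref{thm:Son}, since $Y'(\om)\subset V(\om)=V_j(\om)$ and $Y'(\om)\cap V_+(\om)=Y'(\om)\cap V_{j+1}(\om)=\{0\}$, so every nonzero $y'\in Y'(\om)$ lies in $V_j(\om)\setminus V_{j+1}(\om)$. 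The content is therefore the \emph{uniform} exponential lower bound over the unit sphere of $Y'(\om)$, uniform in $n\ge 0$ after sacrificing a random constant $C'(\om)$ and an arbitrarily small $\ep>0$.

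The key step is to transfer the problem onto the projection $\puv{\om}\colon V(\om)\to U(\om)$ (equivalently, onto the quotient $V/V_+$), using that $Y'(\om)$ is an equivariant complement of $V_+(\om)$ in $V(\om)$. Concretely, I would write, for $y'\in Y'(\om)$, $\mcl_\om^{(n)}y' = \mcl_{10}^{(n)}(\om)y' + \mcl_{11}^{(n)}(\om)(\puv{\om}y')$ after first noting $\puv{\om}|_{Y'(\om)}\colon Y'(\om)\to U(\om)$ is a linear isomorphism whose norm and inverse-norm are bounded above and below by tempered functions of $\om$ (the upper bound is Lemma~\ref{lem:goodComplement}\eqref{lem:boundedproj} together with temperedness; the lower bound comes from the fact that $Y'(\om)\cap V_+(\om)=\{0\}$ is already built into $Y'$ being a complement, combined with equivariance, via the argument that follows). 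Then Sublemma~\ref{slem:growthrates}\eqref{slem:L11} supplies exactly the desired uniform lower bound $\|\mcl_{11}^{(n)}(\sig^{-n}\om)u\|\ge C(\om)e^{n(\lam-\ep)}$ for unit $u\in U(\sig^{-n}\om)$, and Sublemma~\ref{slem:growthrates}\eqref{slem:L00}, \eqref{slem:L10} together with \eqref{eq:L10^n} bound $\|\mcl_{10}^{(n)}(\om)\|$ on $Y'$ by something growing at most like $e^{n(\max(\mu,\Lam)+2\ep)}$ along a density-one sequence of times — hence strictly slower than $e^{n(\lam-\ep)}$ since $\mu<\lam$ and $\Lam=\lam$. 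Subtracting, I get $\|\mcl_\om^{(n)}y'\|\ge \|\mcl_{11}^{(n)}(\om)\puv{\om}y'\| - \|\mcl_{10}^{(n)}(\om)y'\|\ge \tfrac12 C(\om)e^{n(\lam-\ep)}$ for large $n$ and $\|y'\|=1$, and absorbing the finitely many small $n$ into the constant yields the claim.

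The main obstacle I anticipate is establishing the \emph{uniform} lower bound on $\|\puv{\om}y'\|$ over unit $y'\in Y'(\om)$ — i.e.\ that $Y'(\om)$ is not merely transverse to $V_+(\om)$ but \emph{uniformly} transverse after allowing a tempered correction. Pointwise transversality alone is not enough, since a priori the angle between $Y'(\sig^n\om)$ and $V_+(\sig^n\om)$ could decay exponentially along the orbit, which would destroy the estimate. The resolution is that $Y'$ is assumed \emph{equivariant and measurable}: by Sublemma~\ref{slem:growthrates}\eqref{slem:L11} the restricted cocycle $\mcl_{11}$ is invertible on $U$ with a genuine exponential lower bound, so if $\|\puv{\sig^n\om}y'_n\|$ were subexponentially small for a unit equivariant family $y'_n=\mcl^{(n)}$-pushforward, applying $\mcl_{11}^{(-n)}$ would force $\|\mcl_\om^{(n)}y'\|$ itself to be subexponential, contradicting the already-established limit $\lim_n\frac1n\log\|\mcl_\om^{(n)}y'\|=\lam>0$. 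Making this precise — essentially a pigeonhole/temperedness argument using ergodicity of $\sig$ to return infinitely often to a set where the relevant projection norm is bounded, exactly as in \cite[Lemma~8.3]{FLQ1} — is where the work lies; everything else is bookkeeping with the three sublemma estimates.
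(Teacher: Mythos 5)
Your overall idea---reducing to the finite-dimensional block $\mcl_{11}$ on $U$ via the projection $\puv{\om}$---is a workable alternative to the paper's route (the paper simply chooses measurable nice bases for $Y'(\om)$, as in the proof of Lemma~\ref{lem:goodComplement}, and quotes the matrix-cocycle lower-bound result of Barreira and Silva \cite{BarreiraSilva}, as in \cite[Lemma 19]{FLQ2}), but the quantitative step you actually propose fails. A small point first: for $y'\in Y'(\om)\subset V(\om)$ the correct decomposition is $\mcl_\om^{(n)}y'=\mcl_{11}^{(n)}(\om)\puv{\om}y'+\mcl_{10}^{(n)}(\om)y'+\mcl_{00}^{(n)}(\om)\pvu{\om}y'$; your identity drops the $\mcl_{00}^{(n)}$ term. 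The serious problem is the subtraction: you claim $\|\mcl_{10}^{(n)}(\om)\|$ grows strictly slower than $e^{n(\lam-\ep)}$ ``since $\mu<\lam$ and $\Lam=\lam$'', but the bound coming from \eqref{eq:L10^n} and Sublemma~\ref{slem:growthrates} is of order $e^{n(\max(\mu,\Lam)+2\ep)}=e^{n(\lam+2\ep)}$: the off-diagonal block generically grows at the \emph{same} rate $\lam$ as $\mcl_{11}^{(n)}$. (Already for a constant upper-triangular $2\times2$ matrix with diagonal moduli $e^{\mu}<e^{\lam}$ and a large off-diagonal entry, the $V_+$-component of $\mcl_\om^{(n)}u$ grows at rate $\lam$ and can dominate the $U$-component.) Hence $\|\mcl_{11}^{(n)}(\om)\puv{\om}y'\|-\|\mcl_{10}^{(n)}(\om)y'\|$ need not even be positive, and the triangle inequality cannot yield $\tfrac12 C(\om)e^{n(\lam-\ep)}$.

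The repair is to project rather than subtract: $\|\mcl_\om^{(n)}y'\|\ge\|\puvn{n}\|^{-1}\|\puvn{n}\mcl_\om^{(n)}y'\|=\|\puvn{n}\|^{-1}\|\mcl_{11}^{(n)}(\om)\puv{\om}y'\|$, where $\|\puvn{n}\|$ is essentially bounded by Lemma~\ref{lem:goodComplement}\eqref{lem:boundedproj}; the $\mcl_{10}^{(n)}$ and $\mcl_{00}^{(n)}$ terms are then killed rather than estimated. Two further points you gloss over must then be handled. First, Sublemma~\ref{slem:growthrates}\eqref{slem:L11} states the uniform bound only in pullback form (cocycle started at $\sig^{-n}\om$, constant at $\om$); you need the forward form $\|\mcl_{11}^{(n)}(\om)u\|\ge C(\om)e^{n(\lam-\ep)}\|u\|$, and substituting base points puts the constant at $\sig^n\om$, so this requires the same finite-dimensional Barreira--Silva/\cite[Lemma 8.3]{FLQ1}-type argument for the $m_j$-dimensional cocycle $\mcl_{11}|_U$---i.e.\ exactly the ingredient the paper invokes, so your route does not avoid it. Second, your ``main obstacle'' is a red herring: since $C'(\om)$ may depend on $\om$, all you need is $\inf\{\|\puv{\om}y'\|:y'\in Y'(\om),\ \|y'\|=1\}>0$ at the single point $\om$, which follows from injectivity of $\puv{\om}|_{Y'(\om)}$ (using $Y'(\om)\cap V_+(\om)=\{0\}$ and $V(\om)\cap U_-(\om)=\{0\}$) together with compactness of the unit sphere of the finite-dimensional space $Y'(\om)$; no control of angles along the orbit, recurrence, or temperedness of this quantity is needed (and your contradiction sketch, which compares different unit vectors $y'_n$ against a pointwise limit for a fixed $y'$, would not close as stated; also $\lam>0$ is neither known nor needed). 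With these corrections your reduction does prove the lemma, but as written the central estimate is invalid.
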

\begin{proof}
  The proof follows from the corresponding statement for matrices due
  to Barreira and Silva \cite{BarreiraSilva}, as in \cite[Lemma
  19]{FLQ2}, with the only difference being the choice of suitable
  bases for $Y'(\om)$, which in our setting may be done in a
  measurable way similar to that used for the proof of
  Lemma~\ref{lem:goodComplement}.
\end{proof}

\begin{proof}[Proof of Lemma~\ref{thm:OseledetsSplitting}]
\ \\ \textbf{Proof of \eqref{lem:conv}.}\\
The proof follows closely that presented in \cite[Theorem 4.1]{FLQ1}, for
the case of matrices. First, we define
\[
g_n(\om)=\max_{u\in U(\om),
  \|u\|=1}\frac{\|\mcl_{10}^{(n)}(\sig^{-n}\om)u\|}
{\|\mcl_{11}^{(n)}(\sig^{-n}\om)u\|}.
\]
Then, using the characterizations from \eqref{eq:L11^n} and
\eqref{eq:L10^n} together with invertibility of $\mcl_{11}(\om)$, we have
that
\[
g_n(\om) \leq \sum_{i=0}^{n-1}\frac{\max_{u\in U(\sig^{-i-1}\om),
\|u\|=1} \|\mcl_{00}^{i}(\sig^{-i}\om) \mcl_{10}(\sig^{-i-1}\om) u
\|}{\min_{u\in U(\sig^{-i-1}\om), \|u\|=1}
\|\mcl_{11}^{i+1}(\sig^{-i-1}\om) u\| }.
\]
Let $\ep<\frac{\lam - \mu}{4}$. Using Sublemma~\ref{slem:growthrates}, we
have that for $\bbp$-almost every $\om$, there is a constant
$C'(\om)<\infty$ such that
\[
g_n(\om) \leq C'(\om) \sum_{i=0}^{n-1} e^{(\mu -\lam +3\ep)i}.
\]
Hence, $M(\om):=\sup_{n\in \N} g_n(\om)<\infty$ for $\bbp$-almost every
$\om$.

Next, we show that the sequence of subspaces $Y^{(n)}(\om)=
\mcl_{\sig^{-n}\om}^{(n)} U(\sig^{-n}\om)$ forms a Cauchy sequence in
$\mc{G}_k(X)$.

Let $m>n$.
By homogeneity of the norm, the expression
\[
\max\{\sup_{x\in Y^{(n)}(\om) \cap B} d(x, Y^{(m)}(\om)\cap B),
\sup_{x\in Y^{(m)}(\om) \cap B}d(x, Y^{(n)}(\om)\cap B) \}
\]
coincides with
\[
\max\{\sup_{x\in Y^{(n)}(\om) \cap S(X)} d(x, Y^{(m)}(\om)\cap B),
\sup_{x\in Y^{(m)}(\om) \cap S(X)}d(x, Y^{(n)}(\om)\cap B) \},
\]
where $S(X)$ is the unit sphere in $X$.

First, let $x\in Y^{(n)}(\om) \cap S(X)$. Then $x=
\mcl^{(n)}_{\sig^{-n}\om}u$, with $u\in U(\sig^{-n}\om)$. Since
$\mcl_{11}^{(m-n)}(\sig^{-m}\om)$ is invertible for $\bbp$-almost every
$\om$, there exists $u'\in U(\sig^{-m}\om)$ such that
$\mcl^{(m-n)}_{\sig^{-m}\om}u'=u+v$, for some $v\in V_+(\sig^{-n}\om)$.
Since $v=\mcl_{10}^{(m-n)}(\sig^{-m}\om)u'$ and
$u=\mcl_{11}^{(m-n)}(\sig^{-m}\om)u'$, we have that $\|v\| \leq
M(\sig^{-n}\om)\|u\|$.

Let $y=\mcl^{(m)}_{\sig^{-m}\om}u' \in Y^{(m)}(\om)$. Then, $y=x+
\mcl^{(n)}_{\sig^{-n}\om}(v)$. Also, $\| \mcl^{(n)}_{\sig^{-n}\om}(v)\|
\leq D_1(\om) e^{n(\mu+\ep)} M(\sig^{-n}\om)\|u\|$, with $D_1(\om)$ as in
Sublemma~\ref{slem:growthrates}\eqref{slem:L00}. Using Sublemma
\ref{slem:growthrates}\eqref{slem:L11}, we also have that
\[
1=\|x\|=\| \mcl^{(n)}_{\sig^{-n}\om}u\| \geq C(\om) e^{n(\lam-\ep)}\|u\|.
\]
Letting $K(\om)=\frac{D_1(\om)}{C(\om)}$ and $\al=\lam -\mu-2\ep$, we get
that $d(x, Y^{(m)}(\om)) \leq \|y-x\| = \|\mcl^{(n)}_{\sig^{-n}\om}(v)\|
\leq K(\om) M(\sig^{-n}\om) e^{-\al n}$. By the triangle inequality, $
d(x, Y^{(m)}(\om) \cap B) \leq 2 d(x, Y^{(m)}(\om))$. Therefore,
\begin{equation}\label{eq:distYmYn1}
d(x, Y^{(m)}(\om) \cap B) \leq 2K(\om) M(\sig^{-n}\om) e^{-\al n}.
\end{equation}

Second, let $y\in Y^{(m)}(\om)\cap S(X)$. Then, $y=
\mcl^{(m)}_{\sig^{-m}\om}u'$ for some $u'\in U(\sig^{-m}\om)$. Let
$\mcl^{(m-n)}_{\sig^{-m}\om}u'=u+v$, with $u \in U(\sig^{-n}\om)$ and $v
\in V_{+}(\sig^{-n}\om)$.

Using once again the definition of $M(\om)$ at the beginning of the
proof, we have that $\|v\|\leq M(\sig^{-n}\om)\|u\|$, combined with
Sublemma~\ref{slem:growthrates} we obtain that
\[
\| \mcl^{(n)}_{\sig^{-n}\om}v \| \leq K(\om) M(\sig^{-n}\om)e^{-\al n} \|
\mcl^{(n)}_{\sig^{-n}\om}u\|.
\]

On the other hand, since $\mcl^{(n)}_{\sig^{-n}\om}(u+v)=y$, we have
\[
\|\mcl^{(n)}_{\sig^{-n}\om}u\| \leq \|y\| +
\|\mcl^{(n)}_{\sig^{-n}\om}v\|\leq 1+ K(\om) M(\sig^{-n}\om)e^{-\al n} \|
\mcl^{(n)}_{\sig^{-n}\om}u\|.
\]
Therefore, whenever $K(\om) M(\sig^{-n}\om)e^{-\al n}<1$, we have
\[\|\mcl^{(n)}_{\sig^{-n}\om}u\|\leq \frac{1}{1-K(\om) M(\sig^{-n}\om)e^{-\al
n}}.
\]

Since $\mcl^{(n)}_{\sig^{-n}\om}u\in Y^{(n)}(\om)$, we have
\[d(y,
Y^{(n)}(\om)) \leq
\|y-\mcl^{(n)}_{\sig^{-n}\om}u\|=\|\mcl^{(n)}_{\sig^{-n}\om}v\|\leq
\frac{K(\om) M(\sig^{-n}\om)e^{-\al n}}{1-K(\om) M(\sig^{-n}\om)e^{-\al
n}}.
\]
As before, we use the triangle inequality to conclude that
\begin{equation}\label{eq:distYmYn2}
d(y, Y^{(n)}(\om)\cap B) \leq\frac{2K(\om) M(\sig^{-n}\om)e^{-\al
n}}{1-K(\om) M(\sig^{-n}\om)e^{-\al n}}.
\end{equation}

Combining \eqref{eq:distYmYn1} and \eqref{eq:distYmYn2},
 we get that
\[
d(Y^{(n)}(\om), Y^{(m)}(\om))\leq \frac{2K(\om) M(\sig^{-n}\om)e^{-\al
n}}{1-K(\om) M(\sig^{-n}\om)e^{-\al n}}.
\]
Therefore,
\[
d(Y^{(m')}(\om), Y^{(m)}(\om))\leq \frac{4K(\om) M(\sig^{-n}\om)e^{-\al
n}}{1-K(\om) M(\sig^{-n}\om)e^{-\al n}}\text{ for every $m, m'>n$,}
\]
provided $K(\om) M(\sig^{-n}\om)e^{-\al n}<1$.

Since $M(\om)<\infty$ for $\bbp$-almost every $\om$, there exists an
$A>0$ such that $\bbp(M(\om)<A)>0$. By ergodicity of $\sig$, for
$\bbp$-almost every $\om$, there exist arbitrarily large values of $n$
for which $M(\sig^n\om)<A$, proving that $Y^{(m)}(\om)$ is a Cauchy
sequence. Therefore, it is convergent. Let us call its limit $Y(\om)$.

Measurability of $Y^{(n)}(\om)$ comes from
Corollary~\ref{cor:ImageIsMble}.  Hence, measurability of $Y(\omega)$
follows from the fact that the pointwise limit of Borel-measurable
functions from a measurable space to a metric space is
Borel-measurable.

\noindent \textbf{Proof of \eqref{lem:invcomp}.}\\
By closedness of $\mc{G}_k(X)$, we know that $Y(\om)\in \mc{G}_k(X)$.
Since $Y(\om)$ is the limit of subspaces of $V(\om)$ and $V(\om)$ is
closed, it follows that $Y(\om)\subset V(\om)$. We also have that
$V_+(\om)$ is a $k$-codimensional subspace of $V(\omega)$. Hence, to show
that $Y(\om)\oplus V_+(\om)=V(\om)$, we must show that $Y(\om)\cap
V_+(\om)=\{0\}$. Let $x\in Y^{(n)}(\om)$, with $\|x\|=1$. Then,
$x=\mcl_{\sig^{-n}\om}^{(n)}u'$ for some $u'\in U(\sig^{-n}\om)$. Writing
$x=u+v$, with $u\in U(\om)$, $v\in V_{+}(\om)$, we have that $\|v\|\leq
M(\om)\|u\|$. Thus, $1=\|x\|\leq \|u\|(1+M(\om))$, which yields
$\|u\|\geq \frac{1}{1+M(\om)}$. Since this holds for every $n$, every
$x\in Y(\om)$ with $\|x\|=1$ when decomposed as $x=u+v$ with $u\in
U(\om)$, $v\in V_{+}(\om)$, also satisfies $\|u\|\geq
\frac{1}{1+M(\om)}$. Therefore $Y(\om)\cap V_+(\om)=\{0\}$.

The second statement follows directly from Theorem~\ref{thm:Son}.

To show invariance, we observe that $\mcl_\om
Y^{(n)}(\om)=Y^{(n+1)}(\sig\om)$. Also, by the previous argument combined
with Sublemma~\ref{slem:growthrates} , we have that for every $n\geq 0$,
$(\mcl_\om, Y^{(n)}(\om)) \in G(N,k,0)$ for some $N>0$ (see notation of
Lemma~\ref{lem:ImagePWCont}), and $(\mcl_\om, Y(\om)) \in G(N,k,0)$ as
well. Hence, by Lemma~\ref{lem:ImagePWCont}, $\lim_{n \to
  \infty}\mcl_\om Y^{(n)}(\om) = \mcl_\om Y(\om)$. Thus,
$\mcl_\om(Y(\om))=Y(\sig \om)$.

\ \\ \textbf{Proof of \eqref{lem:uniq}.}\\
Let $Y_-(\om)=\bigoplus_{i<j} Y_i(\om)$.  Assume $Y'(\om)$ is a
measurable equivariant complement of $V_+(\om)$ in $V(\om)$.  We will
show that $Y'(\om)=Y(\om)$ for $\bbp$-almost every $\om \in \Om$.  Let
$R(\om)= \Pi_{V_+ \| Y \oplus Y_-(\om)}$.
Remark~\ref{rmk:ContOfProjSOTTop} and Lemma~\ref{lem:contOfDirectSum}
imply that $R$ is $(\mc{F}, \mc{S})$ measurable. Let
$h(\om)=\|R(\om)|_{Y'(\om)}\|$. Then, $h$ is non-negative and, in view of
Lemma~\ref{lem:normRestriction}, $h$ is $(\mc{S}, \mc{B}(\R))$
measurable.

We claim that for $\bbp$-almost every $\om$, $\lim_{n\to \infty} h(\sig^n
\om)=0$.  We will show this in the next paragraph. Given this, we can
finish the proof as follows.  Let $E_i=\{ \om: h(\om) \leq \frac{1}{i}
\}$.  The claim implies that $\bbp$-almost every $\om$, $\sig^n \om \in
E_i$ for sufficiently large $n$.  Hence, by the Poincar\'e recurrence
theorem, $\bbp(\Om \setminus E_i)=0$ for all $i \in \N$, and therefore,
since $h$ is non-negative, $h(\om)=0$ for $\bbp$-almost every $\om$.
This implies that $Y'(\om) \subset Y(\om) \oplus Y_-(\om)$. On the other
hand, $Y(\om), Y'(\om) \subset V(\om)$, and by part \eqref{lem:invcomp},
$V(\om) \cap Y_-(\om)=\{0\}$. So $Y'(\om) \subset Y(\om)$. Since $Y(\om)$
and $Y'(\om)$ have the same dimension, $Y'(\om) =Y(\om)$ as claimed.

The proof of $\lim_{n\to \infty} h(\sig^n \om)=0$ proceeds as in \cite[\S
3.2]{FLQ2}.  Let $y'\in Y'(\om) \setminus \{0\}$. Since $R(\om) y' \in
V_+(\om)$, Sublemma~\ref{slem:growthrates}\eqref{slem:L00} implies that
for every $\ep>0$, there exists some $D'(\om)<\infty$ such that
$\|\mcl_{\om}^{(n)} R(\om) y' \| \leq D'(\om) e^{n(\mu+\ep)} \|y'\|$. By
Remark~\ref{lem:uniformGrowth}, for every $\ep>0$, there exists some
$C'(\om)>0$ such that $\|\mcl_{\om}^{(n)} y' \| \geq C'(\om)
e^{n(\lam-\ep)}\|y'\|$.  Let $\ep<\frac{\lam - \mu}{4}$. Then,
$\frac{\|\mcl_\om^{(n)} R(\om) y' \|}{\|\mcl_\om^{(n)} y' \|}\leq
\frac{D'(\om)}{C'(\om)} e^{-n(\lam-\mu - 2\ep)}$ for every $n\geq 0$.
Consider the closed sets
\[
D_N=\{ y'\in Y'(\om): \|\mcl_\om^{(n)} R(\om) y' \| \leq N e^{-n(\lam-\mu
- 2\ep)}\|\mcl_\om^{(n)} y' \| \text{ for all } n\in \N\}.
\]
Since $\bigcup_{N\in \N} D_N=Y'(\om)$, the Baire category principle
implies that there exists $N\in \N$, $y' \in Y'(\om)$ and $\del>0$ such
that $\overline{B_\del(y')} \cap Y'(\om) \subset D_N$. By linearity of
$\mcl_\om$, $\overline{B_1(\frac{y'}{\del})} \cap Y'(\om) \subset D_N$.

Let $x \in Y'(\om)$ with $\|x\|=1$. Then, $\|\mcl_\om^{(n)} R(\om)
(\frac{y'}{\del}+x) \| \leq N e^{-n(\lam-\mu - 2\ep)} \|\mcl_\om^{(n)}
(\frac{y'}{\del}+x)\|$ and $\|\mcl_\om^{(n)} R(\om) (\frac{y'}{\del}) \|
\leq N e^{-n(\lam-\mu - 2\ep)} \|\mcl_\om^{(n)} (\frac{y'}{\del})\|$.

By invariance of $V_+(\om), Y(\om)$ and $Y_-(\om)$, we see that $R(\sig^n
\om) \mcl_\om^{(n)}= \mcl_\om^{(n)} R(\om)$. Hence,
\begin{align*}
  \|R(\sig^n \om) \mcl_\om^{(n)}(x)\| &\leq N e^{-n(\lam-\mu - 2\ep)}
  \Big(\|\mcl_\om^{(n)} (\frac{y'}{\del})\|+ \|
  \mcl_\om^{(n)} (\frac{y'}{\del}+x) \| \Big)\\
  &\leq 2 N e^{-n(\lam-\mu - 2\ep)} C''(\om)e^{n(\lam+\ep)}
  \Big(\|\frac{y'}{\del}\|+1 \Big),
\end{align*}
where the existence of such $C''(\om)<\infty$ is guaranteed by
Theorem~\ref{thm:Son}.  Furthermore, using invariance of $Y'(\om)$, we
get that $\mcl_\om^{(n)}(Y'(\om))=Y'(\sig^n \om)$. Thus,
\begin{align*}
  h(\sig^n \om) &\leq \frac{\sup_{x \in Y'(\om) \cap S^1(X)}
    \|R(\sig^n \om) \mcl_\om^{(n)}(x)\|}{\inf_{x \in Y'(\om) \cap
      S^1(X)}
    \|\mcl_\om^{(n)}(x)\| }\\
  & \leq \frac{2 N C''(\om) (\|\frac{y'}{\del}\|+1)}{C'(\om)}
  e^{-n(\lam-\mu - 4\ep)},
\end{align*}
where the last inequality follows from the existence of constants
$C'(\om)$ and $C''(\om)$ as before.  By the choice of $\ep$, we get
that $\lim_{n\to \infty} h(\sig^n \om)=0$ as claimed.

\ \\ \textbf{Proof of \eqref{lem:temp}.}\\
We want to show that for $i=1,2$ and $\bbp$-almost every $\om\in \Om$,
the following holds
\[
 \lim_{n\to \pm \infty} \frac{1}{n} \log \|\Pi_i (\sig^{n}\om)\|=0.
\]
Since the maps $\Pi_i(\om)$ are projections to non-trivial subspaces,
it follows that all the norms involved are at least 1. We will show
upper bounds. In view of Lemma~\ref{lem:TempFwd=TempBwd}, it suffices
to show
\[
 \lim_{n\to \infty} \frac{1}{n} \log \|\Pi_i (\sig^{-n}\om)\|=0.
\]
It suffices to show that for each $j$, $\lim_{n\to \infty} \frac{1}{n}
\log \|\Pi_{Y_j||V_{j+1}(\sigma^{-n}\omega)}\|= 0$, where the map is
defined on $V_j(\sig^{-n}\om)$.

Fix $j$, and suppose $1/n \log
\|\Pi_{Y_j||V_{j+1}(\sigma^{-n}\omega)}\| \not\to 0$ as $n \to
\infty$.  Then there exists a sequence $n_1<n_2<\ldots$ such that
$\log \| \Pi_{Y_j || V_{j+1}(\sigma^{-n_i}\omega)} \| > \epsilon n_i$.

Pick $\delta < \min(\epsilon/2, (\lambda_j-\lambda_{j+1})/2)$. Then, for
$\bbp$-almost every $\om$, there exist $C(\omega)$, $D(\omega)$ and
$E(\omega)$ such that
\begin{align*}
\|\mathcal L^{(n)}(\sigma^{-n}\omega)|_{V_{j+1}}\| &<
C(\omega)e^{(\lambda_{j+1}+\delta)n}\text{ for all $n$;}\\
\|\mathcal L^{(n)}(\sigma^{-n}\omega)(y)\| &>
D(\omega)e^{(\lambda_j-\delta)n}\text{ for all $n$ and $y \in B\cap
Y_j(\sigma^{-n}\omega)$;}\\
\|\mathcal L^{(n)}(\sigma^{-n}\omega)|_{V_j}\| &<
E(\omega)e^{(\lambda_j+\delta)n}\text{ for all $n$.}
\end{align*}

Now by hypothesis there exists a sequence $y_{n_i} + v_{n_i} \in
Y_j(\sigma^{-n_i}\omega) \oplus V_{j+1}(\sigma^{-n_i}\omega)$ with
$\|y_{n_i}\|\approx 1$, $\|v_{n_i}\|\approx 1$, but
$\|y_{n_i}+v_{n_i}\| < e^{-\epsilon n_i}$.  Then, we have
\begin{align*}
  \|\mathcal L^{(n_i)}(\sigma^{-n_i}\omega)(y_{n_i})\| &>
  D(\omega)e^{(\lambda_j-\delta)n_i};\\
  \|\mathcal L^{(n_i)}(\sigma^{-n_i}\omega)(v_{n_i})\| &<
  C(\omega)e^{(\lambda_{j+1}+\delta)n_i};\\
  \|\mathcal L^{(n_i)}(\sigma^{-n_i}\omega)(y_{n_i}+v_{n_i})\| &<
  E(\omega)e^{(\lambda_j+\delta)n_i}\|y_{n_i}+v_{n_i}\|\\
  &<E(\omega)e^{(\lambda_j + \delta -\epsilon) n_i}.
\end{align*}

The triangle inequality gives
\[
D(\omega)e^{(\lambda_j-\delta)n_i} < E(\omega)e^{(\lambda_j+\delta-
\epsilon)n_i} + C(\omega)e^{(\lambda_{j+1}+\delta)n_i}.
\]
Since $\lambda_j-\delta > \max(\lambda_j+\delta-\epsilon
,\lambda_{j+1}+\delta)$ this gives a contradiction for sufficiently large
$n_i$.  Hence, $\lim_{n\to \infty} \frac{1}{n}\log \| \Pi_2 (\sig^{-n}
\om) \|= 0$ for $\bbp$-almost every $\om$, as claimed.

Using that $\Pi_1+\Pi_2=\mathrm{Id}$, the corresponding statement for
$\Pi_1$ follows immediately.
\end{proof}
\section{Oseledets splittings for random Piecewise Expanding
maps}\label{sec:RandomLY}

In this section, we present an application of the semi-invertible
operator Oseledets theorem, Theorem~\ref{thm:GranderOseledetsSplitting},
to the setting of random piecewise expanding maps. It is worked out in
considerable generality in the one-dimensional setting and in a special
case in higher-dimensions, based on results of Cowieson. The main result
of this section is Theorem~\ref{thm:OseledetsSplitting4LYMaps}.

We first discuss the class of piecewise expanding maps and random
piecewise expanding dynamical systems in \S\ref{subS:LYMaps}. Then,
fractional Sobolev spaces and some of their relevant properties are
briefly reviewed in \S\ref{subS:fractionalSobolev}. In
\S\ref{subS:TransferOperator}, we recall the definition of the transfer
operator of a piecewise expanding map acting on a fractional Sob\-olev
space. Strong measurability is established in the one-dimensional case.
Quasi-compactness is proved in \S\ref{subS:QC}.

\subsection*{Convention}
Throughout this section, $C_\#$ will denote various constants that are
allowed to depend only on parameters $d, p, t$ and $\al$, as well as on a
$C^\infty$ compactly supported function $\eta:\R^d \to [0,1]$, that is
chosen and fixed depending only on the dimension $d$, as appears in
Thomine \cite{Thomine} and Baladi and Gou\"ezel \cite{BaladiGouezel}.

\subsection{Random Piecewise Expanding Dynamical Systems}\label{subS:LYMaps}

\begin{defn}\label{defn:PWExpMap}
 A map $T$ is called a \emph{piecewise expanding $C^{1+\alpha}$
map} of a compact region $X_0\subset \R^d$ if:
\begin{itemize}
\item
There is a finite (ordered) collection of disjoint subsets of $X_0$,
$O^1,\ldots,O^I$, each connected and open in $\R^d$,  whose boundaries are unions of
finitely many compact $C^1$ hypersurfaces with boundary, and whose union
agrees with $X_0$ up to a set of Lebesgue measure 0;
\item For each $1\le i\le I$, $T|_{O_i}$ agrees with a $C^{1+\alpha}$ map
$T_i$ defined on a neighbourhood of $\overline{O_i}$ such that $T_i$ is a
diffeomorphism onto its image.
\item There exists $\mu>1$ such that for all $x\in \overline {O_i}$,
$\|DT_i(x)(v)\|\ge \mu \|v\|$ for all $v\in\R^d$.
\end{itemize}
\end{defn}

We define $\nint^T:=I$ to be the number of branches of $T$. The collection
$\{O^1,\ldots,O^I\}$ is called the \emph{branch partition} of $T$.
 The collection of $C^{1+\alpha}$ expanding maps of $X_0$ will be denoted
$\text{PE}^{1+\alpha}(X_0)$. The collection of $C^{1+\alpha}$ expanding
maps with a particular branch partition $\mathcal P$ will be denoted by
$\text{PE}^{1+\alpha}(X_0;\mathcal P)$. In the special case where $d=1$
and $X_0=[0,1]$, we denote the collection of maps satisfying the above
conditions $\LY^{1+\alpha}$. In this case, the elements of $\mc{P}$ are intervals.

Finally, we define a metric $d_{\PE}$ on $\PE^{1+\alpha}(X_0)$ as
follows. Let $S,T\in\PE^{1+\alpha}(X_0)$. Let the branches for $S$ be
$(O^S_i)_{i=1}^{\nint^S}$ and for $T$ be $(O^T_i)_{i=1}^{\nint^T}$
(recall that a piecewise expanding map is assumed to consist of an
\emph{ordered} collection of domains and maps). If $\nint^T\ne \nint^S$,
or $O^S_i\cap O^T_i=\emptyset$ for some $i$, we define $d_{\PE}(S,T)=1$.
Otherwise we define
\[
d_{\PE}(S,T)=
   \max_i \|(S_i-T_i)\vert_{O_i^S\cap O_i^T}\|_{C^{1+\alpha}} + \max_i\Big | \|S_i \|_{C^{1+\al}}-  \|T_i \|_{C^{1+\al}} \Big| +\max_i
   d_H(O_i^S, O_i^T),
\]
where $d_H$ denotes Hausdorff distance. In the one-dimensional case we
call the metric $d_\LY$. We endow $\PE^{1+\alpha}(X_0)$ with the Borel
$\sigma$-algebra.

\begin{rmk}
There is a definition of distance for Lasota-Yorke maps, related to the
Skorohod metric, that has been previously used in the literature; see for
instance Keller and Liverani. \cite{KellerLiverani99}. That notion of
distance is not adequate for our purposes, because it allows maps to
behave badly in sets of small Lebesgue measure.
\end{rmk}

\begin{defn}\label{defn:RandomLYmap}
A \textbf{random $C^{1+\alpha}$ piecewise expanding dynamical system} on
a domain $X_0$ is given by a tuple $(\Om,\mc{F},\bbp,\sig,T)$ where
$(\Om,\mc{F},\bbp,\sig)$ is a probability preserving
transformation and $T:\Omega\to\PE^{1+\alpha}(X_0)$ satisfying
\begin{itemize}

\item[R1.] (Measurability)
$T:\Om \rightarrow \PE^{1+\al}(X_0)$ given by $\om \mapsto T_\om$ is
a measurable function.
\item[R2.] (Number of branches)
The function $\om \mapsto \nint^{T_\om}$ is $\bbp$-log-integrable,
$\nint^{T_\om}$ being the number of branches of $T_\om$.
\item[R3.] (Distortion)
There exists a constant $\dist$ such that $\| DT_\om \|_{\al} \leq \dist$
for $\bbp$-almost every $\om \in \Om$.
\item[R4.](Minimum expansion)
There exists $a<1$ such that for $\bbp$-almost every $\om \in \Om$, 
$\| \mu_{T_\om}^{-1}\|_{\infty}\leq a$, where
$\mu_{T_\om}(x):=\inf_{\|v\|=1} \|DT_{\om}(x)v\|$.
\item[R5.](Branch geometry)
There exists a constant $L$ such that for $\bbp$-almost every
$\om\in\Om$, each branch domain $O_i$ is bounded by at most $L$ $C^1$
hypersurfaces.
\end{itemize}
\end{defn}

A random piecewise expanding dynamical system will denote a random
$C^{1+\alpha}$ piecewise expanding dynamical system for some $0<\al \leq
1$. In the case where $d=1$ and $X_0=[0,1]$, we refer to these systems as
\textbf{random Lasota-Yorke type dynamical systems}. We will also refer
to random $C^2$ piecewise expanding dynamical systems (with the obvious
definition).

\subsection{Fractional Sobolev spaces}\label{subS:fractionalSobolev}

Here we introduce spaces of functions suitable for our purposes. Their
choice is motivated by recent work of Baladi and Gou\"ezel
\cite{BaladiGouezel}. Much of the development in this subsection
parallels that done in \cite{BaladiGouezel} (see also Thomine's work
\cite{Thomine}, the specialization of \cite{BaladiGouezel} to the
expanding case).

While the other works consider the case of a single map, we work with
random dynamical systems. One new feature is that we need to ensure the
strong measurability of the family of Perron-Frobenius operators. In the
context of a single map, it is often sufficient to prove inequalities
with constants depending on the map, showing only that the constants are
finite. A second new feature in the random context is that one needs to
maintain control of the quantities describing compositions of maps as the
inequalities are iterated.

For this reason we give references to the earlier works where possible
and emphasize those points where differences arise.

Let $t\geq 0$ and $1<p<\infty$. Let $\lhpt(\R^d)$, or simply $\lhpt$, be the image of $L_p(\R^d)$
under the injective linear map $\mc{J}_t: L_p(\R^d) \rightarrow L_p(\R^d)$
given by
\[
\mc{J}_t(g)= \mc{F}^{-1}(a_{-t} \mc{F}(g)),
\]
where $\mc{F}$ denotes the Fourier transform, and $a_t(\zeta):=
(1+|\zeta|^2)^{\frac{t}{2}}$.
$\lhpt$ endowed with the norm
\[
\|f\|_{\lhpt}:=\|\mc{F}^{-1}(a_t \mc{F}(f))\|_{L_p(\R^d)}
\]
is a Banach space known as (local) fractional Sobolev space. Thus,
$\mc{J}_t$ is a surjective isometry from $L_p(\R^d)$ to $\lhpt$. Since
$L_p(\R^d)$ is separable and reflexive, its  isometric image  $\lhpt$ is also separable and reflexive.  Also,
the space of differentiable functions with compact support,
$C_0^\infty(\R^d)$, is dense in $\lhpt$. See for example Strichartz
\cite{Strichartz} and references therein for these and other properties
of $\lhpt$.

This subsection closely follows Baladi and Gou\"ezel. In what follows, we
assume that $p>1$ and $0<t<\min\{\al, \frac{1}{p}\}$. The following
properties will be used in the sequel. The first one is taken from
Triebel \cite[Corollary 4.2.2]{Triebel}, and concerns multiplication by
H\"older functions. The second one is a refinement of a result of
Strichartz \cite[Corollary II3.7]{Strichartz}, and deals with
multiplication by characteristic functions of intervals. The third one deals with multiplication by characteristic functions of higher dimensional sets.
The last one is related to a result of Baladi and Gou\"ezel
\cite[Lemma~25]{BaladiGouezel}, about composition with smooth functions.

\begin{lem}[Multiplication by $C^\al$ functions]\label{lem:multByHolderFunction}
(Triebel \cite[Corollary 4.2.2]{Triebel})\\
There exists a constant $C_\#$, depending only on $t$ and $\al$, such
that for any $g \in C^\al(\R^d,\R)$, and for any $f \in \lhpt$, we have that
$fg\in \lhpt$, with
\[
\| gf \|_{\lhpt} \leq C_\# \|g\|_{C^\al}\|f\|_{\lhpt}.
\]
\end{lem}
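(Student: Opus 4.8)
The plan is to deduce this from the theory of pointwise multipliers on Triebel--Lizorkin spaces. First I would record the standard identification: since $1<p<\infty$, the Bessel potential space $\lhpt$ coincides, with equivalent norms, with the Triebel--Lizorkin space $F^t_{p,2}(\R^d)$ — this follows from the Mikhlin multiplier theorem applied to the Fourier multipliers $a_{\pm t}$ restricted to a dyadic Littlewood--Paley resolution, and the equivalence constants depend only on $d$, $p$ and $t$. Likewise, for $0<\al\le 1$ the H\"older space $C^\al(\R^d)$ embeds continuously into the H\"older--Zygmund space $B^\al_{\infty,\infty}(\R^d)$, with constant depending only on $\al$ and $d$. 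Hence it is enough to establish the multiplier estimate $\|gf\|_{F^t_{p,2}}\le C_\#\,\|g\|_{B^\al_{\infty,\infty}}\|f\|_{F^t_{p,2}}$ under the standing hypothesis $0<t<\al$.

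For the multiplier estimate I would use Bony's paraproduct decomposition $gf=T_gf+T_fg+R(g,f)$ and bound the three pieces separately, using the Littlewood--Paley square-function description of $F^t_{p,2}$ together with the Fefferman--Stein vector-valued maximal inequality. The low--high paraproduct $T_gf$ is dominated by $\|g\|_{L^\infty}\|f\|_{F^t_{p,2}}\le\|g\|_{B^\al_{\infty,\infty}}\|f\|_{F^t_{p,2}}$, using only that its $j$-th block has frequency support in an annulus of radius $\sim 2^j$ and that $g$ at low frequencies is bounded; this term works for any $t\in\R$. The resonant term $R(g,f)$ is summable because $\|\Delta_jg\|_{L^\infty}\lesssim 2^{-j\al}\|g\|_{B^\al_{\infty,\infty}}$ with $\al>0$, after grouping its blocks (whose frequency supports are balls, not annuli) via the usual summation lemma. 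The high--low paraproduct $T_fg$ has $j$-th block of size $\lesssim 2^{-j\al}\|g\|_{B^\al_{\infty,\infty}}\|S_{j-2}f\|_{L^p}$, and since $t>0$ gives $\|S_{j-2}f\|_{L^p}\lesssim\|f\|_{L^p}\lesssim\|f\|_{\lhpt}$, the $\ell^2_j$-summation of the factors $2^{j(t-\al)}$ converges precisely because $t<\al$; this is the one place where the standing hypothesis $t<\al$ (part of $0<t<\min\{\al,1/p\}$) is genuinely used.

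The main difficulty is not any individual inequality but the uniformity bookkeeping: one must check that every implicit constant entering above — the $\lhpt\leftrightarrow F^t_{p,2}$ norm equivalence, the $C^\al\hookrightarrow B^\al_{\infty,\infty}$ embedding, the Fefferman--Stein inequality, and the constants in the Littlewood--Paley characterization of the $F$-norm — depends only on $d,p,t,\al$ and never on $f$ or $g$, so that once $d$ and $p$ are fixed (as in the standing convention of this section) the resulting $C_\#$ depends only on $t$ and $\al$, as claimed. An alternative and cleaner write-up is simply to invoke Triebel \cite[Corollary 4.2.2]{Triebel} once the identification $\lhpt=F^t_{p,2}$ is in hand, noting that an inspection of that proof gives the asserted dependence of the constant; the paraproduct computation sketched above is in effect a proof of that corollary in the special case needed here.
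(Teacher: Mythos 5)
The paper does not actually prove this lemma: it is imported verbatim from Triebel \cite[Corollary 4.2.2]{Triebel}, so there is no argument of the paper's to compare against beyond the citation itself. Your sketch supplies a genuine proof, and it is the standard one: identify $\lhpt$ with $F^t_{p,2}$ (Littlewood--Paley/Mikhlin, $1<p<\infty$), embed $C^\al\hookrightarrow B^\al_{\infty,\infty}$, and run Bony's decomposition $gf=T_gf+T_fg+R(g,f)$. Your accounting of where each hypothesis enters is correct: $T_gf$ needs only $g\in L^\infty$; $T_fg$ needs $t<\al$ (and $p>1$ for the maximal-function bound $\|S_jf\|$ against $\|f\|_p\le\|f\|_{\lhpt}$); the remainder needs the summation lemma for ball-supported blocks, whose real requirement is that the output regularity $t+\al$ be positive --- your phrasing attributes this to $\al>0$ alone, but since the standing assumption gives $t>0$ as well this is harmless. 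Two small points worth noting in a careful write-up: for $\al=1$ one only has $C^1\subset B^1_{\infty,\infty}$ (Zygmund class), not equality, but the embedding with a dimensional constant is all you use; and the constant dependence you track ($d,p,t,\al$) is exactly what the paper's convention for $C_\#$ permits, so the statement's ``depending only on $t$ and $\al$'' is consistent once $d,p$ are regarded as fixed. In short: your argument is correct and self-contained where the paper simply cites the literature; what the citation buys is brevity, what your paraproduct computation buys is an explicit verification of the uniformity of the constant, which is the point the authors themselves emphasize when adapting single-map estimates to the random setting.
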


\begin{lem}[Multiplication by characteristic functions in one
dimension]\label{lem:multBySmallSupport}\
\begin{itemize}
\item[(a)] (Strichartz \cite[Corollary II3.7]{Strichartz})
There exists some constant $C_\#$ depending only on $t$ and $p$ such that
for every $f\in \lhpt$, and interval $I'\subset \R$,
$\|1_{I'}f\|_{\lhpt}\leq C_\#\|f\|_{\lhpt}$.
\item[(b)]
 Let $f\in \lhpt$. Then, for every $\ep>0$ there exists $\del>0$
 such that whenever $I' \subset \R$ is an interval
 of
length at most $\del$, then $\|1_{I'}f\|_{\lhpt}\leq \ep$.
\end{itemize}
\end{lem}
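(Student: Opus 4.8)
The plan is to prove part (b) from part (a) by an approximation argument, exploiting the density of nice functions in $\lhpt$ together with the uniform bound in (a). First I would fix $f\in\lhpt$ and $\ep>0$. Since $C_0^\infty(\R^d)$ (and in particular smooth compactly supported functions on $\R$) is dense in $\lhpt$, choose $g\in C_0^\infty(\R)$ with $\|f-g\|_{\lhpt}<\ep/(2C_\#)$, where $C_\#$ is the constant from part (a). For any interval $I'$ we then split
\[
\|1_{I'}f\|_{\lhpt}\leq \|1_{I'}(f-g)\|_{\lhpt}+\|1_{I'}g\|_{\lhpt}
\leq C_\#\|f-g\|_{\lhpt}+\|1_{I'}g\|_{\lhpt}<\tfrac{\ep}{2}+\|1_{I'}g\|_{\lhpt},
\]
using part (a) on the first term. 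So it remains to bound $\|1_{I'}g\|_{\lhpt}$ uniformly in small intervals $I'$, for the fixed smooth function $g$.

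For the smooth term, the point is that a short interval carries little mass, and multiplication by $1_{I'}$ only introduces jump discontinuities at the two endpoints of $I'$, whose contribution is controlled by the (bounded) values of $g$ there. Concretely, I would write $1_{I'}g = g\cdot 1_{(-\infty,b]} - g\cdot 1_{(-\infty,a]}$ where $I'=(a,b]$, reducing to estimating $\|g\cdot 1_{(-\infty,c]}\|_{\lhpt}$-type quantities, or more directly observe that $1_{I'}g$ is supported on $I'$, is bounded by $\|g\|_\infty$, and that one can bound its $\lhpt$-norm by interpolating between its (small) $L_p$ norm, which is at most $\|g\|_\infty|I'|^{1/p}\to 0$, and a controlled quantity coming from the jumps. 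Since $t<1/p$, the relevant local Sobolev norm of a characteristic function $1_{I'}$ itself is $O(|I'|^{1/p-t})$ up to the endpoint-jump terms, and multiplying by the smooth bounded $g$ costs only a factor of $\|g\|_{C^\al}$ by Lemma~\ref{lem:multByHolderFunction}. Either way one obtains $\|1_{I'}g\|_{\lhpt}\le \omega_g(|I'|)$ for some modulus $\omega_g$ with $\omega_g(\del)\to 0$ as $\del\to 0$; then pick $\del$ so small that $\omega_g(\del)<\ep/2$, giving $\|1_{I'}f\|_{\lhpt}<\ep$ whenever $|I'|\le\del$.

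The main obstacle is the estimate for the smooth piece: one has to make precise the claim that $\|1_{I'}g\|_{\lhpt}\to 0$ as $|I'|\to 0$ for fixed $g\in C_0^\infty$. The cleanest route is probably to reduce, via Lemma~\ref{lem:multByHolderFunction} applied with the $C^\al$ (indeed $C^\infty$) function $g$, to showing $\|1_{I'}\|_{\lhpt}\to 0$ — but this is false, since $\|1_{I'}\|_{\lhpt}\ge c>0$ does not tend to $0$ (it behaves like $|I'|^{1/p-t}$, which does go to $0$ since $t<1/p$, so in fact this does work). More carefully: $\|1_{I'}\|_{H_p^t}$ scales like $|I'|^{1/p-t}$, and since $t<1/p$ this exponent is positive, so $\|1_{I'}\|_{\lhpt}\to 0$ as $|I'|\to 0$; combined with Lemma~\ref{lem:multByHolderFunction} this gives $\|1_{I'}g\|_{\lhpt}\le C_\#\|g\|_{C^\al}\|1_{I'}\|_{\lhpt}\to 0$, as needed. (A subtlety to check is whether the scaling estimate for $\|1_{I'}\|_{H_p^t}$ can be quoted directly from Strichartz's work or needs a short dyadic/Littlewood–Paley verification; since only $\del$-dependence for fixed exponents is needed, even a crude bound suffices.) I would therefore structure the write-up as: (i) density reduction to the smooth case; (ii) quote or derive $\|1_{I'}\|_{\lhpt}\to 0$; (iii) apply Lemma~\ref{lem:multByHolderFunction}; (iv) combine.
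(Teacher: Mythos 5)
Your proposal is correct and follows essentially the same route as the paper: approximate $f$ by a smooth $g$ and control $\|1_{I'}(f-g)\|_{\lhpt}$ uniformly via part (a), then bound $\|1_{I'}g\|_{\lhpt}\le C_\#\|g\|_{C^\al}\|1_{I'}\|_{\lhpt}$ by Lemma~\ref{lem:multByHolderFunction} and use $\|1_{I'}\|_{\lhpt}\le C_\#\delta^{1/p-t}$ with $t<1/p$. The momentary hesitation about whether $\|1_{I'}\|_{\lhpt}\to 0$ is resolved correctly in your own text, so no gap remains.
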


\begin{proof}[Proof of (b)]
Since $C_0^\infty(\R)$ is dense in $\lhpt$, there exists (using part (a))
$g\in C^\alpha(\R)\cap \lhpt$ such that $\|1_{I'}(f-g)\|_{\lhpt}\le
\ep/2$ for all intervals $I'$. By Lemma \ref{lem:multByHolderFunction} we
have $\|1_{I'}g\|_{\lhpt} \leq C_\# \|g\|_{C^\al}\|1_{I'}\|_{\lhpt}$. If
$I'$ is of length at most $\delta$, then $\|1_{I'}\|_\lhpt\le
C_\#\delta^{1/p-t}$. Hence if $\delta>0$ is chosen sufficiently small we
obtain that for all intervals $I'$ of length at most $\delta$,
$\|1_{I'}g\|_\lhpt\le \ep/2$ completing the proof.
\end{proof}

\begin{lem}[Multiplication by characteristic functions of nice sets]\label{lem:multBySmallSupportHD}\ 
\begin{itemize}
\item[(a)]
(Strichartz \cite[Corollaries II3.7 and II4.2]{Strichartz})
There exists some constant $C_\#$ depending only on $t$ and $p$ such that
for every set $O\subset \R^d$ intersecting almost every line parallel to some coordinate axis in at most $L$ connected components, and for every $f\in \lhpt$, we have that
$\|1_{O}f\|_{\lhpt}\leq C_\# L \|f\|_{\lhpt}$.
\item[(b)]
(Sickel \cite[Proposition 4.8]{Sickel})
Let $\mc{P}$ be a branch partition as in Definition~\ref{defn:PWExpMap}.
Then, there exists a constant $C$ depending on $\mc{P}$ such that for every $O\in \mc{P}$ and every $f$ in $\lhpt$, $1_{O}f \in \lhpt$ with 
$\|1_{O}f\|_{\lhpt}\leq C\|f\|_{\lhpt}$.
\end{itemize}
\end{lem}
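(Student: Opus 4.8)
\noindent\textbf{Proof strategy for Lemma~\ref{lem:multBySmallSupportHD}.}
The plan is to deduce part (a) from the one-dimensional multiplier bound of Lemma~\ref{lem:multBySmallSupport}(a) by slicing $O$ along the coordinate axes, and to obtain part (b) by verifying that a branch domain meets the hypotheses of Sickel's pointwise multiplier criterion \cite[Proposition 4.8]{Sickel}.

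For (a) I would work one coordinate direction at a time. Fix $1\le i\le d$ and write $x=(s,\hat x)$ with $s\in\R$ the $i$-th coordinate and $\hat x\in\R^{d-1}$ the remaining coordinates. For almost every $\hat x$ the slice $O_{\hat x}=\{s\in\R:(s,\hat x)\in O\}$ is a union of at most $L$ intervals, so $1_O(\cdot,\hat x)f(\cdot,\hat x)$ is a sum of at most $L$ functions of the form $1_{I'}f(\cdot,\hat x)$; applying Lemma~\ref{lem:multBySmallSupport}(a) to each summand and the triangle inequality in $\lhpt(\R)$ gives
\[
\|1_O(\cdot,\hat x)\,f(\cdot,\hat x)\|_{\lhpt(\R)}\le C_\#\,L\,\|f(\cdot,\hat x)\|_{\lhpt(\R)},
\]
with $C_\#$ depending only on $t$ and $p$. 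Raising to the power $p$, integrating in $\hat x$, and invoking the two standard facts that (i) $\int_{\R^{d-1}}\|g(\cdot,\hat x)\|_{\lhpt(\R)}^p\,d\hat x$ is comparable to $\|g\|_{L_p(\R^d)}^p+\|D^t_{x_i}g\|_{L_p(\R^d)}^p$, where $D^t_{x_i}$ denotes the fractional derivative of order $t$ in the $i$-th variable, and (ii) the isotropic norm $\|g\|_{\lhpt(\R^d)}$ is comparable to $\|g\|_{L_p(\R^d)}+\sum_{j=1}^d\|D^t_{x_j}g\|_{L_p(\R^d)}$, I obtain $\|1_Of\|_{L_p}+\|D^t_{x_i}(1_Of)\|_{L_p}\le C_\#L\,\|f\|_{\lhpt(\R^d)}$ for each $i$. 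Summing over $i=1,\dots,d$, applying (ii) once more to the function $1_Of$, and using $\|1_Of\|_{L_p}\le\|f\|_{L_p}$ then yields $\|1_Of\|_{\lhpt}\le C_\#L\,\|f\|_{\lhpt}$, proving (a). The comparabilities (i) and (ii), valid in the admissible range $0<t<1/p$, are precisely the content of Strichartz \cite[Corollaries II3.7 and II4.2]{Strichartz}.

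For (b) I would start from the structure of a branch domain $O\in\mc{P}$: it is a bounded open connected set whose topological boundary lies in a finite union of compact $C^1$ hypersurfaces with boundary. The idea is to check that this geometric regularity implies the hypotheses under which Sickel \cite[Proposition 4.8]{Sickel} shows that $1_O$ is a bounded multiplier of $\lhpt$ when $0<t<1/p$ (a porosity, or lower Ahlfors-regularity, condition on $\partial O$ together with thickness of $O$ near its boundary), and to read off the resulting constant $C=C(\mc{P})$, which depends on curvature bounds for the hypersurfaces and on a lower bound for their mutual separation; this dependence is harmless since $\mc{P}$ is fixed throughout. The standing assumption $t<\min\{\al,1/p\}$ guarantees $t<1/p$, which is precisely the range in which $1_O$ belongs to, and multiplies boundedly on, $\lhpt$.

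The slicing and reassembly in (a) are routine; the only care needed there is to apply the one-dimensional estimate and the norm equivalences with constants that do not see the individual intervals or the set $O$, so that the combinatorial factor $L$ is genuinely isolated and $C_\#$ depends only on $(d,p,t)$. The main obstacle lies in (b): pinning down exactly which quantitative boundary condition Sickel's proposition requires and confirming that it is implied by the description of $\partial O$ as a finite union of compact $C^1$ hypersurfaces with boundary, together with checking that this is the correct result to invoke for the critical exponent range $0<t<1/p$ forced by our standing hypotheses.
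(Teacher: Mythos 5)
The paper offers no proof of this lemma beyond the two citations in its statement: part (a) is imported from Strichartz and part (b) from Sickel, so there is nothing internal to compare against line by line. Your treatment of (a) is a correct, more self-contained route: the slicing argument plus the two norm comparabilities you invoke is exactly the Fubini-type characterization of $\lhpt$ (valid for $1<p<\infty$, $0<t<1/p$), and combining it with the one-dimensional bound of Lemma~\ref{lem:multBySmallSupport}(a) does isolate the factor $L$ with a constant depending only on $d,p,t$ — just note that the hypothesis must be read (as you implicitly do) as a bound on the components of $O$ along almost every axis-parallel line in \emph{every} coordinate direction; control in a single direction would not suffice, since the directional fractional derivatives in the remaining variables are not controlled. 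For (b) you follow the same route as the paper, namely quoting Sickel's multiplier criterion, and the verification you flag as the main obstacle does go through: the boundary of a branch domain in Definition~\ref{defn:PWExpMap} is a finite union of compact $C^1$ hypersurfaces with boundary, hence a $(d-1)$-dimensional Ahlfors-regular set (each piece is, and finite unions preserve this), which is the geometric hypothesis in Sickel's result, and the admissible exponent window there is $\max(1/p-1,\,d(1/p-1))<t<1/p$, whose lower bound is negative for $p>1$, so the standing assumption $0<t<\min\{\al,1/p\}$ places us squarely in range; the resulting constant depends on $\mc{P}$, as the lemma allows. So your proposal is correct; what it buys over the paper is an explicit reduction of (a) to the one-dimensional case, while the paper simply cites the corresponding corollaries, and in (b) the only missing step is the routine regularity check just described.
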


\begin{lem}[Composition with $C^{1}$
diffeomorphisms]\label{lem:compWithSmoothFunctions}\
\begin{itemize}
\item[(a)] (Thomine \cite[Lemma 4.3]{Thomine})
Let $A: \R^d \circlearrowleft$ be a linear map. Then, there
exists $C_\#$  such that for any $f\in \lhpt$,
\[
\|f\circ A \|_{\lhpt}\leq C_\# |\det A|^{-\frac{1}{p}}
\|A\|^t\|f\|_{\lhpt} + C_\# |\det A|^{-\frac{1}{p}}\|f\|_{p}.
\]

\item[(b)]
Let $F: \R^d \circlearrowleft$ be a $C^{1}$ diffeomorphism with $\|DF\|_{\infty}, \|DF^{-1}\|_{\infty}<\infty$. Then, there
exists $C_\#$  such that for any $f\in
\lhpt$,
\[
\|f\circ F \|_{\lhpt}\leq C_\#\|\det (DF^{-1})\|_{\infty}^{\frac{1}{p}} \max\{1,
\|DF\|^t_\infty\} \|f\|_{\lhpt}.
\]

\item[(c)]
Let $f\in \lhpt$. Then, for every $\ep>0$ there exists $\del>0$ such that
for every diffeomorphism $F: \R^d \rightarrow \R^d$ with $
\|F-Id\|_{C^{1}}\leq \del$, we have $ \|f\circ F - f\|_{\lhpt} \leq
\ep$.
\end{itemize}
\end{lem}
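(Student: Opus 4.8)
The plan is to take part (a) directly from Thomine \cite[Lemma~4.3]{Thomine} (on the Fourier side a change of variables produces the factor $|\det A|^{-1/p}$ in $L_p$, and comparing the Bessel weights via $|\zeta|\le\|A\|\,|(A^{-1})^{*}\zeta|$ yields the factor $\|A\|^{t}$ for high frequencies, the remaining low-frequency part being absorbed into $\|f\|_p$); then to deduce (b) from (a) by the localization scheme of \cite{Thomine,BaladiGouezel}, and (c) from (b) by a density argument.

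For (b) I would cover $\R^d$ by balls $(B(z_j,\rho))_j$ of small radius $\rho$ with uniformly bounded overlap and take a subordinate $C^\infty$ partition of unity $(\phi_j)_j$, so that $f\circ F=\sum_j(\phi_j f)\circ F$. For each $j$ pick $x_j\in F^{-1}(B(z_j,\rho))$ and let $A_j\colon x\mapsto F(x_j)+DF(x_j)(x-x_j)$ be the affine approximation to $F$ at $x_j$. If $\rho$ is small relative to a modulus of continuity of $DF$, then $G_j:=A_j^{-1}\circ F$ is arbitrarily $C^1$-close to the identity on a neighbourhood of $F^{-1}(B(z_j,\rho))$, and — after cutting off $G_j-\mathrm{Id}$ outside that neighbourhood, which (since $G_j$ fixes $x_j$) only perturbs it by a comparable amount in $C^1$ — may be replaced by a global $C^1$ diffeomorphism lying in a fixed $C^1$-neighbourhood of the identity, without changing $(\phi_j f)\circ A_j\circ G_j=(\phi_j f)\circ F$. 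Bounding composition with such near-identity diffeomorphisms uniformly (see below) and then applying (a) with $A=DF(x_j)$ to the compactly supported function $\phi_j f$ gives
\[
\|(\phi_j f)\circ F\|_{\lhpt}\le C_\#\,|\det DF(x_j)|^{-1/p}\big(\max(1,\|DF(x_j)\|^{t})\,\|\phi_j f\|_{\lhpt}+\|\phi_j f\|_{p}\big).
\]
Raising to the $p$-th power, summing over $j$, and using on both sides the localization property of $\lhpt=F^{t}_{p,2}$ (Triebel \cite{Triebel}) — for $f$ with the partition $(\phi_j)_j$, and for $f\circ F$ with the partition $(\phi_j\circ F)_j$ subordinate to $(F^{-1}(B(z_j,\rho)))_j$, which still has bounded overlap and eccentricity since $F$ is bi-Lipschitz — then bounding $\sup_j|\det DF(x_j)|^{-1/p}\max(1,\|DF(x_j)\|^{t})$ by $\|\det DF^{-1}\|_\infty^{1/p}\max(1,\|DF\|_\infty^{t})$ and absorbing the $\|f\|_p$ error through $\lhpt\hookrightarrow L_p$ (valid since $t>0$) would yield the stated inequality. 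The auxiliary uniform bound $\|g\circ G\|_{\lhpt}\le C\|g\|_{\lhpt}$ for $G$ in a fixed $C^1$-neighbourhood of the identity I would obtain from the intrinsic difference characterization $\|g\|_{\lhpt}\asymp\|g\|_p+\big\|\big(\int_{\R^d}|g(\cdot)-g(\cdot+h)|^2|h|^{-d-2t}\,dh\big)^{1/2}\big\|_p$, $0<t<1$ (cf. Strichartz \cite{Strichartz}, Triebel \cite{Triebel}), via the bi-Lipschitz substitution $h\mapsto G(\cdot+h)-G(\cdot)$; the crude powers of $\|DG\|_\infty,\|\det DG^{-1}\|_\infty$ this produces are harmless since all these quantities are close to $1$.

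For (c) I would argue by density. Since $C_0^\infty(\R^d)$ is dense in $\lhpt$, pick $g\in C_0^\infty(\R^d)$ with $\|f-g\|_{\lhpt}$ small. If $\|F-\mathrm{Id}\|_{C^1}\le\del\le\tfrac12$ then $\|DF\|_\infty\le\tfrac32$, $\|DF^{-1}\|_\infty\le2$ and $\|\det DF^{-1}\|_\infty\le2^d$, so by (b) the operator $h\mapsto h\circ F$ has $\lhpt$-operator norm bounded by a constant $C_0$ independent of such $F$; hence $\|f\circ F-g\circ F\|_{\lhpt}\le C_0\|f-g\|_{\lhpt}$. On the other hand $g\circ F-g$ and $D(g\circ F)-Dg=Dg(F(\cdot))DF-Dg$ are supported in a fixed compact set and tend to $0$ uniformly as $\|F-\mathrm{Id}\|_{C^1}\to0$, so $\|g\circ F-g\|_{W^{1,p}}\to0$; since $\|h\|_{\lhpt}\le C_\#\|h\|_{W^{1,p}}$ for $t\le1$ (the multiplier $(1+|\zeta|^2)^{(t-1)/2}$ being bounded on $L_p$), also $\|g\circ F-g\|_{\lhpt}\to0$. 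Choosing first $g$ and then $\del$, the triangle inequality $\|f\circ F-f\|_{\lhpt}\le\|f\circ F-g\circ F\|_{\lhpt}+\|g\circ F-g\|_{\lhpt}+\|g-f\|_{\lhpt}$ gives the claim.

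The main obstacle is the constant bookkeeping in (b). Reducing to the sharp affine estimate (a) forces localization at a scale $\rho$ that in general depends on the modulus of continuity of $DF$, so one must verify that the $\rho$-dependent constants in the two localization inequalities (decomposition of $f$, reassembly of $f\circ F$) cancel, so that only the clean factor $C_\#\|\det DF^{-1}\|_\infty^{1/p}\max(1,\|DF\|_\infty^{t})$ survives — a direct global change of variables in the difference characterization, by contrast, loses extra powers of $\|DF\|_\infty$ and $\|\det DF^{-1}\|_\infty$. In the applications of the next subsections, where $T_\om$ has uniformly bounded $C^{1+\al}$ norm, $\rho$ can moreover be taken independent of $\om$, which is what makes the estimate usable under iteration over compositions.
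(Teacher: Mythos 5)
Parts (a) and (c) of your proposal are fine and essentially coincide with the paper: (a) is quoted from Thomine, and your (c) is the paper's argument almost verbatim (approximate $f$ by $g\in C_0^\infty$, use (b) to get a composition-operator bound that is uniform over all $F$ with $\|F-\mathrm{Id}\|_{C^1}\le\del$, and control $\|g\circ F-g\|_{\lhpt}$ through the $H^1_p$ norm using the compact support of $g$). The divergence is in (b). The paper disposes of (b) by interpolation: $g\mapsto g\circ F$ is bounded on $L_p$ with norm at most $\|\det DF^{-1}\|_\infty^{1/p}$ (change of variables) and on $H^1_p$ with norm at most $C_\#\|\det DF^{-1}\|_\infty^{1/p}\max(1,\|DF\|_\infty)$ (chain rule), and since $\lhpt=[L_p,H^1_p]_t$ by complex interpolation of Bessel-potential spaces, the stated bound with the exponent $t$ follows at once, with $C_\#$ depending only on $d,p,t$ and with no regularity of $DF$ beyond boundedness.

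Your localization route to (b), by contrast, has two genuine gaps. First, the auxiliary near-identity bound rests on the equivalence $\|g\|_{\lhpt}\asymp\|g\|_p+\bigl\|\bigl(\int_{\R^d}|g(\cdot+h)-g(\cdot)|^2|h|^{-d-2t}\,dh\bigr)^{1/2}\bigr\|_p$, and you need both inequalities in it (first to bound the difference functional of $g$ by $\|g\|_{\lhpt}$, then, after the substitution, to return from the difference functional of $g\circ G$ to $\|g\circ G\|_{\lhpt}$). This pointwise-difference characterization of $\lhpt$ is not valid for all $1<p<\infty$: by Stein's classical square-function theorem it requires $p>2d/(d+2t)$, and the lemma is stated (and later used, e.g.\ in Lemma~\ref{lem:LYquasiCompactness}, where $p$ is taken arbitrarily close to $1$ with $t<\min(\al,1/p)$) precisely in a regime that violates this restriction when $t<d/2$. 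Strichartz's actual characterization, via ball averages of differences, does hold for all $1<p<\infty$, but it does not transform under $h\mapsto G(x+h)-G(x)$ in the simple way you use. Second, the ``constant bookkeeping'' you flag as the main obstacle is a real obstruction, not a routine verification: the decomposition of $f$ via $(\phi_j)$ and the reassembly of $f\circ F$ via $(\phi_j\circ F)$ each lose a factor coming from the localization equivalence at scale $\rho$, and these losses compound rather than cancel; since $\rho$ must be taken small relative to a modulus of continuity of $DF$ -- which a general $C^1$ diffeomorphism with merely bounded $\|DF\|_\infty,\|DF^{-1}\|_\infty$ need not possess uniformly on $\R^d$ -- the constant your argument produces depends on $F$ through more than $\|\det DF^{-1}\|_\infty$ and $\|DF\|_\infty$, contrary to the statement and to the paper's convention that $C_\#$ depends only on $d,p,t,\al,\eta$. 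So as written (b) is not proved; the interpolation argument closes both gaps simultaneously.
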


\begin{proof}
Part (b) follows via interpolation; this result is related to Lemma 4.3 of Thomine \cite{Thomine}.

Now we prove (c). Let $f\in \lhpt$. In view of part (b) and the density
of $C_0^\infty(\R^d)$ in $\lhpt$, we can find a $g\in C_0^\infty$ such that
$\|f-g\|_\lhpt+\|(f-g)\circ F\|_\lhpt<\ep/2$. This gives $\|f\circ
F-f\|_\lhpt\le \ep/2+\|g\circ F-g\|_\lhpt$.

We recall that for $t\leq s$, $H_p^s \subseteq \lhpt$, and the inclusion
is continuous (see for example Strichartz \cite[Corollary
I1.3]{Strichartz}). In particular, for each $t\leq 1$, there exists a
constant $C_\#$ such that for every $g\in H^1_p$, $\|g\|_{\lhpt} \leq
C_\# \|g\|_{H^1_p}$.

Since $g\in C_0^\infty(\R^d)$, $g, g\circ F-g\in H_p^1$. Let $K=\big(1+\|\det(DF^{-1})\|_\infty\big)\leb(\text{supp}(g))<~\infty$. Then
\begin{align*}
  &\|g\circ F-g \|_{\lhpt} \\
  \leq & C_\# \|g-g\circ F \|_{H_p^1} \leq C_\#
  (\|g\circ F-g \|_{L_p} + \|D(g\circ F)-Dg \|_{L_p})
  \\
\leq &C_\# K^{1/p} \Big( \|Dg\|_{\infty}\|I-F\|_{\infty}+ \|D^2
g\|_{\infty}\|I-F\|_{\infty}+ \|Dg\|_{\infty}
  \|1-DF\|_{\infty} \Big),
\end{align*}
since $\leb(\text{supp} h)\leq K$ for all  $h\in \{g\circ F-g, Dg\circ
F-Dg, Dg \circ F\cdot DF-Dg\circ F\}$. Choosing  $\del$ sufficiently
small makes the last expression smaller than $\frac{\ep}{2}$ and hence,
$\|f\circ F - f\|_{\lhpt}\leq \ep$.
\end{proof}

\subsection{Transfer operators}\label{subS:TransferOperator}

Given a map $T\in \PE^{1+\al}(X_0)$ with branches $T_i\colon O_i\to X_0$,
we let $Q_i=T_i(O_i)$ and $\xi_i$ be the inverse branch $T_i^{-1}\colon Q_i \to O_i$.
Assume $p>1$ and $0<t<\min(\alpha,\frac1p)$.

We let $\hpt=\hpt(X_0) \subset \lhpt$ be the subspace of functions supported on the
domain $X_0$, with the induced norm $\|f\|_{\hpt}:=\|f\|_{\lhpt}$. In
view of Lemma~\ref{lem:multBySmallSupportHD}, $\hpt$ is complete, and
thus a Banach space. 
Lemma~\ref{lem:multBySmallSupportHD} shows that $\hpt$ is also separable.
We recall from Baladi \cite[Lemma
2.2]{Baladi-Anisotropic} that the inclusion $\hpt \hookrightarrow
L_p$ is compact.

\begin{rmk}
We note that the space of functions of bounded variation, which has been
the most widely used Banach space to study Lasota-Yorke type maps, is not
separable. This is the reason to look for alternatives. In Baladi
and Gou\"ezel \cite{BaladiGouezel}, the authors show, in particular, that
the fractional Sobolev spaces $\hpt$, are suitable to study transfer
operators associated to piecewise expanding maps.
\end{rmk}

\begin{defn}
The \textit{transfer operator}, $\mcl_T:\hpt \circlearrowleft$,
associated to a map $T\in \PE^{1+\al}(X_0)$ is defined for every $f\in\hpt$ by
\[
  \mcl_T f=\sum_{i=1}^{\nint^T} \big( 1_{O_i^T} \cdot f \big) \circ \xi_i^T \cdot |D\xi_i^T|=  \sum_{i=1}^{\nint^T}  1_{Q_i^T} \cdot f  \circ \xi_i^T \cdot |D\xi_i^T|,
\]
where $|A|$ denotes the absolute value of the determinant of the linear map $A$.
\end{defn}

\begin{rmk}\label{rmk:TransferOpIsOnto}
The results of \S\ref{subS:fractionalSobolev} imply that the linear operators corresponding to composition with smooth functions, multiplication by characteristic functions of elements of the branch partition and multiplication by $C^\al$ functions are bounded in
$\hpt$. Clearly, $\hpt$ is invariant under $\mcl$, and thus the transfer operator acts continuously on $\hpt$.
Furthermore, if $T \in \PE^{1+\al}$ is onto, then $\mcl_T: \hpt \to \hpt$ is
onto. For example, given $f\in \hpt$ and letting $g=\frac{|DT| \cdot
f\circ T}{\sum_{i=1}^{\nint^T}1_{Q_i^T}\circ T}$ gives $\mcl_T g=f$. Again
by the results above, $g\in \hpt$.
\end{rmk}

The following lemma provides a weak continuity property of the transfer
operator acting on a fractional Sobolev space for Lasota-Yorke maps.

\begin{lem}\label{lem:ContTransferOp}
Let $L(\hpt)$ be endowed with the strong operator topology and
$LY^{1+\alpha}$ be endowed with the metric $d_{\LY^{1+\alpha}}$. Then,
the map $\mcl$ sending a Lasota-Yorke map to its transfer operator $\mcl:
\LY^{1+\al} \rightarrow L(\hpt)$ given by $T \mapsto \mcl_T$ is
continuous.
\end{lem}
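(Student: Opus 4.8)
The plan is to prove continuity of $\mcl\colon \LY^{1+\al}\to L(\hpt)$ in the strong operator topology by fixing $f\in\hpt$ and a map $T\in\LY^{1+\al}$, and showing that for every $\ep>0$ there is a $\del>0$ so that $d_\LY(S,T)<\del$ implies $\|\mcl_S f-\mcl_T f\|_{\hpt}<\ep$. Since $d_\LY(S,T)<1$ forces $S$ and $T$ to have the same number of branches $b$ and non-disjoint corresponding branch domains, we may write $\mcl_T f=\sum_{i=1}^b (1_{O_i^T}f)\circ\xi_i^T\cdot|D\xi_i^T|$ and similarly for $S$, and it suffices to estimate each summand separately. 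So fix a branch index $i$ and set $\phi^T=(1_{O_i^T}f)\circ\xi_i^T\cdot|D\xi_i^T|$, $\phi^S=(1_{O_i^S}f)\circ\xi_i^S\cdot|D\xi_i^S|$.

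The main step is to interpolate between $\phi^T$ and $\phi^S$ through a chain of intermediate terms, controlling each difference using exactly one of the Lemmas of \S\ref{subS:fractionalSobolev}. First, $1_{O_i^T}f$ and $1_{O_i^S}f$ differ by $1_{O_i^T\triangle O_i^S}f$; since $d_H(O_i^S,O_i^T)$ is small and these are intervals, the symmetric difference is contained in a union of two short intervals, so Lemma~\ref{lem:multBySmallSupport}(b) makes $\|1_{O_i^T\triangle O_i^S}f\|_{\hpt}$ small. Next, one compares $(1_{O_i^S}f)\circ\xi_i^T$ with $(1_{O_i^S}f)\circ\xi_i^S$: since $S_i-T_i$ is small in $C^{1+\al}$ on $O_i^S\cap O_i^T$ and both branches are uniformly expanding, the inverse branches $\xi_i^S,\xi_i^T$ are close in $C^1$, so $\xi_i^S\circ(\xi_i^T)^{-1}$ is $C^1$-close to the identity, and Lemma~\ref{lem:compWithSmoothFunctions}(c) (after first using part (b) to reduce the target function $1_{O_i^S}f$, which lies in $\hpt$ by Lemma~\ref{lem:multBySmallSupport}(a), to a nearby $C_0^\infty$ function) makes the difference small in $\hpt$. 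Finally, one handles the multiplication by $|D\xi_i^T|$ versus $|D\xi_i^S|$: these are $C^\al$ functions (since $T,S\in\LY^{1+\al}$) that are close in $C^\al$ norm when $d_\LY(S,T)$ is small, so Lemma~\ref{lem:multByHolderFunction} bounds $\|\,|D\xi_i^T|-|D\xi_i^S|\,\|_{C^\al}\cdot\|\cdot\|_{\hpt}$ applied to the relevant function, and this too is small. Combining the three estimates via the triangle inequality and summing over the $b$ branches gives the result.

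The step I expect to be the main obstacle is the middle one: comparing composition with two different inverse branches. The subtlety is that Lemma~\ref{lem:compWithSmoothFunctions}(c) is stated for diffeomorphisms of $\R^d$ that are $C^1$-close to the identity, whereas $\xi_i^S$ and $\xi_i^T$ are maps defined only on (possibly different) subintervals $Q_i^S,Q_i^T$ of $[0,1]$; one must first extend them to genuine diffeomorphisms of $\R$, keeping the extensions uniformly close in $C^1$, and one must also control the effect of composing the \emph{non-smooth} factor $1_{O_i^S}$ inside the composition, which is why the reduction to a $C_0^\infty$ approximant via part (b) is needed before invoking part (c). The uniform bounds on $\|DT_\om\|_\al$, the uniform expansion $\|\mu_{T_\om}^{-1}\|_\infty\le a<1$, and the fact that in one dimension a short interval stays short are what make these extensions and approximations uniform in the relevant sense. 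Keeping track of constants depending only on $f$, $T$, and the structural bounds (but not on $S$) throughout is the bookkeeping that makes the argument go through.
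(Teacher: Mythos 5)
Your proposal is correct and follows essentially the same route as the paper's proof: fix $f$, reduce to a branch-by-branch estimate for $d_{\LY}(S,T)<1$, handle the mismatch of branch domains via Lemma~\ref{lem:multBySmallSupport}(b), the comparison of inverse branches via extensions to diffeomorphisms of $\R$ together with Lemma~\ref{lem:compWithSmoothFunctions}(b),(c), and the Jacobian factors via Lemma~\ref{lem:multByHolderFunction}. The only cosmetic difference is that you split off the domain mismatch ($O_i^T\triangle O_i^S$) before composing with the inverse branches, while the paper intersects the image intervals $Q_i$ and treats the leftover pieces after composition (using that the $\xi_i$ are contractions); when invoking part (c) you should apply it to the fixed function $1_{O_i^T}f$ (or a fixed smooth approximant), since your own first estimate already makes $1_{O_i^S}f$ uniformly close to it, which removes any $S$-dependence in the modulus of continuity.
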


\begin{proof}
Let $f\in \hpt$ and $T \in \LY^{1+\al}$. We will prove that $\lim_{S
\rightarrow T} \|\mcl_S f - \mcl_T f\|_{\hpt}=0.$ Let $\nint=\nint^T$.
Assume $d_{\LY^{1+\al}}(S,T)<1$. Then, by definition of
$d_{\LY^{1+\al}}$, $\nint^S=b$ . For each $1\leq i\leq \nint $, let
$Q_i^{T \cap S}=Q_i^T \cap Q_i^{S}$ and $Q_i^{T \setminus S}=Q_i^T
\setminus  Q_i^{S}$. Then,
\begin{align*}
  \Big\|\mcl_{T}f - \mcl_{S}f\Big\|_{\hpt} &\leq \sum_{i=1}^{\nint}
  \Big\| 1_{Q_i^{T \cap S}} ( f\circ \xi_i^{T} \cdot |D\xi_i^{T}|
   - f\circ \xi_i^{S} \cdot |D\xi_i^{S}|) \Big\|_{\hpt} \\
&+ \sum_{i=1}^{\nint}\Big\|1_{Q_i^{T \setminus S}}\cdot f\circ \xi_i^{T}
\cdot |D\xi_i^{T}|\Big\|_{\hpt}+
\sum_{i=1}^{\nint}\Big\|1_{Q_i^{S\setminus T}}
 \cdot f\circ \xi_i^{S} \cdot |D\xi_i^{S}|\Big\|_{\hpt}.
\end{align*}
We finish the proof by bounding the terms separately in the following
lemma.
\begin{slem}\label{lem:WeakCont} \
\begin{itemize}
  \item[(I)] Bound on common branches.
  For every $1\leq i \leq \nint$,
\[
  \lim_{S\rightarrow T} \Big\| 1_{Q_i^{T\cap S}} ( f\circ \xi_i^{T} \cdot |D\xi_i^{T}|
   - f\circ \xi_i^{S} \cdot |D\xi_i^{S}|) \Big\|_{\hpt}=0.
\]

\item[(II)] Bound on remaining terms.
For every $1\leq i \leq \nint$,
\begin{align*}
  \lim_{S\rightarrow T} \Big\|1_{Q_i^{T \setminus S}}\cdot  f\circ \xi_i^{T}
  \cdot |D\xi_i^{T}|\Big\|_{\hpt}=0 \quad\text{and}\quad
  \lim_{S\rightarrow T} \Big\|1_{Q_i^{S \setminus T}}\cdot
  f\circ \xi_i^{S} \cdot |D\xi_i^{S}|\Big\|_{\hpt}=0.
\end{align*}

\end{itemize}
\end{slem}
\end{proof}

\begin{proof}[Proof of Sublemma~\ref{lem:WeakCont} (I)]
We start by noting that we can fix a way of choosing extensions of each $T_i$ to a diffeomophism $\tld{T}_i$ of $\R$, in such a way that 
$\| \tld{S}_i- \tld{T}_i\|_{C^{1+\alpha}}\leq 2 \|(S_i-T_i)\vert_{Q_i^S\cap Q_i^T}\|_{C^{1+\alpha}}$. In what follows, we drop the tildes for convenience.
Using Lemmas~\ref{lem:multByHolderFunction}, \ref{lem:multBySmallSupport}
and \ref{lem:compWithSmoothFunctions} repeatedly, we have

\begin{align*}
   &\Big\|1_{Q_i^{T\cap S}} (f\circ \xi_i^{T} \cdot |D\xi_i^{T}| - f\circ \xi_i^{S}
   \cdot |D\xi_i^{S}|)\Big\|_{\hpt} \leq C_\#\Big\|f\circ \xi_i^{T}
    \cdot |D\xi_i^{T}| - f\circ \xi_i^{S} \cdot |D\xi_i^{S}|\Big\|_{\lhpt} \\
&\quad \leq C_\#\Big\| |D\xi_i^{T}|-|D\xi_i^{S}| \Big\|_{C^\al} \Big\|
f\circ \xi_i^{T}\Big\|_{\lhpt}+
C_\#\Big\|D\xi_i^{S}\Big\|_{C^\al} \Big\|f\circ \xi_i^{T}-
f\circ \xi_i^{S}\Big\|_{\lhpt} \\
&\quad \leq  C_\# \Big( \Big\| |D\xi_i^{T}|-|D\xi_i^{S}|
 \Big\|_{C^\al} \Big\|f\Big\|_{\hpt} \Big\|D T_i \Big\|_\infty^{\frac{1}{p}}
  + \Big( \Big\|D\xi_i^{T}\Big\|_{C^\al}+1 \Big)
  \Big\|f\circ \xi_i^{T}-f\circ \xi_i^{S}\Big\|_{\lhpt} \Big),
\end{align*}
where $T_i:= (\xi_i^T)^{-1}$, and in the last inequality we use the fact
that $\big\|D\xi_i^{S}\big\|_{C^\al}< \big \|D\xi_i^{T}\big\|_{C^\al}+1$
whenever $d_{\LY^{1+\al}}(S,T)<1$. The first term goes to 0 as
$S\rightarrow T$ because $d_{\LY^{1+\al}}(S,T)\geq \frac12\big\|
|D\xi_i^S|-|D\xi_i^T| \big\|_{C^\al}.$ It remains to show that
$\lim_{S\rightarrow T} \Big\|f\circ \xi_i^{T}-f\circ
\xi_i^{S}\Big\|_{\lhpt}=0.$ By
Lemma~\ref{lem:compWithSmoothFunctions}(b), showing the above is
equivalent to proving that $\lim_{S\rightarrow T} \Big\|f-f\circ
\xi_i^{S}\circ T_i \Big\|_{\lhpt}=0.$ This is a direct consequence of
Lemma~\ref{lem:compWithSmoothFunctions}(c) and the observation that
$\lim_{S\rightarrow T} \Big\|\xi_i^{S}\circ T_i -
Id\Big\|_{C^{1}}=0$.
\end{proof}

\begin{proof}[Proof of Sublemma~\ref{lem:WeakCont} (II)]
Fix $1\leq i \leq \nint$. First, we observe that 
$\leb(Q_i^{S\setminus T})\leq 2 d_H(Q_i^S, Q_i^T)\leq 2(\|T\|_\infty+1)d_{LY^{1+\al}}(S,T)$.
Since $\xi_i^S$ and $\xi_i^T$ are contracting, then
\begin{equation}\label{eq:SmallLebMeasure}
\lim_{S\rightarrow T}\leb \big(\xi_i^T (Q_{i}^{T \setminus S}) \big)=0
\quad \text{ and } \quad \lim_{S\rightarrow T}\leb \big(\xi_i^S (Q_{i}^{S
\setminus T})\big)=0.
\end{equation}
We now show that
\[
\lim_{S\rightarrow T} \Big\|1_{Q_i^{T \setminus S}}\cdot f\circ \xi_i^{T}
\cdot |D\xi_i^{T}|\Big\|_{\hpt}=0.
\]
Being the set difference of two intervals, the set $Q_i^{T \setminus S}$
is either empty, or an interval, or the union of two intervals. Thus, we
let $Q_i^{T \setminus S}= \bigcup_{\ga_i\in \Gamma_i}Q_{\ga_i}^{T
\setminus S}$ be the decomposition of $Q_i^{T \setminus S}$ into
intervals, where $\# \Gamma_i \in\{0,1,2\}$.

Let  $\ga_i\in \Gamma_i$. Using Lemmas~\ref{lem:multByHolderFunction} and
\ref{lem:compWithSmoothFunctions}(b), respectively, we obtain
\begin{align*}
\Big\|1_{Q_{\ga_i}^{T \setminus  S}} \cdot f\circ \xi_i^{T} \cdot
 |D\xi_i^{T}|\Big\|_{\hpt}
& \leq C_\# \Big\| D \xi_i^T \Big\|_{C^\al}
\Big\|1_{Q_{\ga_i}^{T \setminus  S}} \cdot f\circ \xi_i^{T}\Big\|_{\hpt}\\
& \leq
C_\# \Big\| D \xi_i^T \Big\|_{C^\al} \Big\| D T_i \Big\|_{\infty}^{\frac{1}{p}}
 \Big\|1_{\xi_i^T (Q_{\ga_i}^{T \setminus  S})} f \Big\|_{\hpt}.
\end{align*}
Since $Q_{\ga_i}^{T \setminus S} \subseteq Q_{i}^{T \setminus S}$,
\eqref{eq:SmallLebMeasure} implies that $\lim_{S\to T} \leb( \xi_i^T
(Q_{\ga_i}^{T \setminus S}))=0$. Lemma~\ref{lem:multBySmallSupport}(b)
yields $\lim_{S\rightarrow T} \Big\|1_{\xi_i^T (Q_{\ga_i}^{T \setminus
S})} f \Big\|_{\hpt}=0.$ Therefore,
\[
  \lim_{S\rightarrow T} \Big\|1_{Q_{\ga_i}^{T \setminus S}}
  \cdot f\circ \xi_i^{T} \cdot
  |D\xi_i^{T}|\Big\|_{\hpt}=0,
\]
as claimed. Although the statements of Lemma~\ref{lem:WeakCont}(II) are
not symmetric in $T$ and $S$, interchanging the roles of $T$ and $S$ in
the proof just presented, and recalling from \eqref{eq:SmallLebMeasure}
that $\lim_{S \to T} \leb(\xi_i^{S} (Q_{\ga_i}^{S \setminus T}))=0$, we
get that
\[
  \lim_{S\rightarrow T} \Big\|1_{Q^{S \setminus T}}\cdot f\circ \xi_i^{S}
   \cdot |D\xi_i^{S}|\Big\|_{\hpt}=0.
\]
\end{proof}

Let $\mathcal T=(\Om,\mc{F},\bbp,\sigma,T)$ be a random piecewise
expanding $C^{1+\alpha}$ dynamical system. Suppose that the following
conditions are satisfied.

\begin{itemize}
\item[S1.] (Parameters)
$\hpt$ is the fractional Sobolev space defined in
\S\ref{subS:fractionalSobolev}, with $p>1$ and $0<t<\min\{\al,
\frac{1}{p}\}$.
\item[S2.] (Strong measurability)
The map $\mathcal L$ sending $\omega$ to the transfer operator of
$T_\omega$, $\mcl_{T_\omega}$, acting on $\hpt$, is strongly measurable.
\end{itemize}

Then we call the tuple $(\Om,\mc{F},\bbp,\sigma,\hpt,\mcl)$ the strongly
measurable random linear system \emph{associated} to $\mathcal T$. For
brevity, we will use write $\mcl_\om$ instead of $\mcl_{T_\om}$. Also,
the notation from \S\ref{subS:LYMaps} will be abbreviated. For example,
instead of $b^{T_\om}$, we will write $b^\om$, and so on.

We note that Lemma \ref{lem:ContTransferOp} guarantees that for any
random $C^{1+\alpha}$ Lasota-Yorke dynamical system, condition [S2] is
automatically satisfied. A second situation that we consider is that of a
random piecewise expanding dynamical system in higher dimensions with a
fixed, suitably regular branch partition. In this situation one sees directly that [S2] holds
also.

\subsection{Random Lasota-Yorke Inequalities}

Given a collection $\mathcal C$ of subsets of a set, we recall that its
intersection multiplicity is given by $\max_{x\in\bigcup \mathcal
C}\#\{C\in\mathcal C\colon x\in C\}$. Given $T\in \PE^{1+\al}(X_0)$, the
\emph{complexity of $T$ at the end}, denoted by $C_e(T)$, is the
intersection multiplicity of $\{ \overline{T(O_i^T)} \}_{1\leq i \leq
\nint^T}$. The \emph{complexity of $T$ at the beginning}, $C_b(T)$, is
the intersection multiplicity of $\{\overline{O_i^T} \}_{1\leq i \leq
\nint^T}$. We note that in the one-dimensional Lasota-Yorke case,
$C_b(T)$ is always equal to 2 (even when compositions of maps are taken),
whereas in higher dimensions the complexity at the beginning can grow
without bound as maps are composed. Examples of Tsujii \cite{Tsujii} and
Buzzi \cite{Buzzi:noacim} show that this can lead to singular ergodic
properties of the map including non-existence of absolutely continuous
invariant measures.

As is well-known, quasi-compactness can be derived from Lasota-Yorke type
inequalities of the form $\tn\mathcal Lf\tn\le A\| f\|+B\tn f\tn$, where
$\tn\cdot\tn$ is a stronger norm than $\|\cdot\|$ and the inclusion $(Y, \tn \cdot \tn)\hookrightarrow (Y, \| \cdot  \|)$ is compact. Hennion's theorem
shows that the essential spectral radius is governed by $B$.

The following Lasota-Yorke type inequality is based on results of Thomine
\cite[Theorem 2.3]{Thomine}. In that work, rather than a random dynamical
system, a single dynamical system is considered. Thomine (and the
previous work of Baladi and Gou\"ezel) took a great deal of care to bound
the `$B$' term, but did not need to control the `$A$' term other than to
say that it is finite. In our context, we need the additional fact that
$A$ depends in a measurable way on our dynamical system. That this holds
can be seen by a careful examination of the proofs of Thomine; and Baladi
and Gou\"ezel. One feature of the proof that needs attention is that
these papers replace the norm $\|\cdot\|_\hpt$ by an equivalent norm
depending on properties of the map $T^n$. In our context, we would obtain
results in different norms for different compositions $T^{(n)}_\omega$.
We avoid this at the expense of increasing the $A$ term. More
specifically we make use of a bound of the form
\begin{equation*}
\sum_{m\in\Z}\|\eta_{m,r}u\|^p_\hpt\le
C_\#\left((1+r^{pt})\|u\|_p^p+\|u\|_\hpt^p\right),
\end{equation*}
where $(\eta_{m,r})_{m\in\Z^d}$ is a partition of unity of $\mathbb R^d$
obtained by scaling a fixed partition of unity by a factor $r$ in the
variable, and satisfies $\eta_{m,r}(x)=\eta_{0,r}(x+m/r)$.

\begin{lem}[Strong $L_p-\hpt$ Lasota-Yorke
inequality]\label{lem:LYLongBranches}\ \\
Suppose $\mc{R}=(\Om,\mc{F}, \bbp, \sig, \hpt,\mcl)$ is a strongly
measurable random linear system associated to a random $C^{1+\al}$
piecewise expanding dynamical system $\mc{T}$. Then there exists a
constant $C_{\mc{R}}$, depending only on $p$, $t$, $\alpha, \eta$ and $L$ (from
Definition \ref{defn:RandomLYmap}), and a measurable function $A_{\mc{R},
n}(\om)$ such that for every $\om\in \Om$, we have
\begin{equation}\tag{S-LY}\label{eq:StrongLY}
  \|\mcl_\om^{(n)}f\|_{\hpt}
  \leq  A_{\mc{R}, n}(\om) \|f\|_p  +
  B_{\mc{R},n}(\om)\|f\|_{\hpt},
\end{equation}
where
\begin{align*}
  &A_{\mc{R},n}(\om)\text{ is a measurable function of $\omega$}; and\\
  &B_{\mc{R},n}(\om)=C_{\mc{R}}n\left(C_b(T^{(n)}_\omega)\right)^{\frac1p}
  \left(C_e(T^{(n)}_\omega)\right)^{1-\frac1p}
  \||DT_\omega^{(n)}|^{\frac1p-1}\mu_{\om,n}^{-t}\|_\infty,
\end{align*}
where $\mu_{\om,n}(x):=\inf_{\|v\|=1} \|DT_{\om}^{(n)}(x)v\|$.
\end{lem}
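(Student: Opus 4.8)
The plan is to regard the $n$-fold composition $T^{(n)}_\om:=T_{\sig^{n-1}\om}\circ\cdots\circ T_\om$ as a single piecewise $C^{1+\al}$ expanding map and to rerun, for that single map, the Lasota--Yorke argument of Thomine \cite[Theorem~2.3]{Thomine} (and behind it Baladi--Gou\"ezel \cite{BaladiGouezel}), departing from their treatment in exactly two respects: we carry the \emph{fixed} norm $\|\cdot\|_{\hpt}$ throughout instead of the map-adapted equivalent norm they introduce, and we keep track of the fact that the resulting ``$A$'' coefficient is a measurable function of $\om$.

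First I would check that $T^{(n)}_\om$ is again a piecewise $C^{1+\al}$ expanding map of $X_0$: its branch partition is the common refinement of the successive pullbacks of the branch partitions of $T_\om,\dots,T_{\sig^{n-1}\om}$; each of its branch domains is bounded by at most $\lesssim nL$ of the $C^1$ hypersurfaces furnished by condition R5 of Definition~\ref{defn:RandomLYmap}; its minimal expansion $\mu_{\om,n}$ satisfies $\mu_{\om,n}^{-1}\le a^n$ pointwise by R4; its inverse-branch Jacobians are controlled in $C^\al$ via R3; and its complexities at the beginning and at the end are $C_b(T^{(n)}_\om)$ and $C_e(T^{(n)}_\om)$.

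Next, following Thomine, I would decompose $\mcl_\om^{(n)}f=\sum_i 1_{Q_i}\cdot(f\circ\xi_i)\cdot|D\xi_i|$ over the inverse branches $\xi_i$ of $T^{(n)}_\om$, localise each summand with the fixed partition of unity $(\eta_{m,r})_{m\in\Z^d}$ obtained by scaling $\eta$ at a scale $r=r_n(\om)$ to be chosen, and estimate $\|\eta_{m,r}\,1_{Q_i}\,(f\circ\xi_i)\,|D\xi_i|\|_{\hpt}$ by combining Lemma~\ref{lem:compWithSmoothFunctions}(b) (composition), Lemma~\ref{lem:multByHolderFunction} (the Jacobian weight), and Lemma~\ref{lem:multBySmallSupportHD}(a) (the cut-off by the branch domain, where the constant $L$ of that lemma is replaced by the $\lesssim nL$ bound above, one source of the harmless linear factor $n$). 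Summing over $m$, then over $i$, and applying H\"older to the overlap counts at the beginning and at the end produces the factors $\big(C_b(T^{(n)}_\om)\big)^{1/p}\big(C_e(T^{(n)}_\om)\big)^{1-1/p}$ together with the weighted sup-norm $\||DT^{(n)}_\om|^{1/p-1}\mu_{\om,n}^{-t}\|_\infty$. This leaves me with an inequality of the shape $\sum_m\|\eta_{m,r}\mcl_\om^{(n)}f\|_\hpt^p\lesssim B_{\mc{R},n}(\om)^p\big(\sum_m\|\eta_{m,r}f\|_\hpt^p\big)+(\cdots)\|f\|_p^p$; I would then pass from the localised norms back to $\|\cdot\|_\hpt$ using the displayed bound $\sum_m\|\eta_{m,r}u\|_\hpt^p\le C_\#\big((1+r^{pt})\|u\|_p^p+\|u\|_\hpt^p\big)$ (and its elementary reverse estimate), absorbing the $(1+r^{pt})\|f\|_p^p$ contribution into the $A$ term --- this is exactly the ``increase the $A$ term'' trade-off advertised before the statement.

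For measurability of $A_{\mc{R},n}$, I would observe that every $\om$-dependent quantity it involves --- the $C^{1+\al}$ norms of the maps $T_{\sig^j\om}$ and of their branches, the Lebesgue measures of the finitely many branch pieces of $T^{(n)}_\om$, the number of bounding hypersurfaces, and the scale $r_n(\om)$, which may be chosen to depend only on these data --- is a Borel-measurable function of $(T_\om,\dots,T_{\sig^{n-1}\om})$, hence of $\om$ by R1 together with the measurability of $\sig$; since $A_{\mc{R},n}$ is assembled from these by finitely many sums, products, maxima and the elementary operations appearing in Thomine's proof, it is measurable. The step I expect to be the main obstacle is precisely this norm-conversion bookkeeping: one must be sure that the excess generated on passing from the map-adapted localised norm to the fixed $\|\cdot\|_\hpt$ lands \emph{only} in the $A$-coefficient and never inflates $B_{\mc{R},n}(\om)$, since it is $B_{\mc{R},n}$ --- via Hennion's theorem --- that pins down the essential spectral radius of the cocycle; a secondary chore is verifying line by line in \cite{Thomine,BaladiGouezel} that the map enters the ``$A$'' constant only through manifestly measurable quantities.
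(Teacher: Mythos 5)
Your proposal is correct and is essentially the paper's own argument: the paper proves this lemma only by reference to Thomine \cite{Thomine} and Baladi--Gou\"ezel \cite{BaladiGouezel}, applied to the composition $T^{(n)}_\om$ viewed as a single piecewise expanding map, with exactly the two modifications you describe --- retaining the fixed norm $\|\cdot\|_{\hpt}$ via the displayed localization bound $\sum_m\|\eta_{m,r}u\|_\hpt^p\le C_\#\big((1+r^{pt})\|u\|_p^p+\|u\|_\hpt^p\big)$, with the excess pushed into the $A$ term, and a check that the map enters $A_{\mc{R},n}$ only through measurable quantities. Your accounting of where the factor $n$ and the dependence on $L$ arise (the $\lesssim nL$ bounding hypersurfaces of the refined branch partition feeding into Lemma~\ref{lem:multBySmallSupportHD}(a)) is consistent with the stated form of $B_{\mc{R},n}(\om)$.
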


This inequality will prove sufficient to control the index of
compactness, but does not give enough information to control the maximal
Lyapunov exponent since we have no control of the $A_{\mc{R},n}(\omega)$
term. The following inequality remedies the situation by providing an
inequality with no $A$ term, at the expense of having a larger (but still
log-integrable) $B$ term. The availability of both inequalities will
allow us to apply Lemma~\ref{lem:LY-IC-MLE}. The proof of the weak
Lasota-Yorke inequality is straightforward using some of the ingredients
of the stronger version.

\begin{lem}[Weak $L_p-\hpt$ Lasota-Yorke inequality]\label{lem:easyLY}\
\\
Let $\mc{R}=(\Om,\mc{F}, \bbp, \sig, \hpt, \mcl)$ be a random
Lasota-Yorke type dynamical system. Then, for each $n\in \N$ there exists
a $\bbp$-log-integrable function $\tld{A}_{\mc{R}, n}:\Om \to \R$ such
that for every $\om\in \Om$,
\begin{equation}\label{eq:easyLY} \tag{W-LY}
  \|\mcl_\om^{(n)}f\|_{\hpt} \leq  \tld{A}_{\mc{R}, n}(\om)  \|f\|_{\hpt}.
\end{equation}
\end{lem}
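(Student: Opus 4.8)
The plan is to show that for a random Lasota--Yorke dynamical system, a single application of the transfer operator $\mcl_\om$ maps $\hpt$ boundedly into itself with an operator-norm bound that is a log-integrable function of $\om$, and then to iterate. First I would estimate $\|\mcl_\om f\|_\hpt$ directly from the defining formula $\mcl_\om f=\sum_{i=1}^{b^\om}1_{Q_i^\om}\cdot f\circ\xi_i^\om\cdot|D\xi_i^\om|$. Using the triangle inequality over the (finitely many) branches, then Lemma~\ref{lem:multBySmallSupport}(a) to drop the characteristic function $1_{Q_i^\om}$ at cost $C_\#$, then Lemma~\ref{lem:multByHolderFunction} to pull out the $C^\al$ factor $|D\xi_i^\om|$ at cost $C_\#\||D\xi_i^\om|\|_{C^\al}$, and finally Lemma~\ref{lem:compWithSmoothFunctions}(b) to handle the composition $f\circ\xi_i^\om$ at cost $C_\#\|\det D\xi_i^\om\|_\infty^{1/p}\max\{1,\|D\xi_i^\om\|_\infty^t\}$, I obtain
\[
\|\mcl_\om f\|_\hpt\le C_\#\sum_{i=1}^{b^\om}\||D\xi_i^\om|\|_{C^\al}\,\|\det D\xi_i^\om\|_\infty^{1/p}\max\{1,\|D\xi_i^\om\|_\infty^t\}\,\|f\|_\hpt.
\]
All the derivative quantities appearing here are controlled uniformly in $\om$ by the distortion bound [R3] ($\|DT_\om\|_\al\le\mc D$), the minimum-expansion bound [R4] ($\|\mu_{T_\om}^{-1}\|_\infty\le a<1$, which bounds $\|D\xi_i^\om\|_\infty\le a$ and hence $\|\det D\xi_i^\om\|_\infty$ and $\||D\xi_i^\om|\|_{C^\al}$), so the bracketed factor per branch is at most an absolute constant $C'$; the only $\om$-dependence is through the number of branches $b^\om$. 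Hence $\|\mcl_\om f\|_\hpt\le C' b^\om\|f\|_\hpt$, and since $\om\mapsto b^\om$ is $\bbp$-log-integrable by [R2], so is $\om\mapsto C' b^\om$.

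Next I would iterate. Writing $\mcl_\om^{(n)}=\mcl_{\sig^{n-1}\om}\circ\cdots\circ\mcl_\om$ and applying the one-step bound repeatedly gives
\[
\|\mcl_\om^{(n)}f\|_\hpt\le\prod_{k=0}^{n-1}\big(C' b^{\sig^k\om}\big)\,\|f\|_\hpt,
\]
so we may set $\tld A_{\mc R,n}(\om)=\prod_{k=0}^{n-1}C' b^{\sig^k\om}$. Measurability of this function is immediate from [R1] and measurability of $\om\mapsto b^\om$; log-integrability follows because $\log\tld A_{\mc R,n}(\om)=n\log C'+\sum_{k=0}^{n-1}\log^+ b^{\sig^k\om}+\text{(bounded)}$ (note $b^\om\ge 1$, so $\log b^\om=\log^+b^\om\ge 0$), and each term $\om\mapsto\log^+ b^{\sig^k\om}$ is integrable since $\sig$ is measure-preserving and $\log^+ b^\om\in L^1$ by [R2]; a finite sum of $L^1$ functions is $L^1$. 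This completes the proof.

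The only genuinely delicate point is the bookkeeping in the one-step estimate: one must be careful that the constants from Lemmas~\ref{lem:multBySmallSupport}(a), \ref{lem:multByHolderFunction}, and \ref{lem:compWithSmoothFunctions}(b) depend only on the fixed parameters $p,t,\al$ (and not on the map), and that the branch-geometric regularity [R5] is what guarantees Lemma~\ref{lem:multBySmallSupport}(a) applies uniformly to the sets $Q_i^\om$ in the one-dimensional setting (here they are just intervals, so this is automatic). Everything else is routine; there is no subtlety in the iteration because, unlike the strong inequality \eqref{eq:StrongLY}, we make no attempt to track cancellation or to keep the $B$-term small — we simply absorb all expansion into the (log-integrable but possibly large) coefficient. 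This crudeness is exactly what makes the weak inequality easy and, paired with \eqref{eq:StrongLY}, is enough to feed into Lemma~\ref{lem:LY-IC-MLE}.
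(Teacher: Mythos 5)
The paper does not actually write out a proof of this lemma (it is dismissed as ``straightforward using some of the ingredients of the stronger version''), and your argument --- bounding each branch term via Lemma~\ref{lem:multBySmallSupport}(a), Lemma~\ref{lem:multByHolderFunction} and Lemma~\ref{lem:compWithSmoothFunctions}(b), noting that R3--R4 make the per-branch constant uniform in $\om$ so that $\|\mcl_\om f\|_{\hpt}\le C'\,\nint^\om\|f\|_{\hpt}$, and then iterating along the orbit and using R2 together with $\sig$-invariance of $\bbp$ for measurability and log-integrability of the product --- is exactly the intended route and is correct. Two small points you elide but which do not affect the outcome: before invoking Lemma~\ref{lem:compWithSmoothFunctions}(b) one should fix extensions of the inverse branches $\xi_i^\om$ (and of $|D\xi_i^\om|$) to all of $\R$ with uniformly controlled $C^{1+\al}$ data, as the paper does at the start of the proof of Sublemma~\ref{lem:WeakCont}; and the uniform bound on $\||D\xi_i^\om|\|_{C^\al}$ uses R3 together with R4 (not R4 alone), while the factor appearing in Lemma~\ref{lem:compWithSmoothFunctions}(b) is $\|\det(DF^{-1})\|_\infty^{1/p}$ with $F=\xi_i^\om$, i.e.\ $\|\det DT_{\om,i}\|_\infty^{1/p}$ rather than $\|\det D\xi_i^\om\|_\infty^{1/p}$, which is again uniformly bounded by R3.
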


\subsection{Quasi-compactness}\label{subS:QC}

In this subsection we prove quasi-compactness by bootstrapping a strategy
of Buzzi \cite{Buzzi} based on multiple Lasota-Yorke inequalities as
elaborated in Appendix~\ref{sec:randomHennion}.

\begin{lem}\label{lem:essinfDT}
Let $\mc{R}=(\Om,\mc{F}, \bbp, \sig, T)$ be a random piecewise expanding
dynamical system with ergodic base. Then the following hold:
\begin{enumerate}
\item There exist $C_e^*<\infty$ and $C_b^*<\infty$ such that
for $\bbp$-almost every $\om \in \Om$,
\begin{align*}
 \lim_{n\rightarrow
 \infty}(C_e(T_{\om}^{(n)}))^{\frac{1}{n}}=C_e^*;\text{\quad
 and\quad}
 \lim_{n\rightarrow
 \infty}(C_b(T_{\om}^{(n)}))^{\frac{1}{n}}=C_b^*;
\end{align*}

\item There exists $\chi<1$ such that
for $\bbp$-almost every $\om \in \Om$,
$
 \lim_{n\rightarrow \infty}\|\mu_{\om,n}^{-1}\|_{\infty}^{\frac{1}{n}}=\chi.
$
Furthermore, 
$
 \lim_{n\rightarrow \infty}\| |DT_{\om}^{(n)}|^{-1}\|_{\infty}^{\frac{1}{n}}\leq \chi^d.
$
\end{enumerate}
\end{lem}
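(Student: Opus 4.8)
The plan is to realise each of the four quantities in the statement as a subadditive cocycle over $\sig$ and then to invoke Kingman's subadditive ergodic theorem \cite{Kingman}, using ergodicity of the base to conclude that the corresponding time-$n$ averages converge $\bbp$-almost everywhere to constants; identifying the constants with the claimed bounds is then a short computation.

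First I would establish submultiplicativity. Writing $T^{(m+n)}_\om=T^{(m)}_{\sig^n\om}\circ T^{(n)}_\om$, the branch partition of a composition $S\circ T$ (after discarding Lebesgue-null cells) is a coarsening of $\{O_i^T\cap T_i^{-1}(O_j^S)\}_{i,j}$, where the $T_i$ denote the branch maps of $T$ extended to diffeomorphisms of neighbourhoods of the closed domains as in Definition~\ref{defn:PWExpMap}. A point $x$ can lie in the closure of such a cell only if $x\in\overline{O_i^T}$ and $T_i(x)\in\overline{O_j^S}$, and since the points $T_i(x)$ for distinct $i$ are unrelated, the number of admissible pairs $(i,j)$ is at most $C_b(T)\,C_b(S)$; dually, $(S\circ T)(O_i^T\cap T_i^{-1}(O_j^S))\subseteq S_j\big(\overline{T_i(O_i^T)}\cap\overline{O_j^S}\big)$, so a point $y$ can lie in the closure of an image cell only for at most $C_e(S)$ values of $j$ (those with $y\in\overline{S(O_j^S)}$) and, for the associated preimage $S_j^{-1}(y)$, at most $C_e(T)$ values of $i$. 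This is the submultiplicativity observed by Buzzi \cite{Buzzi}:
\[
C_b\big(T^{(m+n)}_\om\big)\le C_b\big(T^{(n)}_\om\big)\,C_b\big(T^{(m)}_{\sig^n\om}\big),
\]
\[
C_e\big(T^{(m+n)}_\om\big)\le C_e\big(T^{(n)}_\om\big)\,C_e\big(T^{(m)}_{\sig^n\om}\big).
\]
For the expansion data, the chain rule $DT^{(m+n)}_\om(x)=DT^{(m)}_{\sig^n\om}(T^{(n)}_\om x)\,DT^{(n)}_\om(x)$, together with supermultiplicativity of the smallest singular value under composition of linear maps and multiplicativity of the determinant, gives $\|\mu_{\om,m+n}^{-1}\|_\infty\le\|\mu_{\sig^n\om,m}^{-1}\|_\infty\,\|\mu_{\om,n}^{-1}\|_\infty$, and likewise with $\mu_{\om,n}$ replaced by the Jacobian determinant $|DT^{(n)}_\om|$.

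Next I would check the hypotheses of Kingman's theorem and read off the constants. Since $C_b(T^{(n)}_\om)$ and $C_e(T^{(n)}_\om)$ are each at most the number of branches $\nint^{T^{(n)}_\om}\le\prod_{i=0}^{n-1}\nint^{T_{\sig^i\om}}$, the positive part of the time-one term is dominated by $\log^+\nint^{T_\om}\in L^1(\bbp)$ by (R2); measurability of $\om\mapsto C_b(T^{(n)}_\om)$ and $\om\mapsto C_e(T^{(n)}_\om)$ follows from (R1) together with the upper semicontinuity (hence Borel measurability) of these functionals on $\PE^{1+\al}(X_0)$. Kingman's theorem then yields $\bbp$-almost everywhere constants $\log C_b^*$ and $\log C_e^*$ as the limits of $\tfrac1n\log C_b(T^{(n)}_\om)$ and $\tfrac1n\log C_e(T^{(n)}_\om)$; these are finite because the averages remain $\le\int\log^+\nint^{T_\om}\,d\bbp<\infty$. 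For the remaining two cocycles, (R4) gives $\|\mu_{\om,1}^{-1}\|_\infty\le a<1$, so the positive part of the time-one term vanishes, Kingman applies, and $\tfrac1n\log\|\mu_{\om,n}^{-1}\|_\infty\to\log\chi$ $\bbp$-almost everywhere for a constant $\chi$; since every average is bounded above by $\log a$ we conclude $\chi\le a<1$. Finally, every singular value of $DT^{(n)}_\om(x)$ is at least $\mu_{\om,n}(x)$, whence $|DT^{(n)}_\om(x)|\ge\mu_{\om,n}(x)^d$ and therefore $\||DT^{(n)}_\om|^{-1}\|_\infty\le\|\mu_{\om,n}^{-1}\|_\infty^{d}$; taking $n$-th roots and letting $n\to\infty$ (the left-hand limit existing by Kingman applied to the $|DT|$ cocycle) gives $\lim_{n\to\infty}\||DT^{(n)}_\om|^{-1}\|_\infty^{1/n}\le\chi^{d}$.

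The step I expect to be the main obstacle is making the submultiplicativity of $C_b$ and $C_e$ genuinely rigorous: one must verify that the branch partition of $T^{(m+n)}_\om$ is indeed, up to Lebesgue-null sets, a coarsening of the refinement described above, that the ``positions'' $T_i(x)$ and $S_j^{-1}(y)$ used in the counting are well defined thanks to the diffeomorphic extensions of the branch maps, and that passing to closures introduces no spurious overlaps. The measurability assertions and the applications of Kingman's and Birkhoff's theorems are then routine.
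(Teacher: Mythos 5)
Your proposal is correct and follows essentially the same route as the paper: submultiplicativity of the four sequences (for $C_b$, $C_e$ via Buzzi-type counting of refined branch cells, for $\mu_{\om,n}^{-1}$ and $|DT_\om^{(n)}|^{-1}$ via the chain rule), log-integrability of the one-step terms from R2 and R4, Kingman's subadditive ergodic theorem with ergodicity to get constants, $\chi\le a<1$ from R4, and the final bound from $|DT_\om^{(n)}(x)|^{-1}\le\mu_{\om,n}(x)^{-d}$. The paper's proof is a terse version of exactly this argument; you have simply supplied the details it leaves implicit.
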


\begin{proof}
The sequences $(C_e(T_\om^{(n)}))_{n\in \N}$, $(C_b(T_\om^{(n)}))_{n\in \N}$,  $( \|\mu_{\om,n}^{-1}\|_{\infty} )_{n\in \N}$ and 
 $( \| |DT_{\om}^{(n)}|^{-1}\|_{\infty})_{n\in \N}$, are
submultiplicative. Log-integrability of $C_e$ and $C_b$ is assured by
Definition \ref{defn:RandomLYmap} since we have
$C_b(T_\om),C_e(T_\om)\leq \log \nint^\om$. Hence the existence of the
limits follows from the Kingman subadditive ergodic theorem
\cite{Kingman}. That $\chi<1$ follows from condition R4.
The last statement follows from $|DT_{\om}^{(n)}(x)|^{-1} \leq \mu_{\om,n}(x)^{-d}$.
\end{proof}

\begin{lem}[Quasi-compactness: Lasota-Yorke case]\label{lem:LYquasiCompactness}
Let $0<\alpha<1$ and let $\mc{T}$ be a $C^{1+\alpha}$ random Lasota-Yorke
dynamical system satisfying the additional condition that the function
$\om \mapsto \log^+ \var(|DT_{\om}|^{-1})$ is $\bbp$-integrable. Then
there exist parameters $p>1, 0<t<\min(\al, \frac{1}{p})$ such that the
associated random linear system, $\mc{R}$, is quasi-compact with
\[
 \ka^*\leq \Big(1-\frac{1}{p} \Big) (\log C_e^*+ \log \chi) + t \log \chi <
 \lam^*=0.
\]
\end{lem}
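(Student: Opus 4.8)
The plan is to obtain the lemma as a consequence of the random Hennion-type result of Appendix~\ref{sec:randomHennion} (Lemma~\ref{lem:LY-IC-MLE}), fed by the two Lasota--Yorke inequalities of Lemmas~\ref{lem:LYLongBranches} and~\ref{lem:easyLY}. First I would fix $p>1$ and $0<t<\min(\al,\tfrac1p)$ (the precise choice deferred to the end of the argument) and record the three inputs to Lemma~\ref{lem:LY-IC-MLE}: (i) the inclusion $\hpt\hookrightarrow L_p$ is compact, by Baladi \cite[Lemma~2.2]{Baladi-Anisotropic}; (ii) the strong inequality~\eqref{eq:StrongLY} holds with $A_{\mc{R},n}$ measurable and with $B_{\mc{R},n}$ $\bbp$-log-integrable for each~$n$ --- here I would check that in dimension one $C_b(T^{(n)}_\om)\equiv 2$, that $C_e(T^{(n)}_\om)$ is dominated by the number of branches of $T^{(n)}_\om$, which is log-integrable by R2, and that $\|\mu_{\om,n}^{-1}\|_\infty\leq a^n<1$ by R4, so that $\log^+ B_{\mc{R},n}\in L^1(\bbp)$; (iii) the weak inequality~\eqref{eq:easyLY} holds with $\tld{A}_{\mc{R},n}$ $\bbp$-log-integrable, and taking $n=1$ this also supplies $\log^+\|\mcl_\om\|\in L^1(\bbp)$, the remaining integrability hypothesis needed. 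The extra assumption $\log^+\var(|DT_\om|^{-1})\in L^1(\bbp)$ enters precisely in guaranteeing that the constants occurring in these two inequalities are almost surely finite and depend measurably and log-integrably on $\om$.

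With these in hand, Lemma~\ref{lem:LY-IC-MLE} yields $\ka^*\leq\lim_{n\to\infty}\tfrac1n\log B_{\mc{R},n}(\om)$ for $\bbp$-almost every $\om$, and the remainder is a direct computation via Lemma~\ref{lem:essinfDT}. In dimension one $|DT^{(n)}_\om(x)|=\mu_{\om,n}(x)$, and since $1-\tfrac1p+t>0$ we have $\big\| |DT^{(n)}_\om|^{\frac1p-1}\mu_{\om,n}^{-t} \big\|_\infty=\|\mu_{\om,n}^{-1}\|_\infty^{\,1-\frac1p+t}$; together with $C_b(T^{(n)}_\om)=2$ this gives
\[
\tfrac1n\log B_{\mc{R},n}(\om)=\tfrac1n\log\!\big(C_{\mc{R}}\,n\,2^{1/p}\big)+\Big(1-\tfrac1p\Big)\tfrac1n\log C_e(T^{(n)}_\om)+\Big(1-\tfrac1p+t\Big)\tfrac1n\log\|\mu_{\om,n}^{-1}\|_\infty .
\]
Letting $n\to\infty$ and invoking Lemma~\ref{lem:essinfDT} (so $\tfrac1n\log C_e(T^{(n)}_\om)\to\log C_e^*$ and $\tfrac1n\log\|\mu_{\om,n}^{-1}\|_\infty\to\log\chi$) gives $\ka^*\leq(1-\tfrac1p)(\log C_e^*+\log\chi)+t\log\chi$, which is exactly the bound in the statement.

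It then remains to choose $(p,t)$ making this bound negative, and to note $\lam^*=0$. Since $\chi<1$ (R4 together with Lemma~\ref{lem:essinfDT}), $\log\chi<0$. If $C_e^*\chi\leq1$ then both summands above are $\leq0$ and $t\log\chi<0$ strictly, so every admissible $(p,t)$ works; if $C_e^*\chi>1$, first fix $t\in(0,\min(\al,1))$ and then take $p>1$ close enough to $1$ that $\tfrac1p>t$ and $(1-\tfrac1p)(\log C_e^*+\log\chi)<t|\log\chi|$ --- each of these conditions bounds $\tfrac1p$ below by a number strictly less than $1$, so such a $p$ exists, and then $0<t<\tfrac1p$ as required. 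As for $\lam^*=0$: $\lam^*\geq0$ by applying $\mcl_\om^{(n)}$ to $1_{X_0}\in\hpt$ and using that the transfer operators preserve the Lebesgue integral together with $\|\cdot\|_{L^1}\leq C_\#\|\cdot\|_\hpt$, while $\lam^*\leq0$ follows from the $L^1$-contractivity of $\mcl_\om$ combined with the Lasota--Yorke structure, exactly as in \cite{FLQ2}. I expect the main obstacle to be the bookkeeping in (ii)--(iii): tracking through the arguments of Thomine and Baladi--Gou\"ezel to confirm the measurability of $A_{\mc{R},n}$ and the log-integrability of $B_{\mc{R},n}$ and $\tld{A}_{\mc{R},n}$ uniformly enough to invoke Lemma~\ref{lem:LY-IC-MLE}; once those points are settled, the bound on $\ka^*$ is an immediate consequence of Lemma~\ref{lem:essinfDT} and the choice of parameters is elementary.
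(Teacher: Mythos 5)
Your bound on $\kappa^*$ is essentially the paper's argument (Lemmas \ref{lem:LYLongBranches}, \ref{lem:easyLY}, \ref{lem:essinfDT} fed into Lemma \ref{lem:LY-IC-MLE}, then choose $t$ first and $p$ close to $1$), and your argument that $\lambda^*\ge 0$ via $\mcl_\om^{(n)}1_{X_0}$ is also the paper's. The genuine gap is the claim $\lambda^*\le 0$, which you dispatch with ``$L^1$-contractivity combined with the Lasota--Yorke structure, exactly as in \cite{FLQ2}''. That argument is not available here: in the BV setting of \cite{FLQ2} the strong Lasota--Yorke inequality has a controlled $A$-coefficient, whereas in \eqref{eq:StrongLY} the coefficient $A_{\mc{R},n}(\om)$ is only known to be measurable --- the paper stresses that this inequality ``does not give enough information to control the maximal Lyapunov exponent''. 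Moreover, $L^1$-contractivity controls $\|\mcl_\om^{(n)}f\|_1$, not $\|\cdot\|_p$, so hypothesis \eqref{eq:subexp} of Lemma \ref{lem:LY-IC-MLE}(2) with $\|\cdot\|=\|\cdot\|_p$ is not directly available for a general $f\in\hpt$; and that part of the lemma is a statement about individual vectors, so even granting it on a dense set it does not by itself bound the operator-norm exponent $\lambda^*$. Relatedly, you attribute the hypothesis $\log^+\var(|DT_\om|^{-1})\in L^1(\bbp)$ to measurability/log-integrability of the constants in \eqref{eq:StrongLY} and \eqref{eq:easyLY}; in the paper it is used elsewhere, namely to run Buzzi's random Lasota--Yorke inequalities in BV.

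The paper closes this gap with a two-stage bootstrap combined with a contradiction through the semi-invertible Oseledets theorem: having shown $\kappa^*<0\le\lambda^*$, assume $\lambda^*>0$, so Theorem \ref{thm:GranderOseledetsSplitting} applies. For a $C^\infty$ function $g$ one first applies Lemma \ref{lem:LY-IC-MLE} with $\|\cdot\|_1$ and $\tn\cdot\tn=\|\cdot\|_{\text{BV}}$ (this is where Buzzi's argument \cite{Buzzi}, hence the variation hypothesis, enters), using $\|\mcl_\om^{(n)}g\|_1\le\|g\|_1$, to get subexponential growth of $\|\mcl_\om^{(n)}g\|_{\text{BV}}$ and hence of $\|\mcl_\om^{(n)}g\|_p$; a second application with $\|\cdot\|_p$ and $\tn\cdot\tn=\|\cdot\|_{\lhpt}$ then gives $\limsup_n\frac1n\log\|\mcl_\om^{(n)}g\|_{\lhpt}\le 0$. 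Since $g-\pi_1(g)$ grows at rate strictly less than $\lambda^*$, this forces $\pi_1(g)=0$ for every $C^\infty$ $g$; by density of $C^\infty$ in $\hpt$ and boundedness of $\pi_1$ this contradicts $\dim Y_1(\om)\ge 1$, so $\lambda^*=0$. Without an argument of this kind (or some other way to neutralize the uncontrolled $A$-term), your step ``$\lambda^*\le 0$'' does not go through.
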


\begin{proof}[Proof of Lemma~\ref{lem:LYquasiCompactness}] \
Since we are in the Lasota-Yorke case, strong measurability of
$\omega\mapsto\mathcal L_\omega$ follows from Lemma
\ref{lem:ContTransferOp}. We also have $C_b^*=1$. By Lemma
\ref{lem:essinfDT}, $C_e^*<\infty$. By hypothesis, $\chi<1$. Fix
$0<t<\alpha$. Now if $p$ is sufficiently close to 1, $t$ satisfies
$t<\min(\alpha,1/p)$ and the inequality $\Big(1-\frac{1}{p} \Big) (\log
C_e^*+ \log \chi) + t \log \chi < 0 $ holds. By Lemma
\ref{lem:LY-IC-MLE}, we see $\kappa^*<0$. On the other hand we have
$\|\mathcal L_\om^{(n)}1\|_\lhpt\ge C_\#\|\mathcal L_\om^{(n)}1\|_p\ge
C_\#\|\mcl_\om^{(n)}1\|_1=C_\#$ so that $\lambda^*\ge 0$. Accordingly Theorem
\ref{thm:GranderOseledetsSplitting} applies. Suppose for a contradiction
that $\lambda^*>0$.

Now the following are full measure sets:  the set where the results of
Theorem \ref{thm:GranderOseledetsSplitting} hold; the set where the top
Lyapunov exponent is $\lambda^*$; the set where the results of Lemma
\ref{lem:LY-IC-MLE} hold using $\|\cdot\|=\|\cdot\|_1$ and
$\tn\cdot\tn=\|\cdot\|_\text{BV}$ (by Buzzi's argument \cite{Buzzi}); and
the set where the results of Lemma \ref{lem:LY-IC-MLE} hold using
$\|\cdot\|=\|\cdot\|_p$ and $\tn\cdot\tn=\|\cdot\|_\lhpt$, because of Lemmas~\ref{lem:LYLongBranches} and \ref{lem:easyLY}. Let the full
measure set obtained by intersecting these be denoted by $\Omega_1$.

Suppose $\omega\in\Omega_1$ and let $f$ be a non-zero element of
$Y_1(\omega)$. By standard properties of $\lhpt$, $f$ may be approximated
arbitrary closely in $\|\cdot\|_\lhpt$ be a $C^\infty$ function $g$.
Applying Lemma \ref{lem:LY-IC-MLE} with $\|\cdot\|_1$ and
$\|\cdot\|_\text{BV}$, (note that $\|\mcl^{(n)}_\om g\|_1\le \|g\|_1$ for
all $n$), we get $\limsup_{n\to\infty}\frac1n\log\|\mcl^{(n)}_\om
g\|_\text{BV}\le 0$ and hence $\limsup_{n\to\infty}\frac1n\log
\|\mcl^{(n)}_\om g\|_p\le 0$. Applying Lemma \ref{lem:LY-IC-MLE} a second
time using the conclusion of the first application as hypothesis, we
get $\limsup_{n\to\infty}\frac1n\log \|\mcl^{(n)}_\om g\|_\lhpt\le 0$.
Letting $\pi_1$ be the projection onto the top Lyapunov subspace, we have that
$\limsup_{n\to\infty}\frac1n\log\|\mcl^{(n)}_\om
(g-\pi_1(g))\|_\lhpt<\lambda^*$. Thus
$\limsup_{n\to\infty}\frac1n\log\|\mcl^{(n)}_\om \pi_1(g)\|_\lhpt<\lambda^*$.
By Theorem \ref{thm:GranderOseledetsSplitting}, this implies $\pi_1(g)$
is 0. Since $g$ can be chosen arbitrarily close to $f$ and $\pi_1$ is
bounded, this is a contradiction.
\end{proof}

We now show that results of Cowieson \cite{Cowieson} may be exploited to
give families of random dynamical systems in higher dimensions for which
one can establish an Oseledets splitting for the Perron-Frobenius
cocycle. The framework of \cite{Cowieson} has a key simplifying feature,
namely that there is a \emph{fixed} partition $\mathcal P$ of the domain
$X_0$ into disjoint open pieces on each of which the map is continuous
and expanding. For this reason, the analogue of Lemma \ref{lem:WeakCont}
is straightforward: there is no issue with accounting for differences
between partitions. On the other hand a new difficulty appears in higher
dimensions, namely that it is no longer true \textsl{a priori} that the
complexity at the beginning, $C_b(T_\omega^{(n)})$, is bounded in $n$.
The necessity of controlling $C_b$ is demonstrated by results of Tsujii
\cite{Tsujii} and Buzzi \cite{Buzzi:noacim} and indeed $C_b$ appears in
Baladi and Gou\"ezel \cite{BaladiGouezel}.

\begin{thm}[Cowieson\cite{Cowieson}]\label{thm:Cowieson}
Let $\mathcal P$ be a fixed branch partition of a compact region
$X_0\subset \R^d$. There is a quantity $M$ and a dense $G_\delta$ subset,
$\text{Cow}(X_0;\mathcal P)$, of $\PE^{2}(X_0;\mathcal P)$ with the
following property: For any $n>0$ and $T\in \text{Cow}(X_0;\mathcal P)$,
there is a neighbourhood $U$ of $T$ such that for any
$T_1,T_2,\ldots,T_n\in U$, $C_b(T_n\circ\cdots\circ T_1)\le M$.
\end{thm}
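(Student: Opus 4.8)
The plan is to reconstruct Cowieson's transversality argument \cite{Cowieson}. Fix the finite decomposition $\partial\mathcal{P}=H_1\cup\cdots\cup H_q$ of the branch-partition boundary into compact $C^1$ hypersurface-pieces, together with its stratification by corners; all of this depends only on $\mathcal{P}$. For maps $T_1,\dots,T_n\in\PE^2(X_0;\mathcal{P})$ write $T^{(0)}=\mathrm{id}$ and $T^{(k)}=T_k\circ\cdots\circ T_1$. Since all the $T_i$ share the partition $\mathcal{P}$, the branch partition of $T_n\circ\cdots\circ T_1$ is the common refinement $\bigvee_{k=0}^{n-1}(T^{(k)})^{-1}\mathcal{P}$, whose boundary is $\bigcup_{k=0}^{n-1}(T^{(k)})^{-1}(\partial\mathcal{P})$. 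Hence $C_b(T_n\circ\cdots\circ T_1)$ is controlled, pointwise at $x$, by how many of the smooth hypersurface germs $(T^{(k)})^{-1}H_j$ pass through $x$ and by how they lie relative to one another: if their union near $x$ is, after a local $C^1$ change of coordinates, a general-position arrangement of finitely many hyperplanes, then a small ball around $x$ is cut into a number of pieces bounded purely in terms of $d$ and the number of hyperplanes, so $C_b$ is bounded in the same terms.

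First I would carry out the genericity step. Fix $n$, times $0\le k_1<\cdots<k_m\le n-1$ and, for each $\ell$, a stratum $Z_\ell$ of $\partial\mathcal{P}$, with codimensions summing to more than $d$ (it suffices to take $m=d+1$ with each $Z_\ell$ a hypersurface-piece, together with the analogous lower-dimensional data, and also a jet condition forbidding non-transverse contact). Consider the evaluation map $E(T,x)=(T^{k_1}(x),\dots,T^{k_m}(x))$ on $\PE^2(X_0;\mathcal{P})\times X_0$, built from the iterates of the single map $T$. Away from the lower-dimensional set of $x$ whose orbit segment $x,Tx,\dots,T^{k_m-1}x$ has a coincidence, one can perturb $T$ in pairwise disjoint neighbourhoods of the points $T^{k_\ell-1}x$ so as to move the coordinates $T^{k_\ell}(x)$ independently; the parametric transversality theorem then gives a residual set of $T$ for which $x\mapsto(T^{k_\ell}(x))_\ell$ is transverse to $Z_1\times\cdots\times Z_m$. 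Since the total codimension exceeds $\dim X_0=d$, this forces the preimage to be empty, and since $X_0$ and the $Z_\ell$ are compact, $\{T:\text{preimage empty}\}$ is an open set; being open and containing a dense set, it is open and dense. Intersecting over the countably many choices of $(n,\text{times},\text{strata})$ produces a dense $G_\delta$ set $\text{Cow}(X_0;\mathcal{P})$ on which, for every $n$ and every $x\in X_0$, the codimensions of the strata of $\partial\mathcal{P}$ visited by $x,Tx,\dots,T^{n-1}x$ sum to at most $d$ and the germs through $x$ meet pairwise transversally; in particular the orbit segment of $x$ meets $\partial\mathcal{P}$ at most $d$ times.

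Then I would pass from $T$ to the neighbourhood. Fix $T\in\text{Cow}(X_0;\mathcal{P})$ and $n$. By the compactness argument just used, the transversality property just established for the times $0,\dots,n-1$ is stable under small $C^1$ perturbations; and if $T_1,\dots,T_n$ lie in a sufficiently small neighbourhood $U$ of $T$ then each $T^{(k)}$ is $C^1$-close to $T^k$ uniformly for $k\le n$, so the property persists for $\bigvee_{k=0}^{n-1}(T^{(k)})^{-1}\mathcal{P}$. Consequently, at every $x$ the boundary of this refined partition is locally a general-position arrangement of at most $d\cdot p_0$ smooth hypersurfaces, where $p_0$ is the (fixed) maximal number of faces of $\partial\mathcal{P}$ meeting at a point; such an arrangement cuts a neighbourhood of $x$ into at most some constant $M=M(d,\mathcal{P})$ pieces, crucially independent of $n$. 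Hence $C_b(T_n\circ\cdots\circ T_1)\le M$, as required.

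The hard part will be the parametric transversality step in the presence of orbit coincidences: when the points $T^{k_\ell-1}x$ are not all distinct (periodic or pre-periodic orbit segments) a perturbation of $T$ no longer moves the coordinates $T^{k_\ell}(x)$ independently, so one must run a separate, inductive transversality argument over the lower-dimensional loci of such $x$ — this is precisely the delicate part of \cite{Cowieson}. A secondary, bookkeeping point is to check carefully that ``bounded total codimension of visited strata at every point'' really does cap the number of refined-partition pieces meeting at a point by a constant independent of $n$, i.e.\ that no unbounded combinatorial complexity can accumulate in the way finitely many transverse germs from different times interleave.
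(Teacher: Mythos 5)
A preliminary point: the paper contains no proof of this statement. Theorem \ref{thm:Cowieson} is imported verbatim from Cowieson \cite{Cowieson} and used as a black box (it enters only through Lemma \ref{lem:CowquasiCompactness} and Theorem \ref{thm:OseledetsSplitting4LYMaps}), so there is no in-paper argument to compare yours against; what you are attempting is a reconstruction of Cowieson's own proof. As a roadmap your outline is pointed the right way: genericity via parametric transversality for evaluation maps built from iterates of a single map, emptiness of the relevant preimages once the total codimension of the visited strata exceeds $d$, openness of that condition by compactness for each fixed $n$, a countable intersection giving the dense $G_\delta$, passage from iterates of $T$ to compositions of maps in an $n$-dependent neighbourhood $U$ (legitimate, since the statement lets $U$ depend on $n$ while $M$ may not), and a final combinatorial bound $M=M(d,\mathcal P)$.

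However, the two steps you flag as ``the hard part'' are exactly where the content of Cowieson's theorem lies, and they are not supplied, so this is a sketch rather than a proof. (i) Your transversality argument is only given when the points $T^{k_1-1}x,\dots,T^{k_m-1}x$ are pairwise distinct; for orbit segments with coincidences (periodic or preperiodic points on $\partial\mathcal P$, or returns to the support of the perturbation) the coordinates can no longer be moved independently, and one needs a separate stratified/inductive transversality argument over these degenerate configurations --- this is the bulk of Cowieson's work. You also do not address the constraint that perturbations must remain in $\PE^{2}(X_0;\mathcal P)$ (respect the fixed partition, stay expanding) and must be performable when the relevant orbit points lie on or arbitrarily near $\partial\mathcal P$, where one-branch bump perturbations are delicate. (ii) The closing combinatorial claim --- that ``total codimension of visited strata at most $d$, with pairwise transverse germs'' bounds the number of elements of $\bigvee_{k<n}(T^{(k)})^{-1}\mathcal P$ meeting at a point by a constant independent of $n$ --- is asserted, not proved; since $C_b$ is defined through this count (including corner strata of $\mathcal P$ pulled back at different times), this is precisely where the uniform bound $M$ is actually obtained. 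So: correct in outline, but with genuine gaps at the load-bearing steps; filling them would essentially amount to reproducing Cowieson's transversality analysis.
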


\begin{lem}[Quasi-compactness: Cowieson case]
\label{lem:CowquasiCompactness} Let $d>1$, let $X_0$ be a compact region
of $\R^d$ and let $\mathcal P$ be a branch partition of $X_0$. Let
$T\in\text{Cow}(X_0;\mathcal P)$. Then there exist parameters $p>1$,
$0<t<1/p$, a constant $\tau<0$ and a neighbourhood $N$ of $T$ with the
following property:

Let $\mc{T}$ be a  random $C^{2}$ piecewise expanding dynamical system
with ergodic base. Suppose that for $\mathbb P$-almost every $\omega$,
$T_\omega$ has branch partition $\mathcal P$ and that $T_\omega\in N$.
Then if $\mathcal L_\omega$ is the corresponding family of transfer
operators acting on $\hpt(X_0)$, then $\mc{R}$, the random linear system associated to $\mc{T}$, is quasi-compact with
\[
\ka^*\le \tau<\lambda^*=0.
\]
\end{lem}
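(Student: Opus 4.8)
The plan is to run the proof of Lemma~\ref{lem:LYquasiCompactness} uniformly in $\om$, the one essential new ingredient being Cowieson's Theorem~\ref{thm:Cowieson}, which is needed to control the complexity at the beginning $C_b(T_\om^{(n)})$ --- for $d>1$ this is no longer automatically bounded in $n$. The subtlety is that Theorem~\ref{thm:Cowieson} provides, for each \emph{fixed} composition-length $n$, a neighbourhood $U_n$ of $T$ (depending on $n$) within which $n$-fold compositions have $C_b\le M$; to turn this into a bound on the asymptotic constant $C_b^*$ of Lemma~\ref{lem:essinfDT}, I would commit to a single scale $n_0$, arrange $N\subseteq U_{n_0}$, and use the submultiplicativity $C_b(S\circ R)\le C_b(S)\,C_b(R)$: if $T_\om\in N$ for $\bbp$-a.e.\ $\om$, then $C_b(T_\om^{(kn_0)})\le M^k$ for every $k$, so $C_b^*=\lim_m(C_b(T_\om^{(m)}))^{1/m}\le M^{1/n_0}$, which is as close to $1$ as we wish once $n_0$ is large.

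First I would restrict to a bounded neighbourhood of $T$, on which everything is controlled uniformly in $\om$: since the branch partition $\mathcal P$ is fixed, the number of branches and the bounding-hypersurface constant $L$ are common to all maps in $\PE^2(X_0;\mathcal P)$, and by continuity of $S\mapsto\|\mu_S^{-1}\|_\infty$ and $S\mapsto\|DS\|_{C^{1+\alpha}}$ on $\PE^2(X_0;\mathcal P)$ there are $a_0<1$, $\mathcal D<\infty$ depending only on $T$ and a neighbourhood on which $\|\mu_{T_\om}^{-1}\|_\infty\le a_0$ and the distortion is $\le\mathcal D$, so hypotheses R1--R5 of Definition~\ref{defn:RandomLYmap} and condition [S2] all hold (R1 by assumption, [S2] directly for fixed $\mathcal P$). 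Also $C_e^*\le|\mathcal P|$ (crude branch count), and $\sup_{S\in N}\|\mcl_S\|_{\hpt}<\infty$ (bounded neighbourhood; inspection of Lemma~\ref{lem:LYLongBranches} at $n=1$), which supplies the higher-dimensional substitute for the weak Lasota--Yorke inequality Lemma~\ref{lem:easyLY}. Now choose parameters: take $p>1$ close to $1$ and $t\in(0,1/p)$ close to $1/p$, with $1-\tfrac1p$ small enough that
\[
(1-\tfrac1p)\log|\mathcal P| + \bigl(d(1-\tfrac1p)+t\bigr)\log a_0 < \tfrac12\log a_0 < 0 ,
\]
which is possible since the left side tends to $\log a_0<0$; this fixes $C_{\mathcal R}$. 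Then, with Cowieson's $M$ in hand, choose $n_0$ so large that $(\log M)/(p\,n_0)<-\tfrac14\log a_0$, and set $N:=U_{n_0}\cap\{\,S:\ \|\mu_S^{-1}\|_\infty<a_0,\ \|DS\|_{C^{1+\alpha}}\le\mathcal D\,\}$. For any random $C^2$ system with ergodic base, branch partition $\mathcal P$ and $T_\om\in N$ for a.e.\ $\om$, Lemma~\ref{lem:essinfDT} yields $C_b^*\le M^{1/n_0}$, $C_e^*\le|\mathcal P|$, $\chi\le a_0$, and substituting into the formula for $B_{\mathcal R,n}(\om)$ of Lemma~\ref{lem:LYLongBranches} (using $|DT_\om^{(n)}|^{-1}\le\mu_{\om,n}^{-d}$) gives
\[
\limsup_{n\to\infty}\tfrac1n\log B_{\mathcal R,n}(\om)\le \tfrac{\log M}{p\,n_0}+(1-\tfrac1p)\log|\mathcal P|+\bigl(d(1-\tfrac1p)+t\bigr)\log a_0<\tfrac14\log a_0=:\tau<0 .
\]
Since $A_{\mathcal R,n_0}(\om)$ is uniformly bounded in $\om$ (fixed $\mathcal P$, all constituent maps in a bounded neighbourhood of $T$), hence log-integrable, Lemma~\ref{lem:LY-IC-MLE} applies with weak norm $L_p$ and strong norm $\hpt$ and gives $\ka^*\le\tau<0$.

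It remains to see $\lam^*=0$. For $\lam^*\ge0$: positivity of the transfer operator and preservation of the integral give $\|\mcl_\om^{(n)}1_{X_0}\|_1=|X_0|$, so by the continuous inclusions $\hpt\hookrightarrow L_p\hookrightarrow L_1$ (the last holds on $\hpt(X_0)$ since those functions are supported on the compact $X_0$), $\|\mcl_\om^{(n)}1_{X_0}\|_{\hpt}\ge c>0$ for all $n$, whence $\tfrac1n\log\|\mcl_\om^{(n)}\|_{\hpt}\ge\tfrac1n\log\bigl(c/\|1_{X_0}\|_{\hpt}\bigr)\to0$. For $\lam^*\le0$ I would argue as in Lemma~\ref{lem:LYquasiCompactness}, the one-dimensional detour through bounded variation being replaced by the classical $L_1$--$L_p$ Lasota--Yorke inequality for piecewise expanding maps (whose $B$-term, like that of Lemma~\ref{lem:LYLongBranches}, is governed by $C_b$ and the expansion, and so --- after possibly enlarging $n_0$ --- has strictly negative exponential rate): assume for contradiction $\lam^*>0$; for a $C^\infty$ function $g$, from $\|\mcl_\om^{(n)}g\|_1\le\|g\|_1$ together with the $L_1$--$L_p$ and the $L_p$--$\hpt$ (Lemma~\ref{lem:LYLongBranches}) inequalities one gets $\limsup_n\tfrac1n\log\|\mcl_\om^{(n)}g\|_{\hpt}\le0$; now pick a nonzero $f\in Y_1(\om)$ for a generic $\om$ and $g$ close to $f$ in $\hpt$ --- by Theorem~\ref{thm:GranderOseledetsSplitting} the component $g-\pi_1 g$ grows at rate strictly below $\lam^*$, so the bound just obtained forces $\pi_1 g=0$, and letting $g\to f$ with $\pi_1$ bounded forces $f=\pi_1 f=0$, a contradiction. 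Hence $\lam^*=0$ and $\ka^*\le\tau<\lam^*=0$.

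The hard part is the $C_b$ bookkeeping described above: everything else is a uniform-in-$\om$ rerun of the one-dimensional argument (the uniformity, which also trivializes the integrability hypotheses of Lemma~\ref{lem:LY-IC-MLE}, coming precisely from $\mathcal P$ being fixed and $N$ a bounded neighbourhood of a single map), while the $C_b$ control dictates the order of the choices --- fix $p$ and $t$; then, with $M$ available, fix the scale $n_0$; only then extract $N=U_{n_0}$ --- and forces the passage from the single-scale estimate of Theorem~\ref{thm:Cowieson} to the asymptotic $C_b^*$ via submultiplicativity.
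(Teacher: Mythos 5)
Your handling of $\kappa^*$ is essentially the paper's argument: fix $p,t$ so that the partition-count/expansion factor is strictly contracting, invoke Theorem~\ref{thm:Cowieson} at a single scale $n_0$ chosen after $p,t$, and shrink the neighbourhood so that $n_0$-fold compositions have $C_b\le M$; whether you then pass to $C_b^*\le M^{1/n_0}$ by submultiplicativity (as you do) or bound $B_{\mathcal R,n_0}\le e^{n_0\tau}$ directly (as the paper does) is immaterial, and your uniform-in-$\omega$ bookkeeping for R1--R5, [S2] and the weak inequality is fine.

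The gap is in the step $\lambda^*\le 0$. You replace the bounded-variation detour by ``the classical $L_1$--$L_p$ Lasota--Yorke inequality for piecewise expanding maps'' with $B$-term ``governed by $C_b$ and the expansion''. No such classical inequality is available in dimension $d>1$, and the naive estimate goes the wrong way: from $\mcl_T f=\sum_i 1_{Q_i}(f\circ\xi_i)|D\xi_i|$ one gets only $\|\mcl_T f\|_p\le \bigl(C_e(T)\,\|\,|\det DT|^{-1}\,\|_\infty\bigr)^{1-1/p}\|f\|_p$, and since $\int\mcl_T 1=|X_0|$ forces $C_e(T)\,\|\,|\det DT|^{-1}\,\|_\infty\ge 1$, this $B$-term is never exponentially small; producing a decaying $B_n$ at the cost of an $A_n\|f\|_1$ term is precisely the kind of smoothing estimate that in $d>1$ requires the BV/quasi-H\"older/Sobolev machinery, not a one-line computation (note also that the pair $(\|\cdot\|_1,\|\cdot\|_p)$ fails the compact-inclusion hypothesis of Lemma~\ref{lem:LY-IC-MLE} as stated). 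This is exactly where the paper uses the Cowieson hypothesis a second time: it applies Lemma~\ref{lem:LY-IC-MLE} with $\|\cdot\|_{L^1}$ and $\tn\cdot\tn=\|\cdot\|_{\mathrm{BV}}$ using Cowieson's higher-dimensional BV Lasota--Yorke estimates, then passes from BV to $L^p$ via $\|g\|_{L^{d/(d-1)}}\le C_\#\|g\|_{\mathrm{BV}}$ (Giusti) and the extra constraint $p<d/(d-1)$, and only then runs the second application with $(\|\cdot\|_p,\|\cdot\|_\hpt)$ and the $Y_1/\pi_1$ contradiction argument you describe. The fact that your parameter choice drops the constraint $p<d/(d-1)$ is a symptom of this missing ingredient; as written, the $\lambda^*\le0$ half of the proof does not go through without either proving the asserted $L_1$--$L_p$ inequality or reinstating the BV route.
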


\begin{proof}
  Let $t=\frac12$, let $T\in\text{Cow}(X_0;\mathcal P)$, and let $k$ be the
  number of elements of $\mathcal P$. Let $a<1$ be such that
  $\mu_T^{-1}< a$, where $\mu_T=\essinf_{x\in X_0, \|v\|=1}\|DT(x)v\|$, and let $M$ be as in guaranteed
  by Theorem \ref{thm:Cowieson}. Let $p>1$ be such that
  $k^{1-\frac1p}a^{d(1-\frac1p)+t}<1$. We may further assume that $p<d/(d-1)$.
  This fixes all the data necessary to determine $C_\mathcal R$.

  Let $n_0$ be such that $\beta:=C_\mathcal
  Rn_0M^{1/p}k^{n_0(1-\frac1p)}a^{n_0(d(1-\frac1p)+t)}<1$ and let
  $\tau=\log\beta/n_0$ so that $\tau<0$. We now apply
  Theorem \ref{thm:Cowieson} to deduce that there is a neighbourhood $N$
  of $T$ such that for all any $n_0$-fold composition of elements of $N$,
  each respecting the branch partition $\mathcal P$,
  the complexity at the beginning is bounded above by $M$. We further
  reduce $N$ (to a smaller open neighbourhood of $T$)
  by requiring that $\mu_S^{-1}<a$ for all $S\in N$.

  Now if $\mathcal{T}$ is a random $C^2$ piecewise expanding dynamical
  system where the maps all belong to $N$ then we have ensured that the
  quantity $B_{\mathcal R,n_0}$ appearing in Lemma \ref{lem:LYLongBranches} is
  at most $e^{n_0\tau}$. As in the proof of Lemma
  \ref{lem:LYquasiCompactness} we obtain $\kappa^*\le \tau$.

  To see that $\lambda^*=0$, we argue as in Lemma
  \ref{lem:LYquasiCompactness}. We initially apply Lemma
  \ref{lem:LY-IC-MLE} with $\|\cdot\|=\|\cdot\|_{L^1}$ and
$\tn\cdot\tn=\|\cdot\|_{BV}$ to deduce for $f\in C^\infty$
  (using results from Cowieson's
  paper \cite{Cowieson}) that $\limsup_{n \to \infty}\frac1n\log\|\mcl^{(n)}_\om f\|_\text{BV}\leq 0$. Then, since there are constants $C_\#, C_\#(X_0)$ such that for any function $g$ supported on $X_0$, $\|g\|_{L^{d/(d-1)}} \le C_\#\|g\|_\text{BV}$  (see Giusti
  \cite[Theorem  1.28]{Giusti}) and $\|g\|_{L^{p}} \leq C_\#(X_0) \| g \|_{L^{d/(d-1)}}$, we obtain sufficient conditions
  for the second
  application of Lemma \ref{lem:LY-IC-MLE}, taking this time
  $\|\cdot\|=\|\cdot\|_{L^{p}}$ and $\tn\cdot\tn=\|\cdot\|_\hpt$.
  The remainder of the
  proof is exactly as in Lemma \ref{lem:LYquasiCompactness}.
\end{proof}

In view of the quasi-compactness just obtained, we can apply
Theorem~\ref{thm:GranderOseledetsSplitting} to get our main application
theorem, ensuring the existence of an Oseledets splitting for random
Lasota-Yorke dynamical systems or Cowieson-type random piecewise
expanding dynamical systems.

\begin{thm}\label{thm:OseledetsSplitting4LYMaps}
Let $\mc{R}=(\Om,\mc{F}, \bbp, \sig, T)$ be a random $C^{1+\al}$
piecewise expanding dynamical system satisfying the hypotheses of
Lemma~\ref{lem:LYquasiCompactness} or Lemma \ref{lem:CowquasiCompactness}
with parameters $p$ and $t$. Then, there exist $1\le l\le\infty$, and
exceptional Lyapunov exponents $0=\lam_1>\lam_2>\dots > \lam_l>\ka^*$ (
in the case $l=\infty$, we have $\lim \lam_n=\ka^*$), measurable families
of finite-dimensional equivariant spaces $Y_1(\om), \dots,
Y_l(\om)\subset X$ and a measurable equivariant family of closed
subspaces $V(\om)\subset X$ defined on a full $\bbp$ measure,
$\sig$-invariant subset of $\Om$ so that $X=V(\om) \oplus
\bigoplus_{j=1}^l Y_j(\om)$, for every $f\in V(\om)\setminus \{0\}$,
$\lim_{n \to \infty}\frac{1}{n} \log \|\mcl_\om^{(n)} f\|_{\hpt} \leq
\ka^*$, and for every $f\in Y_j(\om)\setminus \{0\}$, $\lim_{n \to
\infty} \frac{1}{n} \log \|\mcl_\om^{(n)} f\|_{\hpt} = \lam_j$.
Furthermore, the norms of the associated projections are tempered with
respect to $\sig$.
\end{thm}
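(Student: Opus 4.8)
The plan is to check that the strongly measurable random linear system $(\Om,\mc{F},\bbp,\sig,\hpt,\mcl)$ associated to $\mc{R}$ satisfies every hypothesis of Theorem~\ref{thm:GranderOseledetsSplitting}, and then to read off the conclusion from that theorem. First I would record that $\hpt$ is a separable Banach space (see \S\ref{subS:fractionalSobolev}, together with Lemma~\ref{lem:multBySmallSupportHD} for separability of the subspace of functions supported on $X_0$), that the base $\sig$ is invertible and ergodic, and that the generator $\om\mapsto\mcl_\om$ is strongly measurable: in the one-dimensional Lasota-Yorke case this is precisely condition [S2], which holds automatically by Lemma~\ref{lem:ContTransferOp}; in the higher-dimensional Cowieson case the branch partition $\mc{P}$ is held fixed and the maps vary continuously, so [S2] holds there as well, and $(\Om,\mc{F},\bbp,\sig,\hpt,\mcl)$ is a separable strongly measurable random linear system with ergodic invertible base.

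Next I would verify the integrability hypothesis $\log^+\|\mcl(\om)\|\in L^1(\bbp)$. In the Lasota-Yorke case the weak Lasota-Yorke inequality of Lemma~\ref{lem:easyLY}, applied with $n=1$, gives $\|\mcl_\om\|\le\tld{A}_{\mc{R},1}(\om)$ for a $\bbp$-log-integrable function $\tld{A}_{\mc{R},1}$, which is exactly the bound needed. In the Cowieson case the maps $T_\om$ all lie in the fixed precompact neighbourhood $N$ of $T$ and share the fixed branch partition $\mc{P}$, so the operator norms $\|\mcl_\om\|$ are uniformly bounded and the integrability is immediate. With these in hand I would invoke quasi-compactness: Lemma~\ref{lem:LYquasiCompactness} in the one-dimensional case, or Lemma~\ref{lem:CowquasiCompactness} in the Cowieson case, supplies parameters $p>1$ and $0<t<\min(\al,1/p)$ for which the associated random linear system is quasi-compact with $\ka^*<\lam^*=0$; in particular $\lam_1=\lam^*=0$ is the top exceptional exponent.

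Now all the hypotheses of Theorem~\ref{thm:GranderOseledetsSplitting} hold, and that theorem yields $1\le l\le\infty$, exceptional exponents $0=\lam_1>\lam_2>\dots>\lam_l>\ka^*$ (with $\lim_{n\to\infty}\lam_n=\ka^*$ when $l=\infty$), multiplicities $m_1,\dots,m_l\in\N$, a unique measurable equivariant splitting $X=V(\om)\oplus\bigoplus_{j=1}^l Y_j(\om)$ with $\dim Y_j(\om)=m_j$, the asserted growth rates of $\mcl_\om^{(n)}$ on each $Y_j(\om)\setminus\{0\}$ and on $V(\om)$, and temperedness with respect to $\sig$ of the norms of the projections associated to the splitting. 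Renaming the $Y_j(\om)$, this is precisely the statement of Theorem~\ref{thm:OseledetsSplitting4LYMaps}.

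Since all of the substance has already been carried out in the quasi-compactness lemmas and in Theorem~\ref{thm:GranderOseledetsSplitting}, the present argument is essentially bookkeeping, and I do not expect a serious obstacle. The only points that genuinely need attention are the strong measurability and the log-integrability of the generator in the higher-dimensional Cowieson setting, where [S2] was only asserted; both follow routinely from the fact that there the branch partition is fixed and the maps are confined to a precompact neighbourhood of a single Cowieson map.
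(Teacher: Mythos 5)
Your proposal is correct and follows exactly the route the paper takes: the theorem is obtained by applying Theorem~\ref{thm:GranderOseledetsSplitting} to the associated random linear system on $\hpt$, with strong measurability, log-integrability and quasi-compactness ($\ka^*<\lam^*=0$) supplied by Lemma~\ref{lem:ContTransferOp}, the Lasota-Yorke inequalities, and Lemmas~\ref{lem:LYquasiCompactness} and \ref{lem:CowquasiCompactness}. The only cosmetic difference is that in the Cowieson case the uniform bound on $\|\mcl_\om\|$ comes from the fixed branch partition together with conditions R2--R4 of Definition~\ref{defn:RandomLYmap}, not from any precompactness of the neighbourhood $N$, but this does not affect the argument.
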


\begin{rmk}
We remark that in both the scenarios that we consider, the existing proofs
of Buzzi \cite{Buzzi} and Cowieson \cite{Cowieson} establish the
existence of random absolutely continuous invariant measures. One can
check, using techniques such as those in \cite[Proposition 3.2]{Buzzi}
that their densities lie in the leading Oseledets subspace, $Y_1(\omega)$.
\end{rmk}

\section{Future work}\label{sec:futureWork}

Several interesting questions remain open for future research.
In relation to previous works concerned with exponential decay of correlations,
it is natural to look for conditions that provide further information about the structure of the Oseledets splitting, either in a general framework, or in the specific 
situation of random composition of piecewise expanding maps.
Of particular interest would be to ensure simplicity of the leading Lyapunov
exponent, and to obtain bounds on the number of exceptional Lyapunov exponents, including  finiteness.
Some
progress has already been achieved in this direction in the settings of
smooth expanding maps (see work of Baladi, Kondah and Schmitt
\cite{BaladiKondahSchmitt}) subshifts of finite type (see work of Kifer
\cite{Kifer92, Kifer08}) and piecewise smooth expanding maps of the
interval (see work of Buzzi \cite{BuzziEDC}). 

In a different direction, it would be interesting to investigate applications of the abstract semi-invertible Oseledets theorem (Theorem~\ref{thm:GranderOseledetsSplitting}) to a more general class of expanding maps, allowing for non-constant branch partitions and, more ambitiously, to piecewise hyperbolic maps. 

Finally, we hope that the constructive approach to the identification of Oseledets spaces turns out to be useful for numerical studies of non-autonomous dynamical systems. A possible alternative would be to first attempt to identify the Oseledets filtration, perhaps using some  existing numerical method. Then, one could inductively approximate Oseledets spaces by (1) fixing a sufficiently dense subset of a basis of a suitable Banach space, (2) pushing forward these elements under a numerical approximation of the transfer operator and (3) subtracting the projection along the corresponding level in the filtration to lower Oseledets spaces, previously obtained by this procedure.

\appendix
\section{Strong measurability in separable Banach spaces}

Let $X$ be a separable Banach space and let $\mathcal B_X$ be the
standard Borel $\sigma$-algebra generated by the open subsets of
$X$. Fix a countable dense sequence $x_1,x_2,\ldots$ in $X$ for the
remainder of this appendix. It is well known that any open subset of
$X$ is a countable union of sets of the form $U_{i,j}$, where
$U_{i,j}:=\{x\in X\colon \|x-x_i\|<1/j\}$.  Hence, $\mathcal B_X$ is
countably generated.

We denote by $L(X)$ the set of bounded linear operators from $X$ to
$X$.  The strong operator topology, $\SOT(X)$, on $L(X)$ is the
topology generated by the sub-base $\{V_{x,y, \ep}=\{T\colon
\|T(x)-y\|<\epsilon\}\}$.

\begin{defn}\label{defn:strongSigmaAlgebra}
The \textbf{strong $\sigma$-algebra} on $L(X)$ is the
$\sigma$-algebra $\mathcal S$ generated by
sets of the form
$W_{x,U}=\{T\colon T(x)\in U\}$, with $x\in X$ and $U \subset X$ open.
\end{defn}

For $r\in \R$, let $L_r(X)$ denote the linear maps from $X$ to itself
with norm at most $r$. That is, $L_r=\{T\in L(X)\colon \|T\|\le r\}$.

\begin{lem}\label{lem:Smeas}
\
  \begin{enumerate}
  \item
  For every $r\in \R$, $L_r \in \mathcal S$.\label{it:BallIsSmeas}
  \item
  $\mathcal S$ is countably generated.\label{it:SCountGen}
\item An open set in the strong operator topology lies in $\mathcal
  S$. \label{it:SOTisSmeas}
  \item
  The strong $\sigma$-algebra is the Borel $\sigma$-algebra of the strong
  operator topology $\SOT(X)$.
\item An open set in $L_n(X)$ (in the relative topology) is the union
  of countably many sets of the form $B_{i,j,m,n}$ (with terminology
  introduced in the proof).\label{cor:Ln}
  \end{enumerate}
\end{lem}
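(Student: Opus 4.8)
The plan is to prove the five assertions in the order \eqref{it:BallIsSmeas}, \eqref{cor:Ln}, \eqref{it:SOTisSmeas}, the fourth (the identification of $\mathcal S$ with the Borel $\sigma$-algebra of $\SOT(X)$), and finally \eqref{it:SCountGen}, since each feeds the next. For \eqref{it:BallIsSmeas}, the point is that, $\{x_i\}$ being dense, every $T\in L(X)$ satisfies $\|T\|=\sup\{\|T(x_i)\|/\|x_i\|\colon x_i\neq 0\}$ (using only continuity of $T$); hence $L_r=\bigcap_{i\colon x_i\neq 0}\{T\colon\|T(x_i)\|\le r\|x_i\|\}$ is a countable intersection, and each set in it is the complement of $W_{x_i,\,X\setminus\overline{B(0,r\|x_i\|)}}$, which lies in $\mathcal S$ since $X\setminus\overline{B(0,r\|x_i\|)}$ is open. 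Thus $L_r\in\mathcal S$.

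For \eqref{cor:Ln}, I would introduce, for finite tuples $I=(i_1,\dots,i_k)$, $J=(j_1,\dots,j_k)$ of positive integers and $m,n\in\N$, the sets $B_{I,J,m,n}:=\{T\in L_n(X)\colon\|T(x_{i_s})-x_{j_s}\|<1/m\text{ for }s=1,\dots,k\}$ (so $I,J$ encode finite intersections), equivalently $B_{I,J,m,n}=L_n(X)\cap\bigcap_{s=1}^k W_{x_{i_s},U_{j_s,m}}$, and show that these form a countable base for the relative strong operator topology on $L_n(X)$. A basic $\SOT$-neighbourhood of $T_0\in L_n(X)$ has the form $L_n(X)\cap\bigcap_s\{T\colon\|T(z_s)-T_0(z_s)\|<\ep_s\}$; approximating each $z_s$ by a nearby $x_{i_s}$ and each $T_0(x_{i_s})$ by a nearby $x_{j_s}$, and using the bound $\|T(z)-T(x_i)\|\le n\|z-x_i\|$ valid on $L_n(X)$, one sandwiches a suitable $B_{I,J,m,n}$ between $T_0$ and the given neighbourhood. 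As only countably many such sets occur, $L_n(X)$ is second countable, so every relatively open subset is a countable union of sets $B_{I,J,m,n}$.

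For \eqref{it:SOTisSmeas}, given a $\SOT$-open $O$ I would write $O=\bigcup_{n\in\N}(O\cap L_n(X))$; by \eqref{cor:Ln} each $O\cap L_n(X)$ is a countable union of sets $B_{I,J,m,n}=L_n(X)\cap\bigcap_s W_{x_{i_s},U_{j_s,m}}$, each in $\mathcal S$ by \eqref{it:BallIsSmeas} and the fact that the $W_{x,U}$ with $U$ open are generators of $\mathcal S$; hence $O\in\mathcal S$. The identification of $\mathcal S$ with the Borel $\sigma$-algebra of $\SOT(X)$ then follows: \eqref{it:SOTisSmeas} gives one inclusion, and for the other, each generator $W_{x,U}$ of $\mathcal S$ is itself $\SOT$-open, because writing $U=\bigcup_k B(y_k,r_k)$ gives $W_{x,U}=\bigcup_k\{T\colon\|T(x)-y_k\|<r_k\}$, a countable union of sub-basic open sets. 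For \eqref{it:SCountGen}, the reasoning of \eqref{it:SOTisSmeas} together with this identification shows that every generator $W_{x,U}$ --- indeed every $\SOT$-open set --- lies in the $\sigma$-algebra generated by the countable family $\{B_{I,J,m,n}\}$; since conversely each $B_{I,J,m,n}\in\mathcal S$, this countable family generates $\mathcal S$.

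The one genuinely delicate step is \eqref{cor:Ln}, where the crucial feature is the restriction to the norm-bounded pieces $L_n(X)$: on all of $L(X)$ the strong operator topology is not second countable, and it is precisely the uniform estimate $\|T(z)-T(x_i)\|\le n\|z-x_i\|$, available on $L_n(X)$, that makes the approximation of a general $z$ by a point $x_i$ of the fixed countable dense set uniform in $T$, keeping the generating family countable. Everything else reduces to bookkeeping with countable unions and complements, using the recorded fact that every open subset of $X$ is a countable union of the balls $U_{i,j}$.
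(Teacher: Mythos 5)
Your proposal is correct and follows essentially the same route as the paper: write the condition $\|T\|\le r$ as countably many conditions at the dense points $x_i$, use the uniform bound $\|T(z)-T(x_i)\|\le n\|z-x_i\|$ on $L_n(X)$ to produce a countable base of sets determined by finitely many dense points, and then deduce that open sets in the strong operator topology lie in $\mathcal S$, that $\mathcal S$ is the Borel $\sigma$-algebra of $\SOT(X)$, and that $\mathcal S$ is countably generated. Your only deviations are cosmetic: you prove the base statement for $L_n(X)$ first and index the basic sets by finite tuples $B_{I,J,m,n}$, which is just the paper's finite intersections $\bigcap_s B_{k_s,\ell_s,m,n}$ made explicit (and indeed matches what the paper's argument literally establishes), and you handle $L_r$ via complements of generators rather than a countable intersection of open conditions.
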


\begin{proof}
  We first show that $L_r \in \mathcal S$. Let
  \begin{align*}
    \tld{L}_r&=\bigcap_j\left\{T\colon \|T(x_j)\|\le r\|x_j\|\right\}\\
      &=\bigcap_j\bigcap_k \left\{T\colon \|T(x_j)\|<(r+1/k)\|x_j\|\right\}.
  \end{align*}
  Then, $\tld{L}_r$ is a countable intersection of sets in the sub-base and
  therefore $\tld{L}_r\in\mathcal S$. We claim that $L_r=\tld{L}_r$.
Notice that if $\|T\|\le r$, then $T\in \tld{L}_r$.
  Conversely let $T\in \tld{L}_r$ and $x\in X$, let $x_j\to x$. Since $T$ is
  bounded we have $T(x_j)\to T(x)$. Since $\|T(x_j)\|\le r\|x_j\|$ and
  $\|x_j\|\to\|x\|$ we see that $\|T(x)\|\le r\|x\|$. Since this holds
  for all $x$, we see that $\|T\|\le r$. Thus, $L_r=\tld{L}_r$, as claimed.

  Set $V_{i,j,m}=\{T\colon \|T(x_i)-x_j\|< 1/m\}$. This clearly
  belongs to $\mathcal S$. We claim that an open set $U\subset L(X)$
  in the strong operator topology is the union of sets of the form
  $B_{i,j,m,n}=V_{i,j,m}\cap L_n$. Let $U$ be open and let $T\in U$.
  Then $U$ contains a basic open neighbourhood of $T$, that is, a set
  of the form $\{S\colon \|S(y_i)-T(y_i)\|<\epsilon_i\text{ for
  }i=1,\ldots,s\}$ (where $y_1,\ldots,y_s$ are elements of $X$). Let
  $n>\|T\|$ and let $m>\max(3/\epsilon_i)$. Choose $x_{k_i}$ such that
  $\|x_{k_i}-y_i\|<\min(1/(2mn),\epsilon_i/(3n))$ and $x_{\ell_i}$
  such that $\|x_{\ell_i}-T(y_i)\|<\min(\epsilon_i/3,1/(2m))$. Let
  $C=\bigcap_{i=1}^s B_{k_i,\ell_i,m,n}$.  By an application of the
  triangle inequality if $S\in C$, then we have
  \begin{align*}
    \|S(y_i)-T(y_i)\|&\le
    \|S(y_i)-S(x_{k_i})\|+\|S(x_{k_i})-x_{\ell_i}\|+\|x_{\ell_i}-T(y_i)\|\\
    &\le n\|y_i-x_{k_i}\|+1/m+\epsilon_i/3<\epsilon_i,
  \end{align*}
  so that we see $C\subseteq U$. We also have
  \begin{align*}
    \|T(x_{k_i})-x_{\ell_i}\|&\le
    \|T(x_{k_i})-T(y_i)\|+\|T(y_i)-x_{\ell_i}\|\\
    &\le \|T\|\|x_{k_i}-y_i\|+1/(2m)\le 1/m,
  \end{align*}
  so that $T\in C$. It follows that any open set $U$ may be expressed
  as a countable union of finite intersections of $\mathcal
  S$-mea\-sur\-able sets of the form $B_{i,j,m,n}$, so that open sets
  belong to $\mathcal S$, proving~\eqref{it:SOTisSmeas}.
  \eqref{cor:Ln} is proved similarly.

  Since $\mathcal S$ is generated by sets that are open in the strong
  operator topology, it follows that $\mathcal S$ is also generated by
  the $(B_{i,j,m,n})$, proving~\eqref{it:SCountGen}.

  We have shown that $\mathcal S$ contains all open sets in the strong
  operator topology. By definition it is generated by a collection of
  sets that are open in the strong operator topology. It follows that
  $\mathcal S$ is the Borel $\sigma$-algebra of $\SOT(X)$.

\end{proof}

\begin{defn}\label{defn:stronglyMble}
  A map $T:\Om \to L(X)$ is called \textbf{strongly measurable} if for
  every $x\in X$, the map $T(\cdot)(x):\Om \to X$ given by $\om
  \mapsto T(\om)(x)$ is $(\mc{F}, \mc{B}_X)$ measurable.
\end{defn}

\begin{lem}\label{lem:equivStronglyMble}
  $T:\Om \to L(X)$ is strongly measurable if and only if it is
  $(\mc{F}, \mc{S})$-mea\-sur\-able.
\end{lem}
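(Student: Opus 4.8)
The plan is to prove both implications by directly unwinding the definitions, using the standard measurability criterion: a map $f$ from a measurable space into $(Y,\mc{A})$ with $\mc{A}=\sigma(\mathcal{G})$ is measurable as soon as $f^{-1}(G)$ is measurable for every $G\in\mathcal{G}$. Here the target $\sigma$-algebra is $\mc{S}$ and, by Definition~\ref{defn:strongSigmaAlgebra}, a generating family is $\{W_{x,U}\colon x\in X,\ U\subset X\text{ open}\}$, where $W_{x,U}=\{S\in L(X)\colon S(x)\in U\}$.

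First I would show that strong measurability implies $(\mc{F},\mc{S})$-measurability. Assuming $\om\mapsto T(\om)(x)$ is $(\mc{F},\mc{B}_X)$-measurable for each $x\in X$, one computes for an arbitrary generator $W_{x,U}$ that
\[
T^{-1}(W_{x,U})=\{\om\in\Om\colon T(\om)(x)\in U\}=\big(\om\mapsto T(\om)(x)\big)^{-1}(U),
\]
which lies in $\mc{F}$ because $U$ is open, hence Borel in $X$. Since $\mc{S}$ is generated by the sets $W_{x,U}$, the criterion above gives that $T$ is $(\mc{F},\mc{S})$-measurable.

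Conversely, assuming $T$ is $(\mc{F},\mc{S})$-measurable, I would fix $x\in X$ and note that for every open $U\subset X$ the set $W_{x,U}$ lies in $\mc{S}$, so $\big(\om\mapsto T(\om)(x)\big)^{-1}(U)=T^{-1}(W_{x,U})\in\mc{F}$. Because the open subsets of $X$ generate $\mc{B}_X$ (as recorded at the start of this appendix), this shows $\om\mapsto T(\om)(x)$ is $(\mc{F},\mc{B}_X)$-measurable; as $x\in X$ was arbitrary, $T$ is strongly measurable.

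There is essentially no hard step here: the content of the lemma is precisely that Definition~\ref{defn:strongSigmaAlgebra} was designed so that the two notions coincide. The only point needing any care is the routine justification that one may test $\mc{S}$-measurability on the generating family $\{W_{x,U}\}$ rather than on all of $\mc{S}$ — the standard observation that the collection of $A\subset L(X)$ with $T^{-1}(A)\in\mc{F}$ is a $\sigma$-algebra, hence contains $\sigma(\{W_{x,U}\})=\mc{S}$ once it contains the generators.
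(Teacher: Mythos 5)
Your proof is correct and takes essentially the same route as the paper: both directions reduce to checking preimages of generating sets via the identity $T^{-1}(\{S\colon S(x)\in U\})=\big(\om\mapsto T(\om)(x)\big)^{-1}(U)$. The only difference is that you test against the defining generators $W_{x,U}$ of $\mc{S}$ directly, while the paper tests against the countable family $B_{i,j,m,n}$ from Lemma~\ref{lem:Smeas} (and so needs that $L_n\in\mc{S}$ and that sets open in the strong operator topology lie in $\mc{S}$); your version is slightly more economical but substantively identical.
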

\begin{proof}
  Recall that since $X$ is separable, both $\mc{B}_X$ and $\mc{S}$ are
  countably generated Borel $\sigma$-algebras. $\mc{B}_X$ is generated
  by $U_{i,j}:=\{x\in X\colon \|x-x_i\|<1/j\}$ (where $\{x_i\}_{i \in
    \N}$ is a dense set in $X$) and in view of Lemma~\ref{lem:Smeas},
  $\mc{S}$ is generated by the sets $B_{i,j,m,n}=V_{i,j,m}\cap L_n$
  (where $V_{i,j,m}=\{T\colon \|T(x_i)-x_j\|< 1/m\}$).  Furthermore,
  it is straightforward to check that
\begin{equation}\label{eq:strMble}
  T^{-1}(V_{x,y,\ep})=\{\om : |T(\om)(x)-y|<\ep\}= T(\cdot)(x)^{-1}(B_\ep(y)),
\end{equation}
where $V_{x,y, \ep}=\{T\colon \|T(x)-y\|<\epsilon\}$.

Assume that $T:\Om \to L(X)$ is strongly measurable. To show it is
$(\mc{F}, \mc{S})$ measurable, it suffices to show that
$T^{-1}(B_{i,j,m,n}) \in \mc{F}$. This follows from \eqref{eq:strMble}
and the fact that $L_n =\bigcap_j\bigcap_k \left\{T\colon
  \|T(x_j)\|<(n+1/k)\|x_j\|\right\}$, which was established in the
proof of Lemma~\ref{lem:Smeas}\eqref{it:BallIsSmeas}.

For the converse, suppose that $T:\Om \to L(X)$ is $(\mc{F}, \mc{S})$
measurable. To show it is strongly measurable, we have to show that
for every $x \in X$ and $i, j \in \N$, $T(\cdot)(x)^{-1}(U_{i,j})\in
\mc{F}$.  Equation~\eqref{eq:strMble} gives that
$T(\cdot)(x)^{-1}(U_{i,j})=T^{-1}(V_{x,x_i,1/j})$.  Since
$V_{x,x_i,1/j}\in \mc{S}$ by
Lemma~\ref{lem:Smeas}\eqref{it:SOTisSmeas}, the result follows.
\end{proof}

\begin{lem}\label{lem:CompStronglyMeas}
The composition of strongly measurable maps is strongly measurable.
\end{lem}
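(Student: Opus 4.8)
The plan is to reduce, via the definition of strong measurability, to showing that for each fixed $x\in X$ the map $\om\mapsto T(\om)\bigl(S(\om)(x)\bigr)$ is $(\mc F,\mc B_X)$-measurable, where $T,S:\Om\to L(X)$ denote the two given strongly measurable maps. The difficulty—indeed essentially the only one—is that although $\om\mapsto S(\om)(x)$ is measurable and, separately, each map $\om\mapsto T(\om)(y)$ is measurable for every \emph{fixed} $y$, in the composition the operator $T(\om)$ is applied to the \emph{moving} target $y(\om):=S(\om)(x)$, so strong measurability of $T$ cannot be invoked directly. The obstacle will be overcome by a simple-function approximation in the target variable, combined with the elementary Lipschitz bound $\|T(\om)z-T(\om)z'\|\le\|T(\om)\|\,\|z-z'\|$, valid because each $T(\om)$ is a bounded operator.

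Concretely, I would first fix $x\in X$ and set $y(\om)=S(\om)(x)$, which is $(\mc F,\mc B_X)$-measurable since $S$ is strongly measurable. Using separability of $X$, for each $k\in\N$ I would build a countable Borel partition $\{E^k_i\}_{i\in\N}$ of $X$ with each $E^k_i$ of diameter less than $1/k$ (disjointify a countable cover of $X$ by balls of radius $1/(2k)$ centred at the fixed dense points $x_1,x_2,\dots$), choose a representative $z^k_i\in E^k_i$ for each nonempty piece, and define $y_k(\om)=z^k_i$ whenever $y(\om)\in E^k_i$. Each $y_k$ is then a countably-valued $(\mc F,\mc B_X)$-measurable map with $\|y_k(\om)-y(\om)\|<1/k$ for every $\om$.

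Next I would note that $\om\mapsto T(\om)\bigl(y_k(\om)\bigr)$ is $(\mc F,\mc B_X)$-measurable: on the measurable set $\{\om:y(\om)\in E^k_i\}$ it agrees with $\om\mapsto T(\om)(z^k_i)$, which is measurable because $T$ is strongly measurable, and $\Om$ is the countable disjoint union of these sets. Finally, for each fixed $\om$ one has $\|T(\om)(y_k(\om))-T(\om)(y(\om))\|\le\|T(\om)\|/k\to 0$ as $k\to\infty$, since $T(\om)\in L(X)$. Hence $\om\mapsto T(\om)(y(\om))$ is the pointwise limit of $(\mc F,\mc B_X)$-measurable maps into the metric space $X$, and is therefore itself $(\mc F,\mc B_X)$-measurable. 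As $x\in X$ was arbitrary, $\om\mapsto T(\om)\circ S(\om)$ is strongly measurable. The same argument handles $\om\mapsto S(\om)\circ T(\om)$, and by induction any finite composition, which is the form in which the lemma is applied to $\mcl^{(n)}_\om$.
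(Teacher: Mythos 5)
Your proof is correct, but it takes a genuinely different route from the paper. The paper first reduces strong measurability to $(\mc F,\mc S)$-measurability (Lemma~\ref{lem:equivStronglyMble}) and then shows that the composition map $\Psi(T,S)=T\circ S$ on $L(X)\times L(X)$ is $(\mc S\otimes\mc S,\mc S)$-measurable, by proving that $\Psi$ restricted to $L_n(X)\times L(X)$ is jointly continuous when $L_n(X)$ carries the relative strong operator topology and then exhausting $L(X)=\bigcup_n L_n(X)$; the lemma follows by composing $\Psi$ with the measurable map $\om\mapsto(T(\om),S(\om))$. You instead work pointwise in $x$: you approximate the moving target $y(\om)=S(\om)(x)$ by countably-valued measurable functions built from a countable Borel partition of the separable space $X$, use strong measurability of $T$ on each piece (where the target is a fixed vector), and pass to the pointwise limit using only the finiteness of $\|T(\om)\|$ for each fixed $\om$. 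Your argument is more elementary and self-contained — it bypasses the characterization via the strong $\sigma$-algebra and the structural facts in Lemma~\ref{lem:Smeas}, and in effect it also proves item (3) of the lemma that follows in the appendix (measurability of $\om\mapsto T_\om(x_\om)$ for measurable $\om\mapsto x_\om$), of which your key step is the special case $x_\om=S(\om)(x)$. What the paper's route buys in exchange is a reusable statement about the composition map $\Psi$ itself as a measurable (indeed piecewise continuous) map on $L(X)\times L(X)$, in parallel with its treatment of the evaluation map $(T,x)\mapsto T(x)$, which fits the appendix's systematic development of the strong $\sigma$-algebra. The only cosmetic slip in your write-up is that balls of radius $1/(2k)$ give pieces of diameter at most $1/k$ rather than strictly less, which of course changes nothing.
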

\begin{proof}
  In view of Lemma~\ref{lem:equivStronglyMble}, it suffices to show
  that the composition map $\Psi: L(X) \times L(X) \to L(X)$ given by
  $\Psi(T,S)= T\circ S$ is $(\mc{S}\otimes \mc{S}, \mc{S})$
  measurable.  We claim that for every $n\in \N$, the restriction of
  $\Psi$ to $L_n(X) \times L(X)$ is continuous with respect to
  $(\tau_n(X), \SOT(X))$, where $\tau_n(X)$ is the product topology on
  $L_n(X)\times L(X)$, and where $L_n(X)$ is endowed with the subspace
  topology of $\SOT(X)$. Since $L(X)$ is $\bigcup_{n\in \N} L_n(X)$, the
  result then follows from Lemma~\ref{lem:Smeas}.

  By Lemma~\ref{lem:Smeas}\eqref{it:SCountGen}, the claim will follow
  from showing that for every $x, y, \ep$, the set $\Psi^{-1}(V_{x,y,
    \ep})\cap (L_n(X) \times L(X))$ lies in $\tau_n(X)$.  Let $(T_0,
  S_0)\in \Psi^{-1}(V_{x,y,\ep}) \cap (L_n(X) \times L(X))$. Then,
  $\|T_0\circ S_0(x)-y\|<\ep$. Let $\del<\ep-\|T_0\circ
  S_0(x)-y\|$. Then, for $(T,S)\in \big(V_{S_0(x),T_0\circ S_0(x),
    \frac{\del}{2}} \cap L_n(X) \big) \times V_{x, S_0(x),
    \frac{\del}{2n}}$, we have
\begin{align*}
  \|T\circ S(x)-y\| &\leq \|T\circ S(x)-T \circ S_0(x)\| + \|T\circ
  S_0(x)-T_0 \circ S_0(x)\| + \|T_0 \circ S_0(x)-y\| \\
  & < n \frac{\del}{2n}+ \frac{\del}{2} + \|T_0 \circ S_0(x)-y\|< \ep.
\end{align*}
Thus, $\Psi(T,S)\in V_{x,y,\ep}$ and $\Psi^{-1}(V_{x,y, \ep})$ is open
in $\tau_n(X)$, as claimed.
\end{proof}

Let $\Phi\colon L(X)\times X\to X$ be given by $(T,x)\mapsto
T(x)$.

\begin{lem}
  \ \begin{enumerate}
  \item The restriction of $\Phi$ to $L_n(X)\times X$ is continuous,
    where $L_n(X)$ is endowed with the subspace topology of
    $SOT(X)$.\label{it:cty}
  \item $\Phi$ is $\mathcal S\times \mathcal B_X$-mea\-sur\-able.
  \label{it:mble}
\item If $\tau:\Om \to L(X)$, given by $\omega\mapsto T_\omega$, is
  strongly measurable and $f:\Om \to X$, given by $\omega\mapsto
  x_\omega$, is measurable, then $\omega\mapsto T_\omega(x_\omega)$ is
  measurable.\label{it:comp}
  \end{enumerate}
\end{lem}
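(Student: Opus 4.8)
The plan is to prove the three parts in order, deducing part~\eqref{it:mble} from part~\eqref{it:cty} and part~\eqref{it:comp} from part~\eqref{it:mble}, following the same device used in the proof of Lemma~\ref{lem:CompStronglyMeas} of exhausting $L(X)$ by the norm balls $L_n(X)$, on which the strong operator topology is well behaved.

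For part~\eqref{it:cty}, fix $(T_0,x_0)\in L_n(X)\times X$ and $\ep>0$. Since $\|T\|\le n$ for $T\in L_n(X)$, one has
\[
\|T(x)-T_0(x_0)\|\le \|T\|\,\|x-x_0\|+\|T(x_0)-T_0(x_0)\|\le n\|x-x_0\|+\|T(x_0)-T_0(x_0)\|,
\]
so $\Phi(T,x)$ lies within $\ep$ of $\Phi(T_0,x_0)$ whenever $\|x-x_0\|<\ep/(2n)$ and $T$ belongs to the basic $\SOT$-neighbourhood $V_{x_0,T_0(x_0),\ep/2}$ of $T_0$. Intersecting the latter with $L_n(X)$ yields a neighbourhood of $(T_0,x_0)$ in the product topology, which proves continuity of $\Phi|_{L_n(X)\times X}$.

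For part~\eqref{it:mble}, the key observation is that by Lemma~\ref{lem:Smeas}\eqref{cor:Ln} the countable family $(B_{i,j,m,n})_{i,j,m}$ forms a base for the subspace topology on $L_n(X)$, so $L_n(X)$ is second countable; as $X$ is separable metric, the Borel $\sigma$-algebra of the product $L_n(X)\times X$ then equals $\mc{B}(L_n(X))\otimes\mc{B}_X$. Using Lemma~\ref{lem:Smeas}(4), together with the standard fact that the trace of a Borel $\sigma$-algebra on a subset is the Borel $\sigma$-algebra of the subspace topology, one identifies $\mc{B}(L_n(X))$ with $\{E\in\mc{S}:E\subseteq L_n(X)\}$, which is contained in $\mc{S}$ since $L_n(X)\in\mc{S}$ by Lemma~\ref{lem:Smeas}\eqref{it:BallIsSmeas}; hence $\mc{B}(L_n(X))\otimes\mc{B}_X\subseteq\mc{S}\otimes\mc{B}_X$. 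A continuous map being Borel measurable, part~\eqref{it:cty} now shows $\Phi|_{L_n(X)\times X}$ is $(\mc{S}\otimes\mc{B}_X,\mc{B}_X)$-measurable, and since $L(X)=\bigcup_{n\in\N}L_n(X)$ with each $L_n(X)\in\mc{S}$, for every $U\in\mc{B}_X$ we obtain $\Phi^{-1}(U)=\bigcup_{n\in\N}\bigl(\Phi^{-1}(U)\cap(L_n(X)\times X)\bigr)\in\mc{S}\otimes\mc{B}_X$.

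Part~\eqref{it:comp} is then immediate: by Lemma~\ref{lem:equivStronglyMble} strong measurability of $\tau$ is equivalent to its being $(\mc{F},\mc{S})$-measurable, so $\om\mapsto(T_\om,x_\om)$ is $(\mc{F},\mc{S}\otimes\mc{B}_X)$-measurable, and composing with the $(\mc{S}\otimes\mc{B}_X,\mc{B}_X)$-measurable map $\Phi$ of part~\eqref{it:mble} shows $\om\mapsto T_\om(x_\om)=\Phi(T_\om,x_\om)$ is $(\mc{F},\mc{B}_X)$-measurable. The only delicate point is the bookkeeping in part~\eqref{it:mble} --- upgrading slab-wise continuity to joint $\mc{S}\otimes\mc{B}_X$-measurability --- which rests on the second countability of $L_n(X)$ provided by Lemma~\ref{lem:Smeas}\eqref{cor:Ln}, ensuring that the Borel $\sigma$-algebra of the product does not exceed the product of the Borel $\sigma$-algebras; all remaining estimates are routine.
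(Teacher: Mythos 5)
Your proposal is correct and follows essentially the same route as the paper: the same norm-plus-SOT estimate for continuity on $L_n(X)\times X$, the same use of the countable base $B_{i,j,m,n}$ (Lemma~\ref{lem:Smeas}) to pass from slab-wise continuity to $\mathcal S\otimes\mathcal B_X$-measurability via $L(X)=\bigcup_n L_n(X)$, and the same factorization $\omega\mapsto\Phi(T_\omega,x_\omega)$ for part (3). The only cosmetic difference is that you invoke the abstract identity $\mathcal B(L_n(X)\times X)=\mathcal B(L_n(X))\otimes\mathcal B_X$ for second-countable spaces, whereas the paper directly writes the relatively open preimage as a countable union of measurable rectangles.
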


\begin{proof}
  Let $U$ be an open subset of $X$ and let $A=\Phi^{-1}U\cap
  (L_n(X)\times X)$. Let $(T,x)\in A$, so that $T(x)\in U$ and $T\in
  L_n(X)$. Since $U$ is open there exists an $\epsilon>0$ such that
  $B_\epsilon(T(x))\subset U$.

  Now let $N=\{S\in L_n(X)\colon \|S(x)-T(x)\|\le \epsilon/2\}$. If
  $(S,y)\in N\times B_{\epsilon/(2n)}(x)$ then we see
  \begin{align*}
    \|\Phi(S,y)-\Phi(T,x)\|&=\|S(y)-T(x)\|\le
    \|S(y)-S(x)\|+\|S(x)-T(x)\|\\
    &\le n\|y-x\|+\epsilon/2<\epsilon.
  \end{align*}
  It follows that $N\times B_{\epsilon/(2n)}(x)$ is a subset of $A$ so
  that $A$ is open in the relative topology on $L_n(X)\times X$, hence
  proving \eqref{it:cty}.

  If $U$ is open, then $\Phi^{-1}(U)=\bigcup_n \Phi^{-1}(U)\cap
  (L_n(X)\times X)$.  By the above, the set $\Phi^{-1}(U)\cap
  (L_n(X)\times X)$ is open in the relative topology on $L_n(X)\times
  X$. Since $L_n(X)\times X$ has a countable neighbourhood basis with
  respect to the relative topology (see Corollary \ref{cor:Ln}),
  $\Phi^{-1}(U)$ may be expressed as the countable union of products
  of $\mathcal S$-mea\-sur\-able sets with $\mathcal B_X$ measurable
  sets.  Therefore it is $\mathcal S\times \mathcal
  B_X$-mea\-sur\-able, proving \eqref{it:mble}.

  Let $\tau\colon \Omega\mapsto L(X)$ be strongly measurable and
  $f\colon \omega\mapsto x_\omega$ be measurable. Let
  $\theta(\omega)=(T_\omega,x_\omega)$.  Then the map $\omega\mapsto
  T_\omega(x_\omega)$ may be factorized as $\Phi\circ\theta$. It is
  therefore sufficient to show that $\theta^{-1}\Phi^{-1}U$ is
  measurable for any open set $U$ in $X$.  We showed above that
  $\Phi^{-1}U$ is $\mathcal S\times\mathcal B_X$-mea\-sur\-able so it
  suffices to show that $\theta$ is measurable. By definition,
  $\mathcal S\times\mathcal B_X$ is generated by sets of the form
  $A\times B$ with $A\in\mathcal S$ and $B\in\mathcal B_X$. The
  preimage of $A\times B$ under $\theta$ is $\tau^{-1}(A)\cap f^{-1}B$
  which is, by assumption, the intersection of two measurable
  sets. Hence $\mathcal S\times\mathcal B_X$ is generated by a
  collection of sets whose preimages under $\theta$ are measurable and
  hence $\theta$ is measurable.

\end{proof}
\section{The Grassmannian of a Banach space}

This appendix collects some results about Grassmannians that we need
in Section~\ref{sec:OseledetsSplitting}.

Let $X$ be a Banach space. A closed subspace $Y$ of $X$ is called
\emph{complemented} if there exists a closed subspace $Z$ such that
$X$ is the topological direct sum of $Y$ and $Z$, written $X=Y\oplus
Z$. That is, for every $x\in X$, there exist $y\in Y$ and $z\in Z$
such that $x=y+z$, and this decomposition is unique.  The Grassmannian
$\mathcal G(X)$ is the set of closed complemented subspaces of $X$. We
denote by $\mathcal G^k(X)$ the collection of closed $k$-codimensional
subspaces of $X$ (these are automatically complemented). We denote by
$\mathcal G_k(X)$ the collection of $k$-dimensional subspaces of $X$
(these are automatically closed and complemented).  We equip $\mathcal
G(X)$ with the metric $d(Y,Y')=d_H(Y\cap B,Y'\cap B)$ where $d_H$
denotes the Hausdorff distance and $B$ denotes the closed unit ball in
$X$. We let $B^*$ denote the closed unit ball in $X^*$. We denote by
$\mc{B}_{\mc{G}}$ the Borel $\sigma$-algebra coming from $d$.

There is a natural map $\perp$ from $\mathcal G(X)$ to $\mathcal
G(X^*)$, namely $Y^\perp=\{\theta\in X^*\colon \theta(y)=0\text{ for
  all $y\in Y$}\}$. We use the same notation for the map from
$\mathcal G(X^*)$ to $\mathcal G(X)$ given by $W^\perp=\{y\in Y\colon
\theta(y)=0\text{ for all $\theta\in W$}\}$. Notice that if $X$ is a
reflexive Banach space, then the two notions of $\perp$ on $X^*$
agree. It is well known that for a closed subspace $Y$ of $X$,
$Y^{\perp\perp}=Y$ (that $Y\subseteq Y^{\perp\perp}$ follows from the
definitions; that $Y^{\perp\perp}\subseteq Y$ follows from the
Hahn-Banach theorem).

The following result may be found in Kato \cite[IV \S2]{Kato}.
\begin{lem}\label{lem:perp}
  The maps $\perp$ from $\mathcal G(X)$ to $\mathcal G(X^*)$ and
  from $\mathcal G(X)$ to $\mathcal G(X^*)$
  are homeomorphisms.
\end{lem}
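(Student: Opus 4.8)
The plan is to deduce the lemma from the classical theory of the \emph{gap} between closed subspaces (Kato \cite[IV \S2]{Kato}), while keeping track of the harmless discrepancy between Kato's gap and the metric $d$ used here. For closed subspaces $M,N$ of a Banach space set $\delta(M,N)=\sup\{\operatorname{dist}(u,N):u\in M,\ \|u\|=1\}$ (and $\delta(\{0\},N)=0$) and $\hat\delta(M,N)=\max(\delta(M,N),\delta(N,M))$; note that $\delta(M,N)\le 1$ since $0\in N$.

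\emph{Step 1 (bijectivity).} If $X=Y\oplus Z$ with $Y,Z$ closed and projections $P$ and $Q=\mathrm{Id}-P$, then $Q^{*}$ and $P^{*}$ are complementary projections on $X^{*}$ with ranges $Y^{\perp}$ and $Z^{\perp}$, so $\perp$ maps $\mathcal G(X)$ into $\mathcal G(X^{*})$. Since $Y^{\perp}$ is closed and $Y^{\perp\perp}=Y$ for closed $Y$ (recalled in the text), $\perp$ is injective and equals its own inverse at the level of closed subspaces. Surjectivity onto $\mathcal G(X^{*})$ — equivalently $W^{\perp\perp}=W$ for closed $W\subseteq X^{*}$ — holds when $X$ is reflexive, as it is in all our applications; in general one restricts attention to the weak-$*$-closed members of $\mathcal G(X^{*})$.

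\emph{Step 2 (metric comparison, the crux).} First, by homogeneity of the norm $d(Y,Y')=\max\big(\sup_{u\in Y,\,\|u\|=1}\operatorname{dist}(u,Y'\cap B),\ \sup_{u'\in Y',\,\|u'\|=1}\operatorname{dist}(u',Y\cap B)\big)$, and a short rescaling argument (if $v\in Y'$ realises $\operatorname{dist}(u,Y')$ for a unit $u$ then $\|v\|\le 2$, and either $v$ or $v/\|v\|$ lies in $B$ within $2\operatorname{dist}(u,Y')$ of $u$) shows
\[
\hat\delta(Y,Y')\ \le\ d(Y,Y')\ \le\ 2\,\hat\delta(Y,Y').
\]
Second, the duality identity $\hat\delta(M,N)=\hat\delta(M^{\perp},N^{\perp})$ follows from Hahn--Banach: for a unit $\theta\in N^{\perp}$ one has $|\theta(u)|=|\theta(u-n)|\le\operatorname{dist}(u,N)\le\delta(M,N)\,\|u\|$ for all $u\in M$ and $n\in N$, hence $\|\theta|_{M}\|\le\delta(M,N)$; extending $\theta|_{M}$ to $\tilde\theta\in X^{*}$ of the same norm gives $\theta-\tilde\theta\in M^{\perp}$ with $\operatorname{dist}(\theta,M^{\perp})\le\|\tilde\theta\|\le\delta(M,N)$, so $\delta(N^{\perp},M^{\perp})\le\delta(M,N)$; the reverse inequality is analogous, via the dual formula $\operatorname{dist}(u,N)=\sup\{|\theta(u)|:\theta\in N^{\perp},\,\|\theta\|\le 1\}$. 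Combining the two displays yields $\tfrac12\,d(Y,Y')\le d(Y^{\perp},(Y')^{\perp})\le 2\,d(Y,Y')$.

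\emph{Step 3 (conclusion).} Thus $\perp:\mathcal G(X)\to\mathcal G(X^{*})$ is a bi-Lipschitz bijection, in particular a homeomorphism; running Steps 1 and 2 with $X$ replaced by $X^{*}$ (again using $Y^{\perp\perp}=Y$) gives the same for $\perp:\mathcal G(X^{*})\to\mathcal G(X)$. I expect the main obstacle to be Step 2: one must be careful with the constant relating $d$ to $\hat\delta$, and, more essentially, with the Hahn--Banach argument behind $\delta(N^{\perp},M^{\perp})=\delta(M,N)$, where the extension step carries the real content; everything else is bookkeeping.
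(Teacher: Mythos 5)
Your argument is correct, and it is worth noting that it supplies what the paper does not: the paper's entire proof of this lemma is a citation to Kato [IV \S 2], where the Grassmannian is metrized by the gap $\hat\delta$ rather than by the Hausdorff--ball metric $d$ used in this appendix. Your Step 2 therefore does two genuinely useful things: it re-derives Kato's gap duality $\hat\delta(M,N)=\hat\delta(M^{\perp},N^{\perp})$ by the Hahn--Banach extension argument (both directions check out), and it proves the two-sided comparison $\hat\delta\le d\le 2\hat\delta$, which is exactly the bookkeeping needed to transfer Kato's statement to the metric actually used here and which the citation leaves implicit; the resulting bi-Lipschitz bound $\tfrac12 d(Y,Y')\le d(Y^{\perp},(Y')^{\perp})\le 2d(Y,Y')$ is even slightly stronger than the stated homeomorphism. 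Two caveats, neither of which is a gap relative to the paper since its own statement glosses over them (note the statement's typo: both maps are written $\mathcal G(X)\to\mathcal G(X^*)$): surjectivity of $\perp$ onto $\mathcal G(X^{*})$, i.e.\ $W^{\perp\perp}=W$ for complemented closed $W\subset X^{*}$, requires reflexivity or weak-$*$ closedness, as you say in Step 1; and for non-reflexive $X$ the inverse map is the preannihilator $W\mapsto W^{\perp}\subset X$, where the dual distance formula you invoke, $d(\theta,N)=\sup\{|\theta(x)|:x\in N^{\perp},\ \|x\|\le 1\}$ with $N\subset X^{*}$, is only valid for weak-$*$ closed $N$, so ``run the same argument with $X$ replaced by $X^{*}$'' is not quite literal there. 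In the paper's applications ($X=\mathcal H_p^t$ is reflexive, and the lemma is used for finite-dimensional and finite-codimensional subspaces, whose annihilators are automatically weak-$*$ closed) these caveats are harmless. One cosmetic point: the distance from a unit vector to a closed subspace need not be attained in a general Banach space, so ``realises'' in your rescaling remark should be ``realises up to $\epsilon$''; the estimate is unaffected.
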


\begin{defn}\label{def:nicebasis}
If $Y\in \mathcal G_k(X)$, we will say a basis $\{y_1,\ldots,y_k\}$ for
$Y$ is a \emph{nice basis} if $\|y_i\|=1$ and
$d(y_i,\text{span}(y_1,\ldots,y_{i-1})=1$ for each $i$. A subset of size
$k$ will be called $\epsilon$-\emph{nice} if
$1-\epsilon<\|y_i\|<1+\epsilon$ and
$d((y_i,\text{span}(y_1,\ldots,y_{i-1}))>1-\epsilon$ for each $i>1$.
Clearly if $\epsilon<1$ an $\epsilon$-nice set is linearly independent.
If a set is $\epsilon$-nice and a basis, we call it an
$\epsilon$-\emph{nice basis}.
\end{defn}

\begin{lem}\label{lem:nicebasis} Each element of $\mathcal G_k(X)$ has
  a nice basis.
\end{lem}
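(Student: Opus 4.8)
The plan is a greedy construction by induction on $i$, building the nice basis one vector at a time, using finite-dimensionality (local compactness) to realize the distance \emph{exactly} rather than up to $\epsilon$ as in the Riesz lemma.

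Fix $Y\in\mathcal G_k(X)$. For the base case, pick any $y_1\in Y$ with $\|y_1\|=1$; the condition on $d(y_1,\operatorname{span}(\emptyset))$ is vacuous. For the inductive step, suppose $1\le i\le k$ and that $y_1,\ldots,y_{i-1}$ have been chosen with $\|y_j\|=1$ and $d(y_j,\operatorname{span}(y_1,\ldots,y_{j-1}))=1$ for all $j<i$; in particular they are linearly independent, so $W:=\operatorname{span}(y_1,\ldots,y_{i-1})$ is an $(i-1)$-dimensional subspace of $Y$, and since $i-1<k=\dim Y$, $W$ is a \emph{proper} subspace. As a finite-dimensional subspace, $W$ is closed. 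Choose any $z\in Y\setminus W$, so $d(z,W)>0$. The function $w\mapsto\|z-w\|$ is continuous on $W$ and restricted to the set $\{w\in W:\|z-w\|\le\|z\|\}$ — which is closed, bounded, and lies in the finite-dimensional space $W$, hence compact — it attains its infimum over all of $W$ at some $w_0\in W$. Set $y':=z-w_0$. Then $y'\ne 0$ (as $z\notin W$), and since $y'+W=z+W$ we have $d(y',W)=d(z,W)=\|z-w_0\|=\|y'\|$. Finally put $y_i:=y'/\|y'\|$. Then $\|y_i\|=1$, and by homogeneity $d(y_i,W)=d(y',W)/\|y'\|=1$. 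Moreover $y_i\notin W$, so $y_1,\ldots,y_i$ remain linearly independent, completing the induction.

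After $k$ steps we obtain $y_1,\ldots,y_k\in Y$ that are linearly independent with $\|y_j\|=1$ and $d(y_j,\operatorname{span}(y_1,\ldots,y_{j-1}))=1$; being $k$ linearly independent vectors in the $k$-dimensional space $Y$, they form a basis, hence a nice basis in the sense of Definition~\ref{def:nicebasis}.

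The only delicate point — and the sole place local compactness of $Y$ is used — is upgrading the Riesz almost-orthogonality $d(y_i,W)>1-\epsilon$ to the exact equality $d(y_i,W)=1$ by attaining the distance from $z$ to $W$; in an infinite-dimensional ambient space this attainment can fail for a general closed subspace, but here $W$ is finite-dimensional so the minimizing $w_0$ exists. Everything else is routine bookkeeping of the induction and the normalization.
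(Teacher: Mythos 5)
Your proof is correct and follows essentially the same route as the paper's: a greedy induction in which, at each step, one picks a vector outside the span of the previously chosen ones, uses compactness of that finite-dimensional span to attain the distance exactly, subtracts the nearest point, and normalizes. The extra remarks about upgrading the Riesz lemma to exact equality match the paper's (brief) appeal to compactness, so nothing further is needed.
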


\begin{proof}
  Let $Y\in \mathcal G_k(X)$. We make the inductive claim that for
  each $m\le k$ we can find a sequence of elements $y_1,\ldots,y_m$ of
  norm 1 satisfying the claim for $1\le i\le m$.  The base case is
  easy: let $y_1$ be any vector in $Y$ of length 1.  Suppose we have
  vectors $y_1,\ldots,y_m$ satisfying the claim where $m<k$. Let $W$
  be the subspace of $Y$ spanned by $y_1,\ldots,y_m$. Let $x\in
  Y\setminus W$ (such an $x$ exists because $W$ is an $m$-dimensional
  subspace of $Y$ and hence a proper subspace of $Y$). By compactness
  there is a $w\in W$ minimizing $\|x-w\|$. Then let $y_{m+1}$ be a
  normalized version of $x-w$. This completes the inductive step and
  hence the proof.
\end{proof}

\begin{lem}\label{lem:coords}
Let $\{y_1,\ldots,y_k\}$ be $\epsilon$-nice (with $\epsilon<2^{-k-2}$).
If $\|\sum a_iy_i\|\le 1$ then $|a_i|\le 2^{k+1-i}$ for each $i$.
\end{lem}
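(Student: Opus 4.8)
The plan is to prove the bound by \emph{downward} induction on the index, controlling $|a_k|$ first and then successively $|a_{k-1}|,\dots,|a_1|$. The one structural fact I would extract from $\epsilon$-niceness at the outset is this: since $\text{span}(y_1,\dots,y_{j-1})$ is a linear subspace, the condition $d(y_j,\text{span}(y_1,\dots,y_{j-1}))>1-\epsilon$ is equivalent to saying that $\|y_j+\sum_{i<j}b_iy_i\|>1-\epsilon$ for \emph{every} choice of scalars $b_1,\dots,b_{j-1}$ (for $j=1$ this degenerates to $\|y_1\|>1-\epsilon$); I would also use $\|y_i\|<1+\epsilon$ for all $i$. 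These are the only two properties of an $\epsilon$-nice set that enter.

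Now fix $a_1,\dots,a_k$ with $\|\sum_i a_iy_i\|\le 1$, and suppose inductively that $|a_i|\le 2^{k+1-i}$ has already been shown for all $i>j$ (vacuous when $j=k$). By the triangle inequality and the two facts above,
\[
\Big\|\sum_{i=1}^{j}a_iy_i\Big\|\le \Big\|\sum_{i=1}^{k}a_iy_i\Big\|+\sum_{i=j+1}^{k}|a_i|\,\|y_i\|\le 1+(1+\epsilon)\sum_{i=j+1}^{k}2^{k+1-i}=1+(1+\epsilon)\bigl(2^{k+1-j}-2\bigr),
\]
where I have used the geometric sum $\sum_{i=j+1}^{k}2^{k+1-i}=2^{k+1-j}-2$. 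If $a_j=0$ there is nothing to prove; otherwise I divide through by $|a_j|$, apply the niceness inequality with $b_i=a_i/a_j$, and obtain $|a_j|(1-\epsilon)\le 1+(1+\epsilon)\bigl(2^{k+1-j}-2\bigr)$.

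It then remains to check the elementary inequality $\tfrac{1+(1+\epsilon)(2^{m}-2)}{1-\epsilon}\le 2^{m}$ with $m:=k+1-j$, which rearranges to $2\epsilon(2^{m}-1)\le 1$; since $1\le m\le k$ one has $2^{m}-1<2^{k}$, so the hypothesis $\epsilon<2^{-k-2}$ (even $\epsilon\le 2^{-k-1}$ would do) gives this with room to spare, and hence $|a_j|<2^{k+1-j}$, closing the induction. I do not anticipate any genuine obstacle here; the only points requiring a little care are choosing the \emph{downward} direction of the induction — so that the tail sum $\sum_{i>k}$ is empty in the base case — and matching the geometric-sum bookkeeping with the final one-line inequality on $\epsilon$.
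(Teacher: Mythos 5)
Your proof is correct and is essentially the paper's argument: the paper phrases it as a contradiction by taking the \emph{largest} index $i$ with $|a_i|>2^{k+1-i}$, which is just the contrapositive of your downward induction, and it uses the same ingredients — the geometric tail bound $\sum_{i>j}2^{k+1-i}(1+\epsilon)=(1+\epsilon)(2^{k+1-j}-2)$, the lower bound $\|\sum_{i\le j}a_iy_i\|\gtrsim|a_j|(1-\epsilon)$ from the distance condition, the triangle inequality, and the smallness of $\epsilon<2^{-k-2}$. No gap; your bookkeeping of the final inequality $2\epsilon(2^m-1)\le 1$ is fine.
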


\begin{proof}
Suppose for a contradiction that $y=\sum a_iy_i$ satisfies $\|y\|=1$\ and
$|a_i|>2^{k+1-i}$ for some $i$. Let $i$ be the largest such index. Set
$z=\sum_{j\le i}a_jy_j$. Then $\|z-y\|\le \sum_{j>i}|a_j|(1+\epsilon)\le
(2^{k+1-i}-2)(1+\epsilon)$. On the other hand by the defining property of
$y_i$ we have $\|z\|\ge |a_i|(1-\epsilon)^2>2^{k+1-i}(1-2\epsilon)$. The
triangle inequality then shows that $\|y\|>2-2^{k+3-i}\epsilon>1$, which
contradicts the assumption.
\end{proof}

\begin{lem}\label{lem:close}
Let $y_1,\ldots,y_k$ be an $\epsilon$-nice basis for a $k$-dimensional
space $Y$ (with $\epsilon<2^{-k-2}$), then if $W$ is a space such that
$d(y_i,W)<\delta/2^{k+2}$, then $\sup_{y\in Y\cap B}d(y,W\cap B)<\delta$.
\end{lem}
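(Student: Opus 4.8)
The plan is to fix an arbitrary $y\in Y\cap B$ and exhibit an element of $W\cap B$ lying within distance $\delta$ of $y$, with an estimate that is uniform in $y$. First I would expand $y=\sum_{i=1}^k a_iy_i$ in the given $\epsilon$-nice basis. Since $\|y\|\le 1$ and, by hypothesis, $\epsilon<2^{-k-2}$, Lemma~\ref{lem:coords} applies and bounds the coordinates: $|a_i|\le 2^{k+1-i}$ for $1\le i\le k$. Next, using the hypothesis $d(y_i,W)<\delta/2^{k+2}$, I would pick for each $i$ a vector $w_i\in W$ with $\|y_i-w_i\|<\delta/2^{k+2}$, and set $w=\sum_{i=1}^k a_iw_i$, which lies in $W$ because $W$ is a subspace.

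The key estimate is then a geometric-sum bound:
\[
\|y-w\|\le\sum_{i=1}^k|a_i|\,\|y_i-w_i\|<\frac{\delta}{2^{k+2}}\sum_{i=1}^k 2^{k+1-i}=\frac{\delta}{2^{k+2}}\bigl(2^{k+1}-2\bigr)=\delta\Bigl(\tfrac12-\tfrac1{2^{k+1}}\Bigr),
\]
so in particular $d(y,W)<\delta/2$. To conclude I would pass from $W$ to $W\cap B$ using the standard factor-two observation (the same one invoked in the proof of Lemma~\ref{thm:OseledetsSplitting}): for $y\in B$ and any $w\in W$ one has $d(y,W\cap B)\le 2\|y-w\|$, since if $\|w\|\le1$ then $w\in W\cap B$ already, while if $\|w\|>1$ then $w'=w/\|w\|\in W\cap B$ and $\|y-w'\|\le\|y-w\|+\|w-w'\|=\|y-w\|+(\|w\|-1)\le 2\|y-w\|$, using $\|w\|-1\le\|w\|-\|y\|\le\|w-y\|$. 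This gives $d(y,W\cap B)<\delta(1-2^{-k})$, a bound independent of $y$, and taking the supremum over $y\in Y\cap B$ yields $\sup_{y\in Y\cap B}d(y,W\cap B)\le\delta(1-2^{-k})<\delta$, as required.

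I do not anticipate a real obstacle: the statement is a coordinate-wise perturbation estimate, and the numerical constants in Lemma~\ref{lem:coords} and in the hypotheses were rigged precisely so that the geometric sum comes out below $\delta/2$. The only two points deserving care are checking that Lemma~\ref{lem:coords} is applicable --- immediate from the common assumption $\epsilon<2^{-k-2}$ --- and the passage from a near point of $W$ to a near point of $W\cap B$, which is handled by the normalization argument above.
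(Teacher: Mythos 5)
Your proof is correct and follows essentially the same route as the paper's: expand $y$ in the $\epsilon$-nice basis, bound the coefficients via Lemma~\ref{lem:coords}, approximate each $y_i$ by $w_i\in W$, sum the errors to get $\|y-w\|<\delta/2$, and then rescale $w$ into the unit ball to pass to $W\cap B$. No gaps.
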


\begin{proof}
  Let $d(y_i,w_i)<\delta/2^{k+2}$. Given $y\in Y\cap B$, $y$ may be
  expressed as $\sum a_iy_i$ with $|a_i|\le 2^{k+1-i}$, by
  Lemma~\ref{lem:coords}. Let $w=\sum a_iw_i$. Then $\|y-w\|\le \sum
  2^{k+1-i}\delta/2^{k+2}<\delta/2$. It follows that $\|w\|<
  1+\delta/2$ so letting $w'=w$ if $\|w\|\le 1$ and $w/\|w\|$
  otherwise, we have $\|w'-w\|<\delta/2$. Since $w'\in W\cap B$ we
  deduce $d(y,W\cap B)< \delta$ so that $\sup_{y\in Y\cap B}d(y,W\cap
  B)<\delta$ as required.
\end{proof}

For $\epsilon<2^{-k-2}$, let $NB_k^\ep(X)\subset X^k$ be the set of
$k-$dimensional $\ep-$nice subsets of $X$.
\begin{cor}\label{cor:niceBasisToGrassmannian}
If $\ep<2^{-k-2}$, then the function from $NB_k^\ep(X)$ to $\mc{G}_k(X)$
given by $(y_1,\dots, y_k) \mapsto \text{span}(y_1, \dots, y_{k})$ is
continuous.
\end{cor}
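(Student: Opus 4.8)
The plan is to verify continuity pointwise by producing, for each point and each target accuracy $\delta>0$, an explicit source neighbourhood that is mapped into the corresponding $\delta$-ball of the Grassmannian metric; essentially everything is a direct application of Lemma~\ref{lem:close} used symmetrically. Fix a tuple $(y_1,\dots,y_k)\in NB_k^\ep(X)$ and write $Y=\text{span}(y_1,\dots,y_k)$. Since $\ep<2^{-k-2}<1$, Definition~\ref{def:nicebasis} tells us that an $\ep$-nice set is linearly independent; hence $\{y_1,\dots,y_k\}$ is an $\ep$-nice basis of the $k$-dimensional space $Y$, and likewise any other tuple in $NB_k^\ep(X)$ is an $\ep$-nice basis of its span. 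In particular such spans are finite-dimensional, hence closed and complemented, so they genuinely lie in $\mathcal{G}_k(X)$ and the metric $d(Y,Y')=d_H(Y\cap B,Y'\cap B)$ is defined.

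Given $\delta>0$, I would take the neighbourhood
$U=\{(y_1',\dots,y_k')\in NB_k^\ep(X)\colon \max_i\|y_i-y_i'\|<\delta/2^{k+2}\}$,
which is open in $NB_k^\ep(X)$ for the subspace topology inherited from $X^k$. For $(y_1',\dots,y_k')\in U$, set $Y'=\text{span}(y_1',\dots,y_k')$. Since $y_i'\in Y'$, we get $d(y_i,Y')\le\|y_i-y_i'\|<\delta/2^{k+2}$ for each $i$, so Lemma~\ref{lem:close}, applied with the $\ep$-nice basis $y_1,\dots,y_k$ of $Y$ and $W=Y'$, yields $\sup_{y\in Y\cap B}d(y,Y'\cap B)<\delta$. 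Interchanging the roles of the two tuples and using that $y_i\in Y$ gives $d(y_i',Y)<\delta/2^{k+2}$, so Lemma~\ref{lem:close} applied with the $\ep$-nice basis $y_1',\dots,y_k'$ of $Y'$ and $W=Y$ gives $\sup_{y'\in Y'\cap B}d(y',Y\cap B)<\delta$. Therefore $d(Y,Y')=d_H(Y\cap B,Y'\cap B)<\delta$, and since $(y_1,\dots,y_k)$ was arbitrary, the map is continuous.

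I do not expect any real obstacle: the statement is just Lemma~\ref{lem:close} in both directions, packaged with the observation that an $\ep$-nice set with $\ep<1$ is automatically a basis of its span. The only points needing a line of care are checking that the hypotheses of Lemma~\ref{lem:close} apply to both $(y_i)$ and $(y_i')$ (the content of the first paragraph) and confirming that a finite-dimensional span is closed and complemented so that the Grassmannian metric makes sense for it.
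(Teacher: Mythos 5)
Your proof is correct and is essentially the paper's own argument: the paper's proof is a one-line appeal to Lemma~\ref{lem:close}, applied symmetrically to the two $\ep$-nice tuples, which is exactly what you spell out (together with the routine observation that an $\ep$-nice set with $\ep<1$ spans a genuine $k$-dimensional, hence closed and complemented, subspace).
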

\begin{proof}
  By Lemma~\ref{lem:close}, if $(y_1,\dots, y_k), (y_1',\dots,
  y_k')\in NB_k^\ep(X)$ are such that $\|y_i-y_i'\|< 2^{-(k+2)}\del$,
  then $d(Y,Y')<\del$, where $Y=\text{span}(y_1, \dots, y_k)$ and
  $Y'=\text{span}(y'_1, \dots, y'_k)$.
\end{proof}

\begin{lem}\label{lem:close2}(Symmetry of closeness in $\mathcal
  G_k(X)$ and $\mathcal G^k(X)$).
  Let $Y$ and $W$ be elements of $\mathcal G_k(X)$. Suppose that
  $\max_{y\in Y\cap B}d(y,W\cap B)=r<3^{-k}/4$. Then one obtains
  $\max_{w\in W\cap
    B}d(w,Y\cap B)<4\cdot 3^kr$ and hence $d(W,Y)<4\cdot 3^kr$.

  Let $Y$ and $W$ be elements of $\mathcal G^k(X)$. Suppose that
  $\max_{y\in Y\cap B}d(y,W\cap B)=r<3^{-k}/8$. Then, $d(W,Y)<8\cdot
  3^kr$.
\end{lem}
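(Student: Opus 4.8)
The plan is to prove the two assertions in turn: the finite-dimensional ($\mathcal G_k$) statement by a direct perturbation of a nice basis, and then the codimension-$k$ ($\mathcal G^k$) statement by transferring it to annihilators in $X^*$, using that $\perp$ exchanges $\mathcal G^k(X)$ and $\mathcal G_k(X^*)$ (Lemma~\ref{lem:perp}).

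\emph{Finite-dimensional case.} Fix a nice basis $\{y_1,\dots,y_k\}$ of $Y$ (Lemma~\ref{lem:nicebasis}), so that Lemma~\ref{lem:coords} gives $2^{-k}\max_i|a_i|\le\|\sum_ia_iy_i\|\le\sum_i|a_i|$ for all scalars. Since each $y_i\in Y\cap B$, for every $\eta>0$ I can choose $w_i\in W\cap B$ with $\|y_i-w_i\|<r+\eta$. For $\eta$ small $\{w_i\}$ is linearly independent, hence a basis of $W$: a relation $\sum_ia_iw_i=0$ normalised so that $\max_i|a_i|=1$ would force $\|\sum_ia_iy_i\|=\|\sum_ia_i(y_i-w_i)\|\le k(r+\eta)$, contradicting $\|\sum_ia_iy_i\|\ge2^{-k}$ because $r<3^{-k}/4<2^{-k}/k$. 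Given $w\in W\cap B$, write $w=\sum_ib_iw_i$; then $2^{-k}\max_i|b_i|\le\|\sum_ib_iy_i\|\le\|w\|+k(r+\eta)\max_i|b_i|$, so the threshold $r<3^{-k}/4$ bounds $\max_i|b_i|$ by a controlled multiple of $2^k$. Putting $y=\sum_ib_iy_i\in Y$ gives $\|w-y\|=\|\sum_ib_i(w_i-y_i)\|\le k(r+\eta)\max_i|b_i|$, a controlled multiple of $2^kr$; since $\|y\|-1\le\|w-y\|$, normalising $y$ into $Y\cap B$ moves it by at most $\|w-y\|$, so letting $\eta\to0$ yields $d(w,Y\cap B)<C_kr$ with $C_k$ a controlled multiple of $k2^k$, in particular $C_k<4\cdot3^k$. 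Together with the hypothesis $\sup_{y\in Y\cap B}d(y,W\cap B)=r$ this gives $d(W,Y)=d_H(W\cap B,Y\cap B)<4\cdot3^kr$.

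\emph{Codimension-$k$ case.} The annihilators $Y^\perp,W^\perp$ are closed $k$-dimensional subspaces of $X^*$, hence lie in $\mathcal G_k(X^*)$. I would use the two Hahn--Banach isometries: for a closed subspace $Z\subseteq X$, restriction identifies $X^*/Z^\perp$ with $Z^*$, whence $d(\phi,Z^\perp)=\sup_{z\in Z\cap B}|\phi(z)|$ for $\phi\in X^*$; and $(X/Z)^*$ is identified with $Z^\perp$, whence $d(x,Z)=\sup_{\phi\in Z^\perp\cap B^*}|\phi(x)|$ for $x\in X$. Applying the first with $Z=Y$: for $\psi\in W^\perp\cap B^*$ and $y\in Y\cap B$, choosing $w\in W\cap B$ with $\|y-w\|$ near $d(y,W\cap B)\le r$ gives $|\psi(y)|=|\psi(y-w)|\le\|y-w\|$ (as $\psi(w)=0$), so $d(\psi,Y^\perp)\le r$, and renormalising the near-minimiser into $B^*$ upgrades this to $d(\psi,Y^\perp\cap B^*)\le 2r$. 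Thus $r<3^{-k}/8$ forces $\max_{\psi\in W^\perp\cap B^*}d(\psi,Y^\perp\cap B^*)<3^{-k}/4$, and the finite-dimensional case applied to $W^\perp,Y^\perp$ gives $d(Y^\perp,W^\perp)<4\cdot3^k\cdot2r=8\cdot3^kr$. Finally, applying the second identity with $Z=Y$: for $w\in W\cap B$ and $\theta\in Y^\perp\cap B^*$, choosing $\psi\in W^\perp\cap B^*$ close to $\theta$ gives $|\theta(w)|=|\theta(w)-\psi(w)|\le\|\theta-\psi\|$, so $d(w,Y)=\sup_\theta|\theta(w)|$ is bounded in terms of $d(Y^\perp,W^\perp)$; a final renormalisation, together with the hypothesis controlling $\sup_{y\in Y\cap B}d(y,W\cap B)$, then yields a bound $d(W,Y)<8\cdot3^kr$.

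I expect the obstacle to be bookkeeping, not concept. In the finite-dimensional step the real content is getting a bound on the coordinates $b_i$ that is uniform over all of $W\cap B$; this is exactly what the quantitative smallness hypothesis is for, and the precise threshold $3^{-k}/4$ is what makes the output constant come out as $4\cdot3^k$ rather than something larger. In the codimension-$k$ step the temptation is to quote the homeomorphism of Lemma~\ref{lem:perp} directly, but it is only qualitative; one has to rerun everything through the explicit duality identities above, and the factors of $2$ incurred each time a vector or functional is renormalised into its unit ball must be organised so that one ends with exactly the halved threshold $3^{-k}/8$ and the doubled constant $8\cdot3^k$.
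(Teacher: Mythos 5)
Your finite-dimensional half is correct and is essentially the paper's own argument: take a nice basis of $Y$, pick nearby vectors in $W\cap B$, show these form a basis of $W$ in which every $w\in W\cap B$ has uniformly bounded coordinates, and renormalize once into $Y\cap B$. The only real difference is that you control coordinates via Lemma~\ref{lem:coords} (so $\|\sum_i a_iy_i\|\ge 2^{-k}\max_i|a_i|$), whereas the paper proves the sharper lower bound $\|\sum_i a_iw_i\|\ge (3^{-k}/4)\max_i 3^i|a_i|$ directly for the perturbed vectors. Your route gives $d(w,Y\cap B)\le \frac{2kr}{2^{-k}-kr}\le \frac{18}{7}k2^k r$, and since $k(2/3)^k\le 8/9$ this is indeed $<4\cdot 3^k r$, so the constant closes. (Minor point: $W\cap B$ is compact, so you can take $\|y_i-w_i\|\le r$ and drop the $\eta$'s.)

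The codimension-$k$ half is where the gap is. The paper only indicates a duality argument, so your route matches the intent, but the bookkeeping you describe does not reach $8\cdot 3^k r$. You spend the entire factor-of-two budget (threshold halved to $3^{-k}/8$, constant doubled to $8\cdot 3^k$) on the first renormalization into $B^*$, needed to feed the finite-dimensional statement with the pair $(W^\perp,Y^\perp)$; but you must then pay a second factor of two at the end, because the identity $d(w,Y)=\sup_{\theta\in Y^\perp\cap B^*}|\theta(w)|$ only bounds the distance from $w$ to the subspace $Y$, while the Grassmannian metric requires the distance to $Y\cap B$. The hypothesis $\sup_{y\in Y\cap B}d(y,W\cap B)=r$ controls the other half of the Hausdorff maximum and cannot absorb this. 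As organized, your argument yields $d(W,Y)<16\cdot 3^k r$ (and, tracking your own fin-dim constant, roughly $\frac{72}{7}k2^k r$, which exceeds $8\cdot3^k r$ already for $k=2$). The fix is to not use the fin-dim lemma as a black box: rerun the nice-basis argument in $X^*$ with a nice basis $\psi_1,\dots,\psi_k$ of $W^\perp$ and perturbations $\theta_i\in Y^\perp$ (not in $Y^\perp\cap B^*$); the hypothesis gives $d(\psi_i,Y^\perp)\le r$ via $d(\psi,Y^\perp)=\sup_{y\in Y\cap B}|\psi(y)|$, and the coordinate estimate tolerates $\|\theta_i\|\le 1+r$. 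This bounds $\sup_{\theta\in Y^\perp\cap B^*}d(\theta,W^\perp)$ by roughly $2\cdot 3^k r$ with no renormalization on the output side, and after the single unavoidable conversion to $d(w,Y\cap B)$ one lands near $4\cdot 3^k r$, comfortably within the stated $8\cdot 3^k r$. Alternatively, keep your organization and accept the weaker constant, which suffices for every use of this lemma in the paper, where only ``sufficiently small'' closeness is invoked.
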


\begin{proof}
  Using Lemma \ref{lem:nicebasis}, let $y_1,\ldots,y_k$ be a nice
  basis for $Y$. By assumption there exist elements $w_1,\ldots,w_k$
  of $W\cap B$ such that $\|w_i-y_i\|<r$.

  We first give a lower bound for $\|\sum_{i=1}^ka_iw_i\|$. Let
  $M=\max_i 3^i|a_i|$ and assume that $M=3^j|a_j|$.  Then we have
  \begin{align*}
    \left\|\sum_{i=1}^{k}a_iw_i\right\|&\ge
    \left\|\sum_{i=1}^ja_iw_i\right\|-\left\|\sum_{i=j+1}^ka_iw_i\right\|\\
    &\ge \left\|\sum_{i=1}^j a_iy_i\right\|-\left\|\sum_{i=1}^j
      a_i(w_i-y_i)\right\|
    -\sum_{i=j+1}^k 3^{-i}M\\
    &\ge |a_j|-r\sum_{i=1}^j |a_i|-3^{-j}M/2\\
    &\ge M(3^{-j}/2-r/2)\ge (3^{-k}/4)M,
  \end{align*}
  where for the first term of the third inequality we used the
  definition of nice basis. In particular if $\sum a_iw_i\in B$, we
  have $|M|\le 4\cdot 3^k$ so that $|a_i|\le 4\cdot 3^{k-i}$. Now
  given $w\in W\cap B$, write $w=\sum a_iw_i$ and let $y=\sum
  a_iy_i$. Then $\|w-y\|\le \sum |a_i|\|w_i-y_i\|\le 2\cdot
  3^kr$. Rescaling $y$ to move it inside $B$ if necessary we obtain a
  point $y'\in Y\cap B$ with $\|w-y'\|\le 4\cdot 3^kr$. It follows
  that $\max_{w\in W\cap B}d(w,Y\cap B)\le 4\cdot 3^kr$ as required.

  A similar argument using the proof of Lemma~\ref{lem:perp} gives the
  result in $\mathcal G^k$.
\end{proof}

\begin{lem}\label{lem:niceBasesOpen}
If $\ep<2^{-k-2}$ , the set $NB_k^\ep(X)\subset X^k$ is open.
\end{lem}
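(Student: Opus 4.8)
The plan is to prove openness directly: around each $\ep$-nice $k$-tuple $(y_1,\dots,y_k)\in NB_k^\ep(X)$ I would produce an explicit radius $\del>0$ such that every $k$-tuple $(y_1',\dots,y_k')$ with $\|y_i'-y_i\|<\del$ for all $i$ is again $\ep$-nice. Writing $W_i=\text{span}(y_1,\dots,y_{i-1})$ and $W_i'=\text{span}(y_1',\dots,y_{i-1}')$, recall from Definition~\ref{def:nicebasis} that $\ep$-niceness means (a) $1-\ep<\|y_i\|<1+\ep$ for all $i$, and (b) $d(y_i,W_i)>1-\ep$ for $i>1$; since $\ep<1$, an $\ep$-nice set is automatically linearly independent, so there is no separate dimension condition to check. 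I would set $\rho=\min_{1\le i\le k}\min\big(\|y_i\|-(1-\ep),\,(1+\ep)-\|y_i\|\big)>0$ and $\eta=\min_{1<i\le k}\big(d(y_i,W_i)-(1-\ep)\big)>0$, and assume $k\ge 2$ (for $k=1$ condition (b) is vacuous and openness is trivial).

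First I would dispatch condition (a): if $\del\le\rho$, the reverse triangle inequality gives $\big|\,\|y_i'\|-\|y_i\|\,\big|<\rho$, which keeps $\|y_i'\|$ strictly inside $(1-\ep,1+\ep)$. For condition (b), fix $i>1$ and an arbitrary $w'=\sum_{j<i}a_jy_j'\in W_i'$, and set $w=\sum_{j<i}a_jy_j$. If $\|w'\|\ge 2+\eta$, then directly $\|y_i'-w'\|\ge\|w'\|-\|y_i'\|>(2+\eta)-(1+\ep)=1-\ep+\eta$. If instead $\|w'\|<2+\eta$, I would first control the coefficients. We may assume $w\neq0$ (if $w=0$ then $w'=0$ by linear independence of $y_1,\dots,y_{i-1}$, and since $0\in W_i$ we have $\|y_i\|\ge d(y_i,W_i)$, so $\|y_i'-w'\|=\|y_i'\|>d(y_i,W_i)-\del>1-\ep+\eta/2$ once $\del<\eta/2$). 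From $\|w-w'\|\le\del\sum_{j<i}|a_j|$ we get $\|w\|<(2+\eta)+\del\sum_{j<i}|a_j|$; applying Lemma~\ref{lem:coords} to the $\ep$-nice set $\{y_1,\dots,y_{i-1}\}$—legitimate since $\ep<2^{-k-2}\le 2^{-(i-1)-2}$—and rescaling by homogeneity gives $|a_j|\le\|w\|\,2^{i-j}$, hence $S:=\sum_{j<i}|a_j|\le\|w\|(2^i-2)<2^k\|w\|$. Combining, $S<2^k\big((2+\eta)+\del S\big)$, so if $\del\le 2^{-k-1}$ then $S<2^{k+1}(2+\eta)$.

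With the coefficients under control, the triangle inequality gives
\begin{align*}
\|y_i'-w'\| &\ge \|y_i-w\|-\|y_i'-y_i\|-\|w'-w\| \\
&> (1-\ep+\eta)-\del-\del S \\
&> 1-\ep+\eta-\del\big(1+2^{k+1}(2+\eta)\big),
\end{align*}
which exceeds $1-\ep+\eta/2$ as soon as $\del<\eta\big/\big(2(1+2^{k+1}(2+\eta))\big)$. Since in every case $\|y_i'-w'\|>1-\ep+\eta/2$, taking the infimum over $w'\in W_i'$ yields $d(y_i',W_i')\ge 1-\ep+\eta/2>1-\ep$, establishing condition (b). Hence $\del=\min\big(\rho,\,2^{-k-1},\,\eta\big/\big(2(1+2^{k+1}(2+\eta))\big)\big)>0$ works and $NB_k^\ep(X)$ is open. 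The one genuinely delicate point is the apparent circularity in the case $\|w'\|<2+\eta$—the bound on $\|w'\|$ is used to bound the coefficients $a_j$, but those coefficients reappear in the bound for $\|w\|$—and it is resolved precisely by keeping $\del$ small enough that the $\del S$ term can be absorbed on the left, using the coefficient estimate of Lemma~\ref{lem:coords}; everything else is routine triangle-inequality bookkeeping.
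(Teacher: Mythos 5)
Your proof is correct, but it takes a different route from the paper's. The paper proves openness by passing through the Grassmannian: it notes that the initial segments $\mathrm{span}(y_1,\dots,y_j)$ and $\mathrm{span}(w_1,\dots,w_j)$ of the original and perturbed tuples are close in the metric $d$ on $\mc{G}(X)$ (via Lemma~\ref{lem:close} and the symmetry Lemma~\ref{lem:close2}, giving a bound of the form $2^{j+4}3^j r$), and then transfers the lower bound on $d(y_j,\mathrm{span}(y_1,\dots,y_{j-1}))$ to the perturbed tuple by a triangle inequality of the type $d(w_j,\mathrm{span}(w_1,\dots,w_{j-1}))\geq d(y_j,\mathrm{span}(y_1,\dots,y_{j-1}))-\|y_j-w_j\|-2\|w_j\|\,d(\cdot,\cdot)$. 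You instead work directly with coefficients: you invoke only Lemma~\ref{lem:coords} (applied to the original, known-to-be-nice, initial segment, which is legitimate since $\ep<2^{-k-2}\le 2^{-(i-1)-2}$), and you handle the point the paper delegates to Lemma~\ref{lem:close2} --- namely, that one needs coefficient control for elements of the \emph{perturbed} span, which is not yet known to be nice --- by your case split on $\|w'\|$ and the absorption argument $S<2^k\big((2+\eta)+\del S\big)$ with $\del\le 2^{-k-1}$. Both arguments exploit the same slack (your $\rho,\eta$ play the role of the paper's $\ep'<\ep$), and I checked your estimates: the rescaled Lemma~\ref{lem:coords} bound $|a_j|\le\|w\|2^{i-j}$, the sum $\sum_{j<i}2^{i-j}=2^i-2<2^k$, the $w=0$ and $\|w'\|\ge 2+\eta$ cases, and the final choice of $\del$ all go through (the only cosmetic point is that your final $\del$ is a minimum rather than a strict bound, but the strictness of $S<2^{k+1}(2+\eta)$ and of $\|y_i'-y_i\|<\del$ keeps every inequality strict where needed). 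What your route buys is self-containedness --- it avoids the Hausdorff-distance machinery of Lemmas~\ref{lem:close} and \ref{lem:close2} entirely and gives an explicit radius; what the paper's route buys is brevity and reuse of lemmas it needs elsewhere in the appendix anyway.
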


\begin{proof}
Let $(y_1,\dots, y_k)\in NB_k^\ep(X)$. Let $\ep'<\ep$ be such that
for every $1 \leq j \leq k$, $1-\ep' \leq \|y_j\| \leq 1+\ep'$ and
$d(y_j, \text{span}(y_1, \dots, y_{j-1}) \geq 1-\ep'$.

Let $r>0$ and $(w_1, \dots, w_k) \in \Pi_{i=1}^k B_r(y_i)$.  Then, for
every $1 \leq j \leq k$ we have $1-\ep'-r \leq \|w_j\| \leq 1+\ep'+r$
and, if $r>0$ is sufficiently small, Lemmas~\ref{lem:close} and
\ref{lem:close2} imply that $d(\text{span}(y_1, \dots, y_j),
\text{span}(w_1, \dots, w_j)) \leq 2^{j+4} 3^j r$.

It follows from the triangle inequality that
\begin{align*}
  d(w_j, \text{span}(w_1, \dots, w_{j-1}))\geq &
  d(y_j,\text{span}(y_1, \dots, y_{j-1})) - \|y_j-w_j\| \\
  & - 2\|w_j\|d(\text{span}(y_1, \dots, y_j), \text{span}(w_1, \dots,
  w_j)).
\end{align*}
Hence, $d(w_j, \text{span}(w_1, \dots, w_{j-1}))\geq 1-\ep' - r -
(1+\ep'+r)2^{j+5} 3^j r$.  Thus, choosing $r$ sufficiently small, the
above yields that $(w_1, \dots, w_k)$ satisfies all the conditions of
an $\epsilon$-nice basis.
\end{proof}

\begin{lem}\label{lem:discGrass} (Disconnectedness of $\mathcal
  G(X)$). If $Y\in \mathcal G_j(X)$ and $\text{dim}(Y')>j$ then
  $d(Y,Y')\ge 2^{-j}/8$.  If $Y\in \mathcal G^j(X)$ and
  $\text{codim}(Y')>j$ then $d(Y,Y')\ge 2^{-j}/16$.
\end{lem}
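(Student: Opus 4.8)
The plan is to prove the two assertions separately: the first by an elementary argument with nice bases, and the second by passing to annihilators and invoking the first one inside $X^*$.

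For the first assertion, suppose $Y\in\mathcal G_j(X)$, $\dim Y'>j$, and, for a contradiction, $d(Y,Y')<2^{-j}/8$. Choose a $(j{+}1)$-dimensional subspace $Z\subseteq Y'$ and, by Lemma~\ref{lem:nicebasis}, a nice basis $z_1,\dots,z_{j+1}$ of $Z$; such a basis is $\ep$-nice for every $\ep\in(0,1)$, in particular for some $\ep<2^{-(j+1)-2}$. Each $z_i$ lies in $Y'\cap B$, so since $\sup_{u\in Y'\cap B}d(u,Y\cap B)\le d(Y,Y')<2^{-j}/8$ there is $y_i\in Y\cap B$ with $\|z_i-y_i\|<2^{-j}/8$. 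The $j{+}1$ vectors $y_1,\dots,y_{j+1}$ lie in the $j$-dimensional space $Y$, hence are linearly dependent: there is $(a_i)_{i=1}^{j+1}\neq 0$ with $\sum_i a_i y_i=0$. Since $z_1,\dots,z_{j+1}$ are linearly independent we may rescale $(a_i)$ so that $\big\|\sum_i a_i z_i\big\|=1$; then Lemma~\ref{lem:coords} (with $k=j+1$) gives $|a_i|\le 2^{(j+1)+1-i}$, so $\sum_i|a_i|<2^{j+2}$. But then $1=\big\|\sum_i a_i z_i\big\|=\big\|\sum_i a_i(z_i-y_i)\big\|\le\big(\sum_i|a_i|\big)2^{-j}/8<2^{j+2}\cdot 2^{-j-3}=\tfrac12$, a contradiction. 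Hence $d(Y,Y')\ge 2^{-j}/8$ (in fact the same computation gives $\ge 2^{-j}/4$).

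For the second assertion, suppose $Y\in\mathcal G^j(X)$ and $\text{codim}(Y')>j$. Then $Y^\perp\in\mathcal G_j(X^*)$ and $\dim Y'^\perp>j$. Recall that the gap of two closed subspaces coincides with the gap of their annihilators (this is exactly the quantitative content underlying Lemma~\ref{lem:perp}, following Kato \cite{Kato}), and that the Hausdorff-ball metric is two-sidedly comparable to the gap: $\hat\delta(M,N)\le d(M,N)\le 2\hat\delta(M,N)$ for closed subspaces $M,N$ (the right-hand inequality follows by rescaling a near-optimal approximant back into the unit ball). Combining these, $d(Y^\perp,Y'^\perp)\le 2\hat\delta(Y^\perp,Y'^\perp)=2\hat\delta(Y,Y')\le 2\,d(Y,Y')$. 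Applying the first assertion inside $X^*$ to $Y^\perp$ and $Y'^\perp$ gives $d(Y^\perp,Y'^\perp)\ge 2^{-j}/4$, and therefore $d(Y,Y')\ge 2^{-j}/8\ge 2^{-j}/16$, as required.

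The routine parts are the rescaling and the bookkeeping with Lemma~\ref{lem:coords} in the first assertion. The one point that genuinely needs care is the passage to annihilators in the second assertion: one must extract from the proof of Lemma~\ref{lem:perp} the precise relation between the gaps of $Y,Y'$ and of $Y^\perp,Y'^\perp$, and verify the comparison between $d$ and the gap, since everything here is quantitative. If one prefers to avoid any external gap identity, the second assertion can instead be proved by mimicking the first directly in $X^*$: take a nice basis of a $(j{+}1)$-dimensional subspace of $Y'^\perp$, note that $d(Y,Y')$ small forces these functionals to lie within $O(d(Y,Y'))$ of the $j$-dimensional space $Y^\perp$, approximate them by linearly dependent elements of $Y^\perp$, and rerun the counting argument.
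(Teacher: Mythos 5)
Your proof is correct, and it is worth recording how it differs from the paper's. For the first assertion the paper argues by cases on the two terms of the Hausdorff distance: it takes a nice basis of the \emph{small} space $Y$, approximates it to within $2^{-j}/8$ by elements of $Y'\cap B$, invokes Lemma~\ref{lem:close} to conclude that $Y\cap B$ lies within $1/2$ of the span $W$ of the approximants, and then, since $\dim Y'>j\ge\dim W$, picks a unit vector of $Y'$ at distance $1$ from $W$ (a Riesz-type step) to force the opposite Hausdorff term to be at least $1/2$. You approximate in the opposite direction: a nice basis of a $(j{+}1)$-dimensional subspace of $Y'$ is approximated by vectors of the $j$-dimensional space $Y$, which are necessarily linearly dependent, and Lemma~\ref{lem:coords} converts the dependence into the contradiction $1<\tfrac12$. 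Your variant avoids both the case split and the distance-one vector, gives the slightly stronger one-sided bound $\sup_{u\in Y'\cap B}d(u,Y\cap B)\ge 2^{-j}/8$ (indeed $2^{-j}/4$), and applies verbatim in $X^*$, which is exactly what your second paragraph requires. For the second assertion both you and the paper pass to annihilators, but the paper disposes of it with the single phrase ``follows from Lemma~\ref{lem:perp}'', and that lemma as stated is only a homeomorphism, which by itself cannot produce a uniform lower bound; the quantitative content you supply --- Kato's identity $\hat\delta(M^\perp,N^\perp)=\hat\delta(M,N)$ for the gap together with the two-sided comparison $\hat\delta\le d\le 2\hat\delta$ between the gap and the Hausdorff-ball metric --- is precisely what the paper leaves implicit, and it also explains the loss of the factor $2$ in the stated constant $2^{-j}/16$. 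Your identification of this as the one step needing care is accurate, and your sketched alternative (rerunning the counting argument directly in $X^*$) would serve equally well if one wished to avoid citing the gap identity.
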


\begin{proof}
  We have
  \begin{equation*}
    d(Y,Y')=\max(\max_{y\in Y\cap B}d(y,Y'\cap B),\max_{y'\in
      Y'\cap B}d(y',Y\cap B)).
  \end{equation*}
  Suppose that the first term is less than $2^{-j}/8$. Let
  $y_1,\ldots,y_j$ be a nice basis for $Y$ (as provided by Lemma
  \ref{lem:nicebasis}). Let $\|w_i-y_i\|<2^{-j}/8$ and let $W$ be the
  space spanned by the $w_i$. Lemma \ref{lem:close} guarantees that
  $\sup_{y\in Y\cap B}d(y,W\cap B)<1/2$. Since $W$ is at most
  $j$-dimensional whereas $\text{dim}(Y')>j$, let $z\in Y'\cap B$
  satisfy $d(z,W)=1$. Now for $y\in Y\cap B$, $d(z,W)\le
  d(z,y)+d(y,W)$, so using $d(y,W)\le 1/2$ we see that $d(z,y)\ge
  1/2$. We have therefore shown that $\max_{y'\in Y'\cap B}d(y',Y)\ge
  1/2$ under the assumption that the first term of $d(Y,Y')$ is
  small. In either case we see that $d(Y,Y')$ is bounded below by
  $2^{-j}/8$.  The second statement follows from Lemma~\ref{lem:perp}.
\end{proof}

\begin{cor}\label{cor:oneWayClose}
  Suppose that $Y\in \mathcal G_k(X)$, $Y'\in \mathcal G_{k'}(X)$ and
  $\sup_{y\in Y\cap B}d(y,Y'\cap B)<\ep$.  Then, if $\ep$ is
  sufficiently small, we have that $k'\geq k$.  Similarly, suppose
  that $Y\in \mathcal G^k(X)$, $Y'\in \mathcal G^{k'}(X)$ and
  $\sup_{y\in Y\cap B}d(y,Y'\cap B)<\ep$. Then, if $\ep$ is
  sufficiently small, we have that $k' \leq k$.
\end{cor}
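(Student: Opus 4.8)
The plan is to derive both halves from one geometric principle: a $k$-dimensional subspace cannot be one-sidedly approximated by a subspace of dimension less than $k$. Note that Lemma~\ref{lem:discGrass} does not directly give this, since it only bounds the \emph{symmetric} distance $d(Y,Y')$ from below, whereas here only the one-sided quantity $\sup_{y\in Y\cap B}d(y,Y'\cap B)$ is assumed small (and, in the relevant case, it is the \emph{larger} of the two subspaces that is being squeezed into the smaller). So I would argue directly, by exhibiting $k$ linearly independent vectors inside $Y'$.

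For the dimensional statement: assume $\sup_{y\in Y\cap B}d(y,Y'\cap B)<\ep$. By Lemma~\ref{lem:nicebasis} choose a nice basis $y_1,\dots,y_k$ of $Y$; since $\|y_i\|=1$, the hypothesis gives $w_i\in Y'\cap B$ with $\|w_i-y_i\|<\ep$. Running the perturbation estimates from the proof of Lemma~\ref{lem:niceBasesOpen} with the base vectors forming an honest nice basis (i.e.\ taking the ``$\ep'$'' there to be $0$), one finds that if $\ep$ is below a threshold that depends only on $k$, then $(w_1,\dots,w_k)$ is $\frac12$-nice and hence linearly independent, by Definition~\ref{def:nicebasis} (Lemmas~\ref{lem:close} and~\ref{lem:coords} being the inputs to those estimates). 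Therefore $\text{span}(w_1,\dots,w_k)\subseteq Y'$ is $k$-dimensional, so $k'=\dim Y'\ge k$.

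For the codimensional statement I would dualize and invoke the dimensional case in $X^*$. Here $Y^\perp$ and $Y'^\perp$ are subspaces of $X^*$ of dimensions $k$ and $k'$. Given $\theta\in Y'^\perp\cap B^*$ and $y\in Y\cap B$, pick $y''\in Y'\cap B$ with $\|y-y''\|<\ep$; since $\theta$ annihilates $Y'$, $|\theta(y)|=|\theta(y-y'')|<\ep$, so $\|\theta|_Y\|<\ep$. Extend $\theta|_Y$ by Hahn--Banach to $\psi\in X^*$ with $\|\psi\|<\ep$; then $\theta-\psi\in Y^\perp$, and rescaling $\theta-\psi$ into $B^*$ and applying the triangle inequality gives $d(\theta,Y^\perp\cap B^*)<2\ep$. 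Hence $\sup_{\theta\in Y'^\perp\cap B^*}d(\theta,Y^\perp\cap B^*)<2\ep$, and the dimensional case applied in $X^*$ (whose proof uses no separability) gives $k'=\dim Y'^\perp\le\dim Y^\perp=k$.

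The only point needing real care is \emph{uniformity}: that the ``sufficiently small $\ep$'' in the dimensional case can be chosen to depend on $k$ alone and not on $Y$ or $Y'$. This is precisely what the quantitative content of Lemmas~\ref{lem:close}, \ref{lem:coords} and~\ref{lem:niceBasesOpen} supplies, and it then passes to the codimensional case through the harmless factor picked up in the rescaling step. Everything else---the Hahn--Banach extension and the normalisation back into the unit ball---is routine.
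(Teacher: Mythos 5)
Your argument is correct, but it follows a genuinely different route from the paper's. The paper proves the codimensional statement directly by contradiction: assuming $k'>k$, it picks $\tld{Y}\subset Y$ with $\tld{Y}\in\G^{k'}(X)$, uses Lemma~\ref{lem:close2} (one-sided closeness between two spaces of \emph{equal} (co)dimension upgrades to closeness in the symmetric metric $d$) to conclude $d(Y,Y')$ is small, and then contradicts the uniform separation of spaces of different codimension given by Lemma~\ref{lem:discGrass}; the dimensional statement is declared ``entirely analogous''. You instead prove the dimensional statement constructively --- perturbing a nice basis of $Y$ (Lemma~\ref{lem:nicebasis}) into $Y'\cap B$ and running the quantitative estimates behind Lemma~\ref{lem:niceBasesOpen} to see that the perturbed vectors are $\tfrac12$-nice, hence linearly independent, so $k'\ge k$ --- and you then obtain the codimensional statement by an explicit duality transfer: restricting $\theta\in Y'^{\perp}\cap B^*$ to $Y$, extending by Hahn--Banach, and rescaling to show $Y'^{\perp}$ is one-sidedly $2\ep$-close to $Y^{\perp}$, after which the dimensional case in $X^*$ applies. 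Your route avoids Lemma~\ref{lem:discGrass} and the homeomorphism of Lemma~\ref{lem:perp}, exhibits explicit witnesses (independent vectors in $Y'$, respectively nearby annihilating functionals), and yields a threshold in the dimensional case depending only on $k=\dim Y$; the paper's route is shorter given that Lemmas~\ref{lem:close2} and~\ref{lem:discGrass} are already in place. One small caveat: as written, your codimensional threshold depends on $k'=\dim Y'^{\perp}$ (the approximated space in the dual), not on $k$ alone as your closing remark suggests; this matches the precision of the paper's own proof (whose contradiction also needs $\ep$ small relative to $3^{-k'}$), and if one wants dependence on $k$ only it is enough to apply your dimensional argument to a $(k+1)$-dimensional subspace of $Y'^{\perp}$ when $k'>k$.
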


\begin{proof}
We present the proof of the second statement, which is the one used.
The proof of the first one is entirely analogous.

Let $\ep<3^{-k}/8$, and assume the hypotheses hold.  We want to show that
$k'\leq k$. Assume on the contrary that $k'>k$, and pick $\tld{Y}\subset
Y$ such that $\tld{Y}\in \G^{k'}(X)$. Then, $\sup_{y\in
  \tld{Y}\cap B}d(y,Y'\cap B)\leq \sup_{y\in Y\cap B}d(y,Y'\cap
B)<\ep$. In view of Lemma~\ref{lem:close2}, $d(\tld{Y}, Y') <3^{k'} 8
\ep$, so $\sup_{y'\in Y'\cap B}d(y',Y\cap B)\leq \sup_{y'\in Y'\cap
  B}d(y',\tld{Y}\cap B)< 3^{k'} 8 \ep$.  So $d(Y,Y')<3^{k'} 8
\ep$. If $\ep$ is sufficiently small, this contradicts
Lemma~\ref{lem:discGrass}.  Thus, $k'\leq k$.
\end{proof}

\begin{lem} \label{lem:G_kseparable}
  If $X$ is separable, then $\mathcal {G}_k(X)$ is separable.
\end{lem}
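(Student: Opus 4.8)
The plan is to exhibit an explicit countable dense subset of $\mathcal{G}_k(X)$. Fix, once and for all, a countable dense sequence $\{x_1,x_2,\ldots\}$ in $X$, and let $\mathcal{D}$ be the collection of all subspaces of the form $\operatorname{span}(x_{n_1},\ldots,x_{n_k})$, ranging over those $k$-tuples $(n_1,\ldots,n_k)$ for which $x_{n_1},\ldots,x_{n_k}$ are linearly independent. Each such span is a $k$-dimensional subspace, hence automatically closed and complemented, so $\mathcal{D}\subset\mathcal{G}_k(X)$; and $\mathcal{D}$ is clearly countable. It then remains only to show that $\mathcal{D}$ is dense.

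To approximate a given $Y\in\mathcal{G}_k(X)$ to within a prescribed $\delta>0$, I would first invoke Lemma~\ref{lem:nicebasis} to choose a nice basis $y_1,\ldots,y_k$ for $Y$. Fix any $\epsilon<2^{-k-2}$. Since $\|y_i\|=1$ and $d(y_i,\operatorname{span}(y_1,\ldots,y_{i-1}))=1$ for each $i$, these equalities satisfy the strict inequalities $1-\epsilon<\|y_i\|<1+\epsilon$ and $d(y_i,\operatorname{span}(y_1,\ldots,y_{i-1}))>1-\epsilon$, so $(y_1,\ldots,y_k)\in NB_k^\epsilon(X)$.

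Next I would use that $NB_k^\epsilon(X)$ is open in $X^k$ (Lemma~\ref{lem:niceBasesOpen}) and that the span map $NB_k^\epsilon(X)\to\mathcal{G}_k(X)$ is continuous (Corollary~\ref{cor:niceBasisToGrassmannian}). Combining these, there is an $r>0$ such that whenever $\|w_i-y_i\|<r$ for $1\le i\le k$, the tuple $(w_1,\ldots,w_k)$ lies in $NB_k^\epsilon(X)$ (so in particular $w_1,\ldots,w_k$ are linearly independent) and $d(\operatorname{span}(w_1,\ldots,w_k),Y)<\delta$. Picking $x_{n_i}$ from the dense sequence with $\|x_{n_i}-y_i\|<r$ for each $i$ then produces an element $\operatorname{span}(x_{n_1},\ldots,x_{n_k})$ of $\mathcal{D}$ within distance $\delta$ of $Y$, which establishes density and hence separability of $\mathcal{G}_k(X)$.

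I do not expect a genuine obstacle: the substantive work—openness of the set of $\epsilon$-nice bases in $X^k$ and continuity of the associated span map into the Grassmannian—has already been carried out in the preceding lemmas, so what remains is essentially bookkeeping. The only point needing a moment of care is the observation just made, that a nice basis qualifies as an $\epsilon$-nice basis in the strict sense demanded by the definition of $NB_k^\epsilon(X)$, which is immediate.
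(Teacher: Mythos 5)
Your proof is correct and takes essentially the same approach as the paper: the countable dense family is exactly the paper's collection of spans of linearly independent $k$-tuples drawn from a countable dense sequence, and the approximation of a given $Y\in\mathcal G_k(X)$ proceeds via a nice basis from Lemma~\ref{lem:nicebasis}. The only (harmless) difference is that you obtain the final closeness in $\mathcal G_k(X)$ by invoking the openness of $NB_k^\epsilon(X)$ (Lemma~\ref{lem:niceBasesOpen}) together with continuity of the span map (Corollary~\ref{cor:niceBasisToGrassmannian}), whereas the paper applies Lemmas~\ref{lem:close} and~\ref{lem:close2} directly, which yields the explicit quantitative bound $d(Y,W)\le 4\cdot 3^{k}\epsilon$.
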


\begin{proof}
  Let $x_1,x_2,\ldots$ be a dense sequence in $B$. Then the collection
  of linearly independent $k$-element subsets of $\{x_1,x_2,\ldots\}$
  is also countable. Let $Y\in \mathcal G_k(X)$. Let $y_1,\ldots,y_k$
  be a nice basis for $Y$ (as in Lemma \ref{lem:nicebasis}). Then
  given $\epsilon>0$, let $(x_{i_j})_{j=1}^k\in B$ be chosen so that
  $\|x_{i_j}-y_j\|\le \epsilon/2^{k+2}$. Let
  $W=\text{span}(x_{i_1},\ldots,x_{i_k})$. Lemma \ref{lem:close}
  implies that $\max_{y\in Y\cap B}d(y,W)<\epsilon$. Then Lemma
  \ref{lem:close2} implies that $d(Y,W)\le 4\cdot 3^{k}\epsilon$,
  so the separability is established.
\end{proof}

\begin{lem}\label{lem:G^kseparable}
  If $X^*$ is separable, then $\mathcal G^k(X)$ is separable.
\end{lem}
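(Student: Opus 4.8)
The plan is to reduce to Lemma~\ref{lem:G_kseparable} via the duality homeomorphism $\perp$. Concretely, I would first observe that the map $Y \mapsto Y^\perp$ of Lemma~\ref{lem:perp} carries $\mathcal G^k(X)$ into $\mathcal G_k(X^*)$: if $Y$ is a closed subspace of $X$ of codimension $k$, then $Y^\perp$ is isometrically isomorphic to $(X/Y)^*$ and hence is $k$-dimensional. In the other direction, if $W$ is a $k$-dimensional subspace of $X^*$, then $W^\perp \subset X$ has codimension $k$ (the standard fact that $k$ linearly independent functionals cut out a codimension-$k$ subspace), and $(W^\perp)^\perp = W$ since finite-dimensional subspaces of $X^*$ are weak-$*$ closed. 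Thus $\perp$ and the corresponding map in the opposite direction restrict to mutually inverse bijections between $\mathcal G^k(X)$ and $\mathcal G_k(X^*)$, and by Lemma~\ref{lem:perp} these restrictions are homeomorphisms.

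Next I would apply Lemma~\ref{lem:G_kseparable} with $X$ replaced by the separable Banach space $X^*$, which gives that $\mathcal G_k(X^*)$ is separable. Since separability of a metric space is preserved by homeomorphisms (indeed by any continuous surjection), the homeomorphism $\mathcal G^k(X) \cong \mathcal G_k(X^*)$ then yields that $\mathcal G^k(X)$ is separable, completing the argument. Explicitly, a countable dense subset of $\mathcal G^k(X)$ is obtained by taking the $\perp$-images of a countable dense subset of $\mathcal G_k(X^*)$; one could even exhibit such a set directly, as the orthogonal complements of the spans of linearly independent $k$-element subsets of a fixed countable dense subset of the unit ball of $X^*$.

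The only point needing care is the bookkeeping in the first paragraph --- checking that $\perp$ genuinely maps $\mathcal G^k(X)$ onto $\mathcal G_k(X^*)$ and is bijective there, which rests on $(W^\perp)^\perp = W$ for finite-dimensional $W \subset X^*$ (true because such $W$ is weak-$*$ closed) together with the identification $Y^\perp \cong (X/Y)^*$. Neither fact is deep, so I do not expect a genuine obstacle: the content of the statement is already carried by Lemmas~\ref{lem:perp} and~\ref{lem:G_kseparable}.
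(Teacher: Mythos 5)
Your proposal is correct and follows essentially the same route as the paper: reduce via the duality homeomorphism $\perp$ of Lemma~\ref{lem:perp} to $\mathcal G_k(X^*)$ and invoke Lemma~\ref{lem:G_kseparable} applied to the separable space $X^*$. The extra bookkeeping you supply (that $\perp$ restricts to a bijection $\mathcal G^k(X)\to\mathcal G_k(X^*)$, using $(W^\perp)^\perp=W$ for finite-dimensional $W\subset X^*$) is exactly what the paper leaves implicit in citing Lemma~\ref{lem:perp}.
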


\begin{proof}
  Lemma~\ref{lem:perp} implies that $\mathcal G^k(X)$ is homeomorphic
  to $\mathcal G_k(X^*)$. The result follows from
  Lemma~\ref{lem:G_kseparable}.
\end{proof}

\begin{lem}\label{lem:ImagePWCont}
Let $\Phi: L(X)\times \mc{G}(X) \to \mc{G}(X)$ be given by
$\Phi(T,W)=T(W)$. For $0\leq l \leq k$, let
\[
\tld{G}(n,k, l)=\{(T, W) \in L_n(X)\times \mc{G}_k(X): \dim (Ker\, T \cap
W) \geq l\}.
\]
This is a closed subset of $L_n(X)\times \mc{G}_k(X)$, where $L_n(X)$
is endowed with the restriction of the strong operator topology on
$L(X)$. Also, let
\begin{align*}
G(n,k, l)&=\tld{G}(n,k, l)\setminus \tld{G}(n,k, l+1)\\
&=\{(T, W) \in L_n(X)\times \mc{G}_k(X): \dim (Ker\, T \cap W)= l\}.
\end{align*}
Then, $\Phi|_{G(n,k, l)}: G(n,k, l) \to \mc{G}_{k-l}(X)$ is continuous.
\end{lem}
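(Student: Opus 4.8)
The plan is to prove the two stated claims in turn: first that $\tld{G}(n,k,l)$ is closed, and then that $\Phi$ restricted to $G(n,k,l)$ is continuous into $\mc{G}_{k-l}(X)$. For the closedness, suppose $(T_j, W_j) \to (T, W)$ in $L_n(X) \times \mc{G}_k(X)$ with each $(T_j, W_j) \in \tld{G}(n,k,l)$, so each $W_j$ contains an $l$-dimensional subspace $Z_j \subset \operatorname{Ker} T_j$. Using Lemma~\ref{lem:nicebasis}, pick a nice basis $z_1^{(j)}, \dots, z_l^{(j)}$ of $Z_j$; since $W_j \to W$ and the $z_i^{(j)}$ have norm $1$, a diagonal/compactness argument (passing to a subsequence, and using that closeness in $\mc{G}$ gives nearby unit vectors in $W$, together with the finite-dimensionality forcing the unit sphere of spans to be compact) produces limit vectors $z_1, \dots, z_l \in W$. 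One checks these remain an $\epsilon$-nice set in the limit for any $\epsilon$ (using Lemmas~\ref{lem:coords} and \ref{lem:close}), hence linearly independent, so they span an $l$-dimensional $Z \subset W$. Finally, by part~\eqref{it:cty} of the lemma on $\Phi$ (continuity of $(T,x)\mapsto T(x)$ on $L_n(X)\times X$), $T(z_i) = \lim_j T_j(z_i^{(j)}) = 0$, so $Z \subset \operatorname{Ker} T \cap W$ and $(T,W) \in \tld{G}(n,k,l)$. The identity $G(n,k,l) = \tld{G}(n,k,l)\setminus\tld{G}(n,k,l+1)$ is immediate from the definitions.

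For continuity of $\Phi|_{G(n,k,l)}$, fix $(T_0, W_0) \in G(n,k,l)$, so $\dim(\operatorname{Ker} T_0 \cap W_0) = l$ and hence $\dim T_0(W_0) = k - l$. First I would verify that $\Phi$ does land in $\mc{G}_{k-l}(X)$ on $G(n,k,l)$: for $(T,W) \in G(n,k,l)$, the image $T(W)$ is the continuous linear image of a $k$-dimensional space by a rank-$(k-l)$ restriction, so it is a $(k-l)$-dimensional (hence closed, complemented) subspace. The substantive step is the metric estimate: given $\epsilon > 0$, I want $\delta > 0$ so that if $(T,W) \in G(n,k,l)$ with $d(W, W_0) < \delta$ and $T$ close to $T_0$ in the strong operator topology (tested on a suitable finite set of vectors), then $d(T(W), T_0(W_0)) < \epsilon$. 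The strategy: choose a nice basis $w_1, \dots, w_k$ of $W_0$ such that $w_{l+1}, \dots, w_k$ project to a nice (or $\epsilon$-nice) basis of $T_0(W_0)$ and $w_1, \dots, w_l$ span $\operatorname{Ker} T_0 \cap W_0$ — possible by choosing the nice basis adapted to the subspace $\operatorname{Ker} T_0 \cap W_0$. Using $d(W,W_0)<\delta$, find vectors $w_i' \in W$ with $\|w_i' - w_i\|$ small (Lemma~\ref{lem:close2} gives symmetry of closeness, so both directions are controlled). Then $T_0(w_i')$ is close to $T_0(w_i)$ by boundedness of $T_0$, and $T(w_i')$ is close to $T_0(w_i')$ because $T$ is close to $T_0$ in $\SOT$ on the relevant vectors and norms are bounded by $n$ (here part~\eqref{it:cty} of the $\Phi$-lemma, i.e. joint continuity on $L_n(X)\times X$, is the clean tool). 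For $i \le l$ this shows $T(w_i')$ is small; for $i > l$ it shows the $T(w_i')$ form an $\epsilon'$-nice set spanning a $(k-l)$-dimensional subspace close to $T_0(W_0)$. One then checks $T(W)$ equals $\operatorname{span}(T(w_{l+1}'), \dots, T(w_k'))$ — this needs the nondegeneracy coming from $(T,W) \in G(n,k,l)$, i.e. that $T$ does not collapse more than $l$ dimensions of $W$ — and concludes via Corollary~\ref{cor:niceBasisToGrassmannian} (the span map is continuous on $\epsilon$-nice bases) together with Lemma~\ref{lem:close2}.

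The main obstacle I anticipate is the last point: controlling $T(W)$ as a whole rather than just the span of the images of the chosen near-basis vectors. A priori $T(W)$ could pick up contributions we have not tracked; the resolution is precisely that on $G(n,k,l)$ the rank of $T|_W$ is exactly $k-l$, so once we exhibit $k-l$ linearly independent images $T(w_{l+1}'), \dots, T(w_k')$ that are $\epsilon'$-nice (hence genuinely independent for $\epsilon'$ small), their span must be all of $T(W)$. Making "linearly independent in the limit" quantitative — uniformly over $(T,W)$ in a neighbourhood — is where the $\epsilon$-nice machinery of Appendix~B (Lemmas~\ref{lem:coords}, \ref{lem:close}, \ref{lem:close2}, \ref{lem:niceBasesOpen}) does the real work, and is the part requiring care rather than a one-line appeal. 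A secondary subtlety is that closeness in $\SOT$ is only a sub-base condition, so I must be explicit that finitely many test vectors (the $w_i$ and the $w_i'$, or rather a fixed finite set approximating them) suffice, which is exactly what Lemma~\ref{lem:Smeas} and the neighbourhood-basis structure of $L_n(X)$ permit.
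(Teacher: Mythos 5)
Your argument splits into two parts, and they fare differently. The closedness argument is a correct alternative to the paper's: the paper shows the complement of $\tld{G}(n,k,l)$ is open by choosing a nice basis of the \emph{image} $T(W)$ and invoking Lemma~\ref{lem:niceBasesOpen}, whereas you work sequentially with the \emph{kernel}, extracting limits of nice bases of $Z_j\subset \operatorname{Ker}T_j\cap W_j$ using compactness of the unit ball of the finite-dimensional limit space $W$ and joint continuity of $(T,x)\mapsto T(x)$ on $L_n(X)\times X$. That route is sound (sequences suffice because $L_n(X)$ has a countable base by Lemma~\ref{lem:Smeas}\eqref{cor:Ln}, and linear independence of the limit vectors follows directly from $d(z_i^{(j)},\mathrm{span}(z_1^{(j)},\dots,z_{i-1}^{(j)}))=1$ passing to the limit), and your continuity argument has the same skeleton as the paper's: push forward approximating vectors, use the exact-rank condition on $G(n,k,l)$ to see that the tracked images span all of $T(W)$, and compare spans via the $\epsilon$-nice machinery.

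The genuine gap is in the first step of your continuity argument: you claim you can choose a nice basis $w_1,\dots,w_k$ of $W_0$ with $w_1,\dots,w_l$ spanning $\operatorname{Ker}T_0\cap W_0$ \emph{and} with $T_0(w_{l+1}),\dots,T_0(w_k)$ a nice (or $\epsilon$-nice) basis of $T_0(W_0)$, "by choosing the nice basis adapted to the subspace". This is false in general: niceness upstairs says nothing about the images, whose norms and mutual distances are distorted arbitrarily by $T_0$ (already for $T_0=\mathrm{diag}(1,\delta)$ on $\R^2$ with $W_0=\R^2$, $l=0$, no nice basis of $W_0$ has image within $\epsilon$ of being $\epsilon$-nice for $\epsilon<2^{-k-2}$). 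Since your subsequent appeals to Lemma~\ref{lem:close}, Lemma~\ref{lem:niceBasesOpen} and Corollary~\ref{cor:niceBasisToGrassmannian} all require $\epsilon$-niceness of the image vectors, the quantitative comparison of $\mathrm{span}(T(w'_{l+1}),\dots,T(w'_k))$ with $T_0(W_0)$ does not go through as written. The fix is exactly the paper's device, run in the opposite order from yours: choose the nice basis \emph{in the image} $T_0(W_0)$, writing it as $T_0w_1,\dots,T_0w_{k-l}$ with arbitrary preimages $w_i\in W_0$ of norm at most some $M$ depending on the point, and let all constants (the $U_\delta$-neighbourhood in SOT, the bound $(nM+1)\delta$, the final $2^{k+2}(nM+1)\delta$) depend on $M$; alternatively, keep your basis but insert a point-dependent change-of-basis constant $c>0$ with $\|\sum a_iT_0(w_i)\|\ge c\max|a_i|$ before invoking the appendix lemmas. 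Either patch is routine, but as stated the step fails.
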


\begin{proof}
  To see that $\tilde G(n,k,l)$ is a closed set, suppose that $T\in
  L_n(X)$, $W\in \mc{G}_k(X)$ and $\dim(\text{Ker} T\cap W)=s<l$. Let
  $\{Tw_1,\ldots,Tw_{k-s}\}$ be a nice basis for $T(W)$. Let
  $M=\max\|w_i\|$. Let $U_\del=\{S\in L_n(X)\colon
  \|S(w_i)-T(w_i)\|<\delta\}$ (a relatively open subset of $L_n(X)$),
  and let $V\in \mc{G}_k(X)$ be such that $d(W,V)<\delta$. Then in
  particular $V$ contains elements $v_1,\ldots,v_{k-s}$ such that
  $\|v_i-w_i\|<M\delta$. Now we have $\|S(v_i)-T(w_i)\|\le
  \|S\|\|v_i-w_i\|+\|S(w_i)-T(w_i)\|\le (nM+1)\delta$.  By
  Lemma~\ref{lem:niceBasesOpen}, if $\ep>0$ and $\del$ is
  small enough, then $\{Sv_1,\ldots,Sv_{k-s}\}$ is an $\ep$-nice basis of
  $\text{span}(Sv_1,\ldots,Sv_{k-s})\subset S(V)$.  In particular,
  $S(V)$ has dimension at least $k-s$.  It follows that
  $\dim(\text{Ker} S\cap V)\le s$, so $\tilde G(n,k,l)^c \subset
  L_n(X)\times \mc{G}_k(X)$ is open.

  Let $(T,W) \in G(n,k, l)$, and let $r>0$. Let
  $\{Tw_1,\ldots,Tw_{k-l}\}$ be a nice basis for $T(W)$, and let $M$
  and $U_\del$ as in the previous paragraph. We claim that if $\del>0$
  is sufficiently small and $(S, V)\in \Big(U_\del \times
  B_{\mc{G}}(W, \del) \Big) \cap G(n,k,l)$, then $d(T(W),
  S(V))<r$. Indeed, let $\ep>0$. The previous argument shows that if
  $\del$ is sufficiently small and $\|v_i-w_i\|<M\delta$ for each
  $1\leq i \leq k-l$, then $\{S v_1,\dots,S v_{k-l}\}$ is an
  $\ep$-nice basis for $S(V)$ such that $\|S(v_i)-T(w_i)\|\le
  (nM+1)\delta$. Hence, by the proof of
  Corollary~\ref{cor:niceBasisToGrassmannian}, $d(T(W),
  S(V))<2^{k+2}(nM+1)\del$. Taking $\del<\frac{r}{2^{k+2}(nM+1)}$
  yields the claim.
\end{proof}

\begin{cor}\label{cor:ImageIsMble}
The map $\Phi_k:L(X) \times \mathcal G_k(X) \to \mathcal G_{\leq k}(X)$
given by  $\Phi_k(T, W)= T(W)$ is $(\mathcal S \otimes \mathcal
B_{\mathcal G}, \mathcal B_{\mathcal G})$-mea\-sur\-able.
\end{cor}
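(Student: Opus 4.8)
The plan is to reduce to Lemma~\ref{lem:ImagePWCont} by partitioning the domain $L(X)\times\mathcal G_k(X)$ into countably many $(\mathcal S\otimes\mathcal B_{\mathcal G})$-measurable pieces on each of which $\Phi_k$ is continuous, and then to glue these together.

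First I would set up the measure-theoretic bookkeeping. By Lemma~\ref{lem:Smeas}, $L(X)=\bigcup_{n\in\N}L_n(X)$ with each $L_n(X)\in\mathcal S$, and $\mathcal S$ restricted to $L_n(X)$ is the Borel $\sigma$-algebra of the strong operator topology. Since $X$ is separable, the strong operator topology on the bounded set $L_n(X)$ is second countable (indeed metrizable), and by Lemma~\ref{lem:G_kseparable} so is $\mathcal G_k(X)$; hence the Borel $\sigma$-algebra of the product $L_n(X)\times\mathcal G_k(X)$ coincides with the product of the Borel $\sigma$-algebras of the two factors, which in turn is the trace of $\mathcal S\otimes\mathcal B_{\mathcal G}$ on $L_n(X)\times\mathcal G_k(X)$. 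The consequence I want is: any subset of $L_n(X)\times\mathcal G_k(X)$ that is closed (or more generally Borel) in this product topology lies in $\mathcal S\otimes\mathcal B_{\mathcal G}$ (using also $L_n(X)\in\mathcal S$).

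Second, I would invoke the partition from Lemma~\ref{lem:ImagePWCont}. For fixed $n$, the sets $\tld G(n,k,l)$ are closed in $L_n(X)\times\mathcal G_k(X)$, so the differences $G(n,k,l)=\tld G(n,k,l)\setminus\tld G(n,k,l+1)$ (with $\tld G(n,k,k+1)=\emptyset$) are Borel there, hence belong to $\mathcal S\otimes\mathcal B_{\mathcal G}$ by the previous step. Since $\dim(\ker T\cap W)$ takes exactly one value in $\{0,\ldots,k\}$ for each $(T,W)$, we obtain a countable partition $L(X)\times\mathcal G_k(X)=\bigsqcup_{n\in\N}\bigsqcup_{l=0}^k G(n,k,l)$ into $\mathcal S\otimes\mathcal B_{\mathcal G}$-measurable sets. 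On each $G(n,k,l)$, Lemma~\ref{lem:ImagePWCont} gives that $\Phi_k$ is continuous as a map into $\mathcal G_{k-l}(X)\subseteq\mathcal G_{\le k}(X)$ (the inclusion $\mathcal G_{k-l}(X)\hookrightarrow\mathcal G_{\le k}(X)$ being an isometric, hence continuous, subspace inclusion), so $\Phi_k|_{G(n,k,l)}$ is $\big((\mathcal S\otimes\mathcal B_{\mathcal G})|_{G(n,k,l)},\,\mathcal B_{\mathcal G}\big)$-measurable. Then for any $A\in\mathcal B_{\mathcal G}$ we have $\Phi_k^{-1}(A)=\bigcup_{n\in\N}\bigcup_{l=0}^k\big(\Phi_k|_{G(n,k,l)}\big)^{-1}(A)$, a countable union of $\mathcal S\otimes\mathcal B_{\mathcal G}$-measurable sets, which shows $\Phi_k$ is $(\mathcal S\otimes\mathcal B_{\mathcal G},\mathcal B_{\mathcal G})$-measurable and finishes the proof.

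The main obstacle I anticipate is the first step: one must be careful that a set which is \emph{closed in the product of the (restricted) strong operator topology with the Grassmannian metric} is genuinely $\mathcal S\otimes\mathcal B_{\mathcal G}$-measurable as a subset of $L(X)\times\mathcal G(X)$. This is where separability is used twice --- once via Lemma~\ref{lem:Smeas} to identify $\mathcal S$ with the SOT-Borel $\sigma$-algebra and to place each $L_n(X)$ in $\mathcal S$, and once to guarantee that the Borel $\sigma$-algebra of a product of second-countable spaces equals the product of the Borel $\sigma$-algebras. Everything after that is routine bookkeeping with countable unions.
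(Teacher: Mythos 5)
Your proposal is correct and follows essentially the same route as the paper's own proof: decompose $L(X)\times\mathcal G_k(X)$ into the sets $G(n,k,l)$, use Lemma~\ref{lem:ImagePWCont} both for their $(\mathcal S\otimes\mathcal B_{\mathcal G})$-measurability (differences of closed sets in $L_n(X)\times\mathcal G_k(X)$, with $L_n(X)\in\mathcal S$) and for continuity of the restricted map, and conclude by taking countable unions of preimages. The only differences are cosmetic: you spell out why relative closedness yields $\mathcal S\otimes\mathcal B_{\mathcal G}$-measurability (second countability of both factors), which the paper leaves implicit, and your union over $n$ is nested rather than disjoint, which does not affect the argument.
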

\begin{proof}
  Let us note that $ L(X) \times \mc{G}_k(X) =\bigcup_{n\in \N}
  \bigcup_{l=0}^k G(n,k,l)$. Also, $G(n,k,l)$ is $\mathcal S \otimes
  \mathcal B_{\mathcal G}$ measurable, as $L_n(X)$ is $\mathcal S $
  measurable, and $G(n,k,l)$ is the difference of two closed sets in
  $L_n(X)\times \mc{G}_k(X)$, by Lemma~\ref{lem:ImagePWCont}. Since
  $\Phi|_{G(n,k,l)}$ is continuous again by
  Lemma~\ref{lem:ImagePWCont}, then $\Phi_k$ is $(\mathcal S \otimes
  \mathcal B_{\mathcal G}, \mathcal B_{\mathcal G})$-mea\-sur\-able.
\end{proof}

\begin{lem}\label{lem:ImageProj}
  Let $X$ be separable, $\Pi: X \to Y$ a surjective bounded linear
  map, and $k\geq 0$. Then, the restriction of the induced map $\Pi_k:
  \mc{G}^k(X) \to \mc{G}(Y)$ to $\Pi^{-1}(\mc{G}^{j}(Y))$ is
  continuous for every $j\geq 0$. Furthermore, $\Pi_k$ is measurable.
\end{lem}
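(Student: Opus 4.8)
The plan is to transport the statement to the dual side, where the image under $\Pi$ of a finite-codimensional subspace becomes the intersection of a finite-dimensional subspace with a \emph{fixed} closed subspace, and then to run the same kind of $\varepsilon$-nice-basis estimates that appear in the proof of Lemma~\ref{lem:ImagePWCont}. First I would check that $\Pi_k$ is well defined with values in $\bigcup_{j\le k}\mc{G}^j(Y)$. Given $W\in\mc{G}^k(X)$, the subspace $W+\ker\Pi$ lies between $W$ and $X$, hence equals $W\oplus G$ for some finite-dimensional $G$; so $W+\ker\Pi$ is closed and has codimension $j:=k-\dim G\le k$. Since $\Pi$ factors as a Banach-space isomorphism $X/\ker\Pi\to Y$ and $\Pi(W)=\Pi(W+\ker\Pi)$, the image $\Pi(W)$ is a closed subspace of $Y$ of codimension $j$, and a finite-codimensional closed subspace is complemented.

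Next, put $R:=(\ker\Pi)^\perp\subset X^*$, a closed subspace. The adjoint $\Pi^*\colon Y^*\to X^*$ is injective (because $\Pi$ is onto) with range exactly $R$ (every functional vanishing on $\ker\Pi$ descends to $X/\ker\Pi\cong Y$), so $\Pi^*\colon Y^*\to R$ is a bounded linear isomorphism by the bounded inverse theorem. A direct computation gives $\Pi(W)^\perp=(W+\ker\Pi)^\perp=W^\perp\cap R$ inside $X^*$, and more precisely $\Pi^*$ carries $\Pi(W)^\perp\subset Y^*$ bijectively onto $W^\perp\cap R$. Combining this with Lemma~\ref{lem:perp} (the homeomorphisms $\perp$ for $X$ and for $Y$), and using that $W\in\mc{G}^k(X)$ corresponds to $W^\perp\in\mc{G}_k(X^*)$, we see that $\Pi_k$ is conjugate to the map
\[
V\longmapsto (\Pi^*)^{-1}(V\cap R),\qquad V\in\mc{G}_k(X^*),
\]
taking values in $\mc{G}(Y^*)$. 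A bounded linear isomorphism induces a bi-Lipschitz (in particular continuous) map between the corresponding Grassmannians, the distortion of the metric $d$ being controlled by $\|\Pi^*\|\,\|(\Pi^*)^{-1}\|$ via a routine rescaling of unit balls, so everything reduces to the continuity and measurability of the single map $V\mapsto V\cap R$, with $R$ fixed.

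The crux, and the step I expect to be the main obstacle, is showing that $V\mapsto V\cap R$ is continuous on each stratum $\Sigma_j:=\{V\in\mc{G}_k(X^*)\colon\dim(V\cap R)=j\}$; note that $\Sigma_j$ is exactly the image of $\Pi^{-1}(\mc{G}^j(Y))$ under $\perp$, which explains why the restriction to $\Pi^{-1}(\mc{G}^j(Y))$ is needed (otherwise the target codimension jumps and, by Lemma~\ref{lem:discGrass}, continuity fails across strata). Fix $V\in\Sigma_j$ and write $V=(V\cap R)\oplus P$ with $P$ a finite-dimensional complement; since $P\cap R=\{0\}$ and $R$ is closed, compactness of the unit sphere of $P$ gives a constant $c=c(V)>0$ with $d(p,R)\ge c\|p\|$ for all $p\in P$. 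If $V'\to V$ in $\mc{G}_k(X^*)$ and $g\in V'\cap R$ has $\|g\|=1$, choose $h\in V\cap B$ with $\|g-h\|<d(V',V)=:\varepsilon$; then $d(h,R)<\varepsilon$, and decomposing $h=h_1+h_2$ with $h_1\in V\cap R$ and $h_2\in P$ yields $\|h_2\|\le\varepsilon/c$, whence $d\bigl(g,(V\cap R)\cap B\bigr)\le C(V)\,\varepsilon$ after rescaling $h_1$ into $B$. Since $\dim(V'\cap R)=j=\dim(V\cap R)$, Lemma~\ref{lem:close2} promotes this one-sided bound to $d(V'\cap R,\,V\cap R)\le C_j(V)\,\varepsilon$ once $\varepsilon$ is small, proving continuity at $V$.

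Finally, for measurability it suffices that $\mc{G}^k(X)=\bigsqcup_{j=0}^k\Pi^{-1}(\mc{G}^j(Y))$ is a partition into Borel sets, since then $\Pi_k$, being continuous on each of these finitely many pieces, is Borel. Transporting to the dual, this amounts to $\Sigma_j$ being Borel in $\mc{G}_k(X^*)$, for which it is enough to check that $\{V\colon\dim(V\cap R)\ge l\}$ is closed for each $l$. If $V_n\to V$ with $\dim(V_n\cap R)\ge l$, pick nice $l$-element subsets of $V_n\cap R$ (Lemma~\ref{lem:nicebasis}), transport them into $V\cap B$ up to an error $d(V_n,V)\to0$, use compactness of $V\cap B$ (here $V$ is a fixed finite-dimensional space) to extract norm-convergent limits, observe the limits lie in $R$ (closed) and hence in $V\cap R$, and invoke Corollary~\ref{cor:niceBasisToGrassmannian} together with Lemmas~\ref{lem:coords} and \ref{lem:close2} to see that the limiting $l$-tuple is still nice, hence linearly independent; so $\dim(V\cap R)\ge l$. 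Then $\Sigma_j=\{\dim(\,\cdot\cap R)\ge j\}\setminus\{\dim(\,\cdot\cap R)\ge j+1\}$ is a difference of closed sets, hence Borel, and the proof is complete.
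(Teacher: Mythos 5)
Your proof is correct, but it takes a genuinely different route from the one in the paper. The paper argues entirely in the primal space: by the open mapping theorem there is $r>0$ with $rB_{\Pi(W)}\subset \Pi(W\cap B_X)$, so each $z\in\Pi(W)\cap B_Y$ is pulled back to $W\cap\frac1r B_X$, transported to a nearby point of $W'$, and pushed forward, giving the one-sided estimate $\sup_{z\in\Pi(W)\cap B_Y}d(z,\Pi(W')\cap B_Y)\lesssim\delta$; Corollary~\ref{cor:oneWayClose} then shows the sets $\{W:\operatorname{codim}\Pi(W)\le j\}$ are relatively open (controlling codimension jumps), and Lemma~\ref{lem:close2} symmetrizes the estimate on each stratum, after which measurability follows because the strata are differences of relatively open sets. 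You instead dualize: via Lemma~\ref{lem:perp} and the isomorphism $\Pi^*\colon Y^*\to(\ker\Pi)^\perp=:R$ you conjugate $\Pi_k$ to $V\mapsto(\Pi^*)^{-1}(V\cap R)$ on $\mc{G}_k(X^*)$, so the whole problem becomes a statement about intersecting a \emph{finite-dimensional} $V$ with a fixed closed $R$; continuity on the stratum $\{\dim(V\cap R)=j\}$ then comes from a transversality constant $c(V)$ obtained by compactness of the unit sphere of a complement $P$ of $V\cap R$ in $V$, plus Lemma~\ref{lem:close2}, and the Borel structure of the strata comes from closedness of $\{\dim(V\cap R)\ge l\}$, proved by transporting nice $l$-tuples, extracting limits in the compact set $V\cap B$, and using the Lemma~\ref{lem:coords}-type lower bound to keep the limits independent (note that this closedness is exactly the dual formulation of the paper's relative openness of $\Pi_k^{-1}(\mc{G}^{\le j}(Y))$, so the two proofs establish the same stratification fact by different means). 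What each buys: the paper's argument is self-contained in $X$ and avoids adjoints, annihilators, and the need to check that $\Pi(W)$ is closed and that subspace transport under an isomorphism is Lipschitz; yours reduces everything to transparent finite-dimensional compactness arguments around a single fixed subspace $R$, at the cost of the duality bookkeeping (range of $\Pi^*$, $\Pi(W)^{\perp\perp}=\Pi(W)$, bi-Lipschitz transport under $\Pi^*$), all of which you handle correctly or flag as routine.
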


\begin{proof}
  First, we note that since $\Pi$ is surjective, $Y$ is separable and
  for every $W\in \mc{G}^k(X)$, $\Pi_k(W)\in \mc{G}^{\leq k}(Y)$.

  For every $0\leq j$, the set $\G^k(X) \cap \Pi_k^{-1}(\mc{G}^{\leq
    j}(Y))$ is relatively open in
  $\G^k(X)$. \label{it:InvImgOpenInG^k} To see this, let $W\in \G^k(X)
  \cap \Pi_k^{-1}(\mc{G}^{j}(Y))$. By the above, $j\leq k$. Let $W'\in
  \G^k(X)$ be such that $d(W,W')\leq \del$. By the open mapping
  theorem, there exists $r>0$ such that $rB_{\Pi_k(W)} \subset
  \Pi_k(W\cap B_X)$. Let $z\in \Pi_k(W) \cap B_Y$. Then, there exist
  $w\in W\cap \frac{1}{r}B_X$ and $w'\in W'\cap \frac{1}{r}B_X$ such
  that $\Pi w=z$ and $\|w-w'\|\leq \frac{\del}{r}$. Hence, $\|\Pi w-
  \Pi w'\|\leq \frac{\|\Pi\| \del}{r}$. Thus,
\[
\sup_{z\in \Pi_k(W)\cap B_Y}d(z,\Pi_k(W') \cap B_Y)\leq 2 \sup_{z\in
  \Pi_k(W)\cap B_Y}d(z,\Pi_k(W'))<\frac{2\|\Pi\| \del}{r}.
\]
If $\del$ is sufficiently small, Corollary~\ref{cor:oneWayClose}
implies that $W' \in \Pi_k^{-1}(\mc{G}^{\leq j}(X))$.

It follows from the proof in the previous paragraph and
Lemma~\ref{lem:close2} that the restriction of $\Pi_k$ to
$\Pi_k^{-1}(\mc{G}^{j}(Y))$ is continuous.

The fact that $\Pi_k: \mc{G}^k(X) \to \mc{G}(Y)$ is measurable follows
from the previous two paragraphs.
\end{proof}

\begin{lem}\label{lem:normRestriction}
  The function $\nu: L(X) \times \mc{G}_k(X) \to \R$ given by $\nu(R,
  Y)= \|R|_{Y}\|$ is measurable when $L(X)$ is endowed with the strong
  $\sig$-algebra $\mc{S}$.
\end{lem}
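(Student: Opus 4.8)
The plan is to write $\nu$ as a pointwise limit of functions, each of which is a countable supremum of functions factoring as a product of an $\mc{S}$-measurable function of $R$ and a Borel function of $Y$; the product space is understood to carry $\mc{S}\otimes\mc{B}_{\mc{G}}$.

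First I would fix a countable dense set $\{x_i\}_{i\in\N}$ of $X$ and put $I=\{i\in\N:\|x_i\|\le 1\}$. For $m\in\N$ and $i\in I$, let $h_{i,m}(R,Y)=\|R(x_i)\|$ when $d(x_i,Y\cap B)<1/m$ and $h_{i,m}(R,Y)=0$ otherwise, and set $g_m=\sup_{i\in I}h_{i,m}$. The measurability of $g_m$ is the easy part: by the definition of the strong $\sigma$-algebra, the evaluation $R\mapsto R(x_i)$ is $(\mc{S},\mc{B}_X)$-measurable, so $R\mapsto\|R(x_i)\|$ is $\mc{S}$-measurable; and $Y\mapsto d(x_i,Y\cap B)$ is $1$-Lipschitz for the gap metric, since $|d(x_i,Y\cap B)-d(x_i,Y'\cap B)|\le d_H(Y\cap B,Y'\cap B)=d(Y,Y')$, hence continuous, so $\{Y:d(x_i,Y\cap B)<1/m\}$ is open in $\mc{G}_k(X)$. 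Thus $h_{i,m}$ equals an $\mc{S}$-measurable function of $R$ times the indicator of an open subset of $\mc{G}_k(X)$, so it is $\mc{S}\otimes\mc{B}_{\mc{G}}$-measurable, and so is $g_m$, being a countable supremum of such functions (note $g_m\le\|R\|<\infty$).

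The substance of the argument is the identity $\nu(R,Y)=\lim_{m\to\infty}g_m(R,Y)$ for every $(R,Y)$. The bound $g_m\le\nu+\|R\|/m$ is immediate, since $d(x_i,Y\cap B)<1/m$ gives some $y\in Y\cap B$ with $\|x_i-y\|<1/m$, whence $\|R(x_i)\|\le\|R(y)\|+\|R\|/m\le\nu(R,Y)+\|R\|/m$. For the reverse bound, given $\ep>0$ pick $y\in Y\cap B$ with $\|R(y)\|>\nu(R,Y)-\ep$; replacing $y$ by $(1-1/m)y$ keeps it in $Y\cap B$, makes its norm strictly below $1-1/m$, and loses only the factor $1-1/m$ in $\|R(\cdot)\|$; then choosing $x_i$ within $1/(2m)$ of $(1-1/m)y$ forces $\|x_i\|<1$, so $i\in I$, and $d(x_i,Y\cap B)<1/m$, giving $g_m(R,Y)\ge\|R(x_i)\|\ge(1-1/m)(\nu(R,Y)-\ep)-\|R\|/(2m)$. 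Letting $\ep\to0$ and then $m\to\infty$ yields $\liminf_m g_m\ge\nu$; combined with the upper bound, $g_m\to\nu$ pointwise, so $\nu$ is measurable as a pointwise limit of measurable functions.

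The one point requiring care is this last approximation: the near-maximizer must first be rescaled to have norm strictly below $1$ before being replaced by a nearby point of the dense set, so that the approximant both stays in the unit ball (hence in the index set $I$) and remains within $1/m$ of $Y\cap B$; everything else is routine. A cleaner-looking but heavier alternative would be to build a Castaing-type family $\{f_j\}$ of measurable selections with $\{f_j(Y)\}_j$ dense in $Y\cap B$ for every $Y$ — obtained by a measurable selection on $\mc{G}_k(X)$ in the spirit of the nice-basis construction used in Lemma~\ref{lem:goodComplement} — and then write $\nu(R,Y)=\sup_j\|R(f_j(Y))\|$, using the $\mc{S}\otimes\mc{B}_X$-measurability of $\Phi:L(X)\times X\to X$ from Appendix~A.
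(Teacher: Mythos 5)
Your proof is correct, and it takes a genuinely different route from the paper. The paper proves the stronger statement that $\nu$ is jointly \emph{continuous} on each $L_n(X)\times\mc{G}_k(X)$ (with $L_n(X)$ carrying the relative strong operator topology): it fixes a nice basis $y_1,\dots,y_k$ of $Y$, uses the coordinate bound of Lemma~\ref{lem:coords} to compare $\|S(w)\|$ with $\|R(y)\|$ for nearby $w\in W\cap B$, $y\in Y\cap B$, and then obtains measurability from the decomposition $L(X)=\bigcup_n L_n(X)$ together with the structure of $\mc{S}$ from Lemma~\ref{lem:Smeas}. You instead exhibit $\nu$ directly as a pointwise limit of the countable suprema $g_m=\sup_{i\in I}\|R(x_i)\|\,\mathbf{1}_{\{d(x_i,Y\cap B)<1/m\}}$, each term being a product of the $\mc{S}$-measurable map $R\mapsto\|R(x_i)\|$ and the indicator of an open set in $\mc{G}_k(X)$ (openness via the $1$-Lipschitz property of $Y\mapsto d(x_i,Y\cap B)$ for the gap metric); your two-sided estimates $(1-1/m)\nu-\|R\|/(2m)\le g_m\le\nu+\|R\|/m$, with the careful rescaling of the near-maximizer before approximating it from the dense set, are exactly what is needed and are verified correctly. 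What each approach buys: the paper's argument yields continuity on bounded operator sets, a slightly stronger fact, but it relies on the nice-basis machinery and hence on $\dim Y=k<\infty$; your argument uses only separability of $X$ and nowhere uses finite-dimensionality, so it gives $\mc{S}\otimes\mc{B}_{\mc{G}}$-measurability of $(R,Y)\mapsto\|R|_Y\|$ on all of $L(X)\times\mc{G}(X)$, which is more than enough for the use made of this lemma (measurability of $\om\mapsto\|R(\om)|_{Y'(\om)}\|$ in the uniqueness argument). Your sketched alternative via measurable selections is not needed and can be omitted.
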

\begin{proof}
  By Lemma~\ref{lem:Smeas}, it suffices to show continuity of $\nu$ as
  restricted to $L_n(X)\times \mathcal G_k(X)$, where $L_n(X)$ is
  endowed with the restriction of the strong operator topology. Let
  $Y\in\mathcal G_k(X)$ and let $R\in L_n(X)$. Let $\epsilon>0$ and
  let $\delta<\epsilon/(n+k2^k)$. Let $\{y_1,\ldots,y_k\}$ be a nice
  basis for $Y$. Let $N=\{S\in L_n(X)\colon
  \|S(y_i)-R(y_i)\|<\delta\}$ and let $W\in \mathcal G_k(X)$ satisfy
  $d(W,Y)<\delta$. Now given $w\in B\cap W$, there exists a $y\in
  B\cap Y$ such that $\|w-y\|<\delta$ (or conversely given $y\in B\cap
  Y$, there exists a $w\in B\cap W$ such that $\|w-y\|<\delta$). We
  then have
\begin{align*}
  \|S(w)\|&\le \|R(y)\| +\|S(y)-R(y)\|+\|S(w)-S(y)\|\text{ and}\\
  \|S(w)\|&\ge \|R(y)\| -\|S(y)-R(y)\|-\|S(w)-S(y)\|.
\end{align*}

It follows that $|\|S(w)\|-\|R(y)\||\le \|S(y)-R(y)\| + \|S(w)-S(y)\|$.

The second term of the right side is bounded above by $n\delta$. For the
first term, notice that by Lemma \ref{lem:coords}, $y$ may be expressed
as a linear combination of $y_i$'s with coefficients bounded above by
$2^k$. Hence the first term is bounded above by $k2^k\delta$. It follows
that $|\|S(w)\|- \|R(y)\||\le (n+k2^k)\delta$.

By taking $y\in Y\cap B$ for which $\|R(y)\|=\|R|_Y\|$ it follows that
$\|S|_W\|>\|R|_Y\|-\epsilon$. Similarly taking $w\in W\cap B$ for which
$\|S(w)\|=\|S|_W\|$ we obtain $\|R|_Y\|>\|S|_W\|-\epsilon$ so that
$|\|R|_Y\|-\|S|_W\||<\epsilon$ as required.
\end{proof}

A pair of closed complemented subspaces $Y, Z$ of $X$ is called
\textit{complementary} if $Y\cap Z=\{0\}$ and $Y\oplus Z=X$. By the
closed graph theorem, any pair of complementary spaces $(Y, Z) $
specifies a bounded linear map $\Pi_{Y\| Z}$, which is the projection
onto $Y$ along $Z$, having kernel $Z$ and image $Y$. By symmetry, the map
$\Pi_{Z\|Y}$ is also a linear and bounded projection.

For $k\geq 0$, let
\begin{equation*}
\text{Comp}_k(X)=\{(Y,Z)\in \mc{G}_k(X) \times \mc{G}^k(X)  : Y\cap Z=\{
0 \}, Y\oplus Z =X \},
\end{equation*}
and let $\text{Comp}(X)= \bigcup_{k\geq 0}\text{Comp}_k(X)$ be the set of
complementary subspace pairs of $X$ of finite dimension/codimension.

\begin{lem}\label{lem:projAndGrass} \
Let $(Y, Z) \in \text{Comp}(X)$ and $Y' \in \mc{G}(X)$. Then,
\begin{equation*}
\| \Pi_{Z\|Y}|_{Y'} \| \leq 2\|\Pi_{Z\|Y}\| d(Y, Y').
\end{equation*}
\end{lem}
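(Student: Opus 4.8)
The plan is to combine two elementary facts: that the projection $\Pi_{Z\|Y}$ annihilates every vector of $Y$, and that the Grassmannian metric $d(Y,Y')$ measures, via a Hausdorff distance of unit balls, how well unit vectors of $Y'$ can be approximated by vectors of $Y\cap B$ (where $B$ is the closed unit ball of $X$). Writing $P=\Pi_{Z\|Y}$, I would recall from the paragraph preceding the statement that $P$ is a bounded linear operator with $\ker P=Y$. Since $\|P|_{Y'}\|=\sup\{\|Py'\|:y'\in Y',\ \|y'\|\le 1\}$, it suffices to fix an arbitrary $y'\in Y'$ with $\|y'\|\le 1$ and bound $\|Py'\|$.

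Such a $y'$ lies in $Y'\cap B$, so by the definition $d(Y,Y')=d_H(Y\cap B,Y'\cap B)$ and the definition of Hausdorff distance one has $d(y',Y\cap B)\le\sup_{w\in Y'\cap B}d(w,Y\cap B)\le d(Y,Y')$; hence for every $\ep>0$ there is $y\in Y\cap B$ with $\|y'-y\|<d(Y,Y')+\ep$. The crucial step is then that, since $y\in Y=\ker P$, we have $Py'=P(y'-y)$, so $\|Py'\|\le\|P\|\,\|y'-y\|<\|P\|\bigl(d(Y,Y')+\ep\bigr)$. Letting $\ep\to 0$ and taking the supremum over $y'$ in the unit ball of $Y'$ gives $\|P|_{Y'}\|\le\|P\|\,d(Y,Y')$, which in particular implies the stated inequality $\|\Pi_{Z\|Y}|_{Y'}\|\le 2\|\Pi_{Z\|Y}\|\,d(Y,Y')$.

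I do not expect any genuine obstacle. The only subtlety is to invoke the correct half of the Hausdorff distance, so that $y'$ is approximated by an element of $Y\cap B$ rather than merely of $Y$, and to note that $\|y'\|\le 1$ automatically puts $y'$ into $Y'\cap B$; the boundedness of $P$ (from the closed graph theorem, as recalled before the statement) is what gives the final estimate its content, and the factor $2$ in the statement is simply slack absorbed by the sharper bound $\|P|_{Y'}\|\le\|P\|\,d(Y,Y')$.
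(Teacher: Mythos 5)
Your proof is correct and follows essentially the same route as the paper's: approximate $y'$ by a nearby element of $Y$ and use that $\Pi_{Z\|Y}$ annihilates $Y$, so $\Pi_{Z\|Y}y'=\Pi_{Z\|Y}(y'-y)$. The only difference is that by restricting to $\|y'\|\le 1$ and approximating within $Y\cap B$ via the relevant half of the Hausdorff distance you get the sharper bound $\|\Pi_{Z\|Y}|_{Y'}\|\le\|\Pi_{Z\|Y}\|\,d(Y,Y')$, whereas the paper approximates by a general $y\in Y$ and passes through the estimate $d(y',Y)\le 2\|y'\|\,d(Y,Y')$, which is where the factor $2$ in the statement comes from.
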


\begin{proof}
Let $y' \in Y'$ and $\ep>0$. Let $y\in Y$ be such that $\|y'-y\| \leq
d(y', Y) +\ep$. Then,
\begin{align*}
  \|\Pi_{Z\|Y} (y') \| &= \| \Pi_{Z\|Y} (y' - y) \| \leq
  \|\Pi_{Z\|Y}\| (d(y', Y) +\ep) \leq \|\Pi_{Z\|Y}\| \big( 2 \|y' \|
  d(Y', Y) +\ep).
\end{align*}
Letting $\ep\to 0$, the result follows.
\end{proof}

\begin{lem}\label{lem:ContOfProjNormTop}
  The map $\Psi: \text{Comp}(X) \to L(X)$ given by
  $\Psi(Y,Z)=\Pi_{Z\|Y}$ is continuous, where $\text{Comp}(X)$ carries
  the product topology induced by the metric on $\mc{G}(X)$ and $L(X)$
  is endowed with the norm topology.
\end{lem}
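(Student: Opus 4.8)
The plan is to reduce the problem to a fixed finite-dimensional coordinate system and then invoke continuity of matrix inversion together with continuity of the formation of rank-one operators. It suffices to prove continuity at an arbitrary point $(Y_0,Z_0)\in\text{Comp}(X)$, say $(Y_0,Z_0)\in\text{Comp}_k(X)$. First I would observe, using Lemma~\ref{lem:discGrass}, that there is $\del_0>0$ such that every $(Y,Z)\in\text{Comp}(X)$ with $d(Y,Y_0)+d(Z,Z_0)<\del_0$ automatically lies in $\text{Comp}_k(X)$ (a subspace within distance $2^{-k}/8$ of $Y_0$ must be $k$-dimensional, and one close to $Z_0$ must be $k$-codimensional), so the integer $k$ is locally constant and the construction below is meaningful. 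Fix once and for all a nice basis $y_1^0,\dots,y_k^0$ of $Y_0$ (Lemma~\ref{lem:nicebasis}) and a nice basis $\chi_1^0,\dots,\chi_k^0$ of the $k$-dimensional subspace $Z_0^\perp\subseteq X^*$ (Lemma~\ref{lem:nicebasis}, applied in $X^*$). Because $Y_0\cap Z_0=\{0\}$ and $Y_0\oplus Z_0=X$, the evaluation map $Y_0\to\R^k$, $y\mapsto(\chi_i^0(y))_i$, is an isomorphism, so the $k\times k$ matrix $N^0:=(\chi_i^0(y_j^0))_{1\le i,j\le k}$ is invertible, and one checks directly that
\[
\Pi_{Z_0\|Y_0}x=\sum_{i,j}(N^0)^{-1}_{ij}\,\chi_j^0(x)\,y_i^0,
\]
since the right-hand side takes values in $Y_0$, annihilates $Z_0=\bigcap_j\ker\chi_j^0$, and fixes each $y_l^0$.

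Next I would show that, for $(Y,Z)$ close to $(Y_0,Z_0)$, one can choose bases of $Y$ and of $Z^\perp$ close to the fixed ones. Since $d(Y_0,Y)$ is small and each $y_i^0\in Y_0\cap B$, there are $y_i\in Y\cap B$ with $\|y_i-y_i^0\|$ small; as $(y_1^0,\dots,y_k^0)$ is $\ep$-nice for every $\ep<2^{-k-2}$, Lemma~\ref{lem:niceBasesOpen} guarantees that for $d(Y_0,Y)$ small enough $(y_1,\dots,y_k)$ is still $\ep$-nice, hence linearly independent, hence a basis of the $k$-dimensional space $Y$. By Lemma~\ref{lem:perp}, $d(Z,Z_0)$ small implies $d(Z^\perp,Z_0^\perp)$ small in $\mc{G}_k(X^*)$, and the identical argument yields a basis $\chi_1,\dots,\chi_k$ of $Z^\perp$ with $\|\chi_i-\chi_i^0\|$ small; moreover $Z=Z^{\perp\perp}=\bigcap_j\ker\chi_j$. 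Setting $N:=(\chi_i(y_j))_{i,j}$, we have $N\to N^0$ as $\|\chi_i-\chi_i^0\|,\|y_j-y_j^0\|\to0$, so by continuity of matrix inversion near $N^0$ the matrix $N$ is invertible with $N^{-1}\to(N^0)^{-1}$, and exactly as above
\[
\Pi_{Z\|Y}x=\sum_{i,j}N^{-1}_{ij}\,\chi_j(x)\,y_i .
\]

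To finish, I would write
\[
\Pi_{Z\|Y}-\Pi_{Z_0\|Y_0}=\sum_{i,j}\Bigl(N^{-1}_{ij}\,\chi_j(\cdot)\,y_i-(N^0)^{-1}_{ij}\,\chi_j^0(\cdot)\,y_i^0\Bigr),
\]
a sum of $k^2$ terms, and use that the map $\R\times X^*\times X\to L(X)$, $(\lambda,\phi,y)\mapsto\bigl(x\mapsto\lambda\,\phi(x)\,y\bigr)$, is continuous (it is multilinear, and $\|x\mapsto\phi(x)y\|=\|\phi\|\,\|y\|$). Hence each term, and therefore the whole sum, has small operator norm once $\|\chi_i-\chi_i^0\|$, $\|y_j-y_j^0\|$ and $|N^{-1}_{ij}-(N^0)^{-1}_{ij}|$ are small, i.e. once $d(Y,Y_0)+d(Z,Z_0)$ is small. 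This gives $\|\Psi(Y,Z)-\Psi(Y_0,Z_0)\|\to 0$, which is the asserted continuity.

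The main obstacle is the second step: making precise and verifying that subspaces close in $\mc{G}_k(X)$, respectively $\mc{G}^k(X)$, admit bases close to a prescribed one, and that the resulting matrix $N$ stays invertible. All the tools needed for this — nice bases, openness of the set of $\ep$-nice tuples, the homeomorphism $\perp$, and the disconnectedness of the Grassmannian — are already established earlier in this appendix, so it should amount to careful bookkeeping rather than a new difficulty. The one point that genuinely requires attention is to perform the reduction to $\text{Comp}_k(X)$ first, so that the number $k$ is fixed before the basis constructions are carried out.
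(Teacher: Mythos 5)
Your proposal is correct, but it follows a genuinely different route from the paper's proof. The paper argues purely geometrically in $X$: it shows that the separation $\zeta=\inf_{y\in Y\cap \partial B}d(y,Z)$ degrades by at most a factor of $2$ under a small perturbation of $(Y,Z)$, which yields uniform bounds on $\|\Pi_{Y'\|Z'}\|$ and $\|\Pi_{Z'\|Y'}\|$ near the given pair, and then estimates $\|\Pi_{Z\|Y}-\Pi_{Z'\|Y'}\|$ by splitting through $\Pi_{Y'\|Z'}+\Pi_{Z'\|Y'}=\mathrm{Id}$ and invoking Lemma~\ref{lem:projAndGrass}; no dual space or bases appear. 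You instead coordinatize: a nice basis of $Y_0$, a nice basis of $Z_0^\perp\subset X^*$ (using Lemma~\ref{lem:perp} to transfer closeness of $Z$ to closeness of $Z^\perp$, and Lemma~\ref{lem:niceBasesOpen} together with Lemma~\ref{lem:discGrass} to keep $k$ fixed and the perturbed tuples linearly independent), an explicit rank-$k$ formula $\sum_{i,j}N^{-1}_{ij}\chi_j(\cdot)y_i$, and continuity of matrix inversion. Both arguments are sound; yours has the merit of producing an explicit formula for the projection (with locally uniform norm bounds as a byproduct), at the cost of invoking the annihilator homeomorphism and the basis bookkeeping, while the paper's stays entirely in $X$ and is shorter. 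One small point: with the paper's convention, $\Pi_{Y\|Z}$ is the projection \emph{onto} $Y$ \emph{along} $Z$, so the operator your formula defines (image $Y_0$, kernel $Z_0$) is $\Pi_{Y_0\|Z_0}$, not $\Pi_{Z_0\|Y_0}$; this is harmless, since $\Pi_{Z\|Y}=\mathrm{Id}-\Pi_{Y\|Z}$, so continuity of one map is equivalent to continuity of the other, but the labels should be swapped to match the statement.
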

\begin{rmk}\label{rmk:ContOfProjSOTTop}
  Since the norm topology is finer than the strong operator topology
  on $L(X)$, Lemma~\ref{lem:ContOfProjNormTop} yields that $\Psi$ is
  also continuous when $L(X)$ is endowed with the strong operator
  topology.
\end{rmk}

\begin{proof}[Proof of Lemma~\ref{lem:ContOfProjNormTop}]

  Let $\ep>0$. Let $(Y, Z)\in \text{Comp}(X)$ and $x\in X$.  Since
  $\dim Y<\infty$, then $Y\cap \partial B$ is compact. Hence,
  $\zeta:=\inf_{y\in Y\cap \partial B} d(y, Z)>0$.

  Let $\del<\min\{\frac{1}{3},\frac{\zeta}{8+2\zeta}\}$ and $(Y',
  Z')\in\text{Comp}(Z)$ such that $d(Y,Y'), d(Z,Z')<\del$.  Then,
  $\inf_{y'\in Y'\cap \partial B} d(y', Z')\geq
  \frac{\zeta}{2}$. Indeed, let $y' \in Y'\cap \partial B$, and let
  $y\in Y\cap B$ be such that $\|y'-y\|<\del$. Then, $1-\del
  <\|y\|<1+\del$. Let $z\in Z$ be such that $\|y-z\|<
  d(y,Z)+\del$. Then, $\|z\|\leq 2\|y\|+\del< 3$ and
\[
  d(y', Z')\geq d(y, Z) - \|y-y'\| - d(z, Z') \geq \zeta(1-\del)- \del
  -3\del \geq \frac{\zeta}{2}.
\]
We claim that $\|\Pi_{Y'\|Z'}\|<\frac{2}{\zeta}$ and
$\|\Pi_{Z'\|Y'}\|<\frac{2}{\zeta}+1$.  Indeed, let $x\in
X\cap \partial B$, and write $x=y'+z'$, with $y'\in Y'$ and $z'\in
Z'$.  Then, $1=\|y'+z'\| \geq d(y', Z')\geq \frac{\zeta}{2}\|y'\|$, so
that $\|y'\|\leq \frac{2}{\zeta}$ and the first claim follows. The
second claim follows from the triangle inequality.

Let $M=\max\{ \|\Pi_{Y\|Z}\|, \|\Pi_{Z\|Y}\| \}$.  Assume also that
$\del< \frac{\ep}{4M}\big(\frac{2}{\zeta}+1 \big)^{-1}$.  Then, if
$(Y', Z')\in\text{Comp}(Z)$ is such that $d(Y,Y'), d(Z,Z')<\del$, we
have that
\begin{align*}
\big| \| \Pi_{Z\| Y}\| &- \| \Pi_{Z'\| Y'} \| \big| \leq \| \Pi_{Z\|
  Y} - \Pi_{Z'\| Y'} \| \\
&\leq
\| \big( \Pi_{Z\| Y} - \Pi_{Z'\| Y'} \big) |_{Z'} \| \|\Pi_{Z'\| Y'} \| +
\| \big( \Pi_{Z\| Y} - \Pi_{Z'\| Y'} \big)|_{Y'} \| \|\Pi_{Y'\| Z'}\|
\\
&\leq \| \Pi_{Y\| Z} |_{Z'} \| \|\Pi_{Z'\| Y'} \|+ \|  \Pi_{Z\|
  Y}|_{Y'} \| \|\Pi_{Y'\| Z'}\|  \\
&\leq 2M (d(Z,Z')+ d(Y,Y')) \big(\frac{2}{\zeta}+1 \big)< \ep,
\end{align*}
where the third inequality follows from the fact that $ \Pi_{Y\| Z}+
\Pi_{Z\| Y}= \mathrm{Id}$, and the fourth one follows from
Lemma~\ref{lem:projAndGrass} and the claim above.
\end{proof}

\begin{lem}\label{lem:contOfDirectSum}
Let
\begin{align*}
NI(X)=\bigcup_{k, k' \geq 0}\{(Y,Z) &\in \big(\mc{G}_k(X) \times
\mc{G}^{k'}(X) \big) \\
& \cup \big(\mc{G}^k(X) \times \mc{G}_{k'}(X) \big) \\
& \cup \big(\mc{G}_k(X) \times \mc{G}_{k'}(X) \big): Y\cap Z=\{ 0 \}\}
\end{align*}
be the set of pairs of subspaces of $X$ of finite dimension/codimension
with trivial intersection.

Then, the map $\Psi': NI(X) \to \mc{G}(X)$ be given by $\Psi'(Y,
Z)=Y\oplus Z$ is continuous.
\end{lem}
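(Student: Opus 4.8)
The plan is to exploit the fact that every pair in $NI(X)$ has at least one finite-dimensional coordinate, and that such a coordinate makes a positive ``angle'' with the other subspace — this is precisely what compensates for $Y\oplus Z$ not being all of $X$. Since $\Psi'(Y,Z)=\Psi'(Z,Y)$ and $NI(X)$ is symmetric under interchanging its two coordinates, it suffices to prove continuity at an arbitrary pair $(Y,Z)\in NI(X)$ with $\dim Y=k<\infty$ (with $Z$ either finite-dimensional or finite-codimensional), since continuity is a local property. First I would record that $Y\oplus Z$ does lie in $\mc{G}(X)$: it is closed because a finite-dimensional algebraic complement added to a closed subspace stays closed, and it is complemented because it is either finite-dimensional or of finite codimension. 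Next, since $Y\cap\partial B$ is compact, $Z$ is closed, and $Y\cap Z=\{0\}$, the number $\zeta:=\inf\{d(y,Z):y\in Y\cap\partial B\}$ is strictly positive. Fix $\delta>0$ (to be shrunk at the end), and let $(Y',Z')\in NI(X)$ satisfy $d(Y,Y')<\delta$ and $d(Z,Z')<\delta$. By the computation in the proof of Lemma~\ref{lem:ContOfProjNormTop} — which uses only that $Y$ is finite-dimensional — for $\delta$ small enough one gets $\zeta':=\inf\{d(y',Z'):y'\in Y'\cap\partial B\}\ge\zeta/2$.

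The core of the argument is a two-sided Hausdorff estimate between $(Y\oplus Z)\cap B$ and $(Y'\oplus Z')\cap B$. For the first direction, take $v\in(Y\oplus Z)\cap B$ and write $v=y+z$ with $y\in Y$, $z\in Z$; then $1\ge\|v\|=\|y+z\|\ge d(y,Z)\ge\zeta\|y\|$, so $\|y\|\le1/\zeta$ and $\|z\|\le1+1/\zeta$. Using $d(Y,Y')<\delta$ and $d(Z,Z')<\delta$ I would choose $y'\in Y'$ with $\|y-y'\|\le\|y\|\delta$ and $z'\in Z'$ with $\|z-z'\|\le\|z\|\delta$; then $v':=y'+z'\in Y'\oplus Z'$ and $\|v-v'\|\le(1+2/\zeta)\delta$, so after rescaling $v'$ into $B$ one obtains $d\big(v,(Y'\oplus Z')\cap B\big)\le 2(1+2/\zeta)\delta$. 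The reverse direction is identical after exchanging the roles of primed and unprimed spaces, now using $\zeta'\ge\zeta/2$ to bound the components of a unit vector $w=y'+z'\in Y'\oplus Z'$ (giving $\|y'\|\le2/\zeta$, $\|z'\|\le1+2/\zeta$), and yields $d\big(w,(Y\oplus Z)\cap B\big)\le 2(1+4/\zeta)\delta$. Combining the two directions, $d(Y\oplus Z,\,Y'\oplus Z')\le 2(1+4/\zeta)\delta$, which tends to $0$ with $\delta$; this is continuity of $\Psi'$ at $(Y,Z)$.

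The only point requiring genuine care is the one just flagged: because $Y\oplus Z$ need not equal $X$, there is no bounded projection $\Pi_{Y\|Z}$ on $X$ available to bound the decomposition $v\mapsto(y,z)$ \emph{a priori}, and the compactness of the unit sphere of the finite-dimensional factor — yielding $\zeta>0$ together with its stability $\zeta'\ge\zeta/2$ under small perturbations — is exactly the substitute for that projection. Everything else is routine triangle-inequality bookkeeping; in particular, because both Hausdorff directions are estimated directly, no appeal to the disconnectedness of the Grassmannian (Lemma~\ref{lem:discGrass}) or to matching the dimension/codimension of $Y'\oplus Z'$ with that of $Y\oplus Z$ is needed.
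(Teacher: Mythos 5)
Your proof is correct, but it takes a genuinely different route from the paper's. The paper bounds the components of a unit vector of $Y\oplus Z$ by fixing an auxiliary complement $W$ with $Y\oplus Z\oplus W=X$ and using the bounded projections $\Pi_{Y\|Z\oplus W}$ and $\Pi_{Z\|Y\oplus W}$; it then proves only the one-sided estimate $\sup_{x\in(Y\oplus Z)\cap B}d\big(x,(Y'\oplus Z')\cap B\big)\le 4M\delta$ and upgrades this to a bound on the Hausdorff distance by invoking Lemma~\ref{lem:discGrass} (to force $Y'$, $Z'$ to have the same dimension/codimension as $Y$, $Z$) together with Lemma~\ref{lem:close2}, at the cost of a constant depending on $k$ and $k'$. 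You instead reduce by the swap symmetry of $NI(X)$ and $\Psi'$ to the case $\dim Y<\infty$, replace the projection bound by the angle $\zeta=\inf_{y\in Y\cap\partial B}d(y,Z)>0$, establish its stability $\zeta'\ge\zeta/2$ under perturbation --- and you are right that the relevant computation in the proof of Lemma~\ref{lem:ContOfProjNormTop} uses only compactness of $Y\cap\partial B$, closedness of $Z$, trivial intersection and Hausdorff closeness, not complementarity in $X$ --- and then estimate both Hausdorff directions directly, so that neither Lemma~\ref{lem:discGrass} nor Lemma~\ref{lem:close2} is needed and the constant $2(1+4/\zeta)$ is explicit. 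The trade-off: the paper's argument treats all three types of pairs in $NI(X)$ uniformly and leans on its general Grassmannian machinery, while yours is more self-contained and quantitative, at the price of the preliminary symmetry reduction and the stability estimate for $\zeta$, both of which you carry out correctly (only the trivial case $Y=\{0\}$ is left implicit, as it effectively is in the paper as well).
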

\begin{proof}
Let $(Y, Z) \in NI(X)$. Let $W \in \mc{G}(X)$ be such that $(Y \oplus Z,
W) \in \text{Comp}(X)$, so that $Y \oplus Z \oplus W= X$. Also, let
$M=\max(\|\Pi_{Y\|Z\oplus W}\|, \|\Pi_{Z\|Y\oplus W}\|)$.

Let $\del>0$ and $Y', Z' \in \mc{G}(X)$ be such that $d(Y, Y'), d(Z,
Z')<\del$. Let $y\in Y, z \in Z$ be such that $\|y+z\|\leq 1$. Then
$\|y\|, \|z\| \leq M$. Therefore, there exist $y' \in Y'$ and $z'\in Z'$
such that $\|y-y'\|, \|z-z'\|< M \del$. Hence, $\|(y+z)-(y'+z')\| \leq
\|y-y'\|+\|z-z'\| \leq 2M\del$. Therefore, $\sup_{x\in (Y\oplus Z)\cap B}
d(x, Y'\oplus Z') \leq 2M\del$, and by the triangle inequality,
$\sup_{x\in (Y\oplus Z)\cap B} d(x, (Y'\oplus Z') \cap B) \leq 4M\del$.
If $\del$ is sufficiently small, Lemma~\ref{lem:discGrass} implies that
$\text{(co)dim}Y = \text{(co)dim} Y'$ and $\text{(co)dim}Z
=\text{(co)dim} Z'$. Hence, using Lemma~\ref{lem:close2}, we get that
$d(Y \oplus Z, Y' \oplus Z')<\tld{M}\del$, where $\tld{M}$ depends on
$M$, $k$ and $k'$. Hence, $\Psi'$ is continuous.
\end{proof}
\section{Some facts from ergodic theory}
\subsection{A characterization of tempered maps}\label{sec:TempFwd=TempBwd}

This appendix provides a characterization of tempered maps, based on the
following theorem.

\begin{thm}[Tanny]
  Let $T$ be an ergodic measure-preserving transformation of a
  probability space $(X, \mathcal B,\mu)$. Let $f\colon
  X\to\mathbb{R}$ be a non-negative measurable function.  Then either
  $f(T^nx)/n\to 0$ for $\mu$-almost every $x$; or $\limsup f(T^nx)/n=\infty$
  for $\mu$-almost every $x$.
\end{thm}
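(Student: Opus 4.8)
The plan is to refine the dichotomy into a statement about integrability: I will show $f(T^nx)/n\to 0$ a.e.\ when $\int f\,d\mu<\infty$, and $\limsup_n f(T^nx)/n=\infty$ a.e.\ when $\int f\,d\mu=\infty$. First a soft reduction. The function $W(x):=\limsup_{n\to\infty}f(T^nx)/n$ is measurable and $T$-invariant, since $W(Tx)=\limsup_n\frac{f(T^{n+1}x)}{n}=\limsup_n\frac{n+1}{n}\cdot\frac{f(T^{n+1}x)}{n+1}=W(x)$ (the factor tends to $1$ and $f\ge 0$). By ergodicity $W$ is $\mu$-a.e.\ a constant $c\in[0,\infty]$. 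If $c=0$, then since $f\ge 0$ the limsup being $0$ forces $f(T^nx)/n\to 0$ a.e.\ (first alternative); if $c=\infty$ the second alternative holds by definition; so it remains to rule out $0<c<\infty$. In the integrable case this is immediate from Birkhoff's ergodic theorem: $\frac1n\sum_{k=0}^{n-1}f(T^kx)\to\int f\,d\mu$ a.e., and writing $\frac{f(T^nx)}{n}=\frac{n+1}{n}\cdot\frac{1}{n+1}\sum_{k=0}^{n}f(T^kx)-\frac1n\sum_{k=0}^{n-1}f(T^kx)$, both Ces\`{a}ro averages converge to $\int f\,d\mu$ and the prefactor to $1$, so $f(T^nx)/n\to 0$ a.e., i.e.\ $c=0$.

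For the non-integrable case, suppose $\int f\,d\mu=\infty$ and fix $M>0$; I aim to show $c\ge M$. Set $D_n:=\{f>Mn\}$, a decreasing sequence of measurable sets with $\sum_{n\ge1}\mu(D_n)=\infty$ (the sum dominates $\frac1M\int_M^\infty\mu(f>s)\,ds$, which is infinite since $\int_0^\infty\mu(f>s)\,ds=\int f\,d\mu=\infty$). The set $\{x:f(T^nx)>Mn\text{ for infinitely many }n\}$ is exactly $\limsup_n T^{-n}D_n$, and since the $D_n$ are decreasing this set is contained in its $T$-preimage, hence of measure $0$ or $1$ by ergodicity. So it suffices to prove it is non-null; equivalently, to establish the \emph{monotone Borel--Cantelli property}: for a decreasing sequence of measurable sets $(D_n)$ in an ergodic measure-preserving system with $\sum_n\mu(D_n)=\infty$, one has $\mu\big(\limsup_n T^{-n}D_n\big)=1$. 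Granting this, $c\ge M$ for every $M$, hence $c=\infty$, and together with the reduction above the theorem follows.

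The monotone Borel--Cantelli property is the substantive part --- the actual content of Tanny's theorem --- and is the main obstacle. The scheme I would follow (essentially Tanny's) is a dyadic blocking argument: partition the times into blocks $I_k=[2^k,2^{k+1})$ and, on $I_k$, replace each $D_n$ by the smaller fixed set $D_{2^{k+1}}$, so that $T^nx\in D_{2^{k+1}}$ for some $n\in I_k$ implies $T^nx\in D_n$ for that $n$; it is then enough to show that a.e.\ $x$ lies in $E_k:=\bigcup_{n\in I_k}T^{-n}D_{2^{k+1}}$ for infinitely many $k$. The first-moment heuristic $\sum_{n\in I_k}\mathbf{1}_{D_{2^{k+1}}}(T^nx)\approx |I_k|\,\mu(D_{2^{k+1}})$ indicates that $\mu(E_k)$ is bounded below by a term whose sum over $k$ diverges (this divergence is equivalent to $\sum_n\mu(D_n)=\infty$). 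The difficulty is that Birkhoff's theorem controls these window sums only in the limit and not uniformly in $k$ as the target $D_{2^{k+1}}$ shrinks; the standard remedy is to let the blocks grow fast, use the maximal ergodic inequality to control the fluctuations on each block, and then run a Borel--Cantelli argument on the resulting ``good block'' events. I would cite Tanny for the details of this last step.
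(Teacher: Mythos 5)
Your preliminary reductions are fine: the invariance of $W(x)=\limsup_n f(T^nx)/n$ plus ergodicity gives an a.e.\ constant $c\in[0,\infty]$, the integrable case follows from Birkhoff exactly as you write, and the identification of the divergent case with the statement that $\mu\big(\limsup_n T^{-n}D_n\big)=1$ for the decreasing sets $D_n=\{f>Mn\}$ with $\sum_n\mu(D_n)=\infty$ is correct (including the $0$--$1$ observation for $E=\limsup_n T^{-n}D_n$). The genuine gap is that this ``monotone Borel--Cantelli property'' is not an auxiliary lemma: as you concede, it \emph{is} Tanny's theorem in equivalent form (set $f(x)=\sup\{n\colon x\in D_n\}$ to pass back), so proving the easy half and then writing ``I would cite Tanny for the details of this last step'' leaves the entire content of the theorem unproved. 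Nor does the sketched blocking argument close it as described: a lower bound on the expected number of hits $\sum_{n\in I_k}\mathbf 1_{D_{2^{k+1}}}(T^nx)$ gives no lower bound on $\mu(E_k)$ without second-moment or independence input; the maximal ergodic inequality controls the probability that window averages are \emph{large}, not the bad event that an entire block misses the shrinking target; and the divergence half of Borel--Cantelli requires (quasi-)independence of the block events, which a general ergodic measure-preserving system does not supply. These are exactly the obstructions that make shrinking-target statements delicate, so the route you propose needs a substantially different idea to be completed.

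For comparison, the paper itself does not reprove Tanny's theorem; it quotes it and uses Feldman's concise argument (from Lyons--Pemantle--Peres) as the model for its proof of Lemma~\ref{lem:TempFwd=TempBwd}. That argument finishes your proof in a few lines and needs neither the integrability dichotomy nor any Borel--Cantelli statement: suppose $c<\infty$ and fix $\delta>0$; choose $L$ so large that $A=\{x\colon f(T^nx)\le (c+1)n\ \text{for all }n\ge L\}$ has measure close to $1$ (possible since a.e.\ orbit eventually satisfies $f(T^nx)\le (c+1)n$); by Birkhoff, for a.e.\ $x$ and all large $N$ the visit frequency to $A$ up to time $N$ is close to $1$, so there is a time $i$ with $T^ix\in A$ and $L\le N-i\le \delta N$; then $f(T^Nx)=f\big(T^{N-i}(T^ix)\big)\le (c+1)(N-i)\le (c+1)\delta N$, whence $c\le (c+1)\delta$ and, letting $\delta\to 0$, $c=0$. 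This handles both alternatives at once (either $c=\infty$ or $c=0$), uses only forward iterates, and is the argument you should substitute for the deferred step.
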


The proof of the following lemma is based on a very concise proof of
Tanny's theorem, attributed to Feldman, that appears in a Lyons,
Pemantle and Peres \cite{LyonsPemantlePeres}.

\begin{lem}\label{lem:TempFwd=TempBwd}
Let $T$ be an invertible ergodic measure-preserving transformation of a
probability space $(X,\mathcal B,\mu)$. Let $f\colon X\to \mathbb R$ be a
non-negative measurable function. Then $f(T^{-n}x)/n\to 0$ for
$\mu$-almost every $x$ as $n\to\infty$ if and only if $f(T^nx)/n\to 0$
for $\mu$-almost every $x$ as $n\to\infty$.
\end{lem}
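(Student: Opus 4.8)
By symmetry it suffices to prove one implication: assuming that $f(T^nx)/n\to 0$ for $\mu$-almost every $x$, I will show that $f(T^{-n}x)/n\to 0$ almost everywhere, the reverse implication following by applying the same argument to the ergodic measure-preserving transformation $T^{-1}$. We may assume $f$ is not $\mu$-a.e.\ zero, the contrary case being trivial. The first step is to invoke Tanny's theorem for $T^{-1}$: it gives that either $f(T^{-n}x)/n\to 0$ almost everywhere (which is what we want), or $\limsup_n f(T^{-n}x)/n=\infty$ almost everywhere; so the task reduces to ruling out the latter alternative under the standing hypothesis.

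The core of the argument is a quantitative reformulation of the forward hypothesis through a maximal function. Put $g(x)=\sup_{n\ge 1} f(T^nx)/n$. Since $f(T^nx)/n\to 0$ a.e., the sequence $(f(T^nx)/n)_{n\ge1}$ is almost surely bounded, so $g$ is a finite-valued measurable function and $\mu(g\le K)\uparrow 1$ as $K\to\infty$; fix $K>0$ with $\mu(B)>\tfrac13$, where $B:=\{g\le K\}$. The key ``long excursion'' observation is this: if $f(T^{-n}x)>2Kn$, then for every $m$ with $1\le m\le 2n$ the point $T^{-(n+m)}x$ must lie outside $B$, for otherwise $g(T^{-(n+m)}x)\le K$ would give $f(T^{-n}x)=f\!\left(T^m(T^{-(n+m)}x)\right)\le Km$, forcing $m>2n$. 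Hence the backward orbit of such an $x$ misses $B$ on the entire block of indices $\{n+1,\dots,3n\}$.

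Now suppose, for contradiction, that $\limsup_n f(T^{-n}x)/n=\infty$ a.e. Then for almost every $x$ there are infinitely many $n$ with $f(T^{-n}x)>2Kn$, and for each such $n$ the previous step yields $\#\{1\le k\le 3n:T^{-k}x\in B\}\le n$; evaluating along the subsequence $N=3n$ gives $\liminf_{N}\tfrac1N\#\{1\le k\le N:T^{-k}x\in B\}\le\tfrac13$ for a.e.\ $x$. On the other hand, Birkhoff's ergodic theorem applied to $T^{-1}$ and $\mathbf 1_B$ gives $\tfrac1N\#\{1\le k\le N:T^{-k}x\in B\}\to\mu(B)$ a.e. Thus $\mu(B)\le\tfrac13$, contradicting the choice of $K$; so the second alternative of Tanny's theorem cannot occur, and $f(T^{-n}x)/n\to 0$ a.e.

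The step I expect to be the main obstacle is using Tanny's theorem correctly rather than a weaker substitute: knowing only that $\limsup_n f(T^{-n}x)/n$ is positive on a positive-measure set would not suffice, because the excursion estimate needs the threshold (here $2K$) to be chosen \emph{after}, and large relative to, the level $K$ defining $B$. With only a fixed finite value $c'$ of the limsup one would get excursions of relative length $\sim c'/K$, which is defeated by the requirement $\mu(B)>\tfrac13$ (i.e.\ $K$ large). The remaining ingredients---measurability and finiteness of $g$, the elementary excursion inequality, and the pointwise ergodic theorem for $T^{-1}$---are routine.
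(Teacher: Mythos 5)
Your proof is correct. You prove the direction ``forward tempered $\Rightarrow$ backward tempered'' and get the converse by replacing $T$ with $T^{-1}$; the maximal function $g(x)=\sup_{n\ge1}f(T^nx)/n$ is measurable and a.e.\ finite, the excursion estimate (if $f(T^{-n}x)>2Kn$ then $T^{-(n+m)}x\notin B$ for $1\le m\le 2n$, since otherwise $f(T^{-n}x)=f(T^m(T^{-(n+m)}x))\le Km\le 2Kn$) is sound because you fixed $K>0$, and the visit-frequency bound $\tfrac{1}{3n}\#\{1\le k\le 3n: T^{-k}x\in B\}\le\tfrac13$ along infinitely many $n$ indeed contradicts Birkhoff's theorem for $T^{-1}$, which forces the averages to converge to $\mu(B)>\tfrac13$. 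The route differs from the paper's in its architecture, though the engine is the same: in both arguments the ergodic theorem forces the orbit to hit a positive-measure set on which growth in the opposite time direction is controlled, and that control is transported along the orbit to bound $f$ at the original time. The paper does this directly and self-containedly: for each $\epsilon>0$ it takes $A=\{x: f(T^{-n}x)<\epsilon n\ \forall n\ge L\}$ of measure $\ge\tfrac12$ and finds, for every large $N$, a hitting time $i\in[N+L,2N)$ with $T^ix\in A$, giving $f(T^Nx)<\epsilon N$ outright; no dichotomy is needed. You instead consume Tanny's theorem (applied to $T^{-1}$) as a black box to reduce to excluding the alternative $\limsup f(T^{-n}x)/n=\infty$, and then run a contradiction with a single fixed threshold $2K$ against a density bound. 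That is perfectly legitimate here since Tanny's theorem is quoted in the paper as an external result (no circularity), and your observation that the dichotomy is what lets a fixed threshold suffice is exactly right; the trade-offs are that the paper's version is self-contained (it adapts Feldman's proof technique rather than citing Tanny) and exhibits the quantitative mechanism for every $\epsilon$, while yours is a clean reduction-plus-contradiction that leans on the cited theorem.
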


\begin{proof}
Suppose
  that $f(T^{-n}x)/n\to 0$. Let $\epsilon>0$. There exists for $\mu$-almost
  every $x$ an $L$ such that $n\ge L$ implies
  $f(T^{-n}x)/n<\epsilon$. Fixing a sufficiently large $L$, the set
  $A=\{x\colon f(T^{-n}x)/n<\epsilon\text{ for all }n\ge L\}$ has
  measure at least 1/2. Now we apply the Birkhoff ergodic theorem to
  $\mathbf 1_A$. For almost every $x$, there exists an $n_0$ such that
  for $n\ge n_0$ one has $(1/n)(\mathbf 1_A(x)+\ldots+\mathbf
  1_A(T^{n-1}x))\in[2/5,3/5]$. Fix such an $x$ and let $n_0$ be the
  corresponding quantity. Then let $N>\max(n_0,5L)$. We then have
  \begin{equation*}
  \#\{0\le i<N\colon  T^i(x)\in A\}\le 3N/5.
  \end{equation*}
  On the other hand we have
  \begin{equation*}
  \#\{0\le i<2N \colon T^i(x)\in A\}\ge 4N/5.
  \end{equation*}
  It follows that there exists $i\in [N+L,2N)$ with $T^i(x)\in A$. The
  fact that $T^i(x)\in A$ tells us that $f(T^Nx)<\epsilon
  (i-N)<\epsilon N$. It follows that $f(T^Nx)/N<\epsilon$. Since this
  holds for all large $N$ and $\epsilon$ was arbitrary we deduce that
  $f(T^nx)/n\to 0$.

The converse statement follows immediately.
\end{proof}

Combining the proof of Lemma~\ref{lem:TempFwd=TempBwd} with Tanny's
theorem, we get the following.
\begin{thm}
  Let $T$ be an invertible ergodic measure-preserving transformation
  of a probability space $(X,\mathcal B,\mu)$. Let $f\colon X\to
  \mathbb R$ be a non-negative measurable function. Then
   one of the following holds:
   \begin{itemize}
   \item $f(T^nx)/n\to 0$ for $\mu$-almost every $x$ as $n\to\pm\infty$; or
   \item $\limsup_{n\to\infty} f(T^nx)/n=\infty$ and $\limsup_{n\to\infty}
  f(T^{-n}x)/n=\infty$ for $\mu$-almost every $x$.
  \end{itemize}
\end{thm}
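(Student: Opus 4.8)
The plan is to obtain the statement as a purely formal consequence of Tanny's theorem, applied to both $T$ and $T^{-1}$, together with Lemma~\ref{lem:TempFwd=TempBwd}. First I would apply Tanny's theorem to the ergodic measure-preserving system $(X,\mathcal B,\mu,T)$ and the non-negative function $f$; this produces the \emph{forward dichotomy}: either $f(T^nx)/n\to 0$ for $\mu$-almost every $x$, or $\limsup_{n\to\infty}f(T^nx)/n=\infty$ for $\mu$-almost every $x$. Since $T$ is invertible, $T^{-1}$ is also an ergodic measure-preserving transformation of $(X,\mathcal B,\mu)$, so a second application of Tanny's theorem (now to $T^{-1}$) yields the \emph{backward dichotomy}: either $f(T^{-n}x)/n\to 0$ for $\mu$-almost every $x$, or $\limsup_{n\to\infty}f(T^{-n}x)/n=\infty$ for $\mu$-almost every $x$.

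Next I would invoke Lemma~\ref{lem:TempFwd=TempBwd}, which asserts that the forward-to-zero alternative holds if and only if the backward-to-zero alternative holds; this is exactly what eliminates the two ``mixed'' combinations of the dichotomies. Concretely: if the forward dichotomy lands on $f(T^nx)/n\to 0$, then the Lemma forces $f(T^{-n}x)/n\to 0$ as well, so $f(T^nx)/n\to 0$ as $n\to\pm\infty$ for $\mu$-almost every $x$, which is the first bullet. If instead the forward dichotomy lands on $\limsup_{n\to\infty}f(T^nx)/n=\infty$, then — since $f\ge 0$, so $f(T^nx)/n\to 0$ is incompatible with $\limsup_{n\to\infty} f(T^nx)/n=\infty$ on a full-measure set — the Lemma tells us that $f(T^{-n}x)/n\not\to 0$, and hence the backward dichotomy must land on $\limsup_{n\to\infty}f(T^{-n}x)/n=\infty$. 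This gives the second bullet.

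I do not anticipate a real obstacle: the argument is a two-line deduction from results already in hand. The only steps requiring a word of care are (i) noting that the two alternatives in each Tanny dichotomy are genuinely mutually exclusive on a full-measure set, which is immediate from $f\ge 0$; and (ii) recording that the first bullet's notation ``$f(T^nx)/n\to 0$ as $n\to\pm\infty$'' is shorthand for the conjunction of the $n\to+\infty$ and $n\to-\infty$ statements, the latter being equivalent to $f(T^{-n}x)/n\to 0$ — precisely the pair of conclusions delivered by combining the two dichotomies via the Lemma.
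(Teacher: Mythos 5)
Your proposal is correct and is essentially the paper's own argument: the paper likewise reduces the theorem to Tanny's dichotomy (applied to both $T$ and $T^{-1}$) and uses Lemma~\ref{lem:TempFwd=TempBwd} to rule out the mixed cases, just stated more tersely. Your write-up only spells out the case analysis that the paper leaves implicit.
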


\begin{proof}
In view of Tanny's theorem, it is sufficient to show that if
$f(T^{-n}x)/n\to 0$ a.e. then $f(T^nx)/n\to 0$ a.e.
This follows from Lemma~\ref{lem:TempFwd=TempBwd}.
\end{proof}

\subsection{Random version of Hennion's theorem}\label{sec:randomHennion}

In this appendix, we present a result that allows us to bound the
index of compactness and maximal Lyapunov exponent of some random
dynamical systems satisfying certain Lasota-Yorke type
inequalities. We remark that many parts of this lemma essentially
appear in Buzzi \cite{Buzzi}. We have modified the conclusion and
weakened the hypotheses in one place. This result is based on the
following theorem of Hennion \cite{Hennion}.

\begin{thm}[Hennion]
  Let $(X,\|\cdot\|)$ be a Banach space and suppose that $Y$ is a
  closed subspace of $X$. Let $Y$ be equipped with a finer norm
  $\tn.\tn$ (such that $\|y\|\le \tn y\tn$ for all $y\in Y$) such that
  the inclusion of $(Y, \tn \cdot \tn)\hookrightarrow (Y, \| \cdot
  \|)$ is compact.  Suppose that $\mcl$ is a linear operator such that
  $\mcl(X)\subset X$ and $\mcl(Y)\subset Y$. Suppose further that for all
  $y\in Y$, one has the inequality
\begin{equation*}
  \tn \mcl(y)\tn\le A\| y\| + B\tn y\tn.
\end{equation*}
Then the index of compactness of $\mcl$ is bounded above by $2B$.
\end{thm}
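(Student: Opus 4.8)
The plan is to argue straight from the definition of the index of compactness by a one-step covering argument, in the spirit of Hennion's original proof. Write $\|\cdot\|$ for the coarse norm inherited from $X$ and $\tn\cdot\tn$ for the finer norm on $Y$, and let $B_Y=\{y\in Y\colon \tn y\tn\le 1\}$ be the unit ball of $(Y,\tn\cdot\tn)$. The index of compactness in question is that of $\mcl$ acting on $(Y,\tn\cdot\tn)$, so the task is to cover $\mcl(B_Y)$ by finitely many $\tn\cdot\tn$-balls of radius as close to $2B$ as we like. The one hypothesis that does the work is the compactness of the inclusion $(Y,\tn\cdot\tn)\hookrightarrow(Y,\|\cdot\|)$: it says precisely that $B_Y$ is totally bounded for the coarse norm $\|\cdot\|$, so that for every $\delta>0$ there is a finite set $y_1,\dots,y_N\in B_Y$ with the property that every $y\in B_Y$ satisfies $\|y-y_i\|\le\delta$ for some $i$.

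Given such a $\delta$-net, I would estimate as follows. For $y\in B_Y$ choose $i$ with $\|y-y_i\|\le\delta$; since $y-y_i\in Y$, the Lasota--Yorke inequality applies and, using $\tn y\tn\le 1$ and $\tn y_i\tn\le 1$,
\[
\tn \mcl y-\mcl y_i\tn=\tn \mcl(y-y_i)\tn\le A\|y-y_i\|+B\tn y-y_i\tn\le A\delta+B\bigl(\tn y\tn+\tn y_i\tn\bigr)\le A\delta+2B.
\]
Hence $\mcl(B_Y)\subseteq\bigcup_{i=1}^N\{z\in Y\colon \tn z-\mcl y_i\tn\le A\delta+2B\}$, a covering by $N$ balls of radius $A\delta+2B$, so the index of compactness of $\mcl$ is at most $A\delta+2B$. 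Letting $\delta\downarrow 0$ gives the bound $2B$.

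I do not expect a genuine obstacle here: the proof is entirely contained in the two observations above, and the factor $2$ is nothing more than the triangle inequality $\tn y-y_i\tn\le\tn y\tn+\tn y_i\tn$ inside the $\tn\cdot\tn$-unit ball. The only points needing a little care are the bookkeeping of which norm is used at each step and the fact that one may take the net $\{y_i\}$ to lie inside $B_Y$ itself, which follows from total boundedness of $B_Y$ (cover by finitely many $\delta/2$-balls with arbitrary centres, then replace each centre whose ball meets $B_Y$ by a point of $B_Y$ in that ball). I would also note at the outset that $\mcl(Y)\subset Y$ together with the displayed inequality already forces $\mcl$ to be a bounded operator on $(Y,\tn\cdot\tn)$, so that $\mcl(B_Y)$ is bounded and its index of compactness is finite; the extra hypothesis $\mcl(X)\subset X$ plays no role in this particular conclusion and is present only because $\mcl$ will be acting on the coarse space as well in the applications.
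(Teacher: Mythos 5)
Your argument is correct. Note, though, that the paper does not prove this statement at all: it is quoted as a black box from Hennion's work and used only through Lemma~\ref{lem:LY-IC-MLE}, so there is no in-paper proof to compare against. What you supply is the standard one-step measure-of-noncompactness estimate, and every step checks out: compactness of the inclusion $(Y,\tn\cdot\tn)\hookrightarrow(Y,\|\cdot\|)$ gives total boundedness of the $\tn\cdot\tn$-unit ball $B_Y$ in the coarse norm, the net can indeed be taken inside $B_Y$ (and, crucially, inside $Y$, so the Lasota--Yorke inequality applies to $y-y_i$), and the estimate $\tn\mcl(y-y_i)\tn\le A\delta+B(\tn y\tn+\tn y_i\tn)\le A\delta+2B$ covers $\mcl(B_Y)$ by finitely many $\tn\cdot\tn$-balls of radius $A\delta+2B$; taking the infimum over $\delta$ gives the bound $2B$ for the Kuratowski index as defined in Definition~\ref{defn:indexCompactness}. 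Your side remarks are also accurate: the hypothesis $\mcl(X)\subset X$ is not needed for this conclusion (it matters only for the cocycle setting in which the lemma is applied), and the inequality together with $\|y\|\le\tn y\tn$ already gives boundedness of $\mcl$ on $(Y,\tn\cdot\tn)$. The factor $2$ is exactly the triangle-inequality artifact you identify; it is harmless in the paper's application since it disappears after taking $n$-th roots of iterated bounds.
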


\begin{lem}\label{lem:LY-IC-MLE}
  Let $(X,\|\cdot\|)$ be a Banach space and let $Y$ be a closed
  subspace.  Let $\tn\cdot\tn$ be a finer norm on $Y$ such that the
  inclusion of $(Y, \tn \cdot \tn)\hookrightarrow (Y, \| \cdot \|)$ is
  compact. Let $\sigma\colon(\Omega,\mu)\to(\Omega,\mu)$ be an
  invertible ergodic measure preserving dynamical system and let
  $(\mcl_\omega)_{\omega\in\Omega}$ be a family of linear maps, each
  mapping $X$ to $X$ and $Y$ to $Y$ continuously. As usual, let
  $\mcl^{(n)}_\omega=\mcl_{\sigma^{n-1}\omega}\circ\ldots\circ \mcl_\omega$.

Suppose we have the following inequalities:
\begin{align*}
  \text{(Strong L-Y)\qquad}&\tn \mcl_\omega f\tn \le A(\omega)\|
  f\|+B(\omega)\tn f\tn\text{ for all $f\in Y$};\\
  \text{(Weak L-Y)\qquad}&\tn \mcl_\omega\tn \le C(\omega),
\end{align*} where $A(\omega)$, $B(\omega)$ and $C(\omega)$ are
measurable functions, $C(\omega)$ is log-integrable and $\int\log
B(\omega)\,d\mu(\omega)<0$.

Then there exists a full measure subset $\Omega_1\subset \Omega$ with the
following properties:
\begin{enumerate}
\item $\lim_{n\to\infty}(1/n)\log\tn \mcl_\om^{(n)}
\tn_{\text{ic}}\le \int\log B(\omega)\,d\mu(\omega)$ for all
$\omega\in\Omega_1$;
\item For $\omega\in\Omega_1$, suppose that $f\in Y$ satisfies
\begin{equation}\label{eq:subexp}
\limsup_{n\to\infty}(1/n)\log \|\mcl^{(n)}_\omega f\|\le 0.
\end{equation}
Then $\limsup_{n\to\infty}(1/n)\log\tn \mcl^{(n)}_\omega f\tn \le 0$.
\end{enumerate}

\end{lem}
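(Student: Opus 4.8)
The starting point is to iterate the strong Lasota--Yorke inequality along the cocycle. Using $\mcl^{(n)}_\om=\mcl(\sig^{n-1}\om)\circ\mcl^{(n-1)}_\om$ and applying $\tn\mcl(\sig^{n-1}\om)g\tn\le A(\sig^{n-1}\om)\|g\|+B(\sig^{n-1}\om)\tn g\tn$ at each step, a routine induction gives, for every $f\in Y$,
\[
\tn\mcl^{(n)}_\om f\tn\le\sum_{k=0}^{n-1}\Big(\prod_{j=k+1}^{n-1}B(\sig^j\om)\Big)A(\sig^k\om)\,\|\mcl^{(k)}_\om f\|+\Big(\prod_{j=0}^{n-1}B(\sig^j\om)\Big)\tn f\tn .
\]
I use this in two ways. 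For part (1), bounding $\|\mcl^{(k)}_\om f\|\le\|\mcl^{(k)}_\om\|_{X\to X}\|f\|$ (a finite quantity, since each $\mcl_{\sig^j\om}$ is bounded on $X$) turns the display into a single--operator inequality $\tn\mcl^{(n)}_\om f\tn\le\hat A_n(\om)\|f\|+\big(\prod_{j=0}^{n-1}B(\sig^j\om)\big)\tn f\tn$ with $\hat A_n(\om)<\infty$. For part (2) I keep the factor $\|\mcl^{(k)}_\om f\|$ and estimate it from the hypothesis.

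For part (1), apply Hennion's theorem to the operator $\mcl^{(n)}_\om$ with this inequality. Since the conclusion of Hennion's theorem involves only the coefficient of $\tn f\tn$, the (possibly enormous) size of $\hat A_n(\om)$ plays no role, and one obtains $\tn\mcl^{(n)}_\om\tn_{\text{ic}}\le 2\prod_{j=0}^{n-1}B(\sig^j\om)$. By the Birkhoff ergodic theorem applied to $\log B$ (integrable, since $\int\log B\,d\mu<0$ forces $\log^+B\in L^1$), $\frac1n\sum_{j=0}^{n-1}\log B(\sig^j\om)\to\int\log B\,d\mu$ a.e., whence $\limsup_n\frac1n\log\tn\mcl^{(n)}_\om\tn_{\text{ic}}\le\int\log B\,d\mu$. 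As the index of compactness is submultiplicative, $\big(\log\tn\mcl^{(n)}_\om\tn_{\text{ic}}\big)_n$ is subadditive, and since $\log^+\tn\mcl_\om\tn_{\text{ic}}\le\log^+(2B(\om))\in L^1$ (the $n=1$ case of the bound above), Kingman's subadditive theorem shows the limit exists; hence $\lim_n\frac1n\log\tn\mcl^{(n)}_\om\tn_{\text{ic}}\le\int\log B\,d\mu$ on a full--measure set.

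For part (2) the delicate point is that $A$ is merely measurable, so $A(\sig^k\om)$ in the iterated inequality need not be subexponential along the orbit. I first reduce to bounded $A$ by trading against the weak Lasota--Yorke inequality: for $M>0$ set $A_M=\min(A,M)$ and let $B_M=B$ on $\{A\le M\}$, $B_M=C$ on $\{A>M\}$. Then $\tn\mcl_\om f\tn\le A_M(\om)\|f\|+B_M(\om)\tn f\tn$ for every $\om$ (the original inequality where $A\le M$, and $\tn\mcl_\om f\tn\le C(\om)\tn f\tn$ where $A>M$); moreover $A<\infty$ a.e.\ gives $\mu(\{A>M\})\to0$, so, using $\log^+B,\log^+C\in L^1$, a convergence argument yields $\limsup_{M\to\infty}\int\log B_M\,d\mu\le\int\log B\,d\mu<0$. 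Fixing $M$ large we may thus assume $A\le M$ and $\be:=\int\log B\,d\mu<0$ (with $\log^+B\in L^1$, so Birkhoff still applies; if $\be=-\infty$ the estimate below goes through with $\be$ replaced by any negative constant, and is easier). Now fix $\om$ in the full--measure set $\Om_1$ on which $\frac1m\sum_{j=0}^{m-1}\log B(\sig^j\om)\to\be$, fix $\ep>0$, and take $f\in Y$ with $\limsup_n\frac1n\log\|\mcl^{(n)}_\om f\|\le0$. Then $\|\mcl^{(k)}_\om f\|\le L_1(\om)e^{\ep k}$ for all $k$, and $\big|\sum_{j=0}^{m-1}\log B(\sig^j\om)-m\be\big|\le\ep m+D(\om)$ for all $m$, so $\prod_{j=k+1}^{n-1}B(\sig^j\om)\le D'(\om)e^{2\ep n}e^{(n-k)\be}$. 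Plugging these and $A(\sig^k\om)\le M$ into the iterated inequality, the main sum is $\le K(\om)e^{2\ep n}\sum_{k=0}^{n-1}e^{(n-k)\be}e^{\ep k}$; since $\be<0$ this geometric series is $O(e^{\ep n})$, making the term $O(e^{3\ep n})$, while $\big(\prod_{j=0}^{n-1}B(\sig^j\om)\big)\tn f\tn\le e^{(\be+\ep)n+D(\om)}\tn f\tn$. Hence $\limsup_n\frac1n\log\tn\mcl^{(n)}_\om f\tn\le3\ep$, and letting $\ep\downarrow0$ proves (2). Taking $\Om_1$ to be the ($\sig$--invariant) intersection of the full--measure sets used in both parts completes the proof.

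The reduction to bounded $A$ is the one step I expect to require genuine care: by Tanny's theorem a merely measurable $A$ can satisfy $A(\sig^n\om)/n\not\to0$, so the iterated inequality is truly uncontrolled without it, and the weak Lasota--Yorke inequality is precisely the device that lets one truncate $A$ while preserving $\int\log B\,d\mu<0$. Everything else is a routine combination of Hennion's theorem with the Birkhoff and Kingman ergodic theorems.
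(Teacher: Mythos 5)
Your proof is correct, and parts of it coincide with the paper's: part (1) (iterate the strong inequality, absorb the lower-order terms into a finite coefficient of $\|f\|$, apply Hennion's theorem and the ergodic theorem) is exactly the paper's argument, and your truncation $A_M=\min(A,M)$, $B_M=B$ on $\{A\le M\}$, $B_M=C$ on $\{A>M\}$ is precisely the paper's ``hybrid'' inequality with $\tilde A,\tilde B$, chosen so that $\int\log B_M\,d\mu<0$. Where you genuinely diverge is in how you finish part (2). The paper never uses a two-sided Birkhoff bound on the partial sums of $\log \tilde B$: it bounds the windowed products $\tilde B(\sig^{n-1}\om)\cdots\tilde B(\sig^{n-k}\om)$ by a measurable function $F(\sig^n\om)$ of the endpoint, and since $F(\sig^n\om)$ is not controlled along the orbit (Tanny-type obstruction), it must then compose with the weak inequality over the last $k$ steps, introduce a second function $H$ controlling forward products of $C$, and use a positive-density ``good times'' argument to split $m=n+k$ with $F(\sig^n\om)H(\sig^n\om)$ bounded. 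You instead control every windowed product directly, writing it as a difference $S_n-S_{k+1}$ of ergodic sums and using the two-sided Birkhoff estimate $|S_m-m\be|\le\ep m+D(\om)$, which gives $\prod_{j=k+1}^{n-1}B_M(\sig^j\om)\le D'(\om)e^{2\ep n}e^{(n-k-1)\be}$ uniformly in $k$, after which the geometric series closes the argument; this is shorter and avoids the $F$, $H$ and density machinery entirely. The price is that the two-sided bound needs $\log B_M\in L^1$; your parenthetical about $\be=-\infty$ is the one place that deserves to be spelled out (e.g.\ replace $B_M$ by $\max(B_M,\eta)$ for small $\eta>0$, which preserves the strong inequality and still has negative log-integral), whereas the paper's one-sided product bounds work without this extra truncation. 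Minor cosmetic points: the exponent should be $(n-k-1)\be$ rather than $(n-k)\be$ (harmless since $\be<0$), and the $\sig$-invariance of $\Om_1$ is not needed.
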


\begin{proof}
  For the first statement, notice that applying the strong
  Lasota-Yorke inequality we obtain inductively $\tn \mcl^{(n)}_\omega
  f\tn \le B(\sigma^{n-1})\ldots B(\omega)\tn f\tn+D\|f\|$ for a
  constant $D$ depending on $\omega$ and $n$. From Hennion's
  theorem, we deduce $\tn \mcl_\om^{(n)}\tn_{\text{ic}}\le
  2B(\sigma^{n-1}\omega)\ldots B(\omega)$.  Taking logarithms, the
  conclusion then follows from the ergodic theorem.

  We now show the second statement. There exists a $\delta>0$ such that
  for any set $S$ of measure at most $\delta$, one has $\int_S(\log
  C-\log B)\,d\mu<-\int\log B\,d\mu$. Now since $A$ is measurable,
  there exists a $K>0$ such that $\mu(\{\omega\colon A(\omega)\ge
  K\})<\delta$.

  Set $\tilde B(\omega)=B(\omega)$ if $A(\omega)\le K$ and $C(\omega)$
  otherwise. Set $\tilde A(\omega)=\min(A(\omega),K)$.  We see that we
  have a hybrid Lasota-Yorke inequality obtained by applying the
  strong Lasota-Yorke inequality for cases in which $A(\omega)\le K$
  and the weak inequality otherwise:

  \begin{equation}\label{eq:hybrid}
    \tn \mcl_\omega f\tn \le \tilde A(\omega)\|f\|+\tilde B(\omega)\tn f\tn.
  \end{equation}

  The advantage of this is that we still have $\int\log \tilde
  B\,d\mu<0$ and $\tilde A$ is now uniformly bounded by $K$.

  Applying the ergodic theorem (with the transformation being
  $\sigma^{-1}$) we obtain a measurable function $F(\omega)$ such that for
  $\bbp$-almost every $\omega$, we have
  \begin{equation}\label{eq:Aback}
    \tilde B(\sigma^{-1}\omega)\ldots\tilde B(\sigma^{-k}\omega)\le
    F(\omega)\text{ for all $k\ge 0$.}
  \end{equation}
Let $\beta>\int\log C$. Applying the ergodic theorem once more, we
obtain, for $\bbp$-almost every $\omega$, the bound
\begin{equation}\label{eq:weakit}
C(\sigma^{n+k-1}\omega)\ldots C(\sigma^n\omega)\le
H(\sigma^n\omega)e^{\beta k}\text{ for all $n,k\ge 0$}.
\end{equation}

There exists a $B>0$ such that $H(\omega)F(\omega)<B$ on a set of
positive measure. By the ergodic theorem, for all $\delta>0$, for almost
every $\omega$, there exists an $n_0$ such that
\begin{equation}\label{eq:moving}
\forall N>n_0,\ \exists n\in [N(1-\delta),N)\text{ with}
H(\sigma^n\omega)F(\sigma^n\omega)<B.
\end{equation}

Now let $\Omega_1$ be the set of full measure on which the conditions
above hold. Fix an $\omega\in\Omega_1$ and let $f\in Y$ satisfy
\eqref{eq:subexp}. Let $\epsilon>0$ be arbitrary. Then by the hypotheses,
there exists a constant $L$ such that
\begin{equation}\label{eq:coarsenormbound}
  \|\mcl_\omega^{(n)}f\|\le Le^{\epsilon n/2}\text{ for all $n\ge 0$}.
\end{equation}

Now by iterating \eqref{eq:hybrid}, we obtain the bound (valid for all
$f\in Y$)

\begin{align*}
\tn \mcl_\omega^nf\tn&\le \tilde B(\sigma^{n-1}\omega)\ldots \tilde
A(\omega)\tn f\tn+\tilde B(\sigma^{n-1}\omega)\ldots \tilde B(\sigma
\omega)\tilde A(\omega)\|f\|+\\
&\ldots+\tilde B(\sigma^{n-1}\omega)\tilde
A(\sigma^{n-2}\omega)\|\mcl_\omega^{(n-2)}f\| +\tilde
A(\sigma^{n-1}\omega)\|\mcl_\omega^{(n-1)}f\|.
\end{align*}

Using the inequalities $\tilde B(\sigma^{n-1}\omega)\ldots \tilde
B(\sigma^{n-k}\omega)\le F(\sigma^n\omega)$ (from \eqref{eq:Aback}), the
fact that $\tilde A(\omega)\le K$, and \eqref{eq:coarsenormbound}, we
obtain an upper bound of the form

\begin{equation}
\tn \mcl_\omega^n f\tn\le MF(\sigma^n\omega)e^{\epsilon n/2},
\end{equation}
for a suitable constant $M$.

Combining this with \eqref{eq:weakit} we obtain
\begin{equation}\label{eq:comb2}
\tn \mcl_\omega^{n+k}f\tn \le
MF(\sigma^n\omega)H(\sigma^n\omega)e^{\epsilon n/2}e^{\beta k}
\end{equation}

We can therefore obtain a bound for $\tn \mcl_\omega^mf\tn$ by minimizing
the above over possible decompositions $m=n+k$. Let $n_0$ be as in
\eqref{eq:moving} where $\delta$ is taken to be $\epsilon/(2\beta)$ and
suppose $m>n_0$ is given. Then there exists a $k<\epsilon/(2\beta)m$ such
that $F(\sigma^{m-k}\omega)H(\sigma^{m-k}\omega)\le B$ so that
\begin{equation*}
\tn\mcl_\omega^mf\tn\le MBe^{\epsilon m/2}e^{\beta k} < MBe^{\epsilon m}.
\end{equation*}

It follows that $\limsup_{N\to\infty} (1/N)\tn \mcl_\omega^{(N)}f\tn\le
\epsilon$. Since $\epsilon$ is arbitrary, the proof is complete.
\end{proof} 

\bibliography{SemiInvOsel_refs}
\bibliographystyle{abbrv}

\end{document}